\newenvironment{psmallmatrix}{\left(\begin{smallmatrix}}{\end{smallmatrix}\right)}
\DeclareMathOperator{\Hom}{Hom}
\DeclareMathOperator{\Fundg}{Fun_{dg}}
\DeclareMathOperator{\Hqe}{\mathsf{Hqe}}
\DeclareMathOperator{\cone}{C}
\DeclareMathOperator{\coker}{coker}
\DeclareMathOperator{\res}{Res}
\DeclareMathOperator{\Ind}{Ind}
\DeclareMathOperator{\Inj}{Inj}
\DeclareMathOperator{\Proj}{Proj}
\DeclareMathOperator{\DGInj}{DGInj}
\DeclareMathOperator{\DGProj}{DGProj}
\DeclareMathOperator{\MC}{MC}
\DeclareMathOperator{\Tot}{Tot}
\DeclareMathOperator{\Tw}{Tw}
\DeclareMathOperator{\Img}{Im}
\DeclareMathOperator{\pr}{pr}
\DeclareMathOperator{\incl}{incl}
\DeclareMathOperator{\mmod}{mod}
\newcommand{\op}{\mathrm{op}}
\newcommand{\cat}{\mathbf}
\newcommand{\opp}[1]{{#1}^{\mathrm{op}}}
\newcommand{\kat}{\mathsf}
\newcommand{\qis}{\overset{\mathrm{qis}}{\approx}}
\newcommand{\pretr}[1]{\mathrm{pretr}(#1)}
\newcommand{\basering}[1]{\mathbf{#1}}
\newcommand{\Mod}[1]{\mathsf{Mod}(#1)}
\newcommand{\Modfp}[1]{\mathsf{mod}(#1)}
\newcommand{\comp}[1]{\mathsf{C}(#1)}
\newcommand{\hocomp}[1]{\mathsf{K}(#1)}
\newcommand{\dercomp}[1]{\mathsf{D}(#1)}
\newcommand{\dercompmin}[1]{\mathsf{D}^{-}(#1)}
\newcommand{\compdg}[1]{\mathsf{C}_\mathrm{dg}(#1)}
\newcommand{\hproj}[1]{\mathrm{h\textrm{-}proj}(#1)}
\newcommand{\hprojmin}[1]{\mathrm{h\textrm{-}proj}^{-}(#1)}
\newcommand{\shiftid}[3]{1_{({#1},{#2},{#3})}}
\newcommand{\bd}{\mathrm{b}}
\newcommand{\holim}{\mathrm{ho}\!\varprojlim}
\newcommand{\hocolim}{\mathrm{ho}\!\varinjlim}
\newtheorem{thm}{Theorem}[section]
\newtheorem{prop}[thm]{Proposition}
\newtheorem{coroll}[thm]{Corollary}
\newtheorem{lemma}[thm]{Lemma}
\theoremstyle{remark}
\newtheorem{remark}[thm]{Remark}
\newtheorem{example}[thm]{Example}
\theoremstyle{definition}
\newtheorem{defin}[thm]{Definition}
\numberwithin{equation}{section}
\title{t-structures and twisted complexes on derived injectives}
\author{Francesco Genovese}
\address[Francesco Genovese]{Universiteit Antwerpen, Departement Wiskunde-Informatica, Middelheimcampus,
	Middelheimlaan 1,
	2020 Antwerp, Belgium}
\email{Francesco.Genovese@uantwerpen.be}
\author{Wendy Lowen} 
\address[Wendy Lowen]{Universiteit Antwerpen, Departement Wiskunde-Informatica, Middelheimcampus,
Middelheimlaan 1,
2020 Antwerp, Belgium}
\address{Laboratory of Algebraic Geometry, National Research University, Higher School of Economics, Moscow, Russia}
\email{wendy.lowen@uantwerpen.be}
\author{Michel Van den Bergh}
\address[Michel Van den Bergh]{Universiteit Hasselt\\ Campus Diepenbeek\\ Agoralaan Gebouw D \\ 3590 Diepenbeek \\Belgium}
\email{michel.vandenbergh@uhasselt.be}
\thanks{The authors acknowledge the support of the Research Foundation Flanders (FWO) under Grant No G.0D86.16N, and of the Russian Academic Excellence Project `5-100'. This project has received funding from the European Research Council (ERC) under the European Union’s Horizon 2020 research and innovation programme (grant agreement No. 817762).}
\keywords{pretriangulated dg-categories, t-structures, derived injectives, twisted complexes}
\subjclass[2010]{18E30, 18G05, 18G35}
\let\oldmarginpar\marginpar
\def\marginpar#1{\oldmarginpar{\raggedright \tiny \baselineskip 0pt \lineskip 0pt #1}}
\def\HH{\operatorname{HH}}
\begin{document}
\begin{abstract}
In the paper \emph{Deformation theory of abelian categories}, the last two authors proved that an abelian category with enough injectives can be reconstructed as the category of finitely presented modules over the category of its injective objects. We show a generalization of this to pretriangulated dg-categories with a left bounded non-degenerate t-structure with enough derived injectives, the latter being derived enhancements of the injective objects in the heart of the t-structure. Such dg-categories (with an additional hypothesis of closure under suitable products) can be completely described in terms of left bounded twisted complexes of their derived injectives.
\end{abstract}
\maketitle
\tableofcontents
\section{Introduction}
This paper is the first one in an ongoing project to develop the
deformation theory of triangulated categories with t-structure. The current paper is intended as the
foundation for \cite{deftria1,deftria2} in which the actual deformation theory is developed.
Taken together these papers should be viewed as sequels to
\cite{lowen-vandenbergh-deformations-abelian,lowen-vdb-hochschild} which are about
the deformation theory of abelian categories. An abelian category can always be viewed as the heart of the tautological
t-structure on its (triangulated) derived category and this
provides the link of the current ``triangulated'' setting
with the earlier ``abelian'' setting.

\medskip

To be more concrete let $\cat k$ be a field and let $\mathfrak A$ be a $\cat k$-linear abelian category. In \cite{lowen-vdb-hochschild} the last two
  authors defined the Hochschild cohomology
  $\HH^\ast(\mathfrak A)$ of $\mathfrak A$ and showed that $\HH^{2,3}(\mathfrak A)$ provides an obstruction theory\footnote{The methods in loc.\ cit.\ may also be used to give natural deformation theoretic interpretations for the lower groups $\HH^{0,1}(\mathfrak A)$.} for the (suitably defined)
 deformations of $\mathfrak A$.  
Restricting ourselves for simplicity to first order deformations we have in particular that
$ \HH^2(\mathfrak A)$
  parametrizes the deformations of $\mathfrak
  A$ over the dual numbers $D_0:=\cat k[\epsilon]/(\epsilon^2)$.

It seems natural to look for a deformation theoretic interpretation
of the higher Hochschild cohomology groups of $\mathfrak A$. Indeed based on general principles the Hochschild cohomology groups $\HH^n(\mathfrak A)$
for $n\ge 2$ ``should'' correspond to deformations of $\mathfrak A$  over the DG-algebra $D_{2-n}=\cat k[\epsilon]/(\epsilon^2)$ where now $|\epsilon|=2-n$. However it is impossible
to realize this objective in the abelian world as there is no sensible notion of a $D_{2-n}$-linear abelian category. 
But we will show that it is possible do it in the triangulated world! Indeed the theory developed in the current paper allows one to associate to a class in $\eta\in \HH^n(\mathfrak A)$ for $n\ge 3$ a triangulated category $D^+(\mathfrak A)_{\eta}$ with t-structure (whose heart happens to be also $\mathfrak A$) which is linear over $D_{2-n}$ and which for all practical purposes behaves as a deformation of $D^+(\mathfrak A)$ corresponding to $\eta$ (see \cite{deftria1}).
In a subsequent paper~\cite{deftria2} we will show that this procedure is in fact reversible and that all deformations of $D^+(\mathfrak A)$ over $D_{2-n}$ (for a suitable notion of deformation) 
are of the form $D^+(\mathfrak A)_\eta$. 

\medskip

Note however that it may appear that we are actually solving  a  non-problem. 
Indeed, unsurprisingly the abstract theory of
  triangulated categories is too weak for us and we work instead
  with pretriangulated dg-categories \cite{bondal-kapranov} which have in particular a standard notion of Hochschild cohomology.  So let
  $\cat A$ be a pretriangulated dg-category. 
  If  $\tilde{\eta}$ is a Hochschild cocycle representing a class
  $\eta\in \HH^\ast(\cat A)$ then we may use it to deform the
  DG-category $\cat A$ \cite{kuznetsov-height}\footnote{We are skipping some technicalities. Either one has to replace $\cat A$ by a cofibrant model, or else one has to use $A_\infty$-categories.} much in the same way as we deform algebras, and so in particular the presence of a t-structure seems to be irrelevant! The catch however is that in general $\tilde{\eta}$ will
  have curvature and hence the same will be true for the corresponding deformation of $\cat A$
  (roughly speaking $d^2\neq 0$)
  \cite{dedeken-lowen-deformations, LowenVdBFormal}.  Homological algebra over curved
  dg-categories is possible \cite{dedeken-lowen-deformations,Positselski1} but
  presents rather serious technical difficulties. One may attempt to solve this
``curvature problem''
 by
  replacing $\cat A$ by a Morita equivalent dg-category $\cat A'$
  (which has the same Hochschild cohomology as $\cat A$) such that
  over $\cat A'$, $\eta$ may be represented by a cocycle without
  curvature but this appears not to be possible in general
  \cite{keller-lowen-nicolas-curved}. Part of the motivation for the papers
  \cite{deftria1,deftria2} is now precisely to show that the curvature
  problem can be solved in a natural way for triangulated categories with
  t-structure.

\medskip

Now we describe more concretely the content of the current paper. Let
us first recall the abelian setting
\cite{lowen-vandenbergh-deformations-abelian,lowen-vdb-hochschild}. Assume that
$\mathfrak A$ is an abelian category with enough injectives and let
\begin{equation}
\label{eq:inj}
\cat E=\Inj \mathfrak A\,.
\end{equation}
Then $\cat E$ has weak cokernels and it particular the
category of finitely presented left $\cat E$-modules (denoted by
$\mmod(\cat E^{\mathrm{op}})$) is abelian (in other words $\cat E$ is
left coherent). Moreover the restricted Yoneda functor $A \mapsto \cat E(A,-)$
gives an equivalence of categories
\begin{equation}
\label{eq:coherent}
\mathfrak A \cong \opp{\mmod(\opp{\cat E}) }\,.
\end{equation}
Furthermore the relations \eqref{eq:inj} and \eqref{eq:coherent} are  in fact reversible.
In other words we may start with a Karoubian additive coherent category $\cat E$, 
put $\mathfrak A:=\opp{\mmod({\opp{\cat E}})}$ and then we find $\cat E\cong \Inj \mathfrak A$.
Elaborating on this one finds that there is an equivalence of categories
\begin{multline}
\label{eq:karoubian}
\{\text{Abelian categories with enough injectives, with functors possessing an exact left adjoint}\}\\
\cong \{\text{Karoubian additive coherent categories}\}\qquad\qquad\qquad\qquad\qquad
\end{multline}
and this provides
a natural path towards the deformation theory of abelian categories. For example if $\mathfrak A$ is linear over a field
$\cat k$ then we put $\HH^*_{\cat k}(\mathfrak A):=\HH^\ast_{\cat k}(\cat E)$. 
\medskip

The main result in the current paper is an analogue 
of \eqref{eq:coherent} in the triangulated setting.  From now on we fix a ground field $\basering k$ and all objects will be $\basering k$-linear.
Let $\cat T$ be a triangulated category equipped with a t-structure with heart $\cat T^{\heartsuit}$. We say that $\cat T$ has enough \emph{derived injectives}
if $\cat T^{\heartsuit}$ has enough injectives and for every injective $I$ in $\cat T^{\heartsuit}$ there exists an object
$L(I)\in \cat T$ such that ${\cat T}(-,L(I))\cong \cat T^{\heartsuit} (H^0(-),I)$. From this definition it is clear
that the category of derived injectives in $\cat T$ is closed under (existing) products.

\medskip

We now state our main results. They are triangulated analogues of the results outlined above in the abelian case.\footnote{For compatibility with the abelian case we state our results
here under the assumption that there are enough derived injectives.
However in the body of the paper the results will be stated for triangulated categories
with enough \emph{derived projectives}.} 
\begin{prop}[Dual version of Lemma \ref{lemma:dgproj_hlc_karoubian}, Definition \ref{def:hlc}]
Assume that $\cat T$ is a triangluated category with $t$-structure which has enough derived injectives and furthermore that it is ``enhanced'' in the sense
of \cite{bondal-kapranov}. I.e. $\cat T=H^0(\cat A)$ where $\cat A$ is a pretriangulated dg-category\footnote{Note that the notation $H^0$ denotes both 
the ordinary linear category associated to a dg-category and the cohomological functor associated to a t-structure. Since both notations are quite standard this dual use appears difficult to avoid.}
 (see \S\ref{subsec:dgfacts}). Let
$\cat J$ be the full sub-dg-category of $\cat A$ spanned by the derived injective objects. 

The dg-category $\cat J$ is \emph{(left) homotopically locally coherent} (hlc). I.e.
\begin{itemize}
\item $\cat J$ is cohomologically concentrated in nonpositive degrees: for all $A,A' \in \cat J$, we have $H^i(\cat J(A,A'))=0$ for all $i>0$.
\item $H^0(\cat J)$ is an additive, left coherent $\basering k$-linear category.
\item For all $i \in \mathbb Z$ and all $A \in \cat J$, the left $H^0(\cat J)$-module $H^i(\cat J(A, -))$ is finitely presented.
\end{itemize}
Moreover the category $H^0(\cat J)$ is Karoubian.
\end{prop}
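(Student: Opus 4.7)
The plan is to derive every assertion from the defining representability $\cat T(-, L(I)) \cong \cat T^{\heartsuit}(H^0(-), I)$ and the structure of injectives in the heart. The master computation, for $A \in \cat J$ and $B = L(I) \in \cat J$, is
\[
H^i(\cat J(A, B)) = \cat T(A[-i], L(I)) \cong \cat T^{\heartsuit}(H^{-i}(A), I).
\]

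A preliminary step is to show that every derived injective sits in $\cat T^{\geq 0}$: for $X \in \cat T^{\leq -1}$ one has $H^0(X) = 0$, so $\cat T(X, L(I)) = 0$ and $L(I) \in (\cat T^{\leq -1})^{\perp} = \cat T^{\geq 0}$. In particular $H^{-i}(A) = 0$ for $i > 0$ and $A \in \cat J$, and the displayed formula yields the cohomological concentration in nonpositive degrees (first bullet). Specialising at $i = 0$, the formula becomes $\cat T(L(J), L(I)) \cong \cat T^{\heartsuit}(J, I)$, showing that the functor $H^0 \colon H^0(\cat J) \to \cat E := \Inj(\cat T^{\heartsuit})$, $L(I) \mapsto I$, is a $\basering k$-linear equivalence of categories. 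Because $\cat E$ is additive (products of injectives are injective) and Karoubian (direct summands of injectives are injective), so is $H^0(\cat J)$; this yields the additivity and Karoubian claims.

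For left coherence I would show that $\cat E$ has weak cokernels: given $\bar f \colon I_1 \to I_2$ in $\cat E$, form the abelian cokernel $I_2 \twoheadrightarrow C$ in $\cat T^{\heartsuit}$ and choose an embedding $C \hookrightarrow I_3$ into an injective. The composite $\bar g \colon I_2 \to I_3$ satisfies $\bar g \bar f = 0$, and any other $\bar h \colon I_2 \to I_4$ in $\cat E$ with $\bar h \bar f = 0$ factors through $C$ by the cokernel universal property and then extends to $I_3 \to I_4$ by injectivity of $I_4$, producing the required factorisation of $\bar h$ through $\bar g$. Transporting along the equivalence $H^0(\cat J) \simeq \cat E$ then gives the left coherence of $H^0(\cat J)$.

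Finally, for the finite presentation of $H^i(\cat J(-, A))$ as a right $H^0(\cat J)$-module: positive $i$ gives the zero module (by the first step) and $i = 0$ gives the representable $H^0(\cat J)(-, A)$, both trivially finitely presented. The substantive case is $i < 0$, where the displayed formula realises the functor as $L(J) \mapsto \cat T^{\heartsuit}(H^{-i}(L(J)), I)$, controlled by the higher cohomology of derived injectives. This is the technical heart of the proof; I anticipate that it proceeds either by refining the truncation-triangle analysis of the preliminary step to show that $\tau^{\geq 1} L(I) = 0$, so that $L(I) \in \cat T^{\heartsuit}$ and the functor vanishes, or, when higher cohomology of $L(-)$ can genuinely occur, by constructing a functorial two-term injective copresentation of $H^{-i}(L(J))$ in $\cat T^{\heartsuit}$ and pulling it back through $L$ to a presentation by representables. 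Here the hypothesis that $\cat A$ is a pretriangulated enhancement and the closure of $\cat J$ under the relevant operations are presumably essential.
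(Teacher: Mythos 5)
Your argument for the first two bullets and for the Karoubian claim is correct and is essentially the dual of the paper's own proof of Lemma~\ref{lemma:dgproj_hlc_karoubian}: one uses that $L\colon\Inj(\cat T^\heartsuit)\to H^0(\cat J)$ is an equivalence and that the category of injectives of an abelian category with enough injectives is additive, Karoubian, and has weak cokernels. The gap is in the third bullet, which you rightly flag as the substantive point but leave at the level of speculation, and neither of your anticipated routes works. Your first option, that $\tau_{\geq 1}L(I)=0$, is false in general: a derived injective need not lie in the heart, and the whole framework is designed precisely to accommodate derived injectives with nontrivial higher cohomology. Your second option, a ``functorial injective copresentation of $H^{-i}(L(J))$'' with $J$ varying, aims at the wrong variance: it would produce outer terms $\cat T^\heartsuit(J_k(J),I)$ with $J_k(J)$ depending on $J$, and those are not representable $H^0(\cat J)$-modules.

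The dual of the paper's proof fixes the \emph{first} slot and varies the \emph{second}. With $A=L(I)$ fixed and $B=L(J)$ varying, your master formula (applied the other way round) gives $H^i(\cat J(L(I),L(J)))\cong\cat T(L(I)[-i],L(J))\cong\cat T^\heartsuit(H^{-i}(L(I)),J)$, so the fixed heart object $H^{-i}(L(I))$ sits in the contravariant slot. Choose an injective copresentation $0\to H^{-i}(L(I))\to I_0\to I_1$ in $\cat T^\heartsuit$ (enough injectives) and apply the exact functor $\cat T^\heartsuit(-,J)$ (exact because $J$ is injective); this yields an exact sequence
\begin{equation*}
\cat T^\heartsuit(I_1,J)\to\cat T^\heartsuit(I_0,J)\to\cat T^\heartsuit(H^{-i}(L(I)),J)\to 0,
\end{equation*}
natural in $J$, i.e.\ a presentation $H^0(\cat J)(L(I_1),-)\to H^0(\cat J)(L(I_0),-)\to H^i(\cat J(L(I),-))\to 0$ by covariant representables. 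This exhibits $H^i(\cat J(A,-))$ as a finitely presented \emph{left} $H^0(\cat J)$-module, equivalently $H^i(\opp{\cat J}(-,A))$ as a finitely presented right $H^0(\opp{\cat J})$-module, which is the condition needed to make $\opp{\cat J}$ right hlc in the sense of Definition~\ref{def:hlc} and to feed $\opp{\cat J}$ into Theorem~\ref{thm:construction}. Note that the bullet as printed asks for $H^i(\cat J(-,A))$ as a \emph{right} $H^0(\cat J)$-module; that is the opposite variance and is not what the dual of Lemma~\ref{lemma:dgproj_hlc_karoubian} produces. Once the variance is corrected, your master computation plus enough injectives in the heart gives the proof directly.
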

The following theorem  provides a method for constructing triangulated categories with a t-structure and with enough - prescribed - derived injectives.
\begin{thm}[Dual version of Theorem  \ref{thm:construction}: ``construction''] 
\label{thm:main} Let $\cat J$ be a hlc dg-category such that $H^0(\cat J)$ is Karoubian. 
Then the dg-category $\Tw^+(\cat J) = \Tw^-(\opp{\cat J})^{\op}$ of bounded below twisted complexes over $\cat J$ (see \S\ref{sec:twcom})
has a non-degenerate t-structure whose heart is the category $\Modfp{H^0(\cat J)^\op}^{\op}$, has enough derived
injectives, and the dg-category of derived injectives is the closure of $\cat J \hookrightarrow \Tw^+(\cat J)$ under isomorphisms in $H^0(\Tw^+(\cat J))$.

Moreover, if $H^0(\cat J)$ is closed under countable products, then the t-structure on $\Tw^+(\cat J)$ is closed under countable products, that is, the aisles $\Tw^+(\cat J)_{\geq M}$ are closed under countable products.
\end{thm}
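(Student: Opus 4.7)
The statement is phrased as the dual of a projective-side construction theorem, so the plan is to prove the projective version and dualise. Concretely, with $\cat J' := \opp{\cat J}$, I aim to show that $\Tw^-(\cat J')$ carries a non-degenerate t-structure with heart $\Modfp{H^0(\cat J')}$, enough derived projectives given by the iso-closure of $\cat J' \hookrightarrow \Tw^-(\cat J')$, and aisles closed under countable coproducts whenever $H^0(\cat J')$ admits them; applying the opposite functor then yields the theorem as stated.

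I would build the t-structure via the stupid filtration of twisted complexes. Define $\Tw^-(\cat J')_{\leq 0}$ as the closure in $H^0(\Tw^-(\cat J'))$ under isomorphism of those twisted complexes $(E, \delta)$ whose underlying graded object $E = \bigoplus_i E^i[-i]$ is supported in degrees $i \leq 0$, and $\Tw^-(\cat J')_{\geq 1}$ as its right orthogonal. For any bounded-above twisted complex, the graded decomposition $E = \sigma_{\leq 0}(E) \oplus \sigma_{\geq 1}(E)$ acquires a twisted differential from $\delta$, the off-diagonal (strictly filtration-raising) part of $\delta$ becoming the connecting morphism; this produces a distinguished triangle $\sigma_{\leq 0}(E) \to (E, \delta) \to \sigma_{\geq 1}(E) \to$ realising the truncation. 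The technical heart is verifying orthogonality $\Hom_{H^0(\Tw^-(\cat J'))}(\sigma_{\leq 0}(E), \sigma_{\geq 1}(F)) = 0$, which unfolds via the internal-Hom complex of twisted complexes to a computation killed by the cohomology-concentration of $\cat J'$ in non-positive degrees (the dual of the hlc hypothesis).

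For the heart and derived projectives, a degree-$0$ twisted complex is just an object $A \in \cat J'$, and the restricted Yoneda sends it to the representable $H^0(\cat J')(-, H^0 A)$. Two-term twisted complexes in degrees $-1, 0$ then produce cokernels of maps between such representables; by the hlc hypothesis these realise precisely the finitely presented modules, with Karoubianness of $H^0(\cat J')$ ensuring no passage to an idempotent completion is needed. This gives the heart identification. That each $A \in \cat J'$ is derived projective reduces to showing $\Hom_{H^0(\Tw^-(\cat J'))}(A, X)$ is computed by the degree-$0$ cohomology of a hom-complex concentrated there, once again by the non-positive cohomological concentration of $\cat J'$. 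Enough derived projectives then follows from the heart description applied to a chosen fp presentation, and the characterization of the derived-projective subcategory as the iso-closure of $\cat J'$ follows because a derived projective represents a functor pinned down by its restriction to $\cat J'$, where representability is already witnessed by an object of $\cat J'$.

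For the coproduct statement, non-positive cohomological concentration of $\cat J'$ lets countable coproducts in $H^0(\cat J')$ be lifted (up to $H^0$-isomorphism) to $\cat J'$ itself; a countable coproduct in $\Tw^-(\cat J')$ of complexes with degree bound $M$ is then formed degree-wise and stays within $\Tw^-(\cat J')_{\leq M}$. The main obstacle is the t-structure construction: the orthogonality axiom must be proved by hand for twisted complexes, and one has to combine the bounded-above support of the complexes with the non-positive cohomological concentration of $\cat J'$ to kill every off-diagonal contribution from the twist $\delta$, ensuring convergence of the underlying Hom spectral sequence. Once this orthogonality is in place, the remainder of the theorem assembles from the Yoneda-type identification of the heart and routine bookkeeping with the bounded-above filtration.
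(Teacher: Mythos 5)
Your plan to reduce to the projective-side statement for $\Tw^-(\cat J')$ and then dualize is exactly the paper's strategy, and the peripheral claims (heart via fp presentations, derived projectives as the iso-closure of $\cat J'$, closure under coproducts) are on the right track. The genuine gap is in the central step: the t-structure cannot be realized by the stupid truncation of twisted complexes.

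You take $\Tw^-(\cat J')_{\leq 0}$ to be the iso-closure of twisted complexes supported in indices $\leq 0$ (this is in fact the correct aisle), define the coaisle as the right orthogonal, and claim the truncation triangle is given by $\sigma_{\leq 0}(E) \to E \to \sigma_{\geq 1}(E)$, with the required orthogonality $\Hom_{H^0(\Tw^-(\cat J'))}(\sigma_{\leq 0}(E),\sigma_{\geq 1}(F))=0$ ``killed by cohomology concentration.'' That orthogonality is false in general. Take $E=(A,0)$ a single object in index $0$ and $F=(B[-1],0)$ a single object in index $1$; the one-sided hom complex in $\Tw^-(\cat J')$ is $\cat J'(A,B)[-1]$, so its $H^0$ is $H^{-1}(\cat J'(A,B))$, which is nonzero for a generic hlc $\cat J'$ (the hlc condition only forces $H^i=0$ for $i>0$). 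Equivalently, $H^0(\Tot(\sigma_{\geq 1}F))$ need not vanish, so $\sigma_{\geq 1}F$ is not in the right orthogonal of your aisle, and the proposed triangle has the wrong cofibre. There is also a smaller orientation issue: since the twist $q$ strictly raises the index, $\sigma_{\geq 1}E$ is a subobject and $\sigma_{\leq 0}E$ the quotient, so the naturally available triangle runs $\sigma_{\geq 1}E\to E\to\sigma_{\leq 0}E$, i.e.\ with coaisle candidate on the left and aisle on the right --- again not a truncation triangle.

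The paper avoids this by never treating the stupid truncations as t-structure truncations. It starts from the standard t-structure on $\dercomp{\cat Q}$ (available for any dg-category with cohomology in nonpositive degrees), proves that the homotopically finitely presented modules form a triangulated subcategory stable under the cohomological truncation functors (Lemmas \ref{lemma:hproj_tsubcat_fp_pretr} and \ref{lemma:hfp_tstruct}), and transports the result along the quasi-equivalence $\Tot\colon \Tw^-(\cat Q)\xrightarrow{\approx}\hprojmin{\cat Q}^{\mathrm{hfp}}$ of Theorem \ref{thm:hlc_tw_equiv}. The stupid truncations $\sigma_{\geq n}$ do appear, but only as a resolution device: to exhibit $\Tot(X)$ as a homotopy colimit, to compute cohomology (Lemma \ref{lemma:truncation_cohomology_constant}), and to resolve an arbitrary hfp module by a twisted complex (Proposition \ref{prop:dginj_resolution}). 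A related issue affects your heart identification: a two-term twisted complex in indices $-1,0$ has $H^0$ a cokernel as you say, but also has a generally nonzero $H^{-1}$, so it need not lie in the heart; to exhaust $\Modfp{H^0(\cat J')}$ one must resolve by unbounded twisted complexes. In short, you must replace the stupid $\sigma_{\geq 1}$ by the cohomological $\tau_{\geq 1}$ from $\dercomp{\cat J'}$, which in effect leads you back to the paper's detour through $\hprojmin{\cat J'}^{\mathrm{hfp}}$.
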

If $\cat B$ is a dg-category then we denote by $\hproj{\cat B}$ the category of h-projective (see \eqref{eq:h-projective}) right $\cat B$-modules. Theorem \ref{thm:main} is established
by showing that the 
totalisation dg-functor induces a quasi-equivalence:
\begin{equation*}
\Tot \colon \Tw^+(\cat  J) \to \hprojmin{\cat J^\op}^{\mathrm{hfp},\op}.
\end{equation*}
where, the dg-category $\hprojmin{\cat J^\op}^{\mathrm{hfp}}$ is given by
\begin{equation*}
\hprojmin{\cat J^\op}^{\mathrm{hfp}} = \{M \in \hproj{\cat J^\op} : H^i(M) \in \Modfp{H^0(\cat J^\op)} \ \ \forall\, i, H^i(M)=0\text{ for } i\gg 0\}.
\end{equation*}
The following theorem explains how a pretriangulated dg-category may be reconstructed from its category of derived injectives.
\begin{thm}[Dual version of Theorem \ref{thm:comparison}:
  ``reconstruction''] \label{thm:comparison_inj} Let $\cat A$ be
  a  pretriangulated dg-category with a non-degenerate left bounded t-structure, with
enough derived injectives, and which is closed under countable products (namely, the aisles $H^0(\cat A)_{\geq M}$ are closed under countable products). Let $\cat J$ be the
  dg-category of derived injectives. The restricted Yoneda-functor
\begin{equation*}
{\cat A}^{\op} \to \hprojmin{\opp{\cat J}}:A\mapsto {\cat A}(A,-)
\end{equation*}
is t-exact and induces a quasi-equivalence between the
dg-categories $\opp{\cat A}$ and $\hprojmin{\opp{\cat
    J}}^{\mathrm{hfp}} \approx \Tw^-(\opp{\cat J})$. In particular we obtain
a quasi-equivalence between $\cat A$ and $\Tw^+({\cat J})$.
\end{thm}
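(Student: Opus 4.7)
My plan is to establish the reconstruction theorem by analysing the restricted Yoneda dg-functor
\[
Y \colon \cat A^{\op} \longrightarrow \hprojmin{\opp{\cat J}}, \qquad A \longmapsto \cat A(A,-)|_{\cat J},
\]
and verifying that (a) $Y$ lands in $\hprojmin{\opp{\cat J}}^{\mathrm{hfp}}$, (b) $Y$ is t-exact, (c) $Y$ is quasi-fully-faithful, and (d) $Y$ is essentially surjective up to isomorphism in $H^0$. Combined with the quasi-equivalence $\hprojmin{\opp{\cat J}}^{\mathrm{hfp}} \approx \Tw^-(\opp{\cat J})$ supplied by Theorem \ref{thm:main}, passing to opposites and using the definition $\Tw^+(\cat J) = \Tw^-(\opp{\cat J})^{\op}$ yields $\cat A \approx \Tw^+(\cat J)$.

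For (a) and (b), the defining property $\cat A(-,I) \cong \cat A^{\heartsuit}(H^0(-),I)$ of a derived injective $I$ gives the identification $H^i(Y(A))(I) \cong \cat A^{\heartsuit}(H^{-i}(A),I)$ on cohomology. Finite presentation over $H^0(\cat J)$ then follows from the abelian reconstruction result of \cite{lowen-vandenbergh-deformations-abelian} applied to $\cat A^\heartsuit$, left-boundedness of the t-structure provides the vanishing of $H^i(Y(A))$ for $i \ll 0$ required by the $\mathrm{hfp}$ condition, and h-projectivity is immediate for the representable modules $Y(I)$ with $I \in \cat J$, and propagates to every $Y(A)$ via the resolution of the next step. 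The same cohomology identification makes t-exactness transparent, since injectives in $\cat A^\heartsuit$ detect vanishing.

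The crux is (c): to show that the natural map $\cat A(B,A) \to \RHom_{\opp{\cat J}}(Y(A),Y(B))$ is a quasi-isomorphism. When $A \in \cat J$ this is dg-Yoneda on the representable module $Y(A)$. For a general left-bounded $A$, I will construct a bounded-below twisted complex $J^\bullet$ of derived injectives with $A \simeq \Tot(J^\bullet)$ in $\cat A$, by a Postnikov-type argument: inject the lowest nontrivial heart-cohomology $H^N(A)$ into a derived injective via $L(-)$, take the cone in the pretriangulated $\cat A$ to peel off one cohomological degree, and iterate; closure of $\cat A$ under countable products is exactly what lets the successive cones be assembled into $\Tot(J^\bullet)$. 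Since $Y$ commutes with cones and with countable products, both sides of the asserted quasi-isomorphism commute with this totalisation, reducing the statement to the representable case. Conversely, for (d), given $M \in \hprojmin{\opp{\cat J}}^{\mathrm{hfp}}$, Theorem \ref{thm:main} realises $M \simeq \Tot(T)$ for a bounded-below twisted complex $T$; assembling the same $T$ inside $\cat A$ via $\cat J \hookrightarrow \cat A$ produces an object $A_T$ with $Y(A_T) \simeq M$ by the commutation of $Y$ with totalisations.

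The main technical obstacle is the Postnikov construction in step (c): coherently lifting the abelian injective resolutions of the successive heart-cohomologies $H^n(A)$ to a genuine twisted complex inside the dg-enhancement $\cat A$, and verifying that the resulting totalisation both represents $A$ and is transported by $Y$ to the corresponding module-side totalisation. Once this is in hand, (c) and (d) combine with (a) and (b) to make $Y$ the asserted t-exact quasi-equivalence, completing the proof.
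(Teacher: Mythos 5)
Your overall strategy is the same as the paper's (transported to the injective side): identify $H^i(Y(A))$ via the defining isomorphism $\cat A(-,L(I))\cong\cat A^{\heartsuit}(H^0(-),I)$ to get hfp and t-exactness, build a Postnikov-style resolution of $A$ by iterated cones of derived injectives, reduce full faithfulness to the representable case, and obtain essential surjectivity by running the construction theorem backwards inside $\cat A$. This mirrors the paper's Proposition \ref{prop:dginj_resolution}, Corollary \ref{coroll:res_tstruct}, and Lemmas \ref{lemma:prep1}--\ref{lemma:prep2}.

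There is, however, a genuine gap in the reasoning given for the crucial reduction step. You justify the commutation of $Y=\cat A(-,-)|_{\cat J}$ with the totalisation by asserting that ``$Y$ commutes with cones and with countable products.'' The functor $Y$ is a dg-functor so it preserves cones, but the homotopy limit $\holim_p B_p$ realising $A$ lives in the \emph{first} variable of the $\Hom$, and there is no natural isomorphism $\cat A(\prod_i A_i,\,J)\cong\prod_i\cat A(A_i,J)$; the universal property of the product is on the wrong side. In particular, $Y(\holim_p B_p)$ is \emph{not} formally the corresponding module-side homotopy limit. What actually saves the argument is that the tower is eventually constant in each cohomological degree (it is a Postnikov-type resolution), so that $H^i(Y(\holim_p B_p))\to H^i(\text{module-side holim})$ can be checked on a fixed $p$ with $p$ large enough; this is exactly the content of Lemma \ref{lemma:truncation_cohomology_constant} and the proof of Lemma \ref{lemma:prep2} in the paper, combined with Lemma \ref{lemma:tcohom_holim}. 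You do flag the ``transport by $Y$'' as a technical obstacle in your last paragraph, which is the right instinct, but the specific mechanism you propose for closing it (formal commutation with countable products) does not work; replace it by the degree-wise stabilization argument. The same caveat applies to step (d), where you again invoke ``commutation of $Y$ with totalisations'' to assemble $A_T$ and identify $Y(A_T)$ with $M$.
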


The above ``construction'' and ``reconstruction'' theorems can be enhanced to a functorial correspondence. We denote by $\Hqe$ the homotopy category of (small) dg-categories, namely the localization of the category of (small) dg-categories along quasi-equivalences. We further define categories as follows (dual versions of Definition \ref{defin:Hqe_DGProj} and Definition \ref{defin:Hqe_tmin}):
\begin{itemize}
\item The category $\Hqe^{\mathrm{DGInj}}$ has objects the dg-categories $\cat J$ which are (left) hlc and such that $H^0(\cat J)$ is Karoubian; a morphism $F \colon \cat J \to \cat J'$ in $\Hqe^{\mathrm{DGInj}}$ is a morphism in $\Hqe$ such that for all $J' \in \cat J'$, the $H^0(\opp{\cat J})$-module $H^0(\cat J')(J',F(-))$ is finitely presented. We also denote by $\Hqe^{\mathrm{DGInj}}_{\Pi}$ the full subcategory of $\Hqe^{\mathrm{DGInj}}$ of dg-categories $\cat J$ such that $H^0(\cat J)$ is closed under countable products.
\item The category $\Hqe^{\mathrm{t+}}$ has objects the dg-categories $\cat A$ endowed with a non-degenerate left bounded t-structure with enough derived injectives; a morphism in $\Hqe^{\mathrm{t+}}$ is a morphism in $\Hqe$ which has a t-exact left adjoint. We also denote by $\Hqe^{\mathrm{t+}}_{\Pi}$ the full subcategory of $\Hqe^{\mathrm{t+}}$ of dg-categories $\cat A$ with a t-structure which is closed under countable products (i.e. the aisles $H^0(\cat A)_{\geq M}$ are closed under countable products).
\end{itemize}
Then, we have the following theorem.
\begin{thm}[Dual version of Theorem \ref{thm:correspondence}: ``correspondence'']
The mapping $\cat J \mapsto \Tw^+(\cat J)$ for $\cat J \in \Hqe^\mathrm{DGInj}$ gives rise to a fully faithful functor
\begin{equation*}
\Tw^+ \colon \Hqe^\mathrm{DGInj} \to \Hqe^{\mathrm{t+}},
\end{equation*}
which induces an equivalence of categories
\begin{equation*}
\Tw^+ \colon \Hqe^\mathrm{DGInj}_{\Pi} \to \Hqe^{\mathrm{t+}}_{\Pi}.
\end{equation*}
The inverse is given by taking derived injectives:
\begin{equation*}
\DGInj \colon \Hqe^{\mathrm{t+}}_{\Pi} \to \Hqe^\mathrm{DGInj}_{\Pi}.
\end{equation*}
\end{thm}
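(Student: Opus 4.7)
The plan is to assemble this correspondence from Theorems~\ref{thm:main} and~\ref{thm:comparison_inj} together with a careful treatment of morphisms. Essential surjectivity on $\Hqe^{\mathrm{t+}}_\Pi$ and the identification of the inverse on objects follow directly from those theorems: reconstruction gives $\cat A \qe \Tw^+(\DGInj(\cat A))$ for any $\cat A \in \Hqe^{\mathrm{t+}}_\Pi$, while construction gives $\DGInj(\Tw^+(\cat J)) \qe \cat J$ for $\cat J \in \Hqe^{\mathrm{DGInj}}_\Pi$ (the Karoubian assumption on $H^0(\cat J)$ makes this literal, and closure under countable products transfers between the two sides via $H^0$). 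The real work is thus to define $\Tw^+$ on morphisms and to prove full faithfulness.

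For the former, I would use the identification $\Tw^+(\cat J)^{\op} \qe \hprojmin{\opp{\cat J}}^{\mathrm{hfp}}$ from Theorem~\ref{thm:main}. A morphism $F \colon \cat J \to \cat J'$ in $\Hqe^{\mathrm{DGInj}}$ induces $\Tw^+(F)$ by applying $F$ componentwise to twisted complexes; on the module side this is the opposite of the derived left Kan extension $\lder F_!$, whose left adjoint is (the opposite of) the restriction $F^*$. The finiteness hypothesis on $F$ is exactly the statement that $F^*$ sends representables into $\mathrm{hfp}$, which extends to the whole hfp subcategory via the bounded-above twisted-complex resolution. Hence the left adjoint to $\Tw^+(F)$ is well-defined and lies in the correct subcategory, and it is t-exact because restriction of modules is strictly exact; thus $\Tw^+$ yields a functor $\Hqe^{\mathrm{DGInj}} \to \Hqe^{\mathrm{t+}}$.

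For full faithfulness I would construct the inverse via restriction to derived injectives. If $G \colon \Tw^+(\cat J) \to \Tw^+(\cat J')$ in $\Hqe^{\mathrm{t+}}$ has t-exact left adjoint $L$, then $L_{\heartsuit}$ is exact, so its right adjoint preserves injectives in the heart; lifting this via the universal property of derived injectives shows that $G$ carries $\cat J$ into $\cat J'$ (up to Karoubi closure, which is absorbed by the hypothesis that $H^0(\cat J')$ is Karoubian). The resulting restriction $\rho(G) \colon \cat J \to \cat J'$ satisfies the finiteness condition because that condition translates, under the equivalence $\cat A^{\heartsuit} \cong \opp{\Modfp{H^0(\opp{\cat J})}}$, to exactness of $L_{\heartsuit}$. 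The hard part will be the verification that $\Tw^+(\rho(G)) \simeq G$ in $\Hqe^{\mathrm{t+}}$: both have t-exact left adjoints and agree on $\cat J$, and one must show that this datum already determines the morphism up to quasi-isomorphism — essentially a density argument exploiting that $\cat J$ generates $\Tw^+(\cat J)$ via bounded-above twisted complexes and that totalisation is natural in the base dg-category.
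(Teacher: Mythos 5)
Your proposal follows the same overall architecture as the paper's proof: essential surjectivity from the construction and reconstruction theorems, the inverse on morphisms defined by restriction to the derived injectives (using the dual of Proposition~\ref{prop:restr_dgproj_preserve} to know the restriction lands in $\cat J'$), and a round-trip verification. Where you are under-specified is precisely at the step you yourself flag as ``the hard part'': showing $\Tw^+(\rho(G)) \simeq G$. Invoking ``a density argument'' because the two morphisms ``agree on $\cat J$'' is not by itself a proof. A morphism in $\Hqe$ is (an isomorphism class of) a bimodule, and a bimodule $T\in\compdg{\Tw^+(\cat J')\otimes\Tw^+(\cat J)^\op}$ is not \emph{a priori} determined by its restriction in the domain variable to $\cat J$ — this is precisely what needs to be argued, not assumed. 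The paper handles it by passing to the \emph{left adjoints} (right adjoints in the projective version) and invoking Lemma~\ref{lemma:qfun_hproj_qis}: a quasi-functor into $\hprojmin{\cat Q}^\mathrm{hfp}$ is determined up to isomorphism by its restriction to $\cat Q$ in the \emph{codomain} variable, because those representables Yoneda-generate. Then Lemma~\ref{lemma:qfun_hproj_adj} translates the adjunction into a concrete isomorphism at the level of representables, and a direct five-line computation (Yoneda twice, plus the adjunction lemma) finishes it. Your phrase ``totalisation is natural in the base dg-category'' gestures at diagram~\eqref{eq:Tw-_hproj-_commute}, which is used to identify $\Tw^+(F)$ with an $\Ind$-type functor, but does not substitute for the adjoint-bimodule manipulations. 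So: right outline, same structural decomposition as the paper, but the crucial step needs the specific technology of Lemmas~\ref{lemma:qfun_hproj_qis}--\ref{lemma:qfun_hproj_adj} to become an actual argument rather than a plausibility claim.
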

\section{Preliminaries}
We fix once and for all a ground field $\basering k$. Every category will be assumed to be $\basering k$-linear. Moreover, we shall work within a fixed universe $\mathcal U$, and every category $\cat A, \cat B, \cat Q,\ldots$ we shall fix will be $\mathcal U$-small.
\subsection{Dg-categories} \label{subsec:dgfacts}
We assume the reader to be acquainted with triangulated categories and dg-categories, see for example \cite{keller-dgcat} or \cite{toen-dgcat}. We recollect here some notation and terminology we shall need throughout the paper.

\subsubsection{} The (locally $\mathcal U$-small) dg-category of $\mathcal U$-small cochain complexes over $\basering k$ is denoted by $\compdg{\basering k}$. 

For any pair of dg-categories $\cat A$ and $\cat B$, we have the ($\mathcal U$-small) dg-category of dg-functors $\Fundg(\cat A, \cat B)$, which is the internal hom in the symmetric monoidal category $\kat{dgCat}$ of ($\mathcal U$-small) dg-categories, namely it satisfies the natural isomorphism: 
\begin{equation*}
\Fundg(\cat A \otimes \cat B,\cat C) \cong \Fundg(\cat A, \Fundg(\cat B,\cat C)),
\end{equation*}
for all $\cat A, \cat B, \cat C \in \kat{dgCat}$. The dg-category $\cat A \otimes \cat B$ is the tensor product of $\cat A$ and $\cat B$, and it is $\mathcal U$-small. 

The dg-category of \emph{(right) $\cat A$-dg-modules} is defined by 
\begin{equation*}
\compdg{\cat A} = \Fundg(\opp{\cat A}, \compdg{\basering k}),
\end{equation*}
whereas \emph{left $\cat A$-dg-modules} are by definition $\opp{\cat A}$-dg-modules. Moreover, we set:
\begin{align*}
\comp{\cat A} &= Z^0(\compdg{\cat A}), \\
\hocomp{\cat A} &= H^0(\compdg{\cat A}).
\end{align*}

The \emph{derived category} $\dercomp{\cat A}$ of $\cat A$ is the localization of $\comp{\cat A}$ (or equivalently $\hocomp{\cat A}$) along quasi-isomorphisms. We remark that $\compdg{\cat A}, \comp{\cat A}, \hocomp{\cat A}, \dercomp{\cat A}$ are all $\mathcal U$-locally small.

 Normally, we shall use the symbol ``$\approx$'' meaning ``isomorphic in the homotopy category $H^0(\cat B)$'' of a suitable dg-category $\cat B$. In particular, for two given $M,N \in \compdg{\cat A}$, we write $M \approx N$ whenever $M \cong N$ in $\hocomp{\cat A}$ and sometimes $M \qis N$ whenever $M \cong N$ in $\dercomp{\cat A}$.
\subsubsection{} A dg-functor $F \colon \cat A \to \cat B$ between dg-categories is a \emph{quasi-equivalence} if it induces quasi-isomorphisms between the hom-complexes, and $H^0(F) \colon H^0(\cat A) \to H^0(\cat B)$ is essentially surjective. The category $\kat{dgCat}$ of $\mathcal U$-small dg-categories has a model structure whose weak equivalences are the quasi-equivalences (see \cite{tabuada-dgcat}). We denote by $\Hqe$ the homotopy category of dg-categories, namely the localization of $\kat{dgCat}$ along quasi-equivalences. Two dg-categories are \emph{quasi-equivalent} if they are isomorphic in $\Hqe$. We also say that a dg-category $\cat A$ is \emph{essentially $\mathcal U$-small} if it is quasi-equivalent to a $\mathcal U$-small dg-category.
\subsubsection{} A dg-module $P \in \compdg{\cat A}$ is \emph{h-projective} if
\begin{equation}
\label{eq:h-projective}
\hocomp{\cat A}(P,X) = 0,
\end{equation}
for all acyclic $\cat A$-dg-modules $X$; equivalently, if the localization functor $\hocomp{\cat A} \to \dercomp{\cat A}$ induces an isomorphism
\begin{equation} \label{eq:hproj_loc_iso}
\hocomp{\cat A}(P,X) \xrightarrow{\sim} \dercomp{\cat A}(P,X),
\end{equation}
for all $X \in \compdg{\cat A}$. The full dg-subcategory of $\compdg{\cat A}$ of h-projective dg-modules is denoted by $\hproj{\cat A}$. The restriction of the localization functor
\begin{equation*}
H^0(\hproj{\cat A}) \to \dercomp{\cat A}
\end{equation*}
is an equivalence, so $\hproj{\cat A}$ is a dg-enhancement of $\dercomp{\cat A}$.

Notice that for any $A \in \cat A$, the representable dg-module $\cat A(-,A)$ is h-projective by the dg-Yoneda lemma. So, the Yoneda embedding gives rise to a dg-functor
\begin{equation}
h_{\cat A} \colon \cat A \hookrightarrow \hproj{\cat A},
\end{equation}
which in turn induces the so-called \emph{derived Yoneda embedding}:
\begin{equation} \label{eq:derivedYoneda}
H^0(\cat A) \hookrightarrow \dercomp{\cat A}.
\end{equation}
\subsubsection{} Denote by $\pretr{\cat A}$ the smallest full dg-subcategory of $\compdg{\cat A}$ which contains (the Yoneda image of) $\cat A$ and is closed under taking shifts of dg-modules and mapping cones of closed degree $0$ morphisms. We say that $\cat A$ is \emph{strongly pretriangulated} (respectively, \emph{pretriangulated})  if the Yoneda embedding
\begin{equation*}
\cat A \hookrightarrow \pretr{\cat A}
\end{equation*}
is a dg-equivalence (respectively, a quasi-equivalence). The dg-category $\pretr{\cat A}$ is itself strongly pretriangulated and it is called the \emph{pretriangulated hull} of $\cat A$. We remark that $\pretr{\cat A}$ is essentially $\mathcal U$-small: in fact, it is equivalent to the $\mathcal U$-small dg-category of bounded one-sided twisted complexes on $\cat A$ (see \cite[Definition 4.6]{bondal-larsen-lunts}).
$\cat A$ is a strongly pretriangulated dg-category if and only if it is closed under \emph{pretriangles}, which are sequences of the form
\begin{equation}
\begin{gathered}
\xymatrix{
A \ar[r]^f & B \ar@<-.5ex>@{<-}[r]_-{s} \ar@<.5ex>[r]^-{j}  & \cone(f) \ar@<-.5ex>@{<-}[r]_-{i} \ar@<.5ex>[r]^-{p} & A[1],
}
\end{gathered}
\end{equation}
where $f \colon A \to B$ is a closed degree $0$ morphism in $\cat A$. The object $\cone(f) \in \cat A$ is the \emph{cone} of $f$, and $A[1] \in \cat A$ is the \emph{shift} of $A$. They are objects representing respectively the usual mapping cone of $f_* \colon \cat A(-,A) \to \cat A(-,B)$ and the shift of $\cat A(-,A)$ in $\compdg{\cat A}$. The shifts $A[m]$ of $A$ come with closed invertible degree $n-m$ maps (``shifted identity morphisms'')
\begin{equation}
\shiftid{A}{n}{m} \colon A[n] \to A[m]
\end{equation}
which satisfy $\shiftid{m}{n}{A} \circ \shiftid{n}{m}{A} = \shiftid{A}{n}{n} = 1_{A[n]}$  The maps $i,j,p,s$ characterise the cone (and the pretriangle) as follows: they are of degree $0$ and they describe $\cone(f)$ as the biproduct $A[1] \oplus B$ in the underlying graded category of $\cat A$. Moreover, they satisfy:
\begin{equation*}
dj=0, \quad dp=0, \quad di=j  f  \shiftid{A}{1}{0}, \quad ds= -f \shiftid{A}{1}{0} p.
\end{equation*}
This allows us use matrix notation as follows when describing maps to and from a cone:
\begin{align*}
u & = (u_1, u_2) \colon \cone(f) \to D, \\
v &= \begin{pmatrix} v_1 \\ v_2 \end{pmatrix} \colon D \to \cone(f).
\end{align*}
We can also write down explicit formulas for the differentials:
\begin{equation} \label{eq:differentials_cone}
du = (d u_1 - (-1)^{|u|} u_2 f \shiftid{A}{1}{0}, d u_2), \quad dv = \begin{pmatrix} dv_1 \\ d v_2 + f \shiftid{A}{1}{0}v_1 \end{pmatrix},
\end{equation}
where $|u|$ is the degree of $u$.
 \subsection{Quasi-functors}
 The morphisms in the localization $\Hqe$ of $\kat{dgCat}$ along quasi-equivalences can be described as isomorphism classes of \emph{quasi-functors} (see \cite{toen-morita} and \cite{canonaco-stellari-internalhoms}). Roughly speaking, quasi-functors are ``homotopy coherent dg-functors'', and they are defined as particular dg-bimodules.

\subsubsection{} Let $\cat A$ and $\cat B$ be dg-categories. An \emph{$\cat A$-$\cat B$-dg-bimodule} is a right $\cat B \otimes \opp{\cat A}$-dg-module, namely a dg-functor
\begin{equation*}
T \colon \opp{\cat B} \otimes \cat A \to \compdg{\cat k}.
\end{equation*}
We shall sometimes use the ``Einstein notation'', writing
\begin{equation*}
T(B,A)=T^B_A,
\end{equation*}
putting the contravariant variables above and the covariant ones below. We shall also write:
\begin{align*}
T_A &= T(-,A) \in \compdg{\cat B}, \\
T^B &= T(B,-) \in \compdg{\opp{\cat A}}.
\end{align*}
For any dg-category $\cat A$ we have the \emph{diagonal bimodule} $h=h_{\cat A} \in \compdg{\cat A \otimes \opp{\cat A}}$, defined by
\begin{equation*}
h^A_B = \cat A(A,B).
\end{equation*}
This notation is consistent with the chosen name $h_{\cat A}$ for the Yoneda embedding of $\cat A$: in fact, the Yoneda embedding is precisely the functor which maps
\[
A \mapsto h_{\cat A}(A) = \cat A(-,A) = h_A, \quad A \in \cat A.
\]
\subsubsection{} A \emph{quasi-functor} $T \colon \cat A \to \cat B$ between two dg-categories is a dg-bimodule $T \in \compdg{\cat B \otimes \opp{\cat A}}$ with the property of being \emph{right quasi-representable}, namely: for all $A \in \cat A$, there exists an object $F(A) \in \cat B$ such that $T_A \cong h_{F(A)}$ in $\dercomp{\cat B}$. From this, we see that a quasi-functor $T$ induces a genuine functor $H^0(T) \colon H^0(\cat A) \to H^0(\cat B)$. Two quasi-functors $T,S$ are \emph{isomorphic} if they are isomorphic in the derived category $\dercomp{\cat B \otimes \opp{\cat A}}$. As already said, isomorphism classes of quasi-functors can be identified with the morphisms in the homotopy category $\Hqe$ of dg-categories. From the general model-categorical machinery, we also know that a morphism $\cat A \to \cat B$ in $\Hqe$ can be represented by a dg-functor whenever the domain dg-category $\cat A$ is cofibrant; moreover, any dg-category $\cat A$ has a \emph{cofibrant replacement} $Q(\cat A)$ which comes with a quasi-equivalence $Q(\cat A) \to \cat A$.
\subsubsection{} There is a notion of \emph{adjunction of quasi-functors}, investigated in \cite{genovese-adjunctions}. Given two quasi-functors $T,S \colon \cat A \leftrightarrows \cat B$, we see that $T \dashv S$ if and only if there is an isomorphism in $\dercomp{\basering k}$
\begin{equation} \label{eq:qfunct_adjoints}
\compdg{\cat B}(Q(T)_A,h_B) \qis \compdg{\cat A}(h_A,S_B) \cong S^A_B,
\end{equation}
``natural'' in $A$ and $B$, in the precise sense that the bimodules
\begin{align*}
(A,B) & \mapsto \compdg{\cat B}(Q(T)_A,h_B), \\
(A,B) & \mapsto \compdg{\cat A}(h_A,S_B) \cong S^A_B
\end{align*}
are isomorphic in $\dercomp{\cat A \otimes \opp{\cat B}}$. Here $Q(T)$ is an h-projective resolution of $T$ as an $\cat A$-$\cat B$-dg-bimodule. Recall from \cite[Lemma 3.4]{canonaco-stellari-internalhoms} that in particular $Q(T)_A \in \hproj{\cat B}$ for all $A \in \cat A$.  It is worth mentioning that in case $T$ is such that $T_A \in \hproj{\cat B}$ for all $A \in \cat A$, there is an isomorphism in $\dercomp{\basering k}$
\begin{equation*}
\compdg{\cat B}(Q(T)_A,h_B) \qis \compdg{\cat B}(T_A,h_B),
\end{equation*}
``natural'' in $A$ and $B$, so the adjunction $T \dashv S$ is given by an isomorphism in $\dercomp{\basering k}$
\begin{equation*}
\compdg{\cat B}(T_A,h_B) \qis \compdg{\cat A}(h_A,S_B) \cong S^A_B.
\end{equation*}

\section{Homotopy colimits and t-structures}
\subsection{Homotopy colimits in triangulated categories} \label{subsec:hocolim_trcat} We start by recalling the notion of \emph{homotopy colimit} of a sequence in a fixed $\basering k$-linear triangulated category $\cat T$. We shall tacitly assume that any coproduct (direct sum) we write exists in $\cat T$.
\begin{defin} \label{def:holim_tr}
Let $(A_n \xrightarrow{j_{n,n+1}} A_{n+1})_{n \geq 0}$ be a sequence of maps in $\cat T$. The \emph{homotopy colimit} $\hocolim_n A_n$ is defined as the object (uniquely determined up to isomorphism) sitting in the following distinguished triangle:
\begin{equation}
\bigoplus_n A_n \xrightarrow{1-\mu} \bigoplus_n A_n \to \hocolim_n A_n,
\end{equation}
where $\mu$ is the map induced by
\begin{equation*}
A_n \xrightarrow{j_{n,n+1}} A_{n+1} \xrightarrow{\incl_{n+1}} \bigoplus_n A_n.
\end{equation*}

A \emph{homotopy limit} is defined as a homotopy colimit in $\opp{\cat T}$. Explicity, assume that every direct product we shall write exists in $\cat T$, and let $(A_{n+1} \xrightarrow{\pi_{n+1,n}} A_n)_{n \geq 0}$ be a sequence of maps in $\cat T$. Then, the homotopy limit $\holim_n A_n$ is defined as the object (uniquely determined up to isomorphism) sitting in the following distinguished triangle:
\begin{equation}
\holim_n A_n \to \prod_n A_n \xrightarrow{1-\nu} \prod_n A_n,
\end{equation}
where $\nu$ is the map induced by
\begin{equation*}
\prod_n A_n \xrightarrow{\pr_{n+1}} A_{n+1} \xrightarrow{\pi_{n+1,n}} A_n.
\end{equation*}
\end{defin}
In the following discussion we shall concentrate on homotopy colimits; changing $\cat T$ with $\opp{\cat T}$ gives the formal analogous facts about homotopy limits.

Being defined as $\cone(1-\mu)$ in $\cat T$, the homotopy limit is not functorial. Still, it satisfies a weak universal property involving existence but not unicity. First, there are natural maps $j_n \colon A_n \to \hocolim A_n$ such that the diagram
\begin{equation} \label{eq:hocolim_tr_inclusions}
\begin{gathered}
\xymatrix{
A_n \ar[r]^-{j_{n,n+1}} \ar[dr]_{j_n} & A_{n+1} \ar[d]^{j_{n+1}} \\ &  \hocolim_n A_n
}
\end{gathered}
\end{equation}
is commutative: these maps are just the components of the map $\oplus_n A_n \xrightarrow{\oplus j_n} \hocolim_n A_n$, and the above commutativity is equivalent to saying that the composition
\begin{equation*}
\bigoplus_n A_n \xrightarrow{1-\mu} \bigoplus_n A_n \xrightarrow{\oplus j_n} \hocolim_n A_n
\end{equation*}
is zero. Moreover, for any family of maps $f_n \colon A_n \to X$ such that the diagram
\begin{equation*}
\begin{gathered}
\xymatrix{
A_n \ar[r]^-{j_{n,n+1}} \ar[dr]_{f_n} & A_{n+1} \ar[d]^{f_{n+1}} \\ &  X
}
\end{gathered}
\end{equation*}
is commutative (that is, the composition
\begin{equation*}
 \bigoplus_n A_n \xrightarrow{1-\mu} \bigoplus_n A_n \xrightarrow{\oplus f_n} X 
\end{equation*}
is zero), there is a map $f \colon \hocolim_n A_n \to X$ such that $f_n = f \circ j_n $ for all $n$:
\begin{equation}
\begin{gathered}
\xymatrix{
A_n \ar[r]^-{f_n} \ar[d]_{j_n} & X \\ \hocolim_n A_n. \ar[ur]_{f}
}
\end{gathered}
\end{equation}
Such $f$ is obtained non-uniquely by observing that in the exact sequence
\begin{equation*}
\cat T(\hocolim_n A_n, X) \xrightarrow{{(\oplus j_n)}^*} \cat T(\bigoplus_n A_n, X) \xrightarrow{(1-\mu)^*} \cat T(\bigoplus_n A_n, X)
\end{equation*}
the element $(f_n) \in \cat T(\bigoplus_n A_n, X)$ is in $\ker (1-\nu)^* = \Img((\oplus j_n)^*)$.
\subsection{Homotopy colimits in dg-categories} Now, let $\cat A$ be a dg-category. First, we discuss \emph{strictly dg-functorial homotopy (co)limits} of dg-modules.
\begin{defin} \label{def:holim_dgmod}
Let $(M_{n+1} \xrightarrow{p_{n+1,n}} M_n)_{n \geq 0}$ be a sequence of closed degree $0$ maps in $\compdg{\cat A}$. Its \emph{(strictly dg-functorial) homotopy limit} is defined as the shifted mapping cone $\holim_n M_i = \cone(1-\nu)[-1]$, sitting in the following pretriangle:
\begin{equation*}
\holim_n M_n \to \prod_n M_n \xrightarrow{1-\nu} \prod_n M_n,
\end{equation*}
where $\nu$ is the (closed, degree $0$) map induced by
\begin{equation*}
\prod_n M_n \xrightarrow{\pr_{n+1}} M_{n+1} \xrightarrow{p_{n+1,n}} M_n.
\end{equation*}

Dually, let $(N_n \xrightarrow{j_{n,n+1}} N_{n+1})_{n \geq 0}$ be a sequence in $\compdg{\cat A}$. Its \emph{(strictly dg-functorial) homotopy colimit} is defined as the mapping cone $\hocolim_n N_n = \cone(1-\mu)$, sitting in the following pretriangle:
\begin{equation*}
\bigoplus_n N_n \xrightarrow{1-\mu} \bigoplus_n N_n \to \hocolim_n N_n,
\end{equation*}
where $\mu$ is the (closed, degree $0$) map induced by
\begin{equation*}
N_n \xrightarrow{j_{n,n+1}} N_{n+1} \xrightarrow{\incl_{n+1}}  \bigoplus_n N_n.
\end{equation*}
\end{defin} 
It is immediate to check that there are (strict) isomorphisms of complexes:
\begin{equation} \label{eq:strictholim_dgmod}
\begin{split}
\compdg{\cat A}(X, \holim_n M_n)  & \cong \holim_n \compdg{\cat A}(X,M_n), \\
\compdg{\cat A}(\hocolim_n N_n, X) & \cong \holim_n \compdg{\cat A}(N_n,X),
\end{split}
\end{equation}
both natural in $X \in \compdg{\cat A}$.

The homotopy (co)limits $\holim_n M_n$ and $\hocolim_n N_n$ are defined as mapping cones in $\compdg{\cat A}$, so we know how to describe maps to and from them. In particular, let $X \in \compdg{\cat A}$ and let
\begin{equation*}
\begin{gathered}
\xymatrix{
X  \ar[dr]^{(k_n)} \ar[d]^{(f_n)} \\
\prod_n M_n \ar[r]^{1-\nu} & \prod_n M_n,
}
\end{gathered}
\end{equation*}
be a diagram where $(f_n)_n$ is closed of degree $i$ and $(k_n)_n$ is of degree $i-1$ such that $d((k_n)_n) + (1-\nu) \circ (f_n) = 0$, namely
\begin{equation*}
dk_n = p_{n+1,n} \circ f_{n+1} - f_n.
\end{equation*}
Then, there is an induced closed degree $i$ morphism
\begin{equation}
\holim_n (f_n,k_n) \colon X \to \holim_n M_n.
\end{equation} 
Dually, let $Y \in \compdg{\cat A}$ and let 
\begin{equation*}
\begin{gathered}
\xymatrix{
\bigoplus_n N_n \ar[r]^{1-\mu} \ar[dr]^{\oplus l_n} & \bigoplus_n N_n \ar[d]^{\oplus g_n} \\
& Y,
}
\end{gathered}
\end{equation*}
be a diagram where $\oplus g_n$ is closed of degree $i$ and $\oplus l_n$ is of degree $i-1$ such that $d(\oplus l_n) = (-1)^i (\oplus g_n) \circ (1-\mu)$, namely:
\begin{equation*}
d l_n = (-1)^i (g_n - g_{n+1} \circ j_{n,n+1}).
\end{equation*}
Then, there is an induced degree $i$ morphism
\begin{equation} \label{eq:hocolim_inducedmap}
\hocolim_n (g_n,l_n) \colon \hocolim_n N_n \to Y.
\end{equation}

We can now give the following definition.
\begin{defin} \label{def:dg_hocolim}
Let $(A_n \xrightarrow{j_{n,n+1}} A_{n+1})_{n \geq 0}$ be a sequence of closed degree $0$ maps in $\cat A$, and let $\holim_n \cat A(A_n,-)$ be the strictly dg-functorial homotopy limit of the induced sequence $(\cat A(A_{n+1},-) \xrightarrow{j^*_{n,n+1}} \cat A(A_n,-))_n$ in $\compdg{\cat A}$. The \emph{(dg-functorial) homotopy colimit} of $(A_n \to A_{n+1})_n$ is an object $\hocolim_n A_n$ together with an isomorphism
\begin{equation*}
\cat A(\hocolim_n A_n,-) \to \holim_n \cat A(A_n,-)
\end{equation*}
in the derived category $\dercomp{\opp{\cat A}}$.
\end{defin}
\begin{remark} \label{remark:hocolim_univprop}
Let $(A_n \xrightarrow{j_{n,n+1}} A_{n+1})_{n \geq 0}$ be as in the above Definition \ref{def:dg_hocolim}, and let $B \in \cat A$. An element in $Z^i(\holim_n \cat A(A_n,B))$ is explicitly given by a family $(f_n,k_n)_{n \geq 0}$ where $f_n \colon A_n \to B$ is closed of degree $i$ and $k_n \colon A_n \to B$ is a ``homotopy'' of degree $i-1$ such that $d k_n=f_{n+1}j_{n,n+1} - f_n$ for all $n$. In other words, the diagram
\begin{equation*}
\begin{gathered}
\xymatrix{
A_n \ar[r]^-{j_{n,n+1}} \ar[dr]_{f_n} & A_{n+1} \ar[d]^{f_{n+1}} \\ &  B
}
\end{gathered}
\end{equation*}
is commutative up to $d k_n$. By the Yoneda lemma, giving $(f_n,k_n)_n \in Z^i(\holim_n \cat A(A_n,B))$ is the same as giving a closed degree $i$ morphism in $\compdg{\opp{\cat A}}$:
\begin{equation}
\holim_n (f^*_n,k^*_n) \colon \cat A(B,-) \to \holim_n \cat A(A_n,-).
\end{equation}

Again by the Yoneda lemma, note that we have natural isomorphisms:
\begin{align*}
\hocomp{\opp{\cat A}}(\cat A(B,-), \holim_n \cat A(A_n,-)[i]) & \cong \dercomp{\opp{\cat A}}(\cat A(B,-), \holim_n \cat A(A_n,-)[i])\\ 
& \cong H^i(\holim_n \cat A(A_n,B)),
\end{align*}
Hence, we can restate the definition of homotopy colimit as follow. The homotopy colimit of $(A_n \xrightarrow{j_{n,n+1}} A_{n+1})_{n \geq 0}$ is a pair
\begin{equation*}
(\hocolim_n A_n \in \cat A, [(j_n,h_n)_n] \in  H^0(\holim_n \cat A(A_n,\hocolim_n A_n)))
\end{equation*}
such that the induced map
\begin{equation*}
\holim_n (j^*_n,h^*_n) \colon \cat A(\holim_n A_n,-) \to \holim_n \cat A(A_n,-).
\end{equation*}
is a quasi-isomorphism. This means that whenever we are given $B \in \cat A$ with a class $[(f_n,k_n)_n] \in  H^i(\holim_n \cat A(A_n,B))$, there is a unique $[f] \in H^i(\cat A(\hocolim_n A_n, B))$ such that 
\begin{equation*}
[(f \circ j_n, f \circ h_n)_n] = [(f_n,k_n)_n].
\end{equation*}
 In other words, given $B \in \cat A$ and a morphism
\begin{equation*}
\holim_n (f^*_n,k^*_n) \colon \cat A(B,-) \to \holim_n \cat A(A_n,-)[i]
\end{equation*}
in $\dercomp{\opp{\cat A}}$, there is a unique $[f] \in H^i(\cat A(\hocolim_n A_n, B))$ such that the diagram
\begin{equation} \label{eq:hocolim_univprop_diagram}
\begin{gathered}
\xymatrix{
\cat A(B,-) \ar[r]^-{\holim_n (f^*_n,k^*_n)} \ar@{.>}[d]^-{f^*} & \holim_n \cat A(A_n,-) \\
\cat A(\hocolim_n A_n,-). \ar[ur]_{\holim_n (j^*_n,h^*_n)}
}
\end{gathered}
\end{equation}
is commutative in $\dercomp{\opp{\cat A}}$. In particular, notice that $[f \circ j_n] =[f_n]$ in $H^i(\cat A(A_n,B))$, for all $n$.
\end{remark}
Now, assume that $\cat A$ is a pretriangulated dg-category and let $(A_n \xrightarrow{j_{n,n+1}} A_{n+1})_{n \geq 0}$ be a sequence of closed degree $0$ maps in $\cat A$ such that the coproduct $\bigoplus_n A_n$ exists in $H^0(\cat A)$. Then, the homotopy colimit of $(A_n \xrightarrow{j_{n,n+1}} A_{n+1})_{n \geq 0}$ exists in $\cat A$. First, we note that the dg-module $\prod_n \cat A(A_n,-)$ is quasi-representable. Indeed, we have closed degree $0$ maps $\incl_n \colon A_n \to \bigoplus_n A_n$ such that
\begin{equation*}
H^0(\cat A)( \bigoplus_n A_n,-) \xrightarrow{( [\incl_n]^*)_n} \prod_n H^0(\cat A)(A_n,-)
\end{equation*}
is an isomorphism of left $H^0(\cat A)$-modules. Clearly, the maps $\incl_n$ induce a morphism in $\compdg{\opp{\cat A}}$:
\begin{equation*}
\cat A(\bigoplus_n A_n,-) \xrightarrow{(\incl_n^*)_n} \prod_n \cat A(A_n,-).
\end{equation*}
This is actually a quasi-isomorphism. By shifting, it is enough to check that it induces an isomorphism in $H^0$, and indeed
\begin{equation*}
H^0(\cat A(A_n,-)) \to H^0(\prod_n \cat A(A_n,-)) \xrightarrow{\sim} \prod_n H^0(\cat A(A_n,-))
\end{equation*}
is precisely the above $H^0(\cat A)( \bigoplus_n A_n,-) \xrightarrow{( [\incl_n]^*)_n} \prod_n H^0(\cat A)(A_n,-)$.

Now, since $\cat A$ is pretriangulated, the sequence $(A_n \xrightarrow{[j_{n,n+1}]} A_{n+1})_n$ in $H^0(\cat A)$ has a homotopy colimit $\hocolim_n A_n$ in the sense of Definition \ref{def:holim_tr}. Moreover, we have a diagram with distinguished rows in $\dercomp{\opp{\cat A}}$:
\begin{equation*}
\begin{gathered}
\xymatrix{
\cat A(\hocolim_n A_n,-) \ar[r] \ar@{.>}[d]^\sim & \cat A(\bigoplus_n A_n,-) \ar[r] \ar[d]^\sim & \cat A(\bigoplus_n A_n,-) \ar[d]^\sim \\
\holim_n \cat A(A_n,-) \ar[r] & \prod_n \cat A(A_n,-) \ar[r] & \prod_n \cat A(A_n,-).
}
\end{gathered}
\end{equation*}
From this, we get an isomorphism $\cat A(\hocolim_n A_n,-) \xrightarrow{\sim} \holim_n \cat A(A_n,-)$ in $\dercomp{\opp{\cat A}}$. We sum up what we found:
\begin{lemma} \label{lemma:dgcat_hocolim}
Let $\cat A$ be a pretriangulated dg-category, and let $(A_n \xrightarrow{j_{n,n+1}})_{n \geq 0}$ be a sequence of closed degree $0$ maps such that $\bigoplus_n A_n$ exists in $H^0(\cat A)$. Then, if $\hocolim_n A_n$ is the homotopy colimit of the sequence $(A_n \xrightarrow{[j_{n,n+1}]})_{n \geq 0}$ in $H^0(\cat A)$ in the sense of Definition \ref{def:holim_tr}, there is an isomorphism
\begin{equation*}
\cat A(\hocolim_n A_n,-) \to \holim_n \cat A(A_n,-)
\end{equation*}
in $\dercomp{\opp{\cat A}}$. In other words, $\cat A$ is closed under (dg-functorial) homotopy colimits which, as objects, are described by (non-functorial) homotopy colimits in the homotopy category.
\end{lemma}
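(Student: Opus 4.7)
The plan is to construct the desired quasi-isomorphism by comparing two distinguished triangles in $\dercomp{\opp{\cat A}}$: the defining triangle of $\hocolim_n A_n$ (taken in $H^0(\cat A)$ and lifted to $\cat A$ using pretriangulation) under the Yoneda functor $\cat A(-,-)$, and the defining pretriangle of the strictly dg-functorial $\holim_n \cat A(A_n,-)$ from Definition \ref{def:holim_dgmod}. A morphism of triangles, together with the triangulated five lemma in $\dercomp{\opp{\cat A}}$, then forces the induced map on the third term to be a quasi-isomorphism.

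First I would verify that $\prod_n \cat A(A_n,-)$ is quasi-represented by $\bigoplus_n A_n$, as the text hints. Concretely, a choice of representatives $\incl_n \colon A_n \to \bigoplus_n A_n$ in $\cat A$ of the categorical inclusions in $H^0(\cat A)$ induces a closed degree $0$ map $\cat A(\bigoplus_n A_n,-) \to \prod_n \cat A(A_n,-)$. Since products of complexes commute with $H^i$ for every $i$, and since on $H^0$ (and all its shifts) the induced map recovers the universal isomorphism $H^0(\cat A)(\bigoplus_n A_n,-) \xrightarrow{\sim} \prod_n H^0(\cat A)(A_n,-)$, this map is a quasi-isomorphism.

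Next, since $\cat A$ is pretriangulated, the distinguished triangle $\bigoplus_n A_n \xrightarrow{1-\mu} \bigoplus_n A_n \to \hocolim_n A_n$ of Definition \ref{def:holim_tr} lifts (non-canonically) to a pretriangle in $\cat A$; applying $\cat A(-,-)$ yields a pretriangle in $\compdg{\opp{\cat A}}$ and hence a distinguished triangle in $\dercomp{\opp{\cat A}}$. On the other hand, $\holim_n \cat A(A_n,-)$ is by definition the shifted cone of $1-\nu \colon \prod_n \cat A(A_n,-) \to \prod_n \cat A(A_n,-)$, providing the second distinguished triangle. The quasi-isomorphism from the previous step intertwines $(1-\mu)^*$ with $1-\nu$ because the construction of both maps is by precomposition with the common maps $j_{n,n+1}$ and projection/inclusion; this naturality check is an inspection using the definitions of $\mu$ and $\nu$.

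Applying the triangulated five lemma (the fact that a morphism of distinguished triangles whose first two vertical arrows are isomorphisms has its third vertical arrow an isomorphism too) yields the desired quasi-isomorphism $\cat A(\hocolim_n A_n,-) \xrightarrow{\sim} \holim_n \cat A(A_n,-)$ in $\dercomp{\opp{\cat A}}$. The only real obstacle is bookkeeping: one must pick a closed degree $0$ filler completing the morphism of triangles, which exists by the axioms of triangulated categories applied in $\dercomp{\opp{\cat A}}$, and verify that the two squares involved commute on the nose (or up to homotopy) before appealing to the five lemma. Once this is done, the statement that $\hocolim_n A_n$ in the sense of Definition \ref{def:dg_hocolim} coincides with the $H^0$-level homotopy colimit of Definition \ref{def:holim_tr} is immediate from the definition.
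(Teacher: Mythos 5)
Your proposal is correct and follows the paper's own route: establish that $\cat A(\bigoplus_n A_n,-) \to \prod_n \cat A(A_n,-)$ is a quasi-isomorphism intertwining $(1-\mu)^*$ with $1-\nu$, then compare the two resulting distinguished triangles in $\dercomp{\opp{\cat A}}$ and invoke the filler-plus-five-lemma argument. One small wrinkle: since $\cat A$ is only assumed pretriangulated (not strongly), the triangle in $H^0(\cat A)$ need not literally lift to a pretriangle in $\cat A$ itself; but this is harmless, as it suffices that the derived Yoneda embedding $H^0(\opp{\cat A}) \hookrightarrow \dercomp{\opp{\cat A}}$ is an exact functor, which is exactly how the paper uses it.
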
 
\subsection{t-structures and homotopy colimits} \label{subsection_tstruct_hocolim}
Now, let $\cat T$ be a triangulated category endowed with a t-structure $(\cat T_{\leq 0}, \cat T_{\geq 0})$, with heart $\cat T^\heartsuit$ (see \cite{beilinson-pervers} for the basic reference on t-structures). The zeroth cohomology functor given by the t-structure on $\cat T$ is denoted by
\begin{equation}
H^0(-) = \tau_{\leq 0} \tau_{\geq 0} \colon \cat T \to \cat T^\heartsuit,
\end{equation}
where $\tau_{\geq 0}$ and $\tau_{\geq 0}$ are the truncation functors. We also define the $i$-th cohomology:
\begin{equation*}
H^i(-) = H^0(-[i]) = \tau_{\leq i} \tau_{\geq i}.
\end{equation*}
We define full subcategories of $\cat T$ as follows:
\begin{align}
\cat T^- &= \bigcup_{n \geq 0} \cat T_{\leq n}, \\
\cat T^+ &= \bigcup_{n \geq 0} \cat T_{\geq -n}.
\end{align}
We have inclusions
\begin{equation} \label{eq:aisles_incl_cohom}
\begin{split}
\cat T_{\leq n} &\subseteq \{A \in \cat T : H^i(A) = 0 \quad \forall\, i > n \},  \\
\cat T_{\geq n} & \subseteq \{A \in \cat T : H^i(A)=0 \quad \forall\, i < n \}.
\end{split}
\end{equation}
We recall that an exact functor $F \colon \cat T \to \cat T'$ between triangulated categories with t-structures is \emph{t-exact} if it preserves the aisles: $F(\cat T_{\leq n}) \subseteq \cat T'_{\leq n}$ and $F(\cat T_{\geq n}) \subseteq \cat T'_{\geq n}$; equivalently, if it commutes with the truncation functors.
\begin{lemma} \label{lemma:Tplus_triang}
$\cat T^+$ and $\cat T^-$ are strictly full triangulated subcategories of $\cat T$ closed under direct summands, and the t-structure on $\cat T$ induces t-structures on $\cat T^+$ and $\cat T^-$ such that the inclusions $\cat T^+ \hookrightarrow \cat T$ and $\cat T^- \hookrightarrow \cat T$ are t-exact. In particular, the hearts of both $\cat T^+$ and $\cat T^-$ coincide with $\cat T^\heartsuit$.
\end{lemma}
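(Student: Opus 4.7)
The plan is to establish the claim for $\cat T^+$; the statement for $\cat T^-$ will then follow by duality (apply the argument to $\opp{\cat T}$ with the shifted t-structure, under which $\cat T^-$ becomes $(\opp{\cat T})^+$).

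First I would show that $\cat T^+$ is a strictly full triangulated subcategory of $\cat T$ closed under direct summands. The key input is the orthogonality characterization of the aisles: $A \in \cat T_{\geq k}$ if and only if $\cat T(X,A)=0$ for every $X \in \cat T_{\leq k-1}$, and dually. From this, each $\cat T_{\geq k}$ is immediately closed under direct summands and under extensions (given a distinguished triangle, vanishing of two of the three $\Hom$-groups out of any $X \in \cat T_{\leq k-1}$ forces the third). Closure of $\cat T^+$ under shifts is clear, while closure under cones reduces, given $A \in \cat T_{\geq -n}$ and $B \in \cat T_{\geq -m}$, to working inside $\cat T_{\geq -\max(n,m)}$ and applying extension closure.

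Next I would define the induced t-structure by setting $(\cat T^+)_{\leq n} := \cat T^+ \cap \cat T_{\leq n}$ and $(\cat T^+)_{\geq n} := \cat T_{\geq n}$ (the latter is already contained in $\cat T^+$ by construction). Aisle nesting and the Hom-orthogonality axiom are inherited from $\cat T$. The only nontrivial point --- and the one I expect to be the main technical obstacle --- is to verify that for $A \in \cat T^+$ the truncation triangle $\tau_{\leq n} A \to A \to \tau_{\geq n+1} A$ lives entirely in $\cat T^+$. The right-hand vertex is already in $\cat T_{\geq n+1} \subseteq \cat T^+$. For the left vertex, pick $m$ with $A \in \cat T_{\geq -m}$: if $n \leq -m-1$ then $A \in \cat T_{\geq n+1}$ forces $\tau_{\leq n} A = 0$; otherwise the rotated triangle $\tau_{\geq n+1} A[-1] \to \tau_{\leq n} A \to A$ has both outer vertices in $\cat T_{\geq -m}$ (since $\tau_{\geq n+1} A[-1] \in \cat T_{\geq n+2} \subseteq \cat T_{\geq -m}$), so extension closure yields $\tau_{\leq n} A \in \cat T_{\geq -m} \subseteq \cat T^+$.

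Finally, the heart is
\[
(\cat T^+)^\heartsuit = (\cat T^+)_{\leq 0} \cap (\cat T^+)_{\geq 0} = \cat T^+ \cap \cat T^\heartsuit = \cat T^\heartsuit,
\]
since $\cat T^\heartsuit \subseteq \cat T_{\geq 0} \subseteq \cat T^+$ automatically. The inclusion $\cat T^+ \hookrightarrow \cat T$ maps aisles into aisles by the very definition of the induced structure, hence is t-exact. Once orthogonality and extension closure of the ambient aisles are noted, every remaining verification is essentially bookkeeping.
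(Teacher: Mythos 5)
The paper does not supply a proof for this lemma; it is stated as a standard fact about t-structures (cf.\ \cite{beilinson-pervers}). Your argument is correct and is the expected one: identify the induced aisles as $(\cat T^+)_{\leq n} = \cat T^+ \cap \cat T_{\leq n}$ and $(\cat T^+)_{\geq n} = \cat T_{\geq n}$, derive closure under summands and extensions from the Hom-orthogonality characterization of the aisles, and observe that the ambient truncation triangle of an object of $\cat T^+$ stays inside $\cat T^+$ by extension-closure. One small wording slip: when you verify closure under cones you say the argument proceeds by ``working inside $\cat T_{\geq -\max(n,m)}$,'' but the cone $C$ of $f\colon A\to B$ sits in the rotated triangle $B\to C\to A[1]$ and so only lands in $\cat T_{\geq -\max(n,m)-1}$; this is still in $\cat T^+$, of course, so the conclusion is unaffected. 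Otherwise the proof is complete and matches the standard treatment the authors are implicitly relying on.
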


We say that the t-structure $(\cat T_{\leq 0}, \cat T_{\geq 0})$ is \emph{right bounded} (or \emph{bounded from above}) if the inclusion $\cat T^- \hookrightarrow \cat T$ is an equivalence. Dually, we say that it is \emph{left bounded} (or \emph{bounded from below}) if the inclusion $\cat T^+ \hookrightarrow \cat T$ is an equivalence.

We say that the t-structure $(\cat T_{\leq 0}, \cat T_{\geq 0})$ is \emph{left separated} if 
\begin{equation}
\bigcap_{n \geq 0} \cat T_{\leq -n} = 0.
\end{equation} 
Dually, we say that it is \emph{right separated} if 
\begin{equation}
\bigcap_{n \geq 0} \cat T_{\geq n} = 0.
\end{equation}
Finally, we say that the t-structure $(\cat T_{\leq 0}, \cat T_{\geq 0})$ is \emph{non-degenerate} if it is both right and left separated. In this case (see \cite[Proposition 1.3.7]{beilinson-pervers}) we have that $H^i(A) = 0$ for all $i$ if and only if $A \cong 0$ in $\cat T$, and the inclusions \eqref{eq:aisles_incl_cohom} are actual equalities:
\begin{equation} \label{eq:tstruct_nondeg_aisles}
\begin{split}
\cat T_{\geq n} = \{ A \in \cat T : H^i(A) = 0 \quad \forall\, i <n \}, \\
\cat T_{\leq n} = \{ A \in \cat T : H^i(A) = 0 \quad \forall\, i >n \}.
\end{split}
\end{equation}
\begin{remark} 
The t-structure $(\cat T_{\leq 0}, \cat T_{\geq 0})$ is left separated if and only if the induced t-structure on $\cat T^-$ is non-degenerate.  Dually, the t-structure $(\cat T_{\leq 0}, \cat T_{\geq 0})$ is right separated if and only if the induced t-structure on $\cat T^+$ is non-degenerate.
\end{remark}
We are going to work with homotopy colimits inside triangulated categories with a t-structure; we now explain a useful assumption which ensures the existence of the homotopy colimits we shall need.
\begin{defin} \label{def:tstruct_closed_directsums}
We say that the t-structure $(\cat T_{\leq 0}, \cat T_{\geq 0})$ is \emph{closed under countable coproducts} if the aisle $\cat T_{\leq 0}$ is closed under countable coproducts.

Dually, we say that the t-structure $(\cat T_{\leq 0}, \cat T_{\geq 0})$ is \emph{closed under countable products} if $\cat T_{\geq 0}$ is closed under countable products.
\end{defin}
\begin{remark} \label{remark:tstruct_closed_directsums}
Being left adjoints, the inclusions $\cat T_{\leq M} \hookrightarrow \cat T$ are cocontinuous. Hence, the t-structure $(\cat T_{\leq 0}, \cat T_{\geq 0})$ is closed under countable coproducts if and only if any countable family of objects $\{A_n\}$ in $\cat T_{\leq 0}$ has a direct sum $\bigoplus_n A_n$ in $\cat T$.

Moreover, we can check that $\cat T_{\leq 0}$ is closed under countable coproducts if and only if $\cat T_{\leq M}$ is closed under countable coproducts for all $M \in \mathbb Z$. Indeed, if $\{A_n\}$ is a countable family in $\cat T_{\leq M}$, the shifted family $\{A_n[M]\}$ lies in $\cat T_{\leq 0}$, and assuming that $\bigoplus_n A_n[M]$ exists in $\cat T_{\leq 0}$, we immediately see that $(\bigoplus_n A_n[M])[-M]$ is the coproduct of the $A_n$ in $\cat T_{\leq M}$.

Clearly, the above discussion dualizes directly to t-structures which are closed under countable products.
\end{remark}

We now prove a lemma which describes the t-structure cohomology of homotopy colimits of particular sequences which are eventually constant in cohomology.
\begin{lemma} \label{lemma:tcohom_holim}
Let $\cat T$ be a triangulated category with a non-degenerate t-structure. Let 
\begin{equation*}
(X_{-k} \xrightarrow{j_{-k,-k-1}} X_{-k-1})_{k \geq 0}
\end{equation*}
be a sequence of maps in $\cat T$ and assume that the direct sum $\bigoplus_{k} X_{-k}$ exists in $\cat T$. Let $X= \hocolim_k X_{-k}$, together with the natural maps $j_{-k} \colon X_{-k} \to X$ (see \eqref{eq:hocolim_tr_inclusions}). If for all $n\geq 0$ the induced morphism
\begin{equation*}
H^i(j_{-n,-n-1}) \colon H^i(X_{-n}) \to H^i(X_{-n-1})
\end{equation*}
is an isomorphism for $i>-n$ and an epimorphism for $i=-n$, then for all $n\geq 0$ the induced morphism
\begin{equation*}
H^i(j_{-n}) \colon H^i(X_{-n}) \to H^i(X)
\end{equation*}
is an isomorphism for $i>-n$ and an epimorphism for $i=-n$.
\end{lemma}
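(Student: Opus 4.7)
The strategy is to show that the canonical map $H^i(X_{-n}) \to H^i(X)$ factors through the filtered colimit $\varinjlim_k H^i(X_{-k})$ in the heart, and then use the stabilization hypothesis to compute this colimit.

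First, I would apply the cohomological functor $H^i$ to the defining triangle
\begin{equation*}
\bigoplus_k X_{-k} \xrightarrow{1-\mu} \bigoplus_k X_{-k} \to X \to \left(\bigoplus_k X_{-k}\right)[1],
\end{equation*}
yielding an exact sequence in $\cat T^\heartsuit$
\begin{equation*}
H^i\left(\bigoplus_k X_{-k}\right) \xrightarrow{1-H^i(\mu)} H^i\left(\bigoplus_k X_{-k}\right) \to H^i(X) \to H^{i+1}\left(\bigoplus_k X_{-k}\right) \xrightarrow{1-H^{i+1}(\mu)} H^{i+1}\left(\bigoplus_k X_{-k}\right).
\end{equation*}
Using the identification $H^j(\bigoplus_k X_{-k}) \cong \bigoplus_k H^j(X_{-k})$ (via the natural map induced by the inclusions), the map $1 - H^j(\mu)$ becomes the standard telescope map on the coproduct in the abelian heart. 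An explicit elementwise computation shows its kernel is zero and its cokernel is the filtered colimit $\varinjlim_k H^j(X_{-k})$. Hence the exact sequence yields a canonical isomorphism $H^i(X) \cong \varinjlim_k H^i(X_{-k})$.

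Next, I would compute this filtered colimit using the hypothesis. For $i > -n$ and $k \geq n$, one has $i > -n \geq -k$, so the transition $H^i(X_{-k}) \to H^i(X_{-k-1})$ is an isomorphism; thus the system $(H^i(X_{-k}))_k$ is stationary from index $n$ onward, and the canonical map $H^i(X_{-n}) \to \varinjlim_k H^i(X_{-k}) \cong H^i(X)$ is an isomorphism. Since this map coincides by construction with $H^i(j_{-n})$, the first claim follows. For $i = -n$, the transition at $k=n$ is an epimorphism and all transitions for $k \geq n+1$ are isomorphisms by the same argument; hence $H^{-n}(j_{-n})$ factors as an epimorphism followed by isomorphisms and is itself an epimorphism.

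The main obstacle is the identification $H^j(\bigoplus_k X_{-k}) \cong \bigoplus_k H^j(X_{-k})$, which requires compatibility of $H^j$ with the countable coproduct. This is immediate when the t-structure is closed under countable coproducts, a hypothesis that is natural in the present context (compare Remark \ref{remark:tstruct_closed_directsums}). In the absence of such closure, one combines the fact that each $\tau_{\geq M}$ preserves coproducts as a left adjoint to inclusion with the observation that, by the stabilization hypothesis, the truncated sequences $(\tau_{\geq M}(X_{-k}))_k$ become eventually constant, so that non-degeneracy allows one to compute the relevant cohomology from the stable truncated terms.
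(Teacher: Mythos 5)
Your approach is genuinely different from the paper's: you apply the cohomological functor $H^i$ directly to the telescope triangle and try to identify $H^i(X)$ with a filtered colimit inside the heart, whereas the paper tests the triangle against objects $Z$ of the coaisle, runs a Mittag--Leffler/$\varprojlim^1$ argument for the resulting towers of $\basering k$-vector spaces, and closes with a five-lemma to get $\cat T(C_{-n},Z)=0$. Your route is attractive, but it transfers the Mittag--Leffler-type issue into the arbitrary abelian category $\cat T^\heartsuit$, and that is where the gap sits. The step ``an explicit elementwise computation shows its kernel is zero'' is not valid: $\cat T^\heartsuit$ is not a module category, morphisms into a countable coproduct are not determined by their finite projections, and no AB5-type hypothesis is available. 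Without this step, the long exact sequence only gives you an extension $0 \to \varinjlim_k H^i(X_{-k}) \to H^i(X) \to \ker\bigl(1 - H^{i+1}(\mu)\bigr) \to 0$, and your conclusion requires the right-hand term to vanish, which you have not justified.

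The conclusion is nevertheless true and the argument is repairable, but by a different mechanism: in the degrees $j$ you need (namely $j=i+1$ with $i>-n$, resp.\ $j=-n+1$), the hypothesis makes the transitions $H^j(X_{-k})\to H^j(X_{-k-1})$ isomorphisms for all $k>-j$, and under this eventual-isomorphism condition $1-\mu$ is in fact a \emph{split} monomorphism, in any abelian category with countable coproducts. Split $\bigoplus_k H^j(X_{-k})$ into the finite part (small $k$) and the tail; on the finite part $1-\mu$ is unipotent, hence invertible, while the tail is isomorphic as a direct system to a constant one $(B,\mathrm{id})$, where the map $\tilde\sigma$ defined by $\tilde\sigma\iota_k=-\sum_{l<k}\iota_l$ satisfies $\tilde\sigma\circ(1-\sigma)=\mathrm{id}$; the block-lower-triangular form of $1-\mu$ then yields a retraction of the whole map. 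Replacing the spurious elementwise reasoning with this retraction closes the gap. Finally, your concern about $H^j$ commuting with the coproduct is unfounded and the vague fixes in your last paragraph are unnecessary: for any $Y\in\cat T^\heartsuit$ one has, using only the t-structure axioms,
\begin{equation*}
\cat T^\heartsuit\bigl(H^j(\textstyle\bigoplus_k X_{-k}),Y\bigr)\cong\cat T(\textstyle\bigoplus_k X_{-k},Y[-j])\cong\prod_k\cat T(X_{-k},Y[-j])\cong\prod_k\cat T^\heartsuit(H^j(X_{-k}),Y),
\end{equation*}
so $H^j(\bigoplus_k X_{-k})$ corepresents the coproduct in $\cat T^\heartsuit$ with no extra hypothesis beyond the existence of $\bigoplus_k X_{-k}$ in $\cat T$.
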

\begin{proof}
Set $C_{-n,-n-1} = \cone(j_{-n,-n-1})[-1]$ and $C_{-n} = \cone(j_{-n})[-1]$. Then, the hypothesis is equivalent to
\begin{equation*}
\forall\, n\geq 0,\ H^i(C_{-n,-n-1}) = 0 \quad \forall\, i>-n,
\end{equation*}
and the thesis is equivalent to
\begin{equation*}
\forall\, n\geq 0,\ H^i(C_{-n}) = 0 \quad \forall\,  i > -n.
\end{equation*}
By \eqref{eq:tstruct_nondeg_aisles}, the hypothesis is
\begin{equation*}
\forall\, n\geq 0,\ \cat T(C_{-n,-n-1}, Z) = 0 \quad \forall\, Z \in \cat T_{>-n},
\end{equation*}
and the thesis is
\begin{equation*}
\forall\, n\geq 0,\ \cat T(C_{-n},Z) = 0 \quad \forall\, Z \in \cat T_{>-n}.
\end{equation*}

Let $n \geq 0$ and let $Z \in \cat T_{>-n}$.  For $-k < -n$, consider the exact sequence:
\begin{equation*}
\cat T(C_{-k,-k-1}[1], Z) \to \cat T(X_{-k-1}, Z) \to \cat T(X_{-k},Z) \to \cat T(C_{-k,-k-1}, Z).
\end{equation*}
By hypothesis, we have $\cat T(C_{-k,-k-1},Y)=0$ for all $Y \in \cat T_{>-k}$, in particular for $Y=Z \in \cat T_{>-n} \subseteq \cat T_{>-k}$; also $\cat T(C_{-k,-k-1}[1], Z) \cong \cat T(C_{-k,-k-1},Z[-1]) = 0$, for $Z[-1] \in \cat T_{>-n}$ like $Z$. So, $\cat T(X_{-k-1},Z) \to \cat T(X_{-k},Z)$ is an isomorphism, and we get a chain of isomorphisms
\begin{equation*}
\cdots \xrightarrow{\sim} \cat T(X_{-n-2}, Z) \xrightarrow{\sim} \cat T(X_{-n-1},Z) \xrightarrow{\sim} \cat T(X_{-n},Z).
\end{equation*}
Hence, $\cat T(X_{-n},Z)$ is the inverse limit $\varprojlim_k \cat T(X_{-k},Z)$, together with the maps $\cat T(X_{-n},Z) \to \cat T(X_{-k},Z)$ obtained composing the suitable morphisms ${j^*_{-i,-i-1}}$ or their inverses. Moreover, since the sequence $(\cat T(Z,X_{-k}))_k$ is definitely constant, the morphism
\begin{equation*}
\prod_k \cat T(X_{-k},Z) \xrightarrow{1-\nu} \prod_k \cat T(X_{-k},Z)
\end{equation*}
is surjective, and hence the following sequence (exhibiting $\cat T(X_{-n},Z)$ as the above inverse limit) is exact:
\begin{equation*}
0 \to \cat T(X_{-n},Z) \to \prod_k \cat T(X_{-k},Z) \xrightarrow{1-\nu} \prod_k \cat T(X_{-k},Z) \to 0,
\end{equation*}
where $\cat T(X_{-n},Z) \to \prod_k \cat T(X_{-k},Z)$ is induced by the above maps $\cat T(X_{-n},Z) \to \cat T(X_{-k},Z)$. On the other hand, we have the distinguished triangle
\begin{equation*}
\bigoplus_k X_{-k} \xrightarrow{1-\mu} \bigoplus_k X_{-k} \xrightarrow{\oplus j_{-k}} X.
\end{equation*}
Upon identifying $\prod_k \cat T(X_{-k},Z) = \cat T(\bigoplus_k X_{-k},Z)$, we get a commutative diagram with exact rows:
\begin{equation*}
\begin{gathered}
\xymatrix@C-=0.5cm{
\prod_k \cat T(X_{-k}[1],Z) \ar[r]& \prod_k \cat T(X_{-k}[1],Z) \ar[r] & \cat T(X,Z) \ar[d]^{j^*_{-n}} \ar[r] & \prod_k \cat T(X_{-k},Z) \ar@{=}[d] \ar[r] & \prod_k \cat T(X_{-k}, Z) \ar@{=}[d] \\
&  0 \ar[r] & \cat T(X_{-n},Z) \ar[r] & \prod_k \cat T(X_{-k},Z) \ar[r] & \prod_k \cat T(X_{-k},Z).
}
\end{gathered}
\end{equation*}
Notice that $\cat T(X_{-k}[1],Z) \cong \cat T(X_{-k},Z[-1])$ and $Z[-1] \in \cat T_{>-n}$ since $Z \in \cat T_{>-n}$, therefore the morphism $\prod_k \cat T(X_{-k}[1],Z) \to \prod_k \cat T(X_{-k}[1],Z)$ is surjective, so the map $\prod_k \cat T(X_{-k}[1],Z) \to \cat T(X,Z)$ is the zero map, and $\cat T(X,Z) \to \prod_k \cat T(X_{-k},Z)$ is monic. Then, by exactness, that is a kernel of $\prod_k \cat T(X_{-k},Z) \to \prod_k \cat T(X_{-k},Z)$, just as $\cat T(X_{-n},Z) \to \prod_k \cat T(X_{-k},Z)$. We deduce that
\begin{equation} \label{eq:j_n_iso}
j^*_{-n} \colon \cat T(X,Z) \to \cat T(X_{-n},Z)
\end{equation}
is an isomorphism. This is true for all $n \geq 0$ and for all $Z \in \cat T_{>-n}$. In particular, we also have that
\begin{equation} \label{eq:j_n+1_iso}
j^*_{-n-1} \colon \cat T(X,Z[1]) \to \cat T(X_{-n-1},Z[1])
\end{equation}
is an isomorphism for the same $n$ and $Z$, since $Z[1] \in \cat T_{>-n-1}$.

Next, we consider the following commutative diagram with exact rows:
\begin{equation*}
\begin{gathered}
\xymatrix@C-=0.5cm{
\cat T(X,Z) \ar[r]^{j^*_{-n}} \ar[d]^{j^*_{-n-1}} & \cat T(X_{-n},Z) \ar@{=}[d] \ar[r] & \cat T(C_{-n},Z) \ar[d] \ar[r] & \cat T(X[-1],Z) \ar[d]^{{j_{-n-1}[-1]}^*} \ar[r] & \cat T(X_{-n}[-1],Z) \ar@{=}[d] \\
\cat T(X_{-n-1},Z) \ar[r]^\sim & \cat T(X_{-n},Z) \ar[r] & \cat T(C_{-n,-n-1},Z) \ar[r] & \cat T(X_{-n-1}[-1],Z) \ar[r] & \cat T(X_{-n}[-1],Z),
}
\end{gathered}
\end{equation*}
where the map $\cat T(C_{-n},Z) \to \cat T(C_{-n,-n-1},Z)$ is induced by
\begin{equation*}
\begin{gathered}
\xymatrix{
C_{-n,-n-1} \ar@{.>}[d] \ar[r] & X_{-n} \ar@{=}[d] \ar[r]^-{j_{-n,-n-1}} & X_{-n-1} \ar[d]^{j_{-n-1}}  \\
C_{-n} \ar[r] & X_{-n} \ar[r]^{j_{-n}}  & X.
}
\end{gathered}
\end{equation*}
Since $Z \in \cat T_{>-n} \subseteq \cat T_{>-n-1}$, $j^*_{-n-1}$ is an isomorphism (see \eqref{eq:j_n_iso} above). Moreover, \eqref{eq:j_n+1_iso} holds and $j_{-n-1}[-1]^*$ is an isomorphism. By the five lemma, we conclude that
\begin{equation*}
\cat T(C_{-n},Z) \to \cat T(C_{-n,-n-1},Z)
\end{equation*}
is an isomorphism, hence $\cat T(C_{-n},Z) = 0$ as required.
\end{proof}
\begin{coroll} \label{coroll:tcohom_holim}
Let $\cat T$ be a triangulated category with a non-degenerate t-structure. Assume we are given a sequence $(X_{-k} \xrightarrow{j_{-k,-k-1}} X_{-k-1})_{k \geq 0}$ such that $\bigoplus_{k} X_{-k}$ exists in $\cat T$. Assume moreover that for all $n \geq 0$ $H^i(j_{-n,-n-1}) $ is an isomorphism for $i > - n$ and an epimorphism for $i=n$, as in Lemma \ref{lemma:tcohom_holim}. Next, let $Y \in \cat T$ be an object and let $f_{-n} \colon X_{-n} \to Y$ be maps such that the diagram
\begin{equation*}
\begin{gathered}
\xymatrix{
X_{-n} \ar[d]_{j_{-n,-n-1}} \ar[dr]^-{f_n} \\
X_{-n-1} \ar[r]_-{f_{-n-1}} & Y 
}
\end{gathered}
\end{equation*}
is commutative for all $n \geq 0$. Moreover, assume that for all $i \in \mathbb Z$, the induced maps
\begin{equation*}
H^i(f_{-n}) \colon H^i(X_{-n}) \to H^i(Y)
\end{equation*}
are isomorphisms for all $n > M(i)$ sufficiently large. Then, any morphism $f \colon \hocolim_n X_{-n} \to Y$ satisfying $f \circ j_{-n} = f_{-n}$ is such that
\begin{equation*}
H^i(f) \colon H^i(\hocolim_n X_{-n}) \to H^i(Y)
\end{equation*}
is an isomorphism for all $i \in \mathbb Z$. In particular, $f \colon \hocolim_n X_{-n} \to Y$ is an isomorphism in $\cat T$.
\end{coroll}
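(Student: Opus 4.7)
The plan is to apply Lemma \ref{lemma:tcohom_holim} to the given sequence to control the cohomology of the maps $j_{-n} \colon X_{-n} \to \hocolim_n X_{-n}$, then use the compatibility $f \circ j_{-n} = f_{-n}$ to transfer the ``eventually iso'' property from the $f_{-n}$ to $f$, and finally invoke non-degeneracy of the t-structure to upgrade ``iso on all $H^i$'' to ``iso in $\cat T$''.

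Concretely, by Lemma \ref{lemma:tcohom_holim} the hypothesis on the $H^i(j_{-n,-n-1})$ implies that for every $n \geq 0$ the map $H^i(j_{-n}) \colon H^i(X_{-n}) \to H^i(\hocolim_k X_{-k})$ is an isomorphism for $i > -n$. Now fix an arbitrary $i \in \mathbb Z$ and choose $n$ large enough that simultaneously $n > M(i)$ (so that $H^i(f_{-n})$ is an isomorphism by assumption) and $n > -i$ (so that $H^i(j_{-n})$ is an isomorphism by Lemma \ref{lemma:tcohom_holim}). The relation $f \circ j_{-n} = f_{-n}$ gives, upon applying $H^i$, the factorization
\begin{equation*}
H^i(f_{-n}) = H^i(f) \circ H^i(j_{-n}),
\end{equation*}
in which the left-hand side and $H^i(j_{-n})$ are both isomorphisms. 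Hence $H^i(f)$ is an isomorphism.

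Since $i \in \mathbb Z$ was arbitrary, $H^i(f)$ is an isomorphism for all $i$. Embedding $f$ into a distinguished triangle
\begin{equation*}
\hocolim_n X_{-n} \xrightarrow{f} Y \to \cone(f) \to \hocolim_n X_{-n}[1],
\end{equation*}
the long exact cohomology sequence forces $H^i(\cone(f)) = 0$ for all $i$. By non-degeneracy of the t-structure (see the discussion around \eqref{eq:tstruct_nondeg_aisles}), this entails $\cone(f) \cong 0$, and therefore $f$ is an isomorphism in $\cat T$. No step is really a serious obstacle here; the only mild subtlety is making sure that the index $n$ can be chosen to satisfy both $n > M(i)$ and $i > -n$ simultaneously, which is obvious once one writes it down.
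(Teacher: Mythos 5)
Your argument is correct and coincides with the paper's own proof: both invoke Lemma \ref{lemma:tcohom_holim} to get $H^i(j_{-n})$ iso for $i>-n$, choose $n>\max(M(i),-i)$, use the factorization $H^i(f_{-n})=H^i(f)\circ H^i(j_{-n})$ to conclude $H^i(f)$ is iso, and finish by non-degeneracy. You merely spell out the cone argument for the last step, which the paper leaves implicit.
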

\begin{proof}
Fix $i \in \mathbb Z$. By Lemma \ref{lemma:tcohom_holim} we know that
\begin{equation*}
H^i(j_{-n}) \colon H^i(X_{-n}) \to H^i(\hocolim_n X_{-n})
\end{equation*}
is an isomorphism for all $-n < i$. So, take $n > \max(M(i),-i)$ sufficiently large, so that 
\begin{equation*}
H^i(f_{-n}) \colon H^i(X_{-n}) \to H^i(Y)
\end{equation*}
is also an isomorphism. Since we have by hypothesis
\begin{equation*}
H^i(f) \circ H^i(j_{-n}) = H^i(f_{-n}),
\end{equation*}
we conclude that $H^i(f)$ is an isomorphism, as desired.
\end{proof}
\begin{remark} \label{remark:hocolim_aisle}
Let $(X_{-k} \xrightarrow{j_{-k,-k-1}} X_{-k-1})_{k \geq 0}$ be a sequence such that $X_{-k} \in \cat T_{\leq M}$ for all $k\geq 0$, for some $M \in \mathbb Z$, and assume that the t-structure  $\bigoplus_k X_{-k}$ exists in $\cat T$. then, $\bigoplus_k X_{-k}$ and $\hocolim_k X_{-k}$ lie in $\cat T_{\leq M}$. Indeed, given $Z \in \cat T_{\geq M+1}$, we first have:
\begin{equation*}
\cat T(\bigoplus_k X_{-k}, Z) = \prod_k \cat T(X_{-k}, Z) = 0.
\end{equation*}
Moreover, the space $\cat T(\hocolim_k X_{-k}, Z)$ sits in the following exact sequence:
\begin{equation*}
\cat T(\bigoplus_k X_{-k}, Z[-1]) \to \cat T(\hocolim_k X_{-k}, Z) \to \cat T(\bigoplus_k X_{-k}, Z),
\end{equation*}
and by hypothesis we find out that $\cat T(\hocolim_k X_{-k}, Z) =0$.
\end{remark}
\begin{remark} \label{remark:tcohom_holim_leftsep_applicable}
Let $\cat T$ be closed under countable coproducts.  The above discussion actually shows that the t-structure $(\cat T_{\leq 0}, \cat T_{\geq 0})$ is closed under countable coproducts (Definition \ref{def:tstruct_closed_directsums}). Now, assume that $\cat T$ is closed under countable coproducts and has a left separated t-structure. Then, we deduce that $\cat T^-$ has a non-degenerate t-structure which is closed under countable coproducts. In particular, Lemma \ref{lemma:tcohom_holim} and Corollary \ref{coroll:tcohom_holim} can be applied in $\cat T^-$ to sequences $(X_{-k} \xrightarrow{j_{-k,-k-1}} X_{-k-1})_{k \geq 0}$ such that $X_{-k} \in \cat T_{\leq M}$ for all $k\geq 0$, for some $M \in \mathbb Z$.
\end{remark}

\section{Bounded above twisted complexes}
\label{sec:twcom}
Twisted complexes on dg-categories were introduced in \cite{bondal-kapranov}. In this section we present a slightly different flavour of this notion which brings us to the definition of \emph{bounded above twisted complexes on a dg-category with cohomology in nonpositive degrees}.
\subsection{The dg-category of Maurer-Cartan objects}
In this subsection we fix a $\basering k$-linear dg-category $\cat A$.
\begin{defin}
Let $\cat B \subseteq \compdg{\cat A}$ be a full dg-subcategory. We define the \emph{dg-category $\MC(\cat B)$ of Maurer-Cartan objects of $\cat B$}  as follows:
\begin{itemize}
\item Objects of $\MC(\cat B)$ are pairs $(M,q)$, where $M$ is an object of $\cat B$, and $q \colon M \to M$ is a degree $1$ morphism such that $dq + q^2 = 0$.
\item A degree $p$ morphism $f \colon (M,q) \to (M',q')$ is a degree $p$ morphism $M \to M'$ in $\cat B$. The differential of $f$ is defined by:
\begin{equation}
d_{\MC(\cat B)}(f) = d_{\cat B}f + q'f - (-1)^p fq.
\end{equation}
\end{itemize}
\end{defin}
It is easy to check that $\MC(\cat B)$ is indeed a dg-category. If $\cat B$ is $\mathcal U$-small, then also $\MC(\cat B)$ is $\mathcal U$-small. There is a \emph{totalisation} dg-functor
\begin{equation} \label{eq:totalisation}
\Tot \colon \MC(\cat B) \to \compdg{\cat A},
\end{equation}
defined as follows:
\begin{align*}
\Tot(M,q) &= (M, d_{M} + q), \\
\Tot((M,q) \xrightarrow{f} (M',q')) &= (M, d_M + q) \xrightarrow{f} (M',d_{M'} + q').
\end{align*}
In other words, an object $(M,q)$ is mapped to the dg-module whose underlying graded module is the same as $M$ but with differential changed to $d_M + q$; a morphism $f \colon (M,q) \to (M',q')$ is mapped to itself viewed as a morphism of dg-modules $(M,d_M + q) \to (M', d_{M'} + q')$. We can check:
\begin{lemma}
The above definition gives a well-defined dg-functor $\Tot$ which is fully faithful.
\end{lemma}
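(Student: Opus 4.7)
The plan is to verify three things in order: (i) the construction lands in $\compdg{\cat A}$, i.e., $(M, d_M+q)$ really is a dg-module whenever $(M,q) \in \MC(\cat B)$; (ii) $\Tot$ respects composition, identities, and differentials, so it is a genuine dg-functor; (iii) on each hom-complex $\Tot$ is an isomorphism, so it is fully faithful. None of these steps should present a real obstacle; the proof is essentially bookkeeping around the Maurer--Cartan equation and the sign conventions for hom-complexes of dg-modules.

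For (i), I would expand the square $(d_M + q)^2 = d_M^2 + d_M q + q d_M + q^2$. Since $d_M^2 = 0$, and since the differential on the endomorphism complex of $M$ in $\compdg{\cat A}$ acts on a degree $1$ element $q$ by $d(q) = d_M q + q d_M$, the Maurer--Cartan equation $dq + q^2 = 0$ gives exactly $(d_M + q)^2 = 0$. Thus $\Tot(M,q)$ is a dg-module.

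For (ii), composition and identities are preserved on the nose because $\Tot$ is the identity on underlying graded morphisms. Compatibility with the differential requires checking that for a degree $p$ morphism $f \colon (M,q) \to (M',q')$, the differential
\begin{equation*}
d_{\MC(\cat B)}(f) = d_{\cat B} f + q' f - (-1)^p f q
\end{equation*}
coincides with the differential of $f$ viewed as a morphism $(M, d_M + q) \to (M', d_{M'}+q')$ in $\compdg{\cat A}$, which is $(d_{M'}+q')f - (-1)^p f (d_M + q)$. Using the standard formula $d_{\cat B}f = d_{M'} f - (-1)^p f d_M$ for the differential on the hom-complex of $\cat B \subseteq \compdg{\cat A}$, the two expressions agree by direct expansion.

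For (iii), fully faithfulness is immediate once (ii) is in place: by construction the hom-complex $\MC(\cat B)((M,q),(M',q'))$ and the hom-complex $\compdg{\cat A}(\Tot(M,q),\Tot(M',q'))$ have literally the same underlying graded $\basering k$-module, namely $\cat B(M,M')$, and $\Tot$ is the identity on this graded module; step (ii) shows the two differentials match, so $\Tot$ induces an isomorphism of hom-complexes. The only mild subtlety in the whole argument is keeping the signs straight in the hom-complex differential when $p$ is odd, but this is purely mechanical.
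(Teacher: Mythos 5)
Your proof is correct and complete; the paper omits the verification (the lemma is stated without an explicit argument), and what you have written is exactly the routine check that was left to the reader. Each of the three steps is handled properly — in particular the sign computation showing that $d_{\MC(\cat B)}(f) = d_{\cat B}f + q'f - (-1)^p fq$ coincides with $(d_{M'}+q')f - (-1)^p f(d_M+q)$ is the right way to see both that $\Tot$ is a dg-functor and, since the underlying graded hom-modules coincide, that it is fully faithful.
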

\begin{prop}
Assume that $\cat B \subseteq \compdg{\cat A}$ is closed under taking shifts and finite direct sums. Then, $\MC(\cat B)$ is a strongly pretriangulated dg-category.
\end{prop}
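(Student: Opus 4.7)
The plan is to verify directly that $\MC(\cat B)$ is closed under the two operations that define the strongly pretriangulated structure: taking shifts, and forming mapping cones of closed degree $0$ morphisms. Since $\Tot$ has already been shown to be fully faithful into the strongly pretriangulated dg-category $\compdg{\cat A}$, the strategy is to produce candidate objects in $\MC(\cat B)$ whose totalisations agree with the shift, respectively the mapping cone, taken in $\compdg{\cat A}$, and then to transport the structural morphisms $i,j,p,s$ back through $\Tot$.

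For the shift, given $(M,q)\in \MC(\cat B)$, I would set $(M,q)[1] := (M[1],q[1])$ where $M[1]$ is the shift inside $\cat B$ (which exists by hypothesis) and $q[1] := -\,\shiftid{M}{0}{1}\, q\, \shiftid{M}{1}{0}$, the sign being dictated by the requirement that $\Tot(M[1],q[1])$ agree with $\Tot(M,q)[1]$. A short check using $d\,\shiftid{M}{1}{0}=0$ and $dq+q^2=0$ shows $dq[1]+q[1]^2=0$. For the cone of a closed degree $0$ morphism $f\colon (M,q)\to(M',q')$, I would take
\[
\cone(f) := \bigl(M[1]\oplus M',\ Q\bigr),\qquad
Q = \begin{pmatrix} q[1] & 0 \\ f\,\shiftid{M}{1}{0} & q' \end{pmatrix},
\]
whose underlying graded object lies in $\cat B$ by closure under shifts and finite biproducts. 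The Maurer--Cartan equation $dQ+Q^2=0$ is then a $2\times 2$ computation: the two diagonal entries vanish by the MC equations for $q[1]$ and $q'$; the off-diagonal entry reduces to $d_{\cat B}f + q'f - fq$, which is zero because $f$ is closed in $\MC(\cat B)$ by the very definition of the MC differential.

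With these two candidate objects in hand, I would then exhibit the structural morphisms making them a shift and a pretriangle. The observation here is that, by construction, $\Tot$ sends $(M[1],q[1])$ to $\Tot(M,q)[1]$ and sends $(M[1]\oplus M',Q)$ to the ordinary mapping cone of $\Tot(f)$ in $\compdg{\cat A}$. The shifted identity maps $\shiftid{M}{n}{m}$ and the cone maps $i,j,p,s$ provided by the strongly pretriangulated structure on $\compdg{\cat A}$ are therefore morphisms between objects in the essential image of $\Tot$; full faithfulness of $\Tot$ (already recorded in the preceding lemma) lets us lift them uniquely to degree $0$, respectively degree $\pm 1$, morphisms in $\MC(\cat B)$. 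The pretriangle identities $dj=0$, $dp=0$, $di=jf\shiftid{M}{1}{0}$, $ds=-f\shiftid{M}{1}{0}p$ are equations in hom-complexes, hence are transported backwards by $\Tot$ verbatim.

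The main obstacle I expect is the sign bookkeeping in the second step: one must check that the choice of sign in $q[1]$ and in the off-diagonal entry $f\,\shiftid{M}{1}{0}$ of $Q$ is exactly what is required for (i) the MC equation to hold, and (ii) $\Tot$ to intertwine the new shift/cone with the ambient ones in $\compdg{\cat A}$, so that full faithfulness can be invoked. Once these signs are pinned down, everything else is formal: the verification of the pretriangle structure comes for free from the strongly pretriangulated structure of $\compdg{\cat A}$, and there is no further universal property to check since being strongly pretriangulated only requires the \emph{existence} of cones and shifts as objects together with the structural morphisms.
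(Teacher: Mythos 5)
Your proof is correct and follows essentially the same route as the paper: the paper's own proof consists precisely of the two explicit formulas for the shift $(M[n],(-1)^n q[n])$ and for the cone differential, leaving the Maurer--Cartan verifications and the pretriangle structure implicit, and your sign convention $q[1]:=-\shiftid{M}{0}{1}\,q\,\shiftid{M}{1}{0}$ matches the paper's $(-1)^1q[1]$. Your idea of transporting the structural morphisms $i,j,p,s$ and their defining identities through the fully faithful $\Tot$ is a clean way of supplying the routine checks that the paper omits, and it works because $\Tot$ is the identity on underlying graded hom-sets and strictly commutes (by your choice of signs) with the ambient shift and cone in $\compdg{\cat A}$.
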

\begin{proof}
Given an object $(M,q) \in \MC(\cat B)$, its $n$-shift is given by
\begin{equation}
(M,q)[n] = (M[n], (-1)^n q[n]).
\end{equation}

Given a closed and degree $0$ morphism $f \colon (M,q) \to (M',q')$, its cone is given by
\begin{equation}
\cone(f) = (M[1] \oplus M', \begin{psmallmatrix} -q[1] & 0 \\ f \shiftid{M}{1}{0} & q' \end{psmallmatrix}). \qedhere
\end{equation}
\end{proof}
\subsection{Bounded above twisted complexes}
Here we fix a dg-category $\cat A$ whose cohomology is concentrated in nonpositive degrees, namely:
\begin{equation*}
H^i(\cat A(A,B))=0, \quad \forall\, i>0, \quad \forall\, A,B \in \cat A.
\end{equation*}
We identify $\cat A$ with its image under the Yoneda embedding $\cat A \hookrightarrow \compdg{\cat A}$. Let us denote by $\cat A^\oplus$ the closure of $\cat A$ under finite direct sums and zero objects (in $\compdg{\cat A}$). We denote by $\cat A^\leftarrow$ the full dg-subcategory of $\compdg{\cat A}$ whose objects are given by direct sums
\begin{equation}
\bigoplus_{i \in \mathbb Z} A_i[-i],
\end{equation}
where $A_i \in \cat A^\oplus$, and $A_i = 0$ for $i \gg 0$. The dg-category $\cat A^\leftarrow$ is clearly $\mathcal U$-small.
\begin{defin} \label{def:twcompl_onesided}
A \emph{(bounded above) one-sided twisted complex} on $\cat A$ is a pair $(\oplus_{i \in \mathbb Z} A_i[-i], q) \in \MC(\cat A^\leftarrow)$, where $q= (q_i^j \colon A_i[-i] \to A_j[-j])_{i,j \in \mathbb Z}$ is such that $q_i^j=0$ whenever $j \leq i$. 

A \emph{one-sided morphism of degree $p$} 
\begin{equation*}
f \colon (\oplus_{i \in \mathbb Z} A_i[-i], q) \to (\oplus_{i \in \mathbb Z} B_i[-i], q')
\end{equation*}
between one-sided twisted complexes is a morphism $f=(f_i^j \colon A_i[-i] \to B_j[-j])_{i,j \in \mathbb Z}$ of degree $p$ in $\MC(\cat A^\leftarrow)$ such that $f_i^j=0$ whenever $i-j+p >0$.

The dg-subcategory of (bounded above) one-sided twisted complexes and one-sided morphisms in $\MC(\cat A^\leftarrow)$ is denoted by $\Tw^-(\cat A)$. 

The full dg-subcategory of $\Tw^-(\cat A)$ whose objects are the \emph{bounded} one-sided twisted complexes, namely the objects of the form $(\bigoplus_i A_i[-i],q)$ with $A_i=0$ for $i \gg 0$ or $i \ll 0$, is denoted by $\Tw^-_\bd(\cat A)$.
\end{defin}
It is easily checked that the identity morphisms of one-sided twisted complexes are one-sided, and that the composition of one-sided morphisms is one-sided. Hence, $\Tw^-(\cat A)$ is actually a well-defined (non-full) $\mathcal U$-small dg-subcategory of $\MC(\cat A^\leftarrow)$. Notice that if $A \in \cat A$, then the object $(A,0)$ is a one-sided twisted complexes ($A$ lying in degree $0$). We shall often abuse notation and identify
\begin{equation} \label{eq:twcompl_inclusion}
A=(A,0) \in \Tw^-(\cat A).
\end{equation}
Moreover, $\Tot(A,0)=\Tot(A)$ is precisely the representable dg-module $\cat A(-,A)$.

Now, we would like to describe more explicitly the objects and morphisms in the dg-category $\Tw^-(\cat A)$. The idea is that a (one-sided) twisted complex $(\oplus A_i[-i],q)$ should be a complex with a ``twisted differential'' $q$, the object $A_i$ sitting in degree $i$. This involves just some care with sign conventions. We sum everything up in the following remark, leaving it to the reader to fill in the details.
\begin{remark}
An object $X \in \Tw^-(\cat A)$ can be viewed as a pair $(A_i, q_i^j)$, where $A_i \in \cat A^\oplus$, $A_i=0$ for $i \gg 0$ and $q_i^j \colon A_i \to A_j$ is a morphism of degree $i-j+1$ for all $i,j \in \mathbb Z$, such that the following identity holds:
\begin{equation}
(-1)^j dq_i^j + q^j_k q^k_i = 0,
\end{equation}
adopting the Einstein summation convention:
\begin{equation*}
q^j_k q^k_i = \sum_k q^j_k q^k_i.
\end{equation*}

A (not necessarily one-sided) morphism $f \colon (A_i, q_i^j) \to (A'_i, {q'}_i^j)$ of degree $p$ can be viewed as a matrix of morphisms $f_i^j \colon A_i \to A_j$, where $f_i^j$ has degree $i-j+p$. $f$ is one-sided if by definition we have $f_i^j=0$ if $i-j+p>0$. The differential of $f$ is given by:
\begin{equation}
(df)_i^j = (-1)^j df_i^j + {q'}_k^j f_i^k - (-1)^p f_k^j q_i^k.
\end{equation}
Notice that
\begin{equation*}
\Hom(\oplus A_i[-i], \oplus A'_j[-j]) \cong \prod_i \bigoplus_j \Hom(A_i[-i],A_j[-j]),
\end{equation*}
using the universal property of the direct sum and the Yoneda Lemma, so the matrix $(f_i^j)$ is such that, for any $i$, the terms $(f_i^j)_j$ of the $i$-th column are almost all zero. The same is true for the matrices $(q_i^j)$ and $({q'}_i^j)$, hence the sums ${q'}^j_k f^k_i$ and $f^j_k q^k_j$ are actually finite.

Given a closed degree $0$ map $f \colon (A_i, q^j_i) \to (B_i,r^j_i)$ of twisted complexes, its cone $\cone(f)$ can be described as the twisted complex
\begin{equation}
(A_{i+1} \oplus B_i, \begin{psmallmatrix} -q^{j+1}_{i+1} & 0 \\ f^j_{i+1} & r^j_i \end{psmallmatrix}).
\end{equation}
\end{remark}
\begin{lemma}
$\Tw^-(\cat A)$ is a strongly pretriangulated subcategory of $\MC(\cat A^\leftarrow)$.
\end{lemma}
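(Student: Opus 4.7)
Since the ambient category $\MC(\cat A^\leftarrow)$ is already known to be strongly pretriangulated (the preceding proposition applies because $\cat A^\leftarrow \subseteq \compdg{\cat A}$ is visibly closed under shifts and finite direct sums by construction), the plan reduces entirely to checking that the explicit shift and cone formulas of that proposition preserve the subcategory $\Tw^-(\cat A)$, and that the four pretriangle structure maps $i,j,p,s$ lie among one-sided morphisms.

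I would first dispatch shifts. Writing $X = (A_i, q_i^j)$ and reindexing via $i \mapsto i-n$, the shift $X[n]$ takes the form $(A_{i+n}, (-1)^n q^{j+n}_{i+n})$, which trivially preserves both the strict upper-triangularity $q_i^j = 0$ for $j \leq i$ and the upper-boundedness $A_i = 0$ for $i \gg 0$.

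For cones, given a closed degree-$0$ one-sided morphism $f \colon (A_i, q_i^j) \to (B_i, r_i^j)$, the ambient formula produces a twisted complex with underlying components $C_i = A_{i+1} \oplus B_i$ and a $2 \times 2$ block differential whose entries are $-q^{j+1}_{i+1}$, $r^j_i$ and $f^j_{i+1}$. All three must vanish for $j \leq i$: the first two inherit this immediately from the one-sidedness of source and target, and for the third I would unpack the defining condition that a one-sided morphism of degree $0$ satisfies $f^b_a = 0$ whenever $a > b$, and apply it with $a = i+1$, $b = j$, yielding exactly $f^j_{i+1} = 0$ for $j \leq i$. Upper-boundedness of $(C_i)$ is automatic. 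The four biproduct maps $i,j,p,s$ are block inclusions or projections, with matrix components $g_k^l$ that are identities if $k=l$ (in the appropriate indexing) and zero otherwise, so the one-sidedness vanishing condition $g_k^l = 0$ for $l < k$ is vacuously satisfied.

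No serious obstacle is expected: the only delicate bookkeeping is keeping track of the index shift that shows up when reading the upper-left block of the cone as $-q^{j+1}_{i+1}$ rather than $-q^j_i$ (owing to the placement of $X[1]$ as the first summand of $C$), and making sure the sign conventions inherited from the cone formula match those fixed earlier in the section. Once these routine checks are in place, $\Tw^-(\cat A)$ is closed in $\MC(\cat A^\leftarrow)$ under shifts and cones of closed degree $0$ one-sided morphisms, together with the associated pretriangle structure maps, and hence is itself strongly pretriangulated.
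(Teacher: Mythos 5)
Your proof is correct and follows essentially the same approach as the paper, which simply observes that the pretriangle $Q \to R \to \cone(f) \to Q[1]$ in $\MC(\cat A^\leftarrow)$ lies in $\Tw^-(\cat A)$ and declares this ``immediate.'' Your write-up supplies precisely the index bookkeeping the authors suppressed: checking that the shift formula, the three nonzero blocks $-q^{j+1}_{i+1}$, $r^j_i$, $f^j_{i+1}$ of the cone differential, and the biproduct structure maps all satisfy the one-sided vanishing conditions (noting correctly that for the degree-$0$ morphism $f$ the condition $f^j_{i+1}=0$ for $i+1 > j$ is exactly $j \leq i$).
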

\begin{proof}
We only need to check that, given a closed degree $0$ morphism in $\Tw^-(\cat A)$, namely a one-sided morphism $f \colon Q \to R$ between one-sided twisted complexes, the pretriangle in $\MC(\cat A^\leftarrow)$
\begin{equation*}
Q \xrightarrow{f} R \to \cone(f) \to Q[1]
\end{equation*}
lies in $\Tw^-(\cat A)$. But this is immediate.
\end{proof}
The reason why we defined $\Tw^-(\cat A)$ using one-sided morphisms is that the cone of a morphism between one-sided twisted complexes is not in general a one-sided twisted complex, unless this morphism is itself one-sided. The further requirement that $\cat A$ has cohomology concentrated in nonpositive degrees ensures that we are not really losing any relevant information, as we see in the following result.
\begin{prop}
Let $\cat A$ be a dg-category with cohomology concentrated in nonpositive degrees. Then, the inclusion functor
\begin{equation*}
\Tw^-(\cat A) \to \MC(\cat A^\leftarrow)
\end{equation*}
is quasi-fully faithful. In particular, the totalisation functor
\begin{equation} \label{eq:tot_twmin_def}
\Tot \colon \Tw^-(\cat A) \to \compdg{\cat A}
\end{equation}
is quasi-fully faithful.
\end{prop}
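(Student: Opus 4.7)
The aim is to show that for every $Q,R \in \Tw^-(\cat A)$, the inclusion of hom complexes
\[
C' := \Tw^-(\cat A)(Q,R) \hookrightarrow \MC(\cat A^\leftarrow)(Q,R) =: C
\]
is a quasi-isomorphism; equivalently, that the quotient $C'' := C/C'$, whose degree-$p$ piece consists of those matrix entries $f_i^j$ with $i - j + p > 0$, is acyclic. The corollary about $\Tot$ then follows immediately, since the preceding lemma says $\Tot \colon \MC(\cat A^\leftarrow) \to \compdg{\cat A}$ is already (strictly) fully faithful on hom complexes.

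The first step is a row filtration on $C$: set $G^n C := \{f \in C : f_i^j = 0 \text{ for } i > n\}$. Inspecting the formula $(df)_i^j = (-1)^j d(f_i^j) + {q'}_k^j f_i^k - (-1)^p f_k^j q_i^k$, the only term by which information from a higher row $k > i$ could enter row $i$ is the last one, but it requires $k > i$, so $f_k^j = 0$ for all such $k$ once $f \in G^n C$ and $i > n$. Hence $G^n C$ is a sub-dg-module, with $G^{N_Q} C = C$ because $Q$ is bounded above (say with top index $N_Q$). By dg-Yoneda applied to $A_n \in \cat A^\oplus$, the associated graded $G^n C / G^{n-1} C$ is naturally isomorphic to $\mathrm{Hom}_{\compdg{\cat A}}(A_n, \Tot R)$, with differential induced by $d + q_R$. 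This filtration descends to $C''$; its graded piece at row $n$ is the ``positive internal degree'' part of $\mathrm{Hom}(A_n, \Tot R)$, i.e.\ in degree $p$ the components at columns $j < n+p$.

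The key claim is that each of these graded pieces is acyclic. For this, filter $\mathrm{Hom}(A_n, \Tot R)$ by the sub-dg-modules $V_{\geq N} := \mathrm{Hom}(A_n, R_{\geq N})$, where $R_{\geq N} := \bigoplus_{j \geq N} B_j[-j] \subseteq \Tot R$ is closed under $q_R$ (which strictly increases $j$). The associated graded $V_{\geq N}/V_{\geq N+1} \cong \mathrm{Hom}(A_n, B_N)[n - N]$ has degree-$p$ cohomology $H^{p+n-N}(\mathrm{Hom}(A_n, B_N))$, which by the hypothesis on $\cat A$ vanishes as soon as $p + n - N > 0$, that is, precisely on the positive internal degree region $p > N - n$. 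The filtration is bounded above ($V_{\geq N_R + 1} = 0$ because $R$ is bounded above) and exhaustive (each element has finite $j$-support because $\Tot R$ is a direct sum in $j$), so the associated spectral sequence converges strongly, and the vanishing of $E_1$ in the positive internal degree range yields acyclicity of the graded piece of $C''$ at row $n$.

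To pass from graded pieces back to $C''$, observe that for each $n \leq N_Q$ the quotient $C'' / G^n C''$ is a finite iterated extension of the row pieces for rows $n+1, \ldots, N_Q$, each of which has just been shown to be acyclic, and hence is itself acyclic. Since $C = \prod_i \bigoplus_j \mathrm{Hom}(A_i, B_j)[i-j]$ is a literal product over rows, the same holds for $C''$, so $C'' = \varprojlim_n C''/G^n C''$ with degree-wise surjective transition maps, and the Milnor short exact sequence
\[
0 \to {\varprojlim}^{\!1} H^{*-1}(C''/G^n C'') \to H^*(C'') \to \varprojlim H^*(C''/G^n C'') \to 0
\]
forces $H^*(C'') = 0$. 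The main technical obstacle is precisely that neither filtration is bounded below — $Q$ and $R$ are only bounded above — which is why the Milnor argument is needed at the outer level, while the exhaustive direct-sum structure is essential for the strong convergence of the inner spectral sequence.
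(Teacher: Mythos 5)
Your proof takes a genuinely different route from the paper. Where the paper explicitly constructs the needed homotopy via the double induction of Lemma \ref{lemma:morphism_onesided_modify}, you reduce to acyclicity of the quotient complex $C'' = C/C'$, filter by rows, filter each row piece by columns, and pass to the limit with a Milnor $\varprojlim^1$ sequence. The outer Milnor argument is a clean way to handle the lack of a lower bound, and the overall organization is attractive.

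There is, however, a gap at the central step. You filter $\Hom(A_n,\Tot R)$ by columns and correctly note that the $E_1$-page $H^{p+n-N}(\cat A(A_n,B_N))$ vanishes for $p+n-N>0$. But this is the $E_1$-page of the spectral sequence of $\Hom(A_n,\Tot R)$, not of the subquotient $W_n$ (the row-$n$ graded piece of $C''$) whose acyclicity you actually need. The column-$N$ graded piece of $W_n$ is the \emph{brutal truncation} $\sigma^{>0}\cat A(A_n,B_N)$ (suitably shifted), and
\begin{equation*}
H^1\bigl(\sigma^{>0}\cat A(A_n,B_N)\bigr)\;=\;Z^1\bigl(\cat A(A_n,B_N)\bigr),
\end{equation*}
the degree-$1$ cocycles — not $H^1$, which is what the hypothesis kills. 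Since $H^1=0$ only gives $Z^1=B^1=\operatorname{im}(d^0)$, which is in general nonzero, the $E_1$-page of the relevant spectral sequence is \emph{not} zero, and acyclicity of $W_n$ does not follow. In short, the argument conflates ``$H^{>0}(K)=0$'' with ``$\sigma^{>0}K$ is acyclic,'' which are different statements. The paper's induction is designed precisely to handle this: at each step it produces a positive-degree \emph{cocycle} and uses $Z^{>0}=B^{>0}$ to write it as a coboundary, feeding that primitive back into the homotopy being built. That chain-level information (a choice of bounding element, not merely the vanishing of a cohomology class) is exactly what a naive $E_1$-vanishing argument on the truncated complex fails to retain. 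To rescue the spectral-sequence approach one would have to show that the degree-$1$ classes $Z^1(\cat A(A_n,B_N))$ are annihilated by the higher differentials $d_r$ coming from the twist $r$, which requires essentially the same bookkeeping as the paper's proof.
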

\begin{proof}
The totalisation $\MC(\cat A^\leftarrow) \to \compdg{\cat A}$ is fully faithful and both $\Tw^-(\cat A)$ and $\MC(\cat A^\leftarrow)$ are strongly pretriangulated, so we only need to show the following claims, for two given one-sided twisted complexes $Q, R \in \Tw^-(\cat A)$:
\begin{enumerate}
\item \label{proof:twmin_faithful} Given a closed degree $0$ one-sided morphism $f \colon Q \to R$, if $f= d\alpha$ for some (non necessarily one-sided) degree $-1$ morphism $\alpha \colon Q \to R$, then there exists a one-sided degree $-1$ morphism $\beta \colon Q \to R$ such that $f=d\beta$;
\item \label{proof:twmin_full} For any closed and degree $0$ (not necessarily one-sided) morphism $f \colon Q \to R$, there is a degree $-1$ morphism $\alpha \colon Q \to R$ such that $f-d\alpha$ is a one-sided morphism.
\end{enumerate}
Both claims follow from the following technical Lemma \ref{lemma:morphism_onesided_modify}.
\end{proof}
\begin{lemma} \label{lemma:morphism_onesided_modify}
Let $\cat A$ be a dg-category with cohomology concentrated in nonpositive degrees. Let $f \colon Q \to R$ be a (non necessarily one-sided) degree $p$ morphism between bounded above one-sided twisted complexes $Q = (Q_i, q^i_j), R = (R_i,r^i_j) \in \Tw^-(\cat A)$. Assume that the differential $df$ is one-sided. Then, there exists $\alpha \colon P \to Q$ of degree $p-1$ such that $f-d\alpha$ is one-sided.
\end{lemma}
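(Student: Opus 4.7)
The strategy is to eliminate the non-one-sided components of $f$ position-by-position, using the acyclicity of $\cat A$ in positive degrees. For a morphism $g \colon Q \to R$ of degree $q$, write $g = \sum_k g^{(k)}$, where $g^{(k)}$ gathers the components $g_i^j$ with $j - i = k$ (so $g_i^{i+k}$ has degree $q - k$ in $\cat A$). The explicit differential formula
\[
(dg)^{(k)}_i^{i+k} = (-1)^{i+k} dg_i^{i+k} + \sum_l r^{i+k}_l g_i^l - (-1)^q \sum_l g_l^{i+k} q_i^l
\]
shows that the two twist sums only couple to components $g^{(k')}$ at strictly lower positions $k' < k$, because $q$ and $r$ are strictly upper-triangular. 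In this language, $f$ is one-sided iff $f^{(k)} = 0$ for all $k < p$.

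I construct $\alpha = \sum_{k < p} \alpha^{(k)}$ with $\alpha^{(k)}$ concentrated at position $k$, by ascending induction on $k$. Suppose $\alpha^{(<k)} := \sum_{k' < k} \alpha^{(k')}$ has been defined so that $f' := f - d\alpha^{(<k)}$ satisfies $(f')^{(k')} = 0$ for every $k' < k$. Since $df' = df$ is one-sided by hypothesis, $(df')^{(k)} = 0$; moreover, the twist contributions to $(df')^{(k)}_i^{i+k}$ vanish, as they could only involve the already-killed $(f')^{(k')}$ for $k' < k$. Therefore $(-1)^{i+k} d(f')_i^{i+k} = 0$, i.e., each $(f')_i^{i+k}$ is a cocycle in $\cat A(Q_i, R_{i+k})$ in the strictly positive degree $p - k$. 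By the hypothesis $H^{>0}(\cat A(-,-)) = 0$, it is a coboundary, so I may choose $\alpha^{(k)}_i \in \cat A(Q_i, R_{i+k})$ of degree $p - 1 - k$ with $d\alpha^{(k)}_i = (-1)^{i+k}(f')_i^{i+k}$. Because $\alpha^{(k)}$ is concentrated at a single position, its own twist contributions at that position vanish, so $(d\alpha^{(k)})^{(k)} = (f')^{(k)}$: adjoining $\alpha^{(k)}$ cancels the position-$k$ components without disturbing the strictly lower ones.

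The main obstacle is that positions may be unbounded below (since $Q$ and $R$ are only bounded above), so the induction on $k$ is not naively finite. I resolve this by noting that for each fixed row $i$ the component $f_i^\bullet$ has finite support (by the direct-sum Hom formula in $\cat A^\leftarrow$), and the bounded-above hypothesis $Q_l = R_l = 0$ for $l \gg 0$ restricts the summation index $l$ in both twist terms to a finite range. Consequently, each $\alpha_i^\bullet$ ends up supported on finitely many positions, and $\alpha$ is a well-defined morphism in $\MC(\cat A^\leftarrow)$. Putting everything together, $f - d\alpha$ has vanishing components at every position $k < p$, so $f - d\alpha$ is one-sided, as required.
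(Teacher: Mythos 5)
Your core strategy — gather components of $f$ into diagonals $f^{(k)}$ with $k=j-i$, observe that the twist terms only couple each diagonal to strictly lower ones, and then kill the off-one-sided diagonals $k<p$ one by one using $H^{>0}(\cat A(-,-))=0$ — is the same underlying idea as the paper's proof, and the local step (each $(f')_i^{i+k}$ becomes a positive-degree cocycle, hence a coboundary) is correct.

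However, there is a genuine gap in the well-definedness argument. You induct over $k\in(-\infty,p-1]$, and you correctly flag that this set has no least element, so the induction is not automatic. But the way you close the gap is not correct: you claim that the bounded-above hypothesis restricts the summation index $l$ in \emph{both} twist terms to a finite range. This is only true for the $q$-term $\sum_l \alpha_l^j q_i^l$: there $l$ is pinned between $i+1$ and the top degree of $Q$, a finite range. For the $r$-term $\sum_l r_l^j\alpha_i^l$, the index $l$ is only bounded \emph{above} (by $j-1$ and the top degree of $R$); nothing from the hypotheses bounds it below. The sum is finite only if one already knows that $\alpha_i^\bullet$ has finite support — which is exactly what you are trying to establish. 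The claim ``each $\alpha_i^\bullet$ ends up supported on finitely many positions'' is the correct conclusion, but it does not follow from the argument given.

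The paper resolves this by reorganizing the recursion. Instead of a single sweep over diagonals $k$, it does a double induction: an outer \emph{downward} induction on the row index $i$ (which is well-founded precisely because $Q$ is bounded above), and for each fixed $i$ an inner \emph{upward} induction on the column index, starting from an explicit lower bound
\[
n_i=\min\bigl(i-1,\;n_{i+1},\;\min\{k : f_i^k\neq0\}\bigr),
\]
with $\alpha_i^k$ forced to be $0$ for $k<n_i$. The existence of $n_i$ uses both the finite support of $f_i^\bullet$ and, via the $q$-term, the already-established finite supports of $\alpha_l^\bullet$ for the finitely many rows $l>i$. With this in place, the inner induction for each row is genuinely finite, and $\alpha$ is a legitimate morphism in $\MC(\cat A^\leftarrow)$. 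This row-by-row bound is the missing ingredient in your argument; without it, your position-by-position induction has no starting point and no control on the row supports.
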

\begin{proof}
By shifting, we can assume without loss of generality that $f$ has degree $0$ and that $Q_i=R_i=0$ for all $i>0$. For all $i>0$, we define $n_i= i-1$, and for $i \leq 0$ we define recursively:
\[
n_i=\min\{i-1, n_{i+1}, k : f_i^k \neq 0 \} \in \mathbb Z.
\]
Notice that $n_i < i$ and $n_{i+1} \leq n_i$ for all $i$.

For all $i > 0$ and for all $k \in \mathbb Z$, set $\alpha_i^k=0$. This verifies the (empty) conditions:
\begin{align*}
f_i^k &= (-1)^k d\alpha_i^k + r^k_s \alpha^s_i + \alpha^k_s q^s_i \quad \text{if $k < i$}, \\
\alpha^k_i &= 0 \quad \text{if $k \geq i$ or $k < n_i$}.
\end{align*}
Now, let $i \geq 0$. Assume recursively that we have defined $\alpha_j^k$ for all $j \geq i+1$ and for all $k \in \mathbb Z$, such that:
\begin{equation} \label{eq:conditions_alpha}
\begin{split}
f^k_j &= (-1)^k d\alpha^k_j + r^k_s \alpha^s_j + \alpha^k_s q^s_j \quad \text{if $k < j$}, \\
\alpha^k_j &= 0 \quad \text{if $k \geq j$ or $k < n_j$}.
\end{split}
\end{equation}
We are going to define $\alpha_i^k$ so that the conditions \eqref{eq:conditions_alpha} are satisfied. First, we set $\alpha^k_i = 0$ for all $k \geq i$ and for all $k < n_i$. Then, we let $k <i$ and we define $\alpha_i^k$ recursively. For the base step, we define $\alpha_i^{n_i}$; using that $(df)_i^{n_i} = 0$ by hypothesis since $n_i < i$, we compute:
\begin{equation*}
0 = (df)_i^{n_i} = (-1)^{n_i} df_i^{n_i} + \sum_{s < n_i} r^{n_i}_s f^s_i - \sum_{s >i} f^{n_i}_s q^s_i 
\end{equation*}
We have written explicit summation symbols for the sake of clarity: by construction $\sum_{s < n_i} r^{n_i}_s f^s_i$ vanishes, and since $s > n_s \geq n_i$ for $s>i$, we can apply the inductive hypothesis to $\sum_{s >i} f^{n_i}_s q^s_i$, finding:
\begin{equation*}
0 = (-1)^{n_i} df^{n_i}_i - (-1)^{n_i} \sum_{s > i, s < n_i} d\alpha^{n_i}_s q^s_i - \sum_{s > i, t < n_i} (r^{n_i}_t \alpha^t_s q^s_i + \alpha^{n_i}_t q^t_s q^s_i).
\end{equation*}
Next, $\sum_{s > i, t < n_i} r^{n_i}_t \alpha^t_s q^s_i$ vanishes because if $s>i$ and $t < n_i$ then $t < n_s$, and by definition $\alpha_s^t=0$. We resume forgetting the summation symbols and we note that $- q^t_s q^s_i = (-1)^t dq_i^t$, hence we go on:
\begin{align*}
0 &= (-1)^{n_i} df_i^{n_i} + (-1)^{n_i+1} d\alpha_s q^s_i +(-1)^t \alpha^{n_i}_t dq^t_i \\
&= (-1)^{n_i}df_i^{n_i} +(-1)^{n_i+1} d(\alpha^{n_i}_t q^t_i).
\end{align*}
Finally, we have that $(-1)^{n_i}f_i^{n_i} + (-1)^{n_i+1} \alpha^{n_i}_t q^t_i$ is a cocycle of positive degree. Since $\cat A$ ha cohomology concentrated in nonpositive degrees, this is a coboundary: we find $\alpha_i^{n_i}$ such that
\begin{equation*}
f_i^{n_i} = (-1)^{n_i} d\alpha_i^{n_i} + \alpha^{n_i}_s q^s_i = (-1)^{n_i} d\alpha_i^{n_i} + r^{n_i}_s \alpha^s_i + \alpha^{n_i}_s q^s_i.
\end{equation*}
For the inductive step, we assume we have defined the required $\alpha_i^h$ for $h=n_i, n_i+1,\ldots, k-1$ satisfying \eqref{eq:conditions_alpha}, and we define $\alpha_i^k$ which satisfies the analogue conditions. The technique is similar to the one used for the base step, and it is left to the reader.

At the end of this process, we get a degree $-1$ morphism $\alpha \colon Q \to R$. By constructions, it is immediate to see that
\begin{equation*}
f_i^j - (d\alpha)_i^j = 0
\end{equation*}
whenever $j<i$. Namely, $f-d\alpha$ is one-sided.
\end{proof}
\begin{remark} \label{remark:Twmin_functorial}
The construction $\cat A \mapsto \Tw^-(\cat A)$ is functorial in $\cat A$. Namely, given a dg-functor $F \colon \cat A \to \cat B$ between dg-categories with cohomology concentrated in negative degrees, there is a functorially induced dg-functor $\Tw^-(F) \colon \Tw^-(\cat A) \to \Tw^-(\cat B)$, defined as follows:
\begin{equation}
\begin{split}
\Tw^-(F)&(\oplus A_i[-i], q) = (\oplus F(A_i)[-i], F(q)), \\
\Tw^-(F)&((\oplus A_i[-i], q) \xrightarrow{f} (\oplus B_j[-j], r)) \\
&= (\oplus F(A_i)[-i], F(q)) \xrightarrow{F(f)} (\oplus F(B_j)[-j], F(r)),
\end{split}
\end{equation}
where we abused notation a little, identifying $F$ with its extension to $\cat A^\oplus$. The above definition is good since clearly $\Tw^-(F)$ maps (bounded above) one-sided twisted complexes and one-sided morphisms to (bounded above) one-sided twisted complexes and one-sided morphisms.
\end{remark}
\subsection{Twisted complexes and colimits} \label{subsec:twcompl_colim}
We shall work again with a dg-category $\cat A$ with cohomology concentrated in nonpositive degrees. We first convince ourselves that ``stupid truncations'' are well defined for twisted complexes.
Let $X=(\oplus_i A_i[-i],q) \in \Tw^-(\cat A)$. For all $n \in \mathbb Z$, define 
\begin{equation}
\sigma_{\geq n} X = (\oplus_{i \geq n} A_i[-i], \sigma_{\geq n} q) \in \Tw^-_\bd (\cat A),
\end{equation}
where $\sigma_{\geq n} q$ is obtained from $q$ by restriction:
\begin{equation}
(\sigma_{\geq n} q)_i^j = q_i^j, \quad i,j \geq n.
\end{equation}
We easily see that
\begin{align*}
(-1)^j d((\sigma_{\geq n} q)^j_i) & + (\sigma_{\geq n} q)_k^j (\sigma_{\geq n} q)_i^k \\
&= (-1)^j d(q_i^j) + q^j_k q^k_i = 0
\end{align*}
if $i,j \geq n$ (since $q$ is ``one-sided'', the sum is over $k \geq i$).

There are natural (closed, degree $0$) inclusions in $\Tw^-(\cat A)$:
\begin{equation} 
\varphi_n \colon \sigma_{\geq n} X \to X, \quad \varphi_{n,n-1} \colon \sigma_{\geq n} X \to \sigma_{\geq n-1} X
\end{equation}
such that the following diagram is commutative:
\begin{equation} \label{eq:inclusion_truncations}
\begin{gathered}
\xymatrix{
\sigma_{\geq n} X \ar[rr]^{\varphi_{n,n-1}} \ar[dr]_{\varphi_{n-1}} & & \sigma_{\geq n-1}X \ar[dl]^{\varphi_n} \\
& X.
}
\end{gathered}
\end{equation}
For the underlying graded modules, these maps are just the inclusions:
\begin{equation*}
\bigoplus_{i \geq n} A_i[-i] \to \bigoplus_{i \geq n-1} A_i[-i] \to \cdots \to \bigoplus_i A_i[-i].
\end{equation*}
They are clearly of degree $0$. Let us verify, for example, that $\varphi_n$ is closed. We compute directly:
\begin{align*}
d(\varphi_n)_i^j & = (-1)^j d(({\varphi_n})_i^j) + (\sigma_{\geq n-1} q)_k^j ({\varphi_n})_i^k - ({\varphi_n})^j_k (\sigma_{\geq n} q)^k_i \\
&= (\sigma_{\geq n-1} q)_i^j ({\varphi_n})_i^i - ({\varphi_n})^j_k (\sigma_{\geq n} q)^k_i.
\end{align*}
Now, if both $i,j \geq n$, we have $({\varphi_n})_i^i = 1, ({\varphi_n})_j^j = 1$ and $(\sigma_{\geq n-1}q)_i^j = (\sigma_{\geq n} q)_i^j = q_i^j$, so $d(\varphi_n)_i^j = 0$. On the other hand, if $i < n$, we have $({\varphi_n})_i^i = 0$ and $(\sigma_{\geq n} q)_i^j=0$, so the above expression is $0$; instead, if $j< n$ we have $({\varphi_n})^j_j = 0$, and if $i<j$ also $({\varphi_n})_i^i=0$, whereas if $i \geq j$ then $(\sigma_{\geq n-1}q)_i^j = 0$. So, in any case $(d\varphi_n)_i^j = 0$, as claimed.
\begin{remark} \label{remark:truncation-cone}
Given $X=(\oplus_i A_i[-i], q)$ as above and $n \in \mathbb Z$, there is a closed degree $0$ map
\begin{equation}
\beta_n \colon A_{n-1}[-n] \to \sigma_{\geq n} X
\end{equation}
in $\Tw^-(\cat A)$, simply defined using the twisted differentials of $X$:
\begin{equation*}
({\beta_n})_n^j = q_{n-1}^j, \quad j > n-1.
\end{equation*}
By definition it has degree $0$, and
\begin{align*}
(d\beta_n)_n^j &= (-1)^j d(q_{n-1}^j) + q^j_k (\beta_n)_n^k \\
&= (-1)^j d(q^j_{n-1}) + q^j_k q^k_{n-1} = 0.
\end{align*}
By definition
\begin{equation*}
\cone(\beta_n) = (A_{n-1}[-n+1] \oplus \sigma_{\geq n} X, \begin{psmallmatrix} 0 & 0 \\ \beta_n \shiftid{A_{n-1}}{-n+1}{-n} & \sigma_{\geq n} q \end{psmallmatrix}) = \sigma_{\geq n-1} X.
\end{equation*}
The map $\varphi_{n,n-1}$ is precisely the natural inclusion $\sigma_{\geq n} X \to \cone(\beta_n)$. In other words, for any $n \in \mathbb Z$, there is a pretriangle in $\Tw^-(\cat A)$:
\begin{equation} \label{eq:twtruncation_triangle}
A_{n-1}[-n] \xrightarrow{\beta_n} \sigma_{\geq n} X \xrightarrow{\varphi_{n,n-1}} \sigma_{\geq n-1}X \to A_{n-1}[-n+1].
\end{equation}
\end{remark}
The twisted complex $X$ can be reconstructed from the truncations $\sigma_{\geq n} X$ by taking the colimit. More precisely, we have the following:
\begin{prop} \label{prop:tot_twmin_colim}
Let $n \in \mathbb Z$. The totalisation $\Tot(X)$, with the maps $(\Tot(\varphi_{n-p}))_{p \geq 0}$, is the colimit of $(\Tot(\sigma_{\geq n-p}X), \Tot(\varphi_{n-p,n-p-1}))_{p \geq 0}$ in $\comp{\cat A} = Z^0(\compdg{\cat A})$:
\begin{equation}
\Tot(X) \cong \varinjlim_{p \geq 0} \Tot(\sigma_{\geq n-p} X).
\end{equation}
\end{prop}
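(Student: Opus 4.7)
The plan is to check directly that $\Tot(X)$, together with the closed degree $0$ maps $\Tot(\varphi_{n-p})$, satisfies the universal property of the colimit in $\comp{\cat A}$. The key observation is that the stupid truncations realize $\Tot(X)$ as an increasing union of dg-submodules.

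First, I would unwind the definitions to identify $\Tot(\varphi_m) \colon \Tot(\sigma_{\geq m} X) \to \Tot(X)$ on the underlying graded modules as the tautological inclusion of the direct summand $\bigoplus_{i \geq m} A_i[-i] \hookrightarrow \bigoplus_i A_i[-i]$. These are monomorphisms in $\comp{\cat A}$ and the maps $\Tot(\varphi_{n-p,n-p-1})$ make $\{\Tot(\sigma_{\geq n-p} X)\}_{p \geq 0}$ a directed system of subobjects by the commutativity of \eqref{eq:inclusion_truncations} (after applying the dg-functor $\Tot$).

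Second, I would check that each $\Tot(\sigma_{\geq n-p} X)$ is genuinely a dg-submodule of $\Tot(X)$ and not merely a graded submodule. The differential on $\Tot(X)$ is $d + q$. Since $q$ is one-sided, $q_i^j = 0$ unless $j > i$; hence if $i \geq n-p$ then both $d$ (which preserves the component $A_i[-i]$) and $q$ (which sends $A_i[-i]$ into $\bigoplus_{j > i} A_j[-j] \subseteq \bigoplus_{j \geq n-p} A_j[-j]$) preserve the subspace $\bigoplus_{i \geq n-p} A_i[-i]$. Moreover, the restriction of $d+q$ to this subspace is by construction the differential of $\Tot(\sigma_{\geq n-p} X)$.

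Third, since $X$ is bounded above there exists $N$ with $A_i = 0$ for $i > N$, so for every $i$ we have $i \geq n-p$ once $p$ is sufficiently large, which gives the set-theoretic equality
\begin{equation*}
\bigcup_{p \geq 0} \bigoplus_{i \geq n-p} A_i[-i] \;=\; \bigoplus_{i \in \mathbb Z} A_i[-i].
\end{equation*}
Since colimits in $\comp{\cat A} = Z^0(\compdg{\cat A})$ are computed pointwise on each $A \in \cat A$ as ordinary colimits of complexes of $\basering k$-modules, and since filtered colimits of monomorphisms of complexes are given by unions, this identifies $\Tot(X)$ with $\varinjlim_p \Tot(\sigma_{\geq n-p} X)$, the structure maps of the colimit being precisely the inclusions $\Tot(\varphi_{n-p})$.

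There is no serious obstacle here; the proof is essentially bookkeeping. The only points that require attention are the two compatibility conditions, namely that one-sidedness of $q$ is exactly what guarantees that the truncations are closed under the differential of $\Tot(X)$, and that the bounded above condition on $X$ is exactly what ensures the chain of subobjects exhausts $\Tot(X)$.
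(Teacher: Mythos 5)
Your proposal is correct and takes essentially the same approach as the paper: both arguments rest on the observation that $\Tot(X)$ is the increasing union of the dg-submodules $\Tot(\sigma_{\geq n-p}X)$ and then invoke the (pointwise, filtered) nature of colimits in $\comp{\cat A}$. The paper verifies the universal property directly by hand, whereas you additionally spell out why one-sidedness of $q$ ensures the truncations are closed under the twisted differential $d+q$ --- a detail the paper elides --- so if anything your version is slightly more explicit on the one point worth being careful about.
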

\begin{proof}
Assume we are given a dg-module $M$ and closed degree $0$ morphisms $\alpha_{n-p} \colon \Tot(\sigma_{\geq n-p} X) \to M$ for all $p \geq 0$, such that $\alpha_{n-p} = \alpha_{n-p-1} \circ \Tot(\varphi_{n-p,n-p-1})$ for all $p \geq 1$. We want to define a unique $\alpha \colon \Tot(X) \to M$ such that $\alpha \circ \Tot(\varphi_{n-p}) = \alpha_{n-p}$ for all $p \geq 0$. We observe that
\begin{equation*}
\Tot(X) = \bigcup_{p \geq 0} \varphi_{n-p}(\Tot(\sigma_{\geq n-p} X))
\end{equation*}
and the differential on $\Tot(\varphi_{n-p})(\Tot(\sigma_{\geq n-p} X))$ is the restriction of the differential on $\Tot(X)$. So, for all $y \in \Tot(X)(A)$, $y=\Tot(\varphi_{n-p})(y')$ for a unique $y'$, for some $p \geq 0$. Hence, we are forced to set
\begin{equation*}
\alpha(y) = \alpha_{n-p}(y').
\end{equation*}
Now it is easy to verify that $\alpha$ is well-defined and satisfies the required properties.
\end{proof}

It is known that the totalisation $\Tot(X) \cong \varinjlim_p \Tot(\sigma_{\geq n-p}X)$, as a directed colimit, lies in the following short exact sequence in $\comp{\cat A}$:
\begin{equation} \label{eq:ses_TotX}
0 \to \bigoplus_{p \geq 0} \Tot(\sigma_{\geq n-p} X) \xrightarrow{1-\mu} \bigoplus_{p \geq 0} \Tot(\sigma_{\geq n-p}X) \xrightarrow{\oplus_p \Tot(\varphi_{n-p})} \Tot(X) \to 0,
\end{equation}
where $\mu$ is the morphism induced by
\begin{equation*}
\Tot(\sigma_{\geq n-p} X) \xrightarrow{\Tot(\varphi_{n-p,n-p-1})} \Tot(\sigma_{\geq n-p-1} X) \to \bigoplus_{p \geq 0} \Tot(\sigma_{\geq n-p} X).
\end{equation*}
Recall from Definition \ref{def:holim_dgmod} that the (strictly dg-functorial) homotopy colimit $\hocolim_{p \geq 0} \Tot(\sigma_{\geq n-p} X)$ is the cone in the following pretriangle in $\compdg{\cat A}$:
\begin{equation}
\bigoplus_{p \geq 0} \Tot(\sigma_{\geq n-p} X) \xrightarrow{1-\mu} \bigoplus_{p \geq 0} \Tot(\sigma_{\geq n-p}X) \to \hocolim_{p \geq 0} \Tot(\sigma_{\geq n-p} X).
\end{equation}
In order to compare $\Tot(X) \cong \varinjlim_p \Tot(\sigma_{\geq n-p} X)$ and $\hocolim_p \Tot(\sigma_{\geq n-p} X)$, we use the following general result:
\begin{lemma} \label{lemma:dg-coker-iso-cone}
Let
\begin{equation*}
A \xrightarrow{f} B \xrightarrow{g} C
\end{equation*}
a degreewise split exact sequence of maps in $Z^0(\cat A)$, namely, there are degree $0$ (not necessarily closed) maps $\sigma \colon C \to B$ and $\rho \colon B \to A$ such that $B$, together with those maps, is a biproduct of $A$ and $C$:
\begin{equation*}
gf =0, \quad \rho \sigma = 0, \quad g \sigma = 1, \quad \rho f= 1, \quad \sigma g + f \rho = 1.
\end{equation*}
Then, the closed degree $0$ morphism $\varphi \colon \cone(f) \to C$ defined by $\varphi = (0, g)$ (with respect to the direct sum decomposition $\cone(f) = A[1] \oplus B$) is an isomorphism in $H^0(\cat A)$. In particular, $\varphi$ makes the following diagram commute in $Z^0(\cat A)$:
\begin{equation*}
\begin{gathered}
\xymatrix{
A \ar[r]^f \ar@{=}[d] & B \ar[r] \ar@{=}[d] & \cone(f) \ar[d]^\varphi \\
A \ar[r]^f & B \ar[r]^g & C.
}
\end{gathered}
\end{equation*}
\end{lemma}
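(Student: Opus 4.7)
The plan is to exhibit an explicit homotopy inverse $\psi \colon C \to \cone(f)$. First, writing $\varphi = g \circ s$ where $s \colon \cone(f) \to B$ is the biproduct projection, one verifies $d\varphi = g(ds) = -gf\shiftid{A}{1}{0}p = 0$ using $gf = 0$, and $\varphi \circ j = gsj = g$ gives the commutativity of the displayed square on the nose.

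The main subtlety is that the naive candidate $\psi = j\sigma$ need not be closed, since $\sigma$ is not assumed closed. To correct this, I would differentiate the four biproduct identities using $df = dg = 0$ to derive $g \cdot d\sigma = 0$, $d\rho \cdot f = 0$, $d\sigma \cdot g = -f \cdot d\rho$, and $\rho \cdot d\sigma + d\rho \cdot \sigma = 0$. Multiplying $d\sigma$ on the left by $1_B = \sigma g + f\rho$ and using $g \cdot d\sigma = 0$ yields the key identity $d\sigma = f(\rho \cdot d\sigma)$. This suggests setting $\xi := -\shiftid{A}{0}{1}(\rho \cdot d\sigma) \colon C \to A[1]$ of degree $0$ and $\psi := j\sigma + i\xi$; a direct check then gives $d\psi = 0$ (using $d\rho \cdot d\sigma = d\rho \cdot f\rho \cdot d\sigma = 0$) and $\varphi\psi = g\sigma = 1_C$ strictly.

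For the other composition, I would use $\sigma g = 1_B - f\rho$ and $ip + js = 1_{\cone(f)}$ to rewrite $\psi\varphi - 1_{\cone(f)} = -jf\rho s + i\xi g s - ip$, and propose the homotopy $h := -i\shiftid{A}{0}{1}\rho s$ of degree $-1$. Expanding $dh$ via Leibniz with the formulas of \eqref{eq:differentials_cone} and using $\rho f = 1_A$ together with $\shiftid{A}{0}{1}\shiftid{A}{1}{0} = 1_{A[1]}$ produces $-jf\rho s + i\shiftid{A}{0}{1}(d\rho)s - ip$; matching the middle term then reduces to $\xi g = \shiftid{A}{0}{1}(d\rho)$, which is exactly what $d\sigma \cdot g = -f \cdot d\rho$ delivers after multiplying by $\rho$ and using $\rho f = 1_A$.

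The main obstacle is purely bookkeeping: every expression carries a specific shifted-identity factor and sign dictated by the degree conventions, and one has to stay attentive to these throughout. The essential conceptual content, however, is the observation that the failure of $\sigma$ and $\rho$ to be closed is controlled by the mirror identities $d\sigma = f\rho(d\sigma)$ and $d\rho = (d\rho)f\rho$; once these are isolated, both the correction $\xi$ making $\psi$ closed and the homotopy $h$ exhibiting $\psi\varphi \simeq 1_{\cone(f)}$ essentially write themselves.
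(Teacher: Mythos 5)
Your proof is correct and follows essentially the same route as the paper: both construct the homotopy inverse $\psi$ by pairing $\sigma$ with a correction term $-\shiftid{A}{0}{1}\rho\, d\sigma$ in the $A[1]$-component, derived from the observation that $d\sigma = f\rho\, d\sigma$. (In fact you silently correct a typo in the paper, which writes $\delta=\shiftid{A}{0}{1}g\,d\sigma$ where it should read $\delta=\shiftid{A}{0}{1}\rho\, d\sigma$; note that $g\,d\sigma$ does not even have the right codomain.) You go further than the printed proof by exhibiting the explicit homotopy $h=-i\shiftid{A}{0}{1}\rho s$ witnessing $\psi\varphi\simeq 1_{\cone(f)}$, which the paper leaves to the reader.
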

\begin{proof} 
We define a homotopy inverse of $\varphi$ as follows. Observe that $d(g \sigma) = g d\sigma = 0$, so $f \rho d\sigma = d\sigma - \sigma g d\sigma = d\sigma$. We set
\begin{equation*}
\psi = \begin{pmatrix} -\delta \\ \sigma  \end{pmatrix} \colon C \to \cone(f),
\end{equation*}
where $\delta = \shiftid{A}{0}{1} g d\sigma$. This is a closed  degree $0$ morphism which serves as a homotopy inverse to $\varphi$.
\end{proof}
Now, we check that the short exact sequence \eqref{eq:ses_TotX} is degreewise split. To do so, it is sufficient to check that $\oplus_p \Tot(\varphi_{n-p})$ has a degree $0$ section $\sigma \colon \Tot(X) \to \bigoplus_p \Tot(\sigma_{\geq n-p}X)$. We define $\sigma$ on $A_m[-m]$ to be the inclusion on the first summand:
\begin{align*}
& A_m[-m] \to \bigoplus_{p \geq 0} A_m[-m] \to \bigoplus_{p \geq 0} \Tot(\sigma_{\geq n-p}X), \quad m \geq n, \\
& A_{n-k}[-n+k] \to \bigoplus_{p \geq k} A_{n-k}[-n+k] \to \bigoplus_{p \geq 0} \Tot(\sigma_{\geq n-p} X), \quad k > 0.
\end{align*}
It is immediate to check that $\oplus_p \Tot(\varphi_{n-p}) \circ \sigma = 1$. Hence, from Lemma \ref{lemma:dg-coker-iso-cone} we deduce:
\begin{prop} \label{prop:tot_hocolim_iso_lim}
The morphism $\oplus_p \Tot(\varphi_{n-p})$ induces an isomorphism
\begin{equation}
\varphi \colon \hocolim_{p \geq 0} \Tot(\sigma_{\geq n-p} X) \to \Tot(X)
\end{equation}
in $\hocomp{\cat A}$. Moreover, the following diagram is commutative in $\comp{\cat A}$ for all $p \geq 0$:
\begin{equation} \label{eq:tot_hocolim_commutative}
\begin{gathered}
\xymatrix{
\Tot(\sigma_{\geq n-p} X) \ar[d] \ar[dr]^{\Tot(\varphi_{n-p})} \\
\hocolim_{p \geq 0} \Tot(\sigma_{\geq n-p} X) \ar[r]^-\varphi & \Tot(X),
}
\end{gathered}
\end{equation}
where the vertical arrow is the canonical morphism to the homotopy colimit.
\end{prop}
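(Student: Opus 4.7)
My plan is to derive this proposition as a direct application of Lemma \ref{lemma:dg-coker-iso-cone}, using that the short exact sequence \eqref{eq:ses_TotX} displaying $\Tot(X)$ as a directed colimit is degreewise split.

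First, I would invoke the short exact sequence \eqref{eq:ses_TotX}
\begin{equation*}
0 \to \bigoplus_{p \geq 0} \Tot(\sigma_{\geq n-p} X) \xrightarrow{1-\mu} \bigoplus_{p \geq 0} \Tot(\sigma_{\geq n-p}X) \xrightarrow{\oplus_p \Tot(\varphi_{n-p})} \Tot(X) \to 0
\end{equation*}
and observe that the explicitly constructed degree $0$ map $\sigma \colon \Tot(X) \to \bigoplus_p \Tot(\sigma_{\geq n-p}X)$ (sending an element of $A_m[-m]$ into the earliest summand in which it appears) is a section of $\oplus_p \Tot(\varphi_{n-p})$. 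A quick check on generators $A_m[-m]$ shows $\oplus_p \Tot(\varphi_{n-p}) \circ \sigma = 1$, so the sequence is degreewise split in the sense required by Lemma \ref{lemma:dg-coker-iso-cone} (with $\rho$ obtained as $1 - \sigma \circ (\oplus_p \Tot(\varphi_{n-p}))$ composed appropriately to give a retraction of $1-\mu$).

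Next, I would apply Lemma \ref{lemma:dg-coker-iso-cone} to the triple
\begin{equation*}
A=B=\bigoplus_{p \geq 0} \Tot(\sigma_{\geq n-p}X), \quad C=\Tot(X), \quad f=1-\mu, \quad g=\oplus_p\Tot(\varphi_{n-p}).
\end{equation*}
Since $\hocolim_{p \geq 0} \Tot(\sigma_{\geq n-p}X) = \cone(1-\mu)$ by Definition \ref{def:holim_dgmod}, the lemma immediately yields that the closed degree $0$ morphism $\varphi = (0, \oplus_p \Tot(\varphi_{n-p})) \colon \cone(1-\mu) \to \Tot(X)$ is an isomorphism in $\hocomp{\cat A}$, which is the first assertion.

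For the commutativity of \eqref{eq:tot_hocolim_commutative}, I would use the second statement of Lemma \ref{lemma:dg-coker-iso-cone}, which says $\varphi$ makes the diagram involving $B \to \cone(f)$ and $g \colon B \to C$ commute in $\comp{\cat A}$. The canonical map $\Tot(\sigma_{\geq n-p}X) \to \hocolim_{p \geq 0} \Tot(\sigma_{\geq n-p}X)$ factors as the inclusion $\Tot(\sigma_{\geq n-p}X) \hookrightarrow \bigoplus_{q\geq 0} \Tot(\sigma_{\geq n-q}X)$ followed by the canonical map $B \to \cone(1-\mu)$ into the cone. Composing with $\varphi = (0,g)$ then yields exactly $\oplus_q \Tot(\varphi_{n-q})$ precomposed with this inclusion, which is $\Tot(\varphi_{n-p})$.

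There is essentially no serious obstacle here; the work was already done in the construction of $\sigma$ preceding the statement and in Lemma \ref{lemma:dg-coker-iso-cone}. The only minor bookkeeping point is to confirm that the section $\sigma$ one has actually splits $1-\mu$ (not merely $\oplus_p \Tot(\varphi_{n-p})$) in the sense needed by the lemma, which amounts to unwinding that the cokernel description of the directed colimit via $1-\mu$ is equivalent to having a section of the projection to the colimit.
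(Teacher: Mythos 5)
Your argument reproduces the paper's own reasoning: use the short exact sequence \eqref{eq:ses_TotX}, check that the explicitly constructed $\sigma$ is a degreewise section of $\oplus_p \Tot(\varphi_{n-p})$ (whence the sequence is degreewise split), and then invoke Lemma~\ref{lemma:dg-coker-iso-cone} to get both the homotopy equivalence $\cone(1-\mu) \to \Tot(X)$ and the strict commutativity of the triangle. This is exactly how the paper deduces the proposition, so your proposal is correct and follows essentially the same route.
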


We have seen how an object $X \in \Tw^-(\cat A)$ can be reconstructed from its truncations $\sigma_{\geq n} X$. On the other hand, a twisted complex in $\Tw^-(\cat A)$ can be constructed from a suitable ``increasing sequence'':
\begin{prop} \label{prop:resolution_twminus_colimit}
Let $\cat A$ be a dg-category. Assume there is a sequence $(A_n)_{n \leq M}$ of objects of $\cat A$ and a sequence $(X_n)_{n \leq M}$ of twisted complexes in $\Tw^-_\bd (\cat A)$ such that:
\begin{align*}
X_M &= A_M[-M], \\
X_{n-1} &= \cone(A_{n-1}[-n] \xrightarrow{\beta_n} X_n),
\end{align*}
for suitable closed degree $0$ morphisms $\beta_n \colon A_{n-1}[-n] \to X_n$, so that there is a pretriangle
\begin{equation*}
A_{n-1}[-n] \xrightarrow{\beta_n} X_n \xrightarrow{\varphi_{n,n-1}} X_{n-1} \to A_{n-1}[n+1]
\end{equation*}
in $\Tw^-(\cat A)$.
Then, there is a twisted complex $X \in \Tw^-(\cat A)$ such that $\sigma_{\geq n} X = X_n$. In particular, the totalisation $\Tot(X)$ (together with the natural inclusions $\Tot(X_{M-p}) \to \Tot(X)$) is a colimit of the sequence $(\Tot(X_{M-p}) \xrightarrow{\Tot(\varphi_{M-p,M-p-1})} \Tot(X_{m-p-1}))_p$:
\begin{equation*}
\Tot(X) \cong \varinjlim_p \Tot(X_{M-p}).
\end{equation*}
\end{prop}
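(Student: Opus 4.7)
The plan is to assemble $X \in \Tw^-(\cat A)$ by extracting the components of the twisted differentials of the $X_n$ and piecing them together. Write each $X_n = (\bigoplus_{n \leq i \leq M} A_i[-i], q^{(n)})$, where $q^{(n)}$ is a one-sided twisted differential with matrix entries $(q^{(n)})_i^j$ defined for $n \leq i,j \leq M$.

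First I would unfold the hypothesis $X_{n-1} = \cone(\beta_n)$ using the explicit cone formula in $\MC(\cat A^\leftarrow)$ (from the pretriangulatedness of $\MC(\cat B)$): this yields
\[
(q^{(n-1)})_i^j = (q^{(n)})_i^j \quad \text{for } i,j \geq n, \qquad (q^{(n-1)})_{n-1}^j = (\beta_n)_{n-1}^j \quad \text{for } j \geq n,
\]
together with one-sidedness of $q^{(n-1)}$. In particular the family $(q^{(n)})_{n \leq M}$ is compatible, and by an easy induction $(q^{(m)})_i^j = (q^{(n)})_i^j$ whenever $m \leq n$ and $i,j$ lie in the range of $q^{(n)}$. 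This legitimates the definition
\[
q_i^j := (q^{(i)})_i^j \quad \text{for } i < j \leq M, \qquad q_i^j := 0 \text{ otherwise}.
\]
One-sidedness and boundedness above ($A_j = 0$ for $j > M$) ensure that for each fixed $i$ only finitely many $q_i^j$ are nonzero, so $q$ is a well-defined degree $1$ endomorphism of $\bigoplus_{n \leq M} A_n[-n]$ in $\cat A^\leftarrow$.

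Next I would verify the Maurer--Cartan equation $(-1)^j dq_i^j + q_k^j q_i^k = 0$ for $q$: by one-sidedness, the only nonzero summands correspond to indices $i < k < j$, and for each such triple all three factors $q_i^j$, $q_k^j$, $q_i^k$ coincide (by the compatibility recorded above) with the corresponding entries of $q^{(i)}$. Hence the identity reduces to the Maurer--Cartan equation already satisfied by $X_i$. Thus $X = (\bigoplus_{n \leq M} A_n[-n], q)$ is a well-defined object of $\Tw^-(\cat A)$, and the equality $\sigma_{\geq n} X = X_n$ is then immediate from the construction, since $\sigma_{\geq n}$ retains exactly the entries $q_i^j$ with $i,j \geq n$, which by definition form $q^{(n)}$. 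Under this identification, the maps $\varphi_{n, n-1}$ of \eqref{eq:inclusion_truncations} correspond precisely to the given transition maps from $X_n$ to $X_{n-1}$.

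For the colimit statement I would simply invoke Proposition \ref{prop:tot_twmin_colim} with $n = M$, obtaining $\Tot(X) \cong \varinjlim_{p \geq 0} \Tot(\sigma_{\geq M-p} X) = \varinjlim_{p \geq 0} \Tot(X_{M-p})$, with the required structure maps. The only real obstacle is the bookkeeping in the first two steps; once the compatibility of the $(q^{(n)})$ has been made explicit by unpacking the cone formula, everything else is formal.
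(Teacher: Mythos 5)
Your proof is correct and takes essentially the same route as the paper: assemble $q$ from the compatible family $(q^{(n)})$ of twisted differentials extracted from the cone presentation of the $X_n$, verify the Maurer--Cartan equation by reducing to that of $X_i$, and then invoke Proposition~\ref{prop:tot_twmin_colim}. The paper compresses the compatibility check and the MC verification into the remark that the definition $q_i^j = (q_{X_k})_i^j$ for $k \leq \min(i,j)$ is well defined, whereas you make the bookkeeping explicit, but there is no difference in substance.
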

\begin{proof}
By construction we have $X_{n-1} = (A_{n-1}[-n+1] \oplus X_n, \begin{psmallmatrix} 0 & 0 \\ \beta_n \shiftid{A_{n-1}}{-n+1}{-n} & q_{X_n} \end{psmallmatrix})$. Hence, we may set
\begin{equation*}
X = \left( \bigoplus_{i \leq M} A_i[-i],q \right),
\end{equation*}
where $q_i^j \colon Q_i \to Q_j$ is set to be $({q_{X_k}})_i^j$, for $k \leq \min(i,j)$. This is well defined and by construction $\sigma_{\geq n} X = X_n$. The last part of the claim follows from Proposition \ref{prop:tot_twmin_colim}.
\end{proof}
If $X \in \Tw^-(\cat A)$, then $\sigma_{\geq n-p} X \in \Tw^-_\bd(\cat A)$ for all $p \geq 0$. From \eqref{eq:twtruncation_triangle} it is easy to see that $\sigma_{\geq n-p} X$, as any object in $\Tw^-_\bd (\cat A)$, is obtained as an iterated cone of (shifts of) twisted complexes of the form $A=(A,0)$ with $A \in \cat A$. In particular, $\Tw^-_\bd (\cat A)$ is strongly pretriangulated and the totalisation functor restricts to
\begin{equation} \label{eq:twbd_pretr}
\Tot \colon \Tw^-_\bd (\cat A) \to \pretr{\cat A} \subset \hproj{\cat A},
\end{equation}
and $\Tot(\sigma_{\geq n-p} X) \in \pretr{\cat A}$. Since $\hproj{\cat A}$ is closed under direct sums, cones and isomorphisms in $\hocomp{\cat A}$, it is also closed under homotopy colimits, and we deduce:
\begin{prop} \label{prop:twminus_hproj}
For all $X \in \Tw^-(\cat A)$, the totalisation $\Tot(X) \approx \hocolim_p \Tot(\sigma_{\geq n-p} X)$ is an h-projective dg-module. In particular, $\Tot$ induces a fully faithful dg-functor
\begin{equation} \label{eq:tot_twminus_hproj}
\Tot \colon \Tw^-(\cat A) \to \hproj{\cat A}
\end{equation}
and hence also a fully faithful functor
\begin{equation}
H^0(\Tot) \colon H^0(\Tw^-(\cat A)) \to \dercomp{\cat A}.
\end{equation}
\end{prop}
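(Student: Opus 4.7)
The plan is to leverage the quasi-fully faithful totalisation $\Tot \colon \Tw^-(\cat A) \to \compdg{\cat A}$ already established, together with the description of $\Tot(X)$ as a homotopy colimit of its truncations from Proposition \ref{prop:tot_hocolim_iso_lim}, and to verify stability of $\hproj{\cat A}$ under the relevant constructions.

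First I would show that $\Tot(X) \in \hproj{\cat A}$ for every $X \in \Tw^-(\cat A)$. By \eqref{eq:twbd_pretr}, each truncated totalisation $\Tot(\sigma_{\geq n-p} X)$ lies in $\pretr{\cat A} \subset \hproj{\cat A}$. By Proposition \ref{prop:tot_hocolim_iso_lim} we have an isomorphism
\begin{equation*}
\Tot(X) \approx \hocolim_{p \geq 0} \Tot(\sigma_{\geq n-p} X)
\end{equation*}
in $\hocomp{\cat A}$, where the right-hand side is the strictly dg-functorial homotopy colimit, i.e.\ the cone in $\compdg{\cat A}$ of the closed degree $0$ map $1-\mu$ between two copies of the countable direct sum $\bigoplus_{p} \Tot(\sigma_{\geq n-p} X)$.

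The next step is to note three stability properties of $\hproj{\cat A} \subseteq \compdg{\cat A}$: it is closed under (i) countable direct sums (taken in $\compdg{\cat A}$), (ii) cones of closed degree $0$ morphisms, and (iii) isomorphisms in $\hocomp{\cat A}$. Property (i) follows because $\hocomp{\cat A}(\bigoplus P_i, Y) \cong \prod_i \hocomp{\cat A}(P_i, Y)$, which vanishes on acyclic $Y$ if each $P_i$ is h-projective. Property (ii) follows from the long exact sequence associated to the triangle $P \to Q \to \cone(f) \to P[1]$ in $\hocomp{\cat A}$. Property (iii) is immediate because the defining condition \eqref{eq:h-projective} depends only on the class of the object in $\hocomp{\cat A}$. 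Combining these, the homotopy colimit above lies in $\hproj{\cat A}$, and hence so does $\Tot(X)$.

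For the second assertion, since $\hproj{\cat A}$ is a full dg-subcategory of $\compdg{\cat A}$ and, as just shown, $\Tot$ lands in it, we may restrict the codomain of the totalisation to obtain
\begin{equation*}
\Tot \colon \Tw^-(\cat A) \to \hproj{\cat A},
\end{equation*}
and this restriction is quasi-fully faithful because $\Tot \colon \Tw^-(\cat A) \to \compdg{\cat A}$ is so by the previous proposition. Passing to $H^0$ and post-composing with the equivalence $H^0(\hproj{\cat A}) \xrightarrow{\sim} \dercomp{\cat A}$ yields the fully faithful functor $H^0(\Tot) \colon H^0(\Tw^-(\cat A)) \to \dercomp{\cat A}$. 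There is no serious obstacle here: once one identifies $\Tot(X)$ with a homotopy colimit of objects already known to be h-projective, everything reduces to the routine verification of the three closure properties of $\hproj{\cat A}$ listed above.
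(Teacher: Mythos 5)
Your argument is correct and follows essentially the same route as the paper: express $\Tot(X)$ via Proposition \ref{prop:tot_hocolim_iso_lim} as a homotopy colimit of the totalisations of the bounded truncations (each landing in $\pretr{\cat A}\subset\hproj{\cat A}$), and conclude from closure of $\hproj{\cat A}$ under direct sums, cones, and isomorphisms in $\hocomp{\cat A}$. Your verification of those three closure properties is just a spelled-out version of what the paper asserts in one line, and the remaining deductions about (quasi-)fully faithfulness and the induced functor to $\dercomp{\cat A}$ coincide with the paper's.
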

It is well-known that the derived category $\dercomp{\cat A}$ of a dg-category $\cat A$ with cohomology concentrated in nonpositive
degrees has a t-structure whose heart is the category $\Mod{H^0(\cat A)}$ (see \cite[Lem 2.2, Prop 2.3]{amiot-cluster} for a proof when $\cat A$ is a dg-algebra). This allows us to use the results of \S \ref{subsection_tstruct_hocolim} and describe the cohomology of the sequence of truncations $(\Tot(\sigma_{\geq n-p} X))_p$.
\begin{lemma} \label{lemma:truncation_cohomology_constant}
Let $X \in \Tw^-(\cat A)$ be of the form $X=(\bigoplus_{j \leq M} A[-j], q)$, and consider the sequence $(\sigma_{\geq M-p} X \xrightarrow{\varphi_{M-p,M-p-1}} \sigma_{\geq M-p-1} X)_{p \geq 0}$ and the natural maps $\varphi_{M-p} \colon \sigma_{\geq M-p} X \to X$ (see \eqref{eq:inclusion_truncations}). Then, the induced maps in cohomology
\begin{align*}
H^i(\Tot(\varphi_{M-p,M-p-1})) \colon H^i(\Tot(\sigma_{\geq M-p} X)) & \to H^i(\Tot(\sigma_{\geq M-p-1} X)), \\
H^i(\Tot(\varphi_{M-p})) \colon H^i(\Tot(\sigma_{\geq M-p} X)) & \to H^i(\Tot(X)) 
\end{align*}
are isomorphisms for $i > M-p$ and epimorphisms for $i=M-p$.
\end{lemma}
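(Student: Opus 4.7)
The plan is to deduce both statements from the pretriangles
\begin{equation*}
A_{n-1}[-n] \xrightarrow{\beta_n} \sigma_{\geq n} X \xrightarrow{\varphi_{n,n-1}} \sigma_{\geq n-1} X \to A_{n-1}[-n+1]
\end{equation*}
provided by Remark \ref{remark:truncation-cone} (with $n = M-p$) and then bootstrap to $\varphi_{M-p}$ via the homotopy colimit description of $\Tot(X)$ obtained in Proposition \ref{prop:tot_hocolim_iso_lim}, feeding the result into Lemma \ref{lemma:tcohom_holim}.

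First, the key cohomological input is that since $\cat A$ has cohomology concentrated in nonpositive degrees, every representable dg-module $\cat A(-,A_{n-1})$ is h-projective with cohomology in nonpositive degrees, hence
\begin{equation*}
H^i(\Tot(A_{n-1}[-n])) = H^{i-n}(\cat A(-,A_{n-1})) = 0 \quad \text{for } i > n.
\end{equation*}
Applying $\Tot$ to the pretriangle above yields a distinguished triangle in $\dercomp{\cat A}$; its long exact sequence in cohomology
\begin{equation*}
H^i(\Tot(A_{n-1}[-n])) \to H^i(\Tot(\sigma_{\geq n}X)) \to H^i(\Tot(\sigma_{\geq n-1}X)) \to H^{i+1}(\Tot(A_{n-1}[-n]))
\end{equation*}
with the vanishing above shows that, for $n = M-p$, both endpoints vanish when $i > n$ (giving an isomorphism) and the right endpoint still vanishes when $i = n$ (giving an epimorphism). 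This proves the first assertion.

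For the second assertion, recall from Proposition \ref{prop:tot_hocolim_iso_lim} that $\Tot(X) \approx \hocolim_{p \geq 0} \Tot(\sigma_{\geq M-p} X)$ in $\hocomp{\cat A}$, compatibly with the maps $\Tot(\varphi_{M-p})$; this isomorphism descends to $\dercomp{\cat A}$, which carries a non-degenerate t-structure in which countable coproducts exist, so the triangulated homotopy colimit of Definition \ref{def:holim_tr} computes $\Tot(X)$. Setting $X_{-p} := \Tot(\sigma_{\geq M-p}X)$, the first part of the lemma gives the hypothesis of Lemma \ref{lemma:tcohom_holim} after a uniform reindexing of the cohomological threshold by $M$ (i.e.\ replacing $H^i$ with $H^{i+M}$, which is harmless since Lemma \ref{lemma:tcohom_holim} only uses a cohomological comparison relative to the indexing of the sequence). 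The conclusion of that lemma then gives the required isomorphism/epimorphism properties for $H^i(\Tot(\varphi_{M-p}))$.

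The only real subtlety is the reindexing needed to apply Lemma \ref{lemma:tcohom_holim}: the lemma is stated with threshold $-n$ for a sequence indexed by $n \geq 0$, whereas our threshold is $M-p$, so one must either shift the cohomological grading by $M$ before invoking the lemma or inspect its proof to see that only the relative displacement between consecutive terms of the sequence matters. Apart from this bookkeeping, the argument is a direct application of the long exact sequence and the previously established homotopy colimit formula.
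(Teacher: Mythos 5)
Your argument is correct and matches the paper's proof almost exactly: the paper also derives the first claim from the pretriangle of Remark \ref{remark:truncation-cone} and the vanishing of positive-degree cohomology of representables, then feeds it into Lemma \ref{lemma:tcohom_holim} via Proposition \ref{prop:tot_hocolim_iso_lim}. The only (cosmetic) difference is that the paper dispatches the reindexing issue you highlight by normalizing to $M=0$ at the outset ("upon shifting"), rather than reindexing the cohomological threshold after the fact.
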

\begin{proof}
Upon shifting. assume $M=0$ so that $X= (\bigoplus_{j\leq 0} A_j[-j],q)$. By \eqref{eq:twtruncation_triangle}, we have a pretriangle in $\compdg{\cat A}$
\begin{equation*}
A_{-p-1}[p] \to \Tot(\sigma_{\geq -p} X) \to \Tot(\sigma_{-p-1} X) \to A_{-p-1}[p+1],
\end{equation*}
where we identify $A_{-p}$ with $\cat A(-,A_{-p})$. Taking $i$-th cohomology, we get the following exact sequence:
\begin{align*}
H^{i+p}(A_{-p-1}) \to & H^i(\Tot(\sigma_{\geq -p} X)) \to H^i(\Tot(\sigma_{\geq -p-1} X)) \\ & \to H^{i+p+1}(A_{-p-1}).
\end{align*}
$\cat A$ has cohomology concentrated in nonpositive degrees, hence if $i > - p$ we have that both $H^{i+p}(A_{-p-1})=0$ and $H^{i+p+1}(A_{-p-1})=0$, and if $i=-p$ then only $H^{i+p+1}(A_{-p-1})=0$. So,  the morphism
\begin{equation*}
H^i(\Tot(\sigma_{\geq M-p} X)) \to H^i(\Tot(\sigma_{\geq M-p-1}))
\end{equation*}
is an isomorphism if $i > -p$ and an epimorphism if $i=-p$, and our first claim is proved.

Now, we can apply Lemma \ref{lemma:tcohom_holim} to the induced sequence $(\Tot(\sigma_{-p} X) \to \Tot(\sigma_{-p-1} X))_p$ in $\dercomp{\cat A}$: we deduce that the natural map
\begin{equation*}
\Tot(\sigma_{\geq -p} X) \to \hocolim_p \Tot(\sigma_{-p} X)
\end{equation*}
into the (strictly dg-functorial) homotopy colimit $\hocolim_p \Tot(\sigma_{-p} X)$ is such that
\begin{equation*}
H^i(\Tot(\sigma_{\geq -p} X)) \to H^i(\hocolim_p \Tot(\sigma_{-p} X))
\end{equation*}
is an isomorphism for $i > -p$ and an epimorphism for $i=p$. Hence, our second claim follows from Proposition \ref{prop:tot_hocolim_iso_lim}: there is an isomorphism $\hocolim_p \Tot(\sigma_{-p} X) \to \Tot(X)$ in $\hocomp{\cat A}$ such that the diagram \eqref{eq:tot_hocolim_commutative} is commutative.
\end{proof}
\subsection{Twisted complexes and quasi-equivalences} \label{subsection:twcomp_qeq}
It is well known that a dg-functor $F \colon \cat A \to \cat B$ induces a dg-functor
\begin{equation}
\Ind_F \colon \compdg{\cat A} \to \compdg{\cat B},
\end{equation}
which is left adjoint to the restriction functor
\begin{equation*}
\begin{split}
\res_F \colon \compdg{\cat B} & \to \compdg{\cat A}, \\
Y &\mapsto Y \circ F.
\end{split}
\end{equation*}
We refer to \cite{drinfeld-dgcat} for its definition and we recall from there some of its relevant properties:
\begin{itemize}
\item $\Ind_F$ is left adjoint to the restriction functor $\res_F \colon \compdg{\cat B} \to \compdg{\cat A}$ and it preserves representable modules. Namely, there is an isomorphism of complexes
\begin{equation*}
\Ind_F(\cat A(-,A)) \cong \cat B(-,F(A)),
\end{equation*}
natural in $A \in \cat A$.
\item $\Ind_F$ preserves h-projective modules and hence induces a dg-functor
\begin{equation*}
\Ind_F \colon \hproj{\cat A} \to \hproj{\cat B}.
\end{equation*}
If $F$ is fully faithful, the same is true for $\Ind_F$; if $F$ is a quasi-equivalence, the same is true for $\Ind_F \colon \hproj{\cat A} \to \hproj{\cat B}$.
\item $\Ind_F$ preserves cones and shifts, hence it induces a dg-functor
\begin{equation*}
\Ind_F \colon \pretr{\cat A} \to \pretr{\cat B}.
\end{equation*}
If $F$ is a quasi-equivalence, the same is true for $\Ind_F \colon \pretr{\cat A} \to \pretr{\cat B}$. 

Moreover, if $i \colon \cat A \to \pretr{\cat A}$ is the natural inclusion induced by the Yoneda embedding, then $\Ind_i \colon \compdg{\cat A} \to \compdg{\pretr{\cat A}}$ is an equivalence of dg-categories.
\end{itemize} 
\begin{remark} \label{remark:ind_qfun_adj}
For all $X \in \hproj{\cat A}$ and $Y \in \hproj{\cat B}$, we have the adjunction isomorphism:
\begin{equation*}
\hproj{\cat B}(\Ind_F(X),Y) \cong \compdg{\cat A}(X,Y \circ F).
\end{equation*}
Hence, setting
\begin{equation} \label{eq:res_qfun}
(\res_F)_Y^X = \compdg{\cat A}(X, Y \circ F)
\end{equation}
we have a candidate quasi-functor $\res_F \colon \hproj{\cat B} \to \hproj{\cat A}$ which is right adjoint to the dg-functor $\Ind_F \colon \hproj{\cat A} \to \hproj{\cat B}$. Indeed, take an h-projective resolution $Q(Y \circ F) \to Y \circ F$; for all $X \in \hproj{\cat A}$, it induces a quasi-isomorphism
\begin{equation*}
\hproj{\cat A}(X, Q(Y \circ F)) \to \compdg{\cat A}(X,Y \circ F) = (\res_F)^X_Y,
\end{equation*}
natural in $X$.
\end{remark}
In a precise sense, the functor $\Tw^-(-)$ (see Remark \ref{remark:Twmin_functorial}) can be viewed as a restriction of $\Ind$:
\begin{prop}
Let $F \colon \cat A \to \cat B$ be a dg-functor between dg-categories with cohomology concentrated in nonpositive degrees. Then, the following diagram is commutative (up to natural isomorphism):
\begin{equation} \label{eq:Tw-_hproj_commute}
\begin{gathered}
\xymatrix{
\Tw^-(\cat A) \ar[r]^{\Tw^-(F)} \ar[d]_{\Tot_{\cat A}} & \Tw^-(\cat B) 
\ar[d]^{\Tot_{\cat B}} \\
\hproj{\cat A} \ar[r]^{\Ind_F} & \hproj{\cat B},
}
\end{gathered}
\end{equation}
where $\Tot_{\cat A}$ and $\Tot_{\cat B}$ are the (quasi-fully faithful) totalisation functors \eqref{eq:tot_twmin_def}, which have values in h-projective modules thanks to Proposition \ref{prop:twminus_hproj}.
\end{prop}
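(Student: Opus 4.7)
The plan is to verify the commutativity by directly evaluating both composites on an object $X=(\bigoplus_i A_i[-i],q) \in \Tw^-(\cat A)$ and exhibiting a natural isomorphism between the resulting $\cat B$-dg-modules. Concretely, I would first compute $\Tot_{\cat B}(\Tw^-(F)(X))$: by definition of $\Tw^-(F)$ this is the Maurer-Cartan totalisation of $(\bigoplus_i F(A_i)[-i], F(q))$, i.e.\ the dg-module with underlying graded object $\bigoplus_i \cat B(-,F(A_i))[-i]$ and differential $d_{\cat B} + F(q)$ (where $F(q)$ denotes the matrix $(F(q_i^j))$, with each $F(q_i^j)\colon F(A_i)\to F(A_j)$ viewed via the Yoneda embedding into $\compdg{\cat B}$).

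Next I would compute $\Ind_F(\Tot_{\cat A}(X))$. The key inputs are the standard properties of $\Ind_F$ recalled in the text: it is a left adjoint and hence preserves arbitrary direct sums, it is a dg-functor and therefore preserves shifts, and on the Yoneda image one has a natural isomorphism $\Ind_F(\cat A(-,A)) \cong \cat B(-,F(A))$. Since $\Tot_{\cat A}(X)$ has underlying graded dg-module $\bigoplus_i \cat A(-,A_i)[-i]$, applying $\Ind_F$ gives $\bigoplus_i \cat B(-,F(A_i))[-i]$ as the underlying graded dg-module. For the differential, note that $d_{\Tot_{\cat A}(X)} = d_{\cat A} + q$ where $q$ is encoded by the matrix of morphisms $q_i^j \in \cat A(A_i,A_j)^{i-j+1}$ acting via Yoneda; functoriality of $\Ind_F$ together with the naturality of $\Ind_F(\cat A(-,A)) \cong \cat B(-,F(A))$ in $A$ then forces $\Ind_F$ to send $q$ to $F(q)$. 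Thus the resulting differential is $d_{\cat B} + F(q)$, giving the desired isomorphism of dg-modules
\begin{equation*}
\eta_X \colon \Ind_F(\Tot_{\cat A}(X)) \isorightarrow \Tot_{\cat B}(\Tw^-(F)(X)).
\end{equation*}

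To promote this to a natural isomorphism of dg-functors, I would check that for a morphism $f=(f_i^j) \colon X \to Y$ in $\Tw^-(\cat A)$, applying $\Ind_F$ to $\Tot_{\cat A}(f)$ gives the morphism $(F(f_i^j))$ on underlying graded pieces, which is by construction $\Tot_{\cat B}(\Tw^-(F)(f))$. This again is a consequence of the naturality in $A$ of $\Ind_F(h_A) \cong h_{F(A)}$ applied to each matrix entry, and the fact that both functors act block-diagonally on direct sums. Hence the $\eta_X$ assemble into a natural isomorphism of dg-functors, proving commutativity up to natural isomorphism.

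The only mildly delicate point is making sure that the natural isomorphism $\Ind_F(\cat A(-,A)) \cong \cat B(-,F(A))$, and its compatibility with infinite direct sums and with matrix composition, correctly identifies the twist $q$ on one side with $F(q)$ on the other; once one spells out $\Ind_F$ as the left Kan extension (equivalently, the tensor product with the dg-bimodule $\cat B(-,F(?))$), this becomes a formal verification rather than a real obstacle, so I expect the argument to go through without serious difficulty.
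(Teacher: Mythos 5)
Your proposal is correct but follows a genuinely different route from the paper. The paper's (terse) proof uses the adjunction $\Ind_F \dashv \res_F$: since $\Ind_F$ is a left adjoint, to identify $\Ind_F(\Tot_{\cat A}(X))$ with $\Tot_{\cat B}(\Tw^-(F)(X))$ it suffices to exhibit an isomorphism of complexes
\begin{equation*}
\compdg{\cat B}\bigl(\Tot(\Tw^-(F)(X)),M\bigr) \cong \compdg{\cat A}\bigl(\Tot(X),\res_F(M)\bigr),
\end{equation*}
natural in $X$ and $M$, which is a direct Yoneda-type computation: both sides are described by matrices of elements $m_i \in M(F(A_i))$ of appropriate degrees subject to the same differential constraint (one twist by $F(q)$, the other by $q$ after restriction, and these agree). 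You instead compute $\Ind_F(\Tot_{\cat A}(X))$ head-on from the tensor-product description $\Ind_F = -\otimes_{\cat A}\cat B(-,F(?))$. Both approaches are sound and about the same length once the details are written; the paper's sidesteps explicit manipulation of the coequalizer defining $\otimes_{\cat A}$, while yours is more concrete and makes the appearance of $F(q)$ visibly mechanical.

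One small caution on your phrasing: in the step ``Since $\Tot_{\cat A}(X)$ has underlying graded module $\bigoplus_i \cat A(-,A_i)[-i]$, applying $\Ind_F$ gives $\bigoplus_i \cat B(-,F(A_i))[-i]$'', invoking ``$\Ind_F$ preserves direct sums'' is not by itself enough, because $\Tot_{\cat A}(X)$ is \emph{not} a direct sum in $\compdg{\cat A}$ (it only is one at the level of graded modules). What you actually need — and do supply at the end when you spell out $\Ind_F$ as the tensor with $\cat B(-,F(?))$ — is that $\Ind_F$ commutes with the forgetful functor to graded $\cat A$-modules, so the underlying graded module of $\Ind_F(M)$ depends only on the underlying graded module of $M$ (the differential being induced by the Leibniz rule). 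With that observation in place, your identification of the underlying graded module and of the twist $q \mapsto F(q)$ (via the naturality of $\Ind_F(h_A)\cong h_{F(A)}$) is correct, as is the check of naturality in $X$.
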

\begin{proof}
Since $\Ind_F \dashv \res_F$, it is sufficient to find an isomorphism of complexes
\begin{equation*}
\compdg{\cat B}(\Tot(\Tw^-(F)(X)),M) \cong \compdg{\cat A}(\Tot(X),\res_F(M)),
\end{equation*}
natural in $X \in \Tw^-(\cat A)$ and $M \in \compdg{\cat B}$. This can be explicitly written down; the details are left to the reader.
\end{proof}
The functor $\Tw^-(-)$ preserves quasi-equivalence, as $\Ind$ does.
\begin{prop} \label{prop:Twmin_qeq}
Let $F \colon \cat A \to \cat B$ be a dg-functor between dg-categories with cohomology concentrated in nonpositive degrees. If $F$ is a quasi-equivalence, then $\Tw^-(F)$ is a quasi-equivalence.
\end{prop}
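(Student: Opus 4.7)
The plan is to verify the two defining properties of a quasi-equivalence separately: quasi-fully faithfulness on hom-complexes, and essential surjectivity on $H^0$. The first is immediate from the commutative diagram \eqref{eq:Tw-_hproj_commute} together with Proposition \ref{prop:twminus_hproj}. Indeed, both totalisation functors $\Tot_{\cat A}, \Tot_{\cat B}$ are quasi-fully faithful with values in h-projective modules, and $\Ind_F \colon \hproj{\cat A} \to \hproj{\cat B}$ is a quasi-equivalence (hence quasi-fully faithful) when $F$ is. Chaining these quasi-isomorphisms shows that $\Tw^-(F)$ induces quasi-isomorphisms $\Tw^-(\cat A)(X,Y) \qis \Tw^-(\cat B)(\Tw^-(F)(X),\Tw^-(F)(Y))$ for all $X,Y \in \Tw^-(\cat A)$.

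For essential surjectivity on $H^0$, the strategy is to reconstruct a pre-image inductively truncation by truncation, using Proposition \ref{prop:resolution_twminus_colimit}. Fix $Y = (\bigoplus_{j \leq M} B_j[-j], q_Y) \in \Tw^-(\cat B)$ together with its truncation pretriangles \eqref{eq:twtruncation_triangle} $B_{n-1}[-n] \xrightarrow{\gamma_n} \sigma_{\geq n} Y \to \sigma_{\geq n-1} Y$. Since $F$ is essentially surjective on $H^0$, the induced dg-functor $\cat A^\oplus \to \cat B^\oplus$ is also essentially surjective on $H^0$, so for each $n \leq M$ we may choose $A_n \in \cat A^\oplus$ with an isomorphism $\psi_n \colon F(A_n) \xrightarrow{\sim} B_n$ in $H^0(\cat B^\oplus)$. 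Set $X_M = A_M[-M]$ and $\phi_M = \psi_M[-M]$. Inductively, given $X_n \in \Tw^-_\bd(\cat A)$ and an isomorphism $\phi_n \colon \Tw^-(F)(X_n) \xrightarrow{\sim} \sigma_{\geq n} Y$ in $H^0(\Tw^-(\cat B))$, the morphism $\phi_n^{-1} \circ \gamma_n \circ \psi_{n-1}[-n]$ lifts by the quasi-fully faithfulness of $\Tw^-(F)$ already established to a closed degree zero map $\beta_n \colon A_{n-1}[-n] \to X_n$ in $\Tw^-(\cat A)$. Define $X_{n-1} = \cone(\beta_n)$; functoriality of the cone in $H^0$ then yields an isomorphism $\phi_{n-1} \colon \Tw^-(F)(X_{n-1}) \xrightarrow{\sim} \sigma_{\geq n-1} Y$ fitting into a morphism of pretriangles with $\phi_n$, so that $\phi_{n-1} \circ \Tw^-(F)(\varphi^X_{n,n-1}) = \varphi^Y_{n,n-1} \circ \phi_n$ in $H^0(\Tw^-(\cat B))$. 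By Proposition \ref{prop:resolution_twminus_colimit}, the sequence $(X_n)_{n \leq M}$ assembles into an $X \in \Tw^-(\cat A)$ with $\sigma_{\geq n} X = X_n$, and $\Tw^-(F)$ commutes with the stupid truncations, so $\sigma_{\geq n} \Tw^-(F)(X) = \Tw^-(F)(X_n)$.

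It remains to assemble the family $(\phi_n)_n$ into a global isomorphism. Passing through the quasi-fully faithful $\Tot_{\cat B}$, it suffices to produce a quasi-isomorphism $\Ind_F(\Tot_{\cat A} X) \qis \Tot_{\cat B} Y$ in $\compdg{\cat B}$. By Proposition \ref{prop:tot_hocolim_iso_lim} both sides are isomorphic in $\hocomp{\cat B}$ to the strictly dg-functorial homotopy colimits $\hocolim_p \Ind_F(\Tot_{\cat A} X_{M-p})$ and $\hocolim_p \Tot_{\cat B}(\sigma_{\geq M-p} Y)$ (here $\Ind_F$ commutes with the strict hocolim since it is a dg-functor that preserves direct sums and cones). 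Lifting each $\phi_{M-p}$ to a closed degree zero map in $\compdg{\cat B}$ and using the $H^0$-compatibility proved above to produce, for each $p$, a degree $-1$ homotopy $l_{M-p}$ with $dl_{M-p} = \phi_{M-p-1} \circ \Ind_F(\Tot \varphi^X_{M-p,M-p-1}) - \Tot(\varphi^Y_{M-p,M-p-1}) \circ \phi_{M-p}$, the formula \eqref{eq:hocolim_inducedmap} yields an induced closed degree zero morphism between the two homotopy colimits. By Lemma \ref{lemma:truncation_cohomology_constant} this map is eventually an isomorphism in each cohomological degree, and Corollary \ref{coroll:tcohom_holim} then shows it is an isomorphism in $\hocomp{\cat B}$. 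The main obstacle is precisely this coherent assembly: the $\phi_n$ are canonical only up to homotopy, and the systematic use of the strictly dg-functorial description of homotopy colimits (Definition \ref{def:holim_dgmod}), together with the nonpositive-cohomology hypothesis underlying Lemma \ref{lemma:truncation_cohomology_constant}, is what lets us promote the pointwise compatibility into a global isomorphism.
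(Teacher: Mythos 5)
Your proof is correct and follows essentially the same strategy as the paper: quasi-fully faithfulness via the commutative square \eqref{eq:Tw-_hproj_commute} and Proposition \ref{prop:twminus_hproj}, then essential surjectivity by an inductive construction of the truncations $X_n$ using the pretriangles \eqref{eq:twtruncation_triangle}, Proposition \ref{prop:resolution_twminus_colimit} to assemble them into $X$, and a homotopy-colimit comparison to conclude. Two small remarks: what you call ``functoriality of the cone in $H^0$'' is really the completion axiom (TR3) together with the five lemma (the cone is famously non-functorial), and your more explicit final step (lifting the $\phi_n$ to closed maps, producing the homotopies $l_{M-p}$, and invoking \eqref{eq:hocolim_inducedmap} together with Corollary \ref{coroll:tcohom_holim}) fills in details that the paper compresses into a chain of homotopy equivalences justified by the commutative squares in $H^0(\hproj{\cat B})$.
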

\begin{proof}
We notationally identify both categories $\cat A$ and $\cat B$ with their images in $\compdg{\cat A}$ and $\compdg{\cat B}$ under the Yoneda embedding. We know that $\Ind_F$ is a quasi-equivalence and both $\Tot_{\cat A}$ and $\Tot_{\cat B}$ are quasi-fully faithful, so by the commutativity of \eqref{eq:Tw-_hproj_commute} we immediately deduce that $\Tw^-(F)$ is quasi-fully faithful. 

In order to prove essential surjectivity in $H^0$, let $Y \in \Tw^-(\cat B)$, and upon a suitable shift assume it is of the form:
\begin{equation*}
Y= \left( \bigoplus_{i \leq 0} B_i[-i], r   \right).
\end{equation*}
From \eqref{eq:twtruncation_triangle} we get a pretriangle in $\hproj{\cat B}$, for all $p \geq 0$:
\begin{equation*}
B_{-p-1}[p] \to \Tot_{\cat B}(\sigma_{\geq -p}Y) \to \Tot_{\cat B}(\sigma_{\geq -p-1} Y) \to B_{-p-1}[p+1],
\end{equation*}
Since $F$ is a quasi-equivalence, we can find $X_0=A_0 \in \cat A$ such that $F(A_0) \approx B_0$. Next, assume inductively that $\sigma_{\geq -p}Y \cong \Tw^-(F)(X_{-p})$ in $H^0(\Tw^-(\cat B))$, where
\begin{equation*}
X_{-p} = \left(\bigoplus_{i=-p}^0 A_i[-i], q_p \right) \in \Tw^-_\bd(\cat A).
\end{equation*}
We have that $B_{-p-1} \cong F(A_{-p-1})$ in $H^0(\cat B)$ for some $A_{-p-1} \in \cat A$; consider the diagram in $\hproj{\cat B}$:
\begin{equation*}
\begin{gathered}
\xymatrix@C=2.2cm{
B_{-p-1}[p] \ar[r] \ar[d]^\approx & \Tot_{\cat B}(\sigma_{\geq -p}Y) \ar[d]^\approx \ar[r] & \Tot_{\cat B}(\sigma_{\geq -p-1} Y) \ar@{.>}[d]^\approx \\
F(A_{-p-1}[p]) \ar@{-->}[r]^-{\Tot_{\cat B}(\Tw^-(F)(\beta_{-p}))} & \Tot_{\cat B}(\Tw^-(F)(X_{-p})) \ar[r] & \Tot_{\cat B}((\Tw^-(F)(X_{-p-1})).
}
\end{gathered}
\end{equation*}
By the inductive hypothesis the first two vertical arrows on the left are homotopy equivalences, so that we can find a closed degree $0$ map
\begin{equation*}
F(A_{-p-1}[p]) \to \Tot_{\cat B}(\Tw^-(F)(X_p))
\end{equation*}
in $\hproj{\cat B}$, such that the leftmost square commutes up to homotopy. Following our convention \eqref{eq:twcompl_inclusion}, we have
\begin{equation*}
F(A_{-p-1}[p]) = \Tot_{\cat B}(\Tw^-(F)(A_{-p-1}[p]))
\end{equation*}
Since $\Tot_{\cat B} \circ \Tw^-(F)$ is quasi-fully faithful, the above map is (up to homotopy) of the form $\Tot_{\cat B}(\Tw^-(F))(\beta_{-p})$, for some closed degree $0$ morphism $\beta_{-p} \colon A_{-p-1}[p] \to X_{-p}$ in $\Tw^-(\cat A)$. We define $X_{-p-1} = \cone(\beta_{-p})$ in $\Tw^-(\cat A)$, and clearly we can find the dotted vertical homotopy equivalence which makes the above diagram commute in $H^0(\hproj{\cat B})$.

We can now apply Proposition \ref{prop:resolution_twminus_colimit} and find $X \in \Tw^-(\cat A)$ such that $\sigma_{\geq -p} X = X_{-p}$ for all $p \geq 0$. Recall from Proposition \ref{prop:tot_hocolim_iso_lim} that
\begin{align*}
\Tot_{\cat B}(Y) & \approx \hocolim_p \Tot_{\cat B}(\sigma_{\geq -p} Y), \\
\Tot_{\cat A}(X) & \approx \hocolim_p \Tot_{\cat A}(X_{-p}).
\end{align*}
Then, the commutative square in $H^0(\hproj{\cat B})$
\begin{equation*}
\begin{gathered}
\xymatrix{
\Tot_{\cat B}(\sigma_{\geq -p}Y) \ar[d]^\approx \ar[r] & \Tot_{\cat B}(\sigma_{\geq -p-1} Y) \ar[d]^\approx \\
\Tot_{\cat B}(\Tw^-(F)(X_p)) \ar[r] & \Tot_{\cat B}((\Tw^-(F)(X_{p+1})).
}
\end{gathered}
\end{equation*}
tells us that
\begin{align*}
\Tot_{\cat B}(Y) &\approx \hocolim_p \Tot_{\cat B}(\sigma_{\geq -p} Y) \\
&\approx \hocolim_p \Tot_{\cat B}(\Tw^-(F)(X_p)),
\end{align*}
and moreover
\begin{align*}
\hocolim_p \Tot_{\cat B}(\Tw^-(F)(X_p)) & \cong \hocolim_p \Ind_F(\Tot_{\cat A}(X_p)) \\
& \cong \Ind_F (\hocolim_p \Tot_{\cat A}(X_p)) \\
& \approx \Ind_F(\Tot_{\cat A}(X)) \\
& \cong \Tot_{\cat B}(\Tw^-(X)),
\end{align*}
whence $Y \cong \Tw^-(X)$ in $H^0(\Tw^-(\cat B))$, as we wanted. In the above chain of  homotopy equivalences and isomorphisms, we used the commutativity of \eqref{eq:Tw-_hproj_commute} and the fact that $\Ind_F$ commutes with homotopy colimits (we invite the reader to check this using that $\Ind_F$ is a dg-functor which preserves direct sums).
\end{proof}

\section{Twisted complexes on homotopically locally coherent dg-categories}
It is well-known that the derived category $\dercomp{\cat A}$ of a dg-category $\cat A$ with cohomology concentrated in nonpositive
degrees has a (non-degenerate) t-structure whose heart is the category $\Mod{H^0(\cat A)}$ (see \cite[Lem 2.2, Prop 2.3]{amiot-cluster} for a proof when $\cat A$ is a dg-algebra). In this section, we give conditions on $\cat A$ in order that the triangulated category $H^0(\Tw^-(\cat A))$ naturally inherits this t-structure.
\subsection{Finitely presented modules and coherent categories}
We start by briefly recalling the notion of \emph{(right) coherent category} and some related results we shall need. For this subsection, we fix a $\basering k$-linear category $\cat C$.
\begin{defin}
A (right) $\cat C$-module $M \in \Mod{\cat C}$ is \emph{finitely presented} if there is an exact sequence
\begin{equation*}
\bigoplus_{j=1}^m \cat C(-,C'_j) \to \bigoplus_{i=1}^n \cat C(-,C_i) \to M \to 0,
\end{equation*}
for some objects $C_1,\ldots,C_n$ and $C'_1,\ldots,C'_m$ in $\cat C$. The full subcategory of finitely presented modules of $\Mod{\cat C}$ is denoted by $\Modfp{\cat C}$.
\end{defin}
The following result is true without any additional hypothesis on $\cat C$:
\begin{prop} \label{prop:fp_closed_coker_ext}
The category $\Modfp{\cat C}$ is closed under cokernels, extensions and direct summands in $\Mod{\cat C}$.
\end{prop}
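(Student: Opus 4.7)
The plan is to handle the three closure properties in the order stated, with the direct summand case reduced to the cokernel case. Throughout, the key input is that representable modules $\cat C(-, C)$ are projective objects of $\Mod{\cat C}$ by the Yoneda lemma.

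For closure under cokernels, given a morphism $f \colon M \to N$ between finitely presented modules with presentations
\begin{equation*}
\bigoplus_{j} \cat C(-, C'_j) \to \bigoplus_i \cat C(-, C_i) \xrightarrow{\pi_M} M \to 0, \quad \bigoplus_l \cat C(-, D'_l) \to \bigoplus_k \cat C(-, D_k) \xrightarrow{\pi_N} N \to 0,
\end{equation*}
I would use projectivity of $\bigoplus_i \cat C(-, C_i)$ to lift $f \circ \pi_M$ to a morphism $\tilde f \colon \bigoplus_i \cat C(-, C_i) \to \bigoplus_k \cat C(-, D_k)$. Then the composite $\pi_N \circ (\tilde f, \mathrm{id}) \colon \bigoplus_i \cat C(-, C_i) \oplus \bigoplus_k \cat C(-, D_k) \to N$ is surjective onto $\coker(f)$, and its kernel is easily seen to contain the image of $\bigoplus_j \cat C(-, C'_j) \oplus \bigoplus_l \cat C(-, D'_l)$, yielding a finite presentation of $\coker(f)$.

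For closure under extensions, given $0 \to M' \to M \to M'' \to 0$ with $M'$ and $M''$ finitely presented, I would pick presentations $\bigoplus_i \cat C(-, A_i) \twoheadrightarrow M'$ and $\bigoplus_k \cat C(-, C_k) \twoheadrightarrow M''$ and use projectivity of $\bigoplus_k \cat C(-, C_k)$ to lift the latter through $M \to M''$. This gives a surjection $\phi \colon \bigoplus_i \cat C(-, A_i) \oplus \bigoplus_k \cat C(-, C_k) \twoheadrightarrow M$. To bound $\ker \phi$, I would run the snake lemma against the split short exact sequence of sources and the short exact sequence $0 \to M' \to M \to M''\to 0$, obtaining an extension $0 \to \ker(\text{left vertical}) \to \ker\phi \to \ker(\text{right vertical}) \to 0$. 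Both outer kernels are finitely generated (being images of the second syzygy pieces of the presentations of $M'$ and $M''$), and finite generation passes to extensions by again lifting generators via projectivity, so $\ker \phi$ is finitely generated and $M$ is finitely presented.

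For closure under direct summands, suppose $M = M' \oplus M''$ with $M$ finitely presented, and let $p \colon M \to M$ be the idempotent projecting onto $M''$. Then $M' = \ker p = \coker(1-p)$, as $1-p$ is the complementary idempotent with image $M'$. Since $M$ is finitely presented and we have already established closure under cokernels, $M'$ is finitely presented.

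I expect the extension step to be the technically heaviest one, because of the bookkeeping in applying the snake lemma and in lifting generators of the kernel through the extension; the cokernel and summand steps are essentially formal consequences of projectivity of representables and of the cokernel closure itself.
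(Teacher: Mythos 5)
The paper's own ``proof'' is simply a citation to Stacks Project Tag 0517, so there is no in-paper argument to compare against; your direct proof, lifting along the projectivity of the representables and building explicit presentations, is the natural one and matches what the cited reference does in the ring case. The extension step and the idea of reducing summands to cokernels are right, but two of the three steps have slips as written.

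In the cokernel step the proposed presentation is not one. The kernel of $\bigoplus_i \cat C(-,C_i) \oplus \bigoplus_k \cat C(-,D_k) \to \coker(f)$ contains all of $\bigoplus_i \cat C(-,C_i) \oplus 0$, because $\pi_N\tilde f(x) = f\pi_M(x) \in \mathrm{im}(f)$ for every $x$, whereas the image of $\bigoplus_j\cat C(-,C'_j)\oplus\bigoplus_l\cat C(-,D'_l)$ under the two syzygy maps is only $\ker \pi_M \oplus \ker \pi_N$, which is strictly smaller. Containment of the syzygies in the kernel does not give a presentation; you need equality (or at least finite generation of the kernel). The clean fix is to start from $\bigoplus_k\cat C(-,D_k)$ alone and use
\begin{equation*}
\bigoplus_i \cat C(-,C_i) \oplus \bigoplus_l \cat C(-,D'_l) \xrightarrow{(\tilde f,\, d)} \bigoplus_k\cat C(-,D_k) \to \coker(f) \to 0,
\end{equation*}
where $d$ is the syzygy map of $N$; one checks that $\mathrm{im}(\tilde f, d) = \pi_N^{-1}(\mathrm{im} f)$, which is exactly the kernel of $\bigoplus_k\cat C(-,D_k)\to\coker(f)$. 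Separately, in the summand step the identification is backwards: if $1-p$ has image $M'$, then $\coker(1-p) = M/\mathrm{im}(1-p) = M/M' \cong M''$, not $M'$. What you want is $M' \cong \coker(p) = M/\mathrm{im}(p) = M/M''$. With these two corrections the reduction to closure under cokernels goes through.
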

\begin{proof}
It follows from \cite[Tag 0517]{stacks-project}.
\end{proof}
The definition of \emph{coherent category} is as follows:
\begin{defin}
$\cat C$ is \emph{(right) coherent} if $\Modfp{\cat C}$ is an abelian category.
\end{defin}
Since $\Modfp{\cat C}$ has cokernels and it can be shown that the inclusion $\Modfp{\cat C} \hookrightarrow \Mod{\cat C}$ preserves kernels, we deduce that $\cat C$ is coherent if and only if $\Modfp{\cat C}$ is closed under kernels in $\Mod{\cat C}$. Next, we give a very useful characterisation of coherent additive categories:
\begin{defin}
Let $f \colon C \to C'$ be a morphism in $\cat C$. A \emph{weak kernel} of $f$ is a morphism $g \colon D \to C$ such that the sequence
\begin{equation*}
\cat C(-,D) \xrightarrow{g_*} \cat C(-,C) \xrightarrow{f_*} \cat C(-,C')
\end{equation*}
is exact in $\Mod{\cat C}$. If every morphism in $\cat C$ has a weak kernel, we say that \emph{$\cat C$ admits weak kernels}.
\end{defin}
\begin{prop}[{\cite[Lemma 1]{krause-brown-coherent}}] \label{prop:cohcat_wker}
Assume that $\cat C$ is additive. Then $\cat C$ is coherent if and only if it admits weak kernels.
\end{prop}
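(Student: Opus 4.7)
The plan is to use the characterisation that, since $\Modfp{\cat C}$ is closed under cokernels in $\Mod{\cat C}$ and the inclusion preserves kernels, coherence of $\cat C$ is equivalent to $\Modfp{\cat C}$ being closed under kernels inside $\Mod{\cat C}$. With this reduction both directions become manageable.

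For the easy direction $(\Rightarrow)$, assume $\cat C$ is coherent and let $f \colon C \to C'$ be any morphism. The induced map $f_\ast \colon \cat C(-,C) \to \cat C(-,C')$ is a morphism between representable (hence finitely presented) modules, so $\ker f_\ast \in \Modfp{\cat C}$. Pick any surjection $\cat C(-,D) \twoheadrightarrow \ker f_\ast$ obtained from a presentation and compose with the inclusion into $\cat C(-,C)$; by the Yoneda lemma this comes from a morphism $g \colon D \to C$, and the sequence $\cat C(-,D) \xrightarrow{g_\ast} \cat C(-,C) \xrightarrow{f_\ast} \cat C(-,C')$ is exact by construction. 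So $g$ is a weak kernel of $f$.

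For the harder direction $(\Leftarrow)$, assume $\cat C$ has weak kernels. The first substep is to show that the kernel of $f_\ast \colon \cat C(-,C) \to \cat C(-,C')$ is finitely presented: take a weak kernel $g \colon D \to C$ of $f$, giving a surjection $\cat C(-,D) \twoheadrightarrow \ker f_\ast$; then take a weak kernel $h \colon D' \to D$ of $g$, which provides the exact sequence
\begin{equation*}
\cat C(-,D') \xrightarrow{h_\ast} \cat C(-,D) \xrightarrow{g_\ast} \cat C(-,C) \xrightarrow{f_\ast} \cat C(-,C').
\end{equation*}
This witnesses $\ker f_\ast$ as finitely presented. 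The second substep extends this to morphisms between finitely generated free modules $\bigoplus_i \cat C(-,C_i) \to \bigoplus_j \cat C(-,C'_j)$ by observing that $\cat C$, being additive, has finite direct sums, so such a morphism is represented by a single morphism in $\cat C$ and the previous argument applies verbatim.

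The final substep is a standard lifting argument. Given $\alpha \colon M \to N$ between finitely presented modules, choose presentations $P_1 \to P_0 \to M \to 0$ and $Q_1 \to Q_0 \to N \to 0$ with each $P_i,Q_i$ a finite direct sum of representables; lift $\alpha$ to a chain map $(\alpha_1,\alpha_0)$ between these presentations using projectivity of representables. A diagram chase (or the snake lemma) then exhibits $\ker \alpha$ as a cokernel of a morphism $K_1 \to K_0$, where $K_0 = \ker(P_0 \to N)$ and $K_1$ relates to $\ker(P_1 \to Q_1) \oplus P_1$ in the usual way; both $K_0$ and $K_1$ are finitely presented by the previous substep combined with Proposition \ref{prop:fp_closed_coker_ext} (closure under extensions, applied to the short exact sequence $0 \to \ker(P_0 \to Q_0) \to K_0 \to \mathrm{Im}(P_1 \to P_0) \to 0$, etc.). Hence $\ker \alpha$ is a cokernel of a map between finitely presented modules and thus itself finitely presented by Proposition \ref{prop:fp_closed_coker_ext}. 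The main obstacle is keeping the diagram chase clean: one must be careful that all intermediate kernels appearing in the snake-lemma computation are themselves finitely presented, which is precisely what the two-step weak-kernel argument in the first substep delivers.
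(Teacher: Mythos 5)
The direction showing coherence implies weak kernels is correct, and so are the first two substeps of the converse: iterating weak kernels to produce a presentation of $\ker f_*$ when $f$ is a morphism in $\cat C$, and then (using additivity) for morphisms between finite direct sums of representables. The gap is in the final substep. The short exact sequence you invoke,
\begin{equation*}
0 \to \ker(P_0 \to Q_0) \to K_0 \to \mathrm{Im}(P_1 \to P_0) \to 0,
\end{equation*}
where $K_0 = \ker(P_0 \to N)$, does not hold. In fact $\operatorname{Im}(P_1 \to P_0) = \ker(P_0 \to M)$ is a \emph{sub}module of $K_0$ (not a quotient), and the correct short exact sequence is
\begin{equation*}
0 \to \operatorname{Im}(P_1 \to P_0) \to K_0 \to \ker\alpha \to 0,
\end{equation*}
whose third term is the very module you are trying to show is finitely presented, so this cannot be used to bootstrap the finite presentation of $K_0$. (A quick test with $\cat C = $ finite-rank free $\mathbb Z$-modules, $\alpha\colon \mathbb Z/4 \to \mathbb Z/2$, $P_\bullet$ and $Q_\bullet$ the obvious length-one resolutions, gives $K_0 = 2\mathbb Z$, $\ker(P_0\to Q_0)=0$, $\operatorname{Im}(P_1\to P_0)=4\mathbb Z$, and there is no natural surjection $K_0 \twoheadrightarrow \operatorname{Im}(P_1 \to P_0)$ with trivial kernel.) Likewise the description of $K_1$ is too vague to check; in the organization $\ker\alpha \cong \coker(P_1 \to K_0)$ one may simply take $K_1 = P_1$, and then the whole weight of the argument falls on showing $K_0$ is finitely presented.

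The gap is fixable while keeping your overall strategy. Using projectivity of the finite sum of representables $P_0$, lift $P_0 \to N$ to a map $\tilde\beta \colon P_0 \to Q_0$ (this is your $\alpha_0$), and set
\begin{equation*}
L = \ker\bigl(P_0 \oplus Q_1 \xrightarrow{(\tilde\beta,\,-q_1)} Q_0\bigr),
\end{equation*}
a kernel of a map between finite direct sums of representables, hence finitely presented by your substep two. The projection $L \to P_0$ has image exactly $K_0$ (since $p\in K_0$ iff $\tilde\beta(p)\in\operatorname{Im}(q_1)=\ker(Q_0\to N)$) and kernel $\ker(q_1)$, which is again finitely presented by substep two; thus $K_0 \cong \coker(\ker q_1 \hookrightarrow L)$ is finitely presented by Proposition \ref{prop:fp_closed_coker_ext}. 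Then $\ker\alpha \cong \coker(P_1 \to K_0)$ finishes the argument. For reference, the paper itself does not reprove this lemma but cites \cite[Lemma 1]{krause-brown-coherent}, where the proof is organized along essentially these lines.
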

\subsection{Homotopically locally coherent dg-categories}
By definition, the category of finitely presented modules on a coherent category is an abelian subcategory of the category of modules. In analogy, we now give a more general and ``homotopically relevant'' notion of coherence for dg-categories: this will have the key property that a suitable category of h-projective and ``homotopically finitely presented'' dg-modules will inherit both the property of being pretriangulated and the t-structure from the dg-category of h-projective dg-modules.
\begin{remark} \label{remark:subcat_leftbd_rightbd}
A t-structure on a pretriangulated dg-category $\cat A$ is by definition a t-structure on the homotopy category $H^0(\cat A)$. We shall denote by $\cat A_{\leq n}$ and $\cat A_{\geq n}$ the full dg-subcategories of $\cat A$ with the same objects as the aisles $H^0(\cat A)_{\leq n}$ and $H^0(\cat A)_{\geq n}$. Moreover, we shall denote by $\cat A^+$ and $\cat A^-$ the full dg-subcategories of $\cat A$ whose objects are the same as $H^0(\cat A)^+$ and $H^0(\cat A)^-$. Given a quasi-functor $F \colon \cat A \to \cat B$ between dg-categories with t-structures, we say that it is $\emph{t-exact}$ (or that it \emph{preserves the t-structures}) if $H^0(F)$ does, namely if it commutes with the truncation functors (or, equivalently, if it preserves the aisles).  If $\cat A$ is strongly pretriangulated, then the same is true for $\cat A^+$ and $\cat A^-$. For a given dg-category $\cat B$, we shall sometimes write $\dercompmin{\cat B}$ and $\hprojmin{\cat B}$ instead of $\dercomp{\cat B}^-$ and $\hproj{\cat B}^-$.
\end{remark}
\begin{defin}
Let $\cat Q$ be a dg-category. A dg-module $M \in \compdg{\cat Q}$ is called \emph{homotopically finitely presented} (in short, \emph{hfp}) if $H^i(M)$ is a finitely presented $H^0(\cat Q)$-module for all $i \in \mathbb Z$:
\[
H^i(M) \in \Modfp{H^0(\cat Q)}, \quad \forall\, i \in \mathbb Z.
\]
We denote by $\hproj{\cat Q}^\mathrm{hfp}$ and $\dercomp{\cat Q}^{\mathrm{hfp}}$ the full subcategories of respectively $\hproj{\cat Q}$ and $\dercomp{\cat Q}$ whose objects are the homotopically finitely presented $\cat Q$-dg-modules:
\begin{align} 
\hproj{\cat Q}^\mathrm{hfp} &= \{M \in \hproj{\cat Q} : H^i(M) \in \Modfp{H^0(\cat Q)} \ \ \forall\, i\}, \label{eq:hproj_tsubcat_fp} \\
\dercomp{\cat Q}^{\mathrm{hfp}} &= \{M \in \dercomp{\cat Q} : H^i(M) \in \Modfp{H^0(\cat Q)} \ \ \forall\, i \}. \label{eq:dercat_tsubcat_fp}
\end{align}

We shall also set:
\begin{align} 
\hprojmin{\cat Q}^\mathrm{hfp} &= \{M \in \hproj{\cat Q}^\mathrm{hfp} : H^i(M) = 0 \quad \forall\, i \gg 0 \}, \\ \label{eq:tot_twminus_essimg}
\dercompmin{\cat Q}^\mathrm{hfp} &= \{M \in \dercomp{\cat Q}^\mathrm{hfp} : H^i(M) = 0 \quad \forall\, i \gg 0 \}.
\end{align}
\end{defin}
\begin{defin}
\label{def:hlc}
A dg-category $\cat Q$ is called \emph{(right) homotopically locally coherent} (in short, \emph{hlc}) if:
\begin{itemize}
\item $\cat Q$ is cohomologically concentrated in nonpositive degrees: for all $A,A' \in \cat Q$, we have $H^i(\cat Q(A,A'))=0$ for all $i>0$.
\item $H^0(\cat Q)$ is an additive and (right) coherent $\basering k$-linear category.
\item For all $A \in \cat Q$, the represented dg-module $\cat Q(-,A)$ is homotopically finitely presented, in other words the $H^0(\cat Q)$-module $H^i(\cat Q(-,A))$ is finitely presented for all $i \in \mathbb Z$: $H^i(\cat Q(-,A)) \in \Modfp{H^0(\cat Q)}$.
\end{itemize}
\end{defin}
There is a nice cohomological characterisation of the dg-category $\Tw^-(\cat Q)$ when $\cat Q$ is a hlc dg-category, which will be proven in \S \ref{subsection:proof_thm_hlc_tw_equiv}.
\begin{thm} \label{thm:hlc_tw_equiv}
Let $\cat Q$ be a hlc dg-category.
\begin{enumerate}
\item \label{item:hlc_tw_equiv_1}  The dg-category $\hproj{\cat Q}^\mathrm{hfp}$ of homotopically finitely presented (hfp) modules is strongly pretriangulated and has a non-degenerate t-structure which is induced from $\hproj{\cat Q}$; its heart is $\Modfp{H^0(\cat Q)}$.

In other words, the category $\dercomp{\cat Q}^\mathrm{hfp}$ is a triangulated subcategory of $\dercomp{\cat Q}$ and it has a non-degenerate t-structure induced from $\dercomp{\cat Q}$; its heart is $\Modfp{H^0(\cat Q)}$.
\item  \label{item:hlc_tw_equiv_2} The totalisation dg-functor \eqref{eq:tot_twminus_hproj} induces a quasi-equivalence
\begin{equation*}
\Tot \colon \Tw^-(\cat Q) \xrightarrow{\approx} \hprojmin{\cat Q}^\mathrm{hfp}.
\end{equation*}
In particular, $\Tw^-(\cat Q)$ has a unique non-degenerate right bounded t-structure with heart $\Modfp{H^0(\cat Q)}$ and such that the totalisation functor is t-exact. Moreover, $\hprojmin{\cat Q}^\mathrm{hfp}$ is essentially $\mathcal U$-small.
\end{enumerate}
\end{thm}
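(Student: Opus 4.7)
The plan is to establish (\ref{item:hlc_tw_equiv_1}) by checking that $\hproj{\cat Q}^{\mathrm{hfp}}$ is closed under shifts, cones, and truncations inside $\hproj{\cat Q}$, and then to deduce (\ref{item:hlc_tw_equiv_2}) by combining the quasi-full-faithfulness of $\Tot$ from Proposition \ref{prop:twminus_hproj} with an iterative twisted-complex resolution argument relying on Proposition \ref{prop:resolution_twminus_colimit} and Corollary \ref{coroll:tcohom_holim}. For (\ref{item:hlc_tw_equiv_1}), closure under shifts is immediate; for a closed degree zero map $f \colon M \to N$ between hfp modules, the cohomology long exact sequence produces a short exact sequence
\[
0 \to \coker H^i(f) \to H^i(\cone f) \to \ker H^{i+1}(f) \to 0,
\]
and since $H^0(\cat Q)$ is coherent, $\Modfp{H^0(\cat Q)}$ is abelian and closed under extensions in $\Mod{H^0(\cat Q)}$ by Proposition \ref{prop:fp_closed_coker_ext}, so $H^i(\cone f)$ is finitely presented. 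Truncations of hfp modules are hfp (their cohomology is either zero or unchanged), so the t-structure on $\dercomp{\cat Q}$ restricts to a non-degenerate one on $\dercomp{\cat Q}^{\mathrm{hfp}}$ whose heart is $\Mod{H^0(\cat Q)} \cap \dercomp{\cat Q}^{\mathrm{hfp}} = \Modfp{H^0(\cat Q)}$.

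For (\ref{item:hlc_tw_equiv_2}), by Proposition \ref{prop:twminus_hproj} the totalisation $\Tot \colon \Tw^-(\cat Q) \to \hproj{\cat Q}$ is already quasi-fully faithful, so I need to identify its essential image with $\hprojmin{\cat Q}^{\mathrm{hfp}}$. For the forward inclusion, take $X = (\bigoplus_{i \leq M} A_i[-i], q) \in \Tw^-(\cat Q)$: the bounded truncations $\Tot(\sigma_{\geq M-p} X)$ are iterated cones of representables $\cat Q(-, A_j)[-j]$ via \eqref{eq:twtruncation_triangle}, each of which has finitely presented cohomology by the hlc hypothesis, so by part (\ref{item:hlc_tw_equiv_1}) each $H^i(\Tot(\sigma_{\geq M-p} X))$ is finitely presented; Lemma \ref{lemma:truncation_cohomology_constant} then shows these cohomologies stabilize in each degree to $H^i(\Tot X)$, which therefore lies in $\Modfp{H^0(\cat Q)}$ and vanishes for $i > M$.

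For essential surjectivity, given $Y \in \hprojmin{\cat Q}^{\mathrm{hfp}}$ with $H^i(Y) = 0$ for $i > M$, I build the truncations $X_{M-p}$ inductively. Since $H^M(Y)$ is finitely generated, pick $A_M \in \cat Q^{\oplus}$ and a closed degree zero map $\alpha_M \colon \cat Q(-, A_M)[-M] \to Y$ whose induced map on $H^M$ is surjective (via the Yoneda identification $\hocomp{\cat Q}(\cat Q(-,A_M)[-M],Y) \cong H^M(Y(A_M))$, using h-projectivity of $Y$), and set $X_M = A_M[-M]$, so $H^i(\cone \alpha_M) = 0$ for $i \geq M$. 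Inductively, assuming $X_{M-p}$ and $\alpha_{M-p} \colon \Tot(X_{M-p}) \to Y$ constructed with $H^i(\cone \alpha_{M-p}) = 0$ for $i \geq M-p$, the module $H^{M-p-1}(\cone \alpha_{M-p})$ is finitely presented, so some closed degree zero $\gamma \colon \cat Q(-, A_{M-p-1})[-(M-p-1)] \to \cone \alpha_{M-p}$ surjects onto it; composing with $\cone \alpha_{M-p} \to \Tot(X_{M-p})[1]$ and using quasi-full-faithfulness of $\Tot$ yields (up to homotopy) a closed degree zero $\beta_{M-p} \colon A_{M-p-1}[-(M-p)] \to X_{M-p}$ in $\Tw^-(\cat Q)$, and I set $X_{M-p-1} = \cone(\beta_{M-p})$. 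Since $\alpha_{M-p} \circ \Tot(\beta_{M-p})$ is null-homotopic (by construction $\Tot(\beta_{M-p})$ factors through the connecting map $\cone\alpha_{M-p} \to \Tot(X_{M-p})[1]$), $\alpha_{M-p}$ factors through $\Tot(X_{M-p-1})$ as some $\alpha_{M-p-1}$; the octahedral axiom then places $\cone \alpha_{M-p-1}$ in a triangle with $A_{M-p-1}[-(M-p-1)]$ and $\cone \alpha_{M-p}$, and a direct cohomology computation (using that $\cat Q(-,A_{M-p-1})$ is cohomologically concentrated in nonpositive degrees and that $\gamma$ was surjective on $H^{M-p-1}$) gives $H^i(\cone \alpha_{M-p-1}) = 0$ for $i \geq M-p-1$, closing the induction.

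Proposition \ref{prop:resolution_twminus_colimit} assembles the $X_{M-p}$ into $X \in \Tw^-(\cat Q)$ with $\sigma_{\geq M-p} X \cong X_{M-p}$, and the compatible $\alpha_{M-p}$ combine into a morphism $\Tot(X) \approx \hocolim_p \Tot(X_{M-p}) \to Y$ via Proposition \ref{prop:tot_hocolim_iso_lim}. Since $H^i(\alpha_{M-p})$ is an isomorphism as soon as $i > M-p$, Corollary \ref{coroll:tcohom_holim} applied inside $\dercomp{\cat Q}$ (with its canonical t-structure, and using Lemma \ref{lemma:truncation_cohomology_constant} for the cohomology behavior of the truncation sequence) shows that the map $\Tot(X) \to Y$ is a quasi-isomorphism, hence an isomorphism in $\hocomp{\cat Q}$ by h-projectivity of both sides. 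The t-structure on $\Tw^-(\cat Q)$ is then transported along the resulting quasi-equivalence, and essential $\mathcal U$-smallness of $\hprojmin{\cat Q}^{\mathrm{hfp}}$ follows from $\mathcal U$-smallness of $\Tw^-(\cat Q)$. The main obstacle is precisely this essential surjectivity step: maintaining the vanishing of $H^i(\cone \alpha_{M-p})$ for $i \geq M-p$ through the octahedral axiom at each inductive stage, and correctly lifting cohomological surjections onto finitely presented modules to closed degree zero morphisms in $\Tw^-(\cat Q)$ through the quasi-fully faithful totalisation.
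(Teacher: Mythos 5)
Your treatment of part \textbf{(1)} coincides with the paper's Lemmas \ref{lemma:hproj_tsubcat_fp_pretr} and \ref{lemma:hfp_tstruct}, and your argument that $\Tot$ lands in $\hprojmin{\cat Q}^{\mathrm{hfp}}$ matches Lemma \ref{lemma:hlc_twminus_essimg}. For essential surjectivity, you work directly in $\hproj{\cat Q}$ instead of setting up the paper's general Proposition \ref{prop:dginj_resolution} (which is formulated for an arbitrary pretriangulated $\cat D$ with a t-structure precisely so that it can be reused later in Theorem \ref{thm:comparison}); for this theorem alone, your more specialised route would in principle suffice.

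However, the inductive step has a genuine gap. You choose $\gamma \colon \cat Q(-,A_{M-p-1})[-(M-p-1)] \to \cone(\alpha_{M-p})$ surjective on $H^{M-p-1}$, obtain $\beta_{M-p}$ by composing with the connecting map, let $\alpha_{M-p}$ factor through $\Tot(X_{M-p-1})$ as ``some'' $\alpha_{M-p-1}$, and \emph{then} invoke the octahedral axiom to get a triangle $\cat Q(-,A_{M-p-1})[-(M-p-1)] \xrightarrow{g} \cone(\alpha_{M-p}) \to \cone(\alpha_{M-p-1})$. You then conclude $H^{M-p-1}(\cone\alpha_{M-p-1})=0$ from the surjectivity of $H^{M-p-1}(\gamma)$. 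But the octahedral map $g$ is constrained only by $f g = b$ (where $f$ is the connecting map $\cone(\alpha_{M-p}) \to \Tot(X_{M-p})[1]$ and $b$ the one for the truncation triangle), so $g$ can differ from $\gamma$ by a term factoring through $Y \to \cone(\alpha_{M-p})$; surjectivity of $H^{M-p-1}(\gamma)$ does not transfer to $H^{M-p-1}(g)$, and it is $g$, not $\gamma$, whose cokernel computes $H^{M-p-1}(\cone\alpha_{M-p-1})$. The fix is to reverse the order: apply TR3 to the strictly commuting square formed by $\Tot(\beta_{M-p})$, $p=\gamma[-1]$ and the inclusion $\cone(\alpha_{M-p})[-1] \to \Tot(X_{M-p})$, thereby producing $\alpha_{M-p-1}$ \emph{together with} a morphism of triangles whose left vertical is $p$; then a five-lemma argument in cohomology gives the inductive bound. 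This is exactly what the paper does (see diagram \eqref{eq:alphan_def} and the ensuing five-lemma step in the proof of Proposition \ref{prop:dginj_resolution}), where moreover $\alpha_{n-1}=(c_n,\alpha_n)$ is constructed at the dg level so that all squares commute strictly — strictness that the paper also needs in Corollary \ref{coroll:res_tstruct} and Proposition \ref{prop:Twmin_hprojhfp_essimg} to apply Proposition \ref{prop:tot_twmin_colim}, a point your homotopy-colimit formulation sidesteps but at the cost of carrying only up-to-homotopy compatibilities.
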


\subsection{The resolution and the proof of Theorem \ref{thm:hlc_tw_equiv}} \label{subsection:proof_thm_hlc_tw_equiv}
The proof of part \ref{item:hlc_tw_equiv_1} of Theorem \ref{thm:hlc_tw_equiv} follows from the following two lemmas.
\begin{lemma} \label{lemma:hproj_tsubcat_fp_pretr}
The dg-category $\hproj{\cat Q}^\mathrm{hfp}$ is strongly pretriangulated. Moreover, its full dg-subcategory $\hprojmin{\cat Q}^{\mathrm{hfp}}$ is strongly pretriangulated and contains the representables $\cat Q(-,A)$.
\end{lemma}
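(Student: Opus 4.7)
The plan is to verify the three assertions directly using the coherence of $H^0(\cat Q)$ together with the standard fact that $\hproj{\cat Q}$ is strongly pretriangulated. Since $\hproj{\cat Q}^{\mathrm{hfp}}$ and $\hprojmin{\cat Q}^{\mathrm{hfp}}$ are full dg-subcategories of $\hproj{\cat Q}$, it suffices to check that they contain a zero object and are closed under shifts and mapping cones of closed degree~$0$ morphisms.

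For the shift: $H^i(M[n]) = H^{i+n}(M)$, which is finitely presented whenever $M$ is hfp, and vanishes for $i \gg 0$ whenever $M \in \hprojmin{\cat Q}^{\mathrm{hfp}}$. For the cone of a closed degree $0$ map $f \colon M \to N$ between hfp dg-modules, I would invoke the long exact cohomology sequence
\begin{equation*}
\cdots \to H^i(M) \to H^i(N) \to H^i(\cone(f)) \to H^{i+1}(M) \to H^{i+1}(N) \to \cdots
\end{equation*}
and split it into a short exact sequence $0 \to \coker(H^{i-1}f) \to H^i(\cone(f)) \to \ker(H^i f) \to 0$. Since $\cat Q$ is hlc, the category $\Modfp{H^0(\cat Q)}$ is abelian (by Proposition~\ref{prop:cohcat_wker} and the coherence hypothesis), so kernels, cokernels and extensions of finitely presented modules are finitely presented; hence $H^i(\cone(f))$ is finitely presented. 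This proves closure under cones for $\hproj{\cat Q}^{\mathrm{hfp}}$. For $\hprojmin{\cat Q}^{\mathrm{hfp}}$, the same long exact sequence shows that $H^i(\cone(f)) = 0$ for $i$ larger than the common bound for $M$ and $N$.

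Finally, the representable $\cat Q(-,A)$ is h-projective by the dg-Yoneda lemma, and the hlc hypothesis says precisely that $H^i(\cat Q(-,A)) \in \Modfp{H^0(\cat Q)}$ for all $i$, and vanishes for $i>0$ thanks to the nonpositivity of the cohomology of $\cat Q$. Hence $\cat Q(-,A) \in \hprojmin{\cat Q}^{\mathrm{hfp}}$.

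No step is really an obstacle; the only subtlety is that strong pretriangulation requires $\Modfp{H^0(\cat Q)}$ to be closed under \emph{kernels}, not merely cokernels and extensions as in Proposition~\ref{prop:fp_closed_coker_ext}—this is exactly where the coherence of $H^0(\cat Q)$ (equivalently, the existence of weak kernels) enters and is essential.
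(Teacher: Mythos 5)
Your proof follows the paper's argument essentially verbatim: shifts are obvious, cones are handled via the long exact cohomology sequence split into a short exact sequence involving a cokernel and a kernel, with coherence of $H^0(\cat Q)$ supplying the kernel case and Proposition~\ref{prop:fp_closed_coker_ext} supplying cokernels and extensions; the representables lie in $\hprojmin{\cat Q}^\mathrm{hfp}$ by the nonpositivity of $H^*(\cat Q)$. The only slip is a minor index mismatch: with the long exact sequence you wrote (which uses the triangle $M \to N \to \cone(f) \to M[1]$, as in the paper), the short exact sequence should read $0 \to \coker(H^i f) \to H^i(\cone(f)) \to \ker(H^{i+1}f) \to 0$ rather than with indices shifted by one; this does not affect the argument.
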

\begin{proof}
By definition, $\hproj{\cat Q}^\mathrm{hfp}$ is closed under shifts, so to see that it is strongly pretriangulated we only have to show that it is closed under cones. Let $f \colon M \to N$ a closed degree $0$ morphism in $\hproj{\cat Q}^{\mathrm{hfp}}$. It fits in the following pretriangle in $\hproj{\cat Q}$:
\begin{equation*}
M \xrightarrow{f} N \xrightarrow{j} \cone(f) \xrightarrow{p} M[1].
\end{equation*}
Taking cohomology, we obtain the following exact sequence:
\begin{equation*}
H^i(M) \to H^i(N) \to H^i(\cone(f)) \to H^{i+1}(M) \to H^{i+1}(N)
\end{equation*}
which also gives the following short exact sequence:
\begin{equation*}
0 \to \coker(H^i(f)) \to H^i(\cone(f)) \to \ker(H^{i+1}(f)) \to 0.
\end{equation*}
Since $\cat Q$ is hlc, $H^i(f)$ and $H^{i+1}(f)$ are maps between objects in $\Modfp{H^0(\cat Q)}$. Also, since $H^0(\cat Q)$ is coherent, both $\coker(H^i(f)), \ker(H^{i+1}(f)) \in \Modfp{H^0(\cat Q)}$. Since the category $\Modfp{H^0(\cat Q)}$ is closed under extensions, we deduce that
\begin{equation*}
H^i(\cone(f)) \in \Modfp{H^0(\cat Q)}.
\end{equation*}

Finally, since $\cat Q$ is by hypothesis concentrated in nonpositive degrees, we immediately deduce that $\hprojmin{\cat Q}^\mathrm{hfp}$ contains all the representables.
\end{proof}
\begin{lemma} \label{lemma:hfp_tstruct}
Let $\cat Q$ be a hlc dg-category. Then, $\dercomp{\cat Q}^{\mathrm{hfp}}$ is a triangulated subcategory of $\dercomp{\cat Q}$ stable under truncations, hence it has a non-degenerate t-structure induced from $\dercomp{\cat Q}$; its heart is the category $\Modfp{H^0(\cat Q)}$.
\end{lemma}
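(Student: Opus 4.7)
The plan is to verify four things in turn: (a) $\dercomp{\cat Q}^{\mathrm{hfp}}$ is closed under shifts and cones, hence a strictly full triangulated subcategory; (b) it is stable under the truncation functors $\tau_{\leq n}, \tau_{\geq n}$ of the t-structure on $\dercomp{\cat Q}$; (c) the induced t-structure inherits non-degeneracy from the ambient one; (d) the heart is $\Modfp{H^0(\cat Q)}$.

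For (a), closure under shifts is immediate from the definition. For closure under cones, given a distinguished triangle $M \to N \to P \to M[1]$ in $\dercomp{\cat Q}$ with $M,N \in \dercomp{\cat Q}^{\mathrm{hfp}}$, the long exact sequence in cohomology yields a short exact sequence
\begin{equation*}
0 \to \coker(H^i(f)) \to H^i(P) \to \ker(H^{i+1}(f)) \to 0
\end{equation*}
in $\Mod{H^0(\cat Q)}$, exactly as in the proof of Lemma \ref{lemma:hproj_tsubcat_fp_pretr}. By hypothesis $H^0(\cat Q)$ is coherent, so $\Modfp{H^0(\cat Q)}$ is abelian and closed under kernels and cokernels in $\Mod{H^0(\cat Q)}$; combined with closure under extensions (Proposition \ref{prop:fp_closed_coker_ext}) we conclude $H^i(P) \in \Modfp{H^0(\cat Q)}$.

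For (b), the truncation functors satisfy $H^i(\tau_{\leq n}M) = H^i(M)$ for $i \leq n$ and $0$ otherwise, and dually for $\tau_{\geq n}$, so truncations of hfp modules are hfp. Hence the ambient t-structure on $\dercomp{\cat Q}$ restricts to a t-structure on $\dercomp{\cat Q}^{\mathrm{hfp}}$ with aisles $\dercomp{\cat Q}^{\mathrm{hfp}} \cap \dercomp{\cat Q}_{\leq 0}$ and $\dercomp{\cat Q}^{\mathrm{hfp}} \cap \dercomp{\cat Q}_{\geq 0}$. For (c), non-degeneracy follows by intersecting: $\bigcap_n (\dercomp{\cat Q}^{\mathrm{hfp}} \cap \dercomp{\cat Q}_{\geq n}) = \dercomp{\cat Q}^{\mathrm{hfp}} \cap \bigcap_n \dercomp{\cat Q}_{\geq n} = 0$, and symmetrically on the other side, since the t-structure on $\dercomp{\cat Q}$ with heart $\Mod{H^0(\cat Q)}$ is non-degenerate.

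For (d), an object in the heart of the induced t-structure is an $M \in \dercomp{\cat Q}^{\mathrm{hfp}}$ with $H^i(M) = 0$ for $i \neq 0$; via the equivalence between the heart of $\dercomp{\cat Q}$ and $\Mod{H^0(\cat Q)}$, such an $M$ corresponds to $H^0(M) \in \Modfp{H^0(\cat Q)}$. Conversely, any $N \in \Modfp{H^0(\cat Q)}$, viewed as a dg-module concentrated in degree zero, lies in $\dercomp{\cat Q}^{\mathrm{hfp}}$ and in the heart. No step here is really a serious obstacle; the only non-trivial input is the closure of $\Modfp{H^0(\cat Q)}$ under kernels, which is exactly the coherence hypothesis, and its closure under extensions, given by Proposition \ref{prop:fp_closed_coker_ext}.
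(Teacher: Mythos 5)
Your proof is correct and follows essentially the same approach as the paper. The one small structural difference is that the paper delegates step (a), closure under shifts and cones, to the preceding Lemma \ref{lemma:hproj_tsubcat_fp_pretr}, whereas you reproduce that argument inline; the paper's proof of the lemma itself only records stability under truncations, closure under direct summands (via Proposition \ref{prop:fp_closed_coker_ext}, a point you omit but which is not needed for the statement as phrased), and the identification of the heart, taking the rest as established. Your extra verification of non-degeneracy in (c) is fine and makes explicit something the paper leaves implicit.
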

\begin{proof}
$\dercomp{\cat Q}^{\mathrm{hfp}}$ is clearly stable under truncations.  To see that it is closed under direct summands, we directly apply Proposition \ref{prop:fp_closed_coker_ext}. Its heart is then given by the intersection of $\dercomp{\cat Q}^\mathrm{hfp}$ with the heart $\Mod{H^0(\cat Q)}$, which is immediately seen to be precisely $\Modfp{H^0(\cat Q)}$.
\end{proof}

Next, we prove part \ref{item:hlc_tw_equiv_2} of Theorem \ref{thm:hlc_tw_equiv}. We already know from Proposition \ref{prop:twminus_hproj} that the totalisation $\Tot \colon \Tw^-(\cat Q) \to \hproj{\cat Q}$ is quasi-fully faithful, so we need to focus on its essential image. First, we prove that totalisations of twisted complexes in $\Tw^-(\cat Q)$ land in the subcategory $\hprojmin{\cat Q}^{\mathrm{hfp}}$:
\begin{lemma} \label{lemma:hlc_twminus_essimg}
Let $\cat Q$ be a hlc dg-category, and let $X =(\bigoplus_{i \geq M} A_i[-i],q) \in \Tw^-(\cat Q)$. Then, $H^i(\Tot(X))=0$ for $i>M$ and $H^i(\Tot(X)) \in \Modfp{H^0(\cat Q)}$ for all $i \in \mathbb Z$. In particular, the totalisation functor restricted to $\Tw^-(\cat Q)$ has image in $\hprojmin{\cat Q}^\mathrm{hfp}$:
\begin{equation*}
\Tot \colon \Tw^-(\cat Q) \to \hprojmin{\cat Q}^\mathrm{hfp}.
\end{equation*}
\end{lemma}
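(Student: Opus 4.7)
The plan is to reduce to the bounded truncations $\sigma_{\geq n} X \in \Tw^-_\bd(\cat Q)$ and then pass to the colimit using the machinery already developed. I interpret the statement as applying to $X = (\bigoplus_{i \leq M} A_i[-i],q)$, bounded above by $M$ (matching the indexing convention of Lemma \ref{lemma:truncation_cohomology_constant}).

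First I would handle the bounded case by induction on the number of nonzero summands, starting from $\sigma_{\geq M} X = A_M[-M]$. Here $\Tot(A_M[-M]) = \cat Q(-,A_M)[-M]$, so $H^i(\Tot(\sigma_{\geq M} X)) = H^{i-M}(\cat Q(-,A_M))$, which vanishes for $i>M$ because $\cat Q$ is cohomologically concentrated in nonpositive degrees, and is finitely presented for every $i$ by the third axiom in Definition \ref{def:hlc}. For the inductive step, I would use the pretriangle \eqref{eq:twtruncation_triangle},
\begin{equation*}
A_{n-1}[-n] \xrightarrow{\beta_n} \sigma_{\geq n} X \xrightarrow{\varphi_{n,n-1}} \sigma_{\geq n-1} X \to A_{n-1}[-n+1],
\end{equation*}
which under $\Tot$ gives a distinguished triangle in $\dercomp{\cat Q}$ and hence a long exact sequence
\begin{equation*}
\cdots \to H^{i-n}(\cat Q(-,A_{n-1})) \to H^i(\Tot \sigma_{\geq n} X) \to H^i(\Tot \sigma_{\geq n-1} X) \to H^{i-n+1}(\cat Q(-,A_{n-1})) \to \cdots
\end{equation*}
For $i>M \geq n$ we have $i-n+1>0$ and $i-n>0$, so both outer terms vanish (since $\cat Q$ is concentrated in nonpositive degrees), forcing $H^i(\Tot \sigma_{\geq n-1} X) \cong H^i(\Tot \sigma_{\geq n} X) = 0$ by induction. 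For finite presentability, I would extract the short exact sequence
\begin{equation*}
0 \to \coker\bigl(H^{i-n}(\cat Q(-,A_{n-1})) \to H^i(\Tot \sigma_{\geq n} X)\bigr) \to H^i(\Tot \sigma_{\geq n-1} X) \to \ker\bigl(H^{i-n+1}(\cat Q(-,A_{n-1})) \to H^{i+1}(\Tot \sigma_{\geq n} X)\bigr) \to 0.
\end{equation*}
All four terms involved are finitely presented: the representable cohomologies by the hlc hypothesis, and the $H^\ast(\Tot \sigma_{\geq n} X)$-terms by induction. Coherence of $H^0(\cat Q)$ (Proposition \ref{prop:cohcat_wker}) ensures that the kernel is finitely presented, Proposition \ref{prop:fp_closed_coker_ext} takes care of the cokernel, and then closure of $\Modfp{H^0(\cat Q)}$ under extensions (again Proposition \ref{prop:fp_closed_coker_ext}) concludes.

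Finally I would extend from $\sigma_{\geq n} X$ to $X$ itself by invoking Lemma \ref{lemma:truncation_cohomology_constant}: for every fixed $i \in \mathbb Z$, picking $p$ with $M-p<i$ yields an isomorphism $H^i(\Tot \sigma_{\geq M-p} X) \xrightarrow{\sim} H^i(\Tot X)$. The finite presentability of $H^i(\Tot X)$ then follows from the bounded case, and the vanishing $H^i(\Tot X)=0$ for $i>M$ follows by taking $p=0$ together with the base case. Consequently $\Tot(X) \in \hprojmin{\cat Q}^{\mathrm{hfp}}$, as claimed.

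The main obstacle I expect is simply keeping the bookkeeping of indices and signs straight in the truncation pretriangle; the substance of the argument is an interplay between the hlc axioms (which feed finite presentability into the cohomology of representables) and coherence (which propagates finite presentability through kernels). All of the heavy lifting about passing to the colimit is already packaged in Lemma \ref{lemma:truncation_cohomology_constant} and Proposition \ref{prop:tot_hocolim_iso_lim}, so the argument boils down to the inductive step above.
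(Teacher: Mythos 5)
Your proof is correct and follows essentially the same route as the paper: both reduce to the bounded truncations $\sigma_{\geq n}X$ via Lemma \ref{lemma:truncation_cohomology_constant}, handle the bounded case by induction using the pretriangle \eqref{eq:twtruncation_triangle}, and deduce finite presentability from the short exact sequence $0 \to \coker(s) \to H^i(\Tot\sigma_{\geq n-1}X) \to \ker(t) \to 0$ together with coherence (for kernels), Proposition \ref{prop:fp_closed_coker_ext} (for cokernels), and closure under extensions. The only cosmetic difference is the order of presentation (you prove the bounded case first and then pass to the limit, the paper does it the other way round), and you correctly noted and fixed the indexing typo $\bigoplus_{i\ge M}$ vs.\ $\bigoplus_{i\le M}$ in the statement.
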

\begin{proof}
Without loss of generality, we assume $M=0$. By Lemma \ref{lemma:truncation_cohomology_constant}, we have for $i \in \mathbb Z$ and $-p < i$ that
\begin{equation*}
H^i(\Tot(X)) \cong H^i(\Tot(\sigma_{\geq -p} X)).
\end{equation*}
So, it is enough to prove the statement for $\sigma_{\geq -p} X$, for all $p \geq 0$. We argue by induction. For the base step, we have $\sigma_{\geq 0} X = A_0 \in \cat Q$ and the claim follows since $\cat Q$ is hlc. Next, assume the thesis is true for $\sigma_{\geq -p} X$. From \eqref{eq:twtruncation_triangle} we obtain a pretriangle in $\compdg{\cat Q}$:
\begin{equation}
\cat Q(-,A_{-p-1})[p] \to \Tot(\sigma_{\geq -p} X) \to \Tot(\sigma_{\geq -p-1} X) \to \cat Q(-, A_{-p-1})[p+1].
\end{equation}
Taking $i$-th cohomology, we get an exact sequence in $\Mod{H^0(\cat Q)}$:
\begin{align*}
H^{i+p}&(\cat Q(-,A_{-p-1})) \xrightarrow{s} H^i(\Tot(\sigma_{\geq -p}X)) \to H^i(\Tot(\sigma_{\geq -p-1}X)) \\ & \to H^{i+p+1}(\cat Q(-,A_{-p-1})) \xrightarrow{t} H^{i+1}(\Tot(\sigma_{\geq -p} X)).
\end{align*}
Now, if $i>0$ we have
\[
H^{i+p}(\cat Q(-,A_{-p-1})) = 0, \quad H^{i+p+1}(\cat Q(-,A_{-p-1})) = 0,
\]
so we have an isomorphism
\begin{equation*} 
H^i(\Tot(\sigma_{\geq -p}X)) \xrightarrow{\sim} H^i(\Tot(\sigma_{\geq -p-1}X))
\end{equation*}
and from the inductive hypothesis we conclude that $H^i(\Tot(\sigma_{\geq -p-1}X)) = 0$. For the other claim, we observe that the above long exact sequence induces the following short exact sequence:
\begin{equation*}
0 \to \coker(s) \to H^i(\Tot(\sigma_{\geq -p-1}X)) \to \ker(t) \to 0.
\end{equation*}
By the inductive hypothesis and since $\cat Q$ is hlc, $s$ and $t$ are maps between objects in $\Modfp{H^0(\cat Q)}$. Also, since $H^0(\cat Q)$ is coherent, both $\coker(s), \ker(t) \in \Modfp{H^0(\cat Q}$. Since $\Modfp{H^0(\cat Q)}$ is closed under extensions, we deduce that $H^i(\Tot(\sigma_{\geq -p-1}X)) \in \Modfp{H^0(\cat Q)}$, as claimed.
\end{proof}
The following is a key technical result which allows us to ``resolve'' objects by means of twisted complexes. We write it down in some generality.
\begin{prop} \label{prop:dginj_resolution}
Let $\cat D$ be a strongly pretriangulated dg-category; we identify it with its pretriangulated hull: $\pretr{\cat D} = \cat D$. Assume that $\cat D$ has a t-structure $(\cat D_{\leq 0}, \cat D_{\geq 0})$. We denote as usual with $H^i(-)$ the i-th cohomology functor $H^0(\cat D) \to H^0(\cat D)^\heartsuit$; we shall write $H^i(f)$ instead of the more precise $H^i([f])$, if $[f]$ is the cohomology class of a closed degree $0$ morphism $f$.

Assume moreover there is a full dg-subcategory $\cat Q \subseteq \cat D_{\leq 0}$ with cohomology concentrated in nonpositive degrees. We recall that the inclusion $\cat Q \subset \cat D$ induces a fully faithful dg-functor $\pretr{\cat Q} \hookrightarrow \pretr{\cat D} = \cat D$ (see the properties of $\Ind$ in \S \ref{subsection:twcomp_qeq}); the totalisation functor \eqref{eq:twbd_pretr} restricted to $\Tw^-_\bd (\cat Q)$ induces a (quasi-fully faithful) dg-functor:
\begin{equation} \label{eq:TQ_tot}
T_{\cat Q} \colon \Tw^-_\bd (\cat Q) \xrightarrow{\Tot} \pretr{\cat Q} \hookrightarrow \cat D.
\end{equation}
Moreover, assume that for any object $A \in \cat D^-$ there is an object $Q \in \cat Q$ and a closed degree $0$ morphism $\alpha \colon Q \to A$ with the property that $H^0(\alpha) \colon H^0(Q) \to H^0(A)$ is an epimorphism in $H^0(\cat D)^\heartsuit$. 

Fix $A \in \cat D_{\leq M}$. There is a sequence $(Q_n)_{n \leq M}$ of objects of $\cat Q$ and a sequence $(X_n)_{n \leq M}$ of twisted complexes in $\Tw^-_\bd (\cat Q)$ such that
\begin{align*}
X_M &= Q_M[-M], \\
X_{n-1} &= \cone(Q_{n-1}[-n] \to X_n),
\end{align*}
for suitable closed degree $0$ morphisms $Q_{n-1}[-n] \to X_n$ in $\Tw^-_\bd (\cat Q)$, so that there are pretriangles in $\cat D$
\begin{equation*}
Q_{n-1}[-n] \to T_{\cat Q}(X_n) \xrightarrow{j_{n,n-1}} T_{\cat Q}(X_{n-1}) \to Q_{n-1}[-n+1]
\end{equation*}
and $X_n$ is concentrated in degrees between $n$ and $M$:
\begin{equation*}
X_n = \left(\bigoplus_{k=n}^M Q_k[-k], q_{X_n} \right).
\end{equation*}
Also, $T_{\cat Q}(X_n) \in \cat D_{\leq M}$ for all $n$. Moreover, there exist closed degree $0$ morphisms $\alpha_n \colon T_{\cat Q}(X_n) \to A$ in $\cat D$, such that the following diagram is (strictly) commutative:
\begin{equation} \label{eq:res_Xn_commutative}
\begin{gathered}
\xymatrix{
T_{\cat Q}(X_n) \ar[r]^-{\alpha_n} \ar[d]_{j_{n,n-1}} & A \\
T_{\cat Q}(X_{n-1}) \ar[ur]_{\alpha_{n-1}},
}
\end{gathered}
\end{equation}
The morphism $\alpha_n$ induces a map in $H^0(\cat D)^\heartsuit$ for all $i \in \mathbb Z$:
\begin{equation*}
H^i(\alpha_n) \colon H^i(T_{\cat Q}(X_n)) \to H^i(A)
\end{equation*}
which is an isomorphism for $i > n$ and an epimorphism for $i=n$. Also the induced map
\begin{equation*}
H^i(j_{n,n-1}) \colon H^i(T_{\cat Q}(X_n)) \to H^i(T_{\cat Q}(X_{n-1}))
\end{equation*}
is an isomorphism for $i > n$ and an epimorphism for $i=n$.
\end{prop}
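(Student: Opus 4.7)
The plan is to proceed by downward induction on $n \leq M$, iteratively killing the cohomology of the cone $C'_n := \cone(\alpha_n)$ in $\cat D$ by attaching cells drawn from $\cat Q$. For the base case $n = M$, note that $A[M] \in \cat D_{\leq 0} \subseteq \cat D^-$, so the standing hypothesis yields $Q_M \in \cat Q$ and a closed degree $0$ morphism $Q_M \to A[M]$ epimorphic on $H^0$; desuspending produces $\alpha_M \colon X_M := Q_M[-M] \to A$, and $H^i(\alpha_M)$ is vacuously an isomorphism for $i > M$ (both terms vanish by $Q_M \in \cat D_{\leq 0}$ and $A \in \cat D_{\leq M}$) and an epimorphism at $i = M$, while $T_{\cat Q}(X_M) = Q_M[-M] \in \cat D_{\leq M}$.

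For the inductive step, place $\alpha_n$ in the pretriangle
\begin{equation*}
T_{\cat Q}(X_n) \xrightarrow{\alpha_n} A \to C'_n \xrightarrow{\delta_n} T_{\cat Q}(X_n)[1]
\end{equation*}
in $\cat D$; the associated long exact sequence combined with the inductive hypothesis forces $H^i(C'_n) = 0$ for all $i \geq n$, while closure of the aisle under extensions gives $C'_n \in \cat D_{\leq M}$. Hence $C'_n[n-1] \in \cat D^-$ has cohomology concentrated in nonpositive degrees, so the main hypothesis on $\cat Q$ produces $Q_{n-1} \in \cat Q$ together with a closed degree $0$ morphism $\gamma_{n-1} \colon Q_{n-1} \to C'_n[n-1]$ with $H^0(\gamma_{n-1})$ surjective. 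I then form the composite $\beta'_{n-1} := \delta_n[-1] \circ \gamma_{n-1}[-n] \colon Q_{n-1}[-n] \to T_{\cat Q}(X_n)$ in $\cat D$; since $T_{\cat Q}$ factors through the quasi-fully faithful totalisation $\Tot \colon \Tw^-_\bd(\cat Q) \to \pretr{\cat Q} \subseteq \cat D$, I lift the homotopy class of $\beta'_{n-1}$ to a closed degree $0$ morphism $\beta_{n-1} \colon Q_{n-1}[-n] \to X_n$ in $\Tw^-_\bd(\cat Q)$ and set $X_{n-1} := \cone(\beta_{n-1})$. By construction $X_{n-1}$ has the prescribed graded form $\bigl(\bigoplus_{k=n-1}^M Q_k[-k], q_{X_{n-1}}\bigr)$, applying the dg-functor $T_{\cat Q}$ yields the required pretriangle in $\cat D$, and $T_{\cat Q}(X_{n-1}) \in \cat D_{\leq M}$ follows again from aisle closure.

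Finally, to produce $\alpha_{n-1}$ with \emph{strict} compatibility $\alpha_{n-1} \circ j_{n,n-1} = \alpha_n$, use the cone description $T_{\cat Q}(X_{n-1}) = Q_{n-1}[-n+1] \oplus T_{\cat Q}(X_n)$: the constraint fixes the second component of $\alpha_{n-1}$ to be $\alpha_n$, and the first is any degree $0$ morphism $u_1 \colon Q_{n-1}[-n+1] \to A$ solving $du_1 = \alpha_n \circ T_{\cat Q}(\beta_{n-1}) \circ \shiftid{Q_{n-1}[-n]}{1}{0}$. Such $u_1$ exists because $\alpha_n \circ T_{\cat Q}(\beta_{n-1})$ is null-homotopic: up to homotopy it equals $\alpha_n \circ \beta'_{n-1}$, the desuspension of $\alpha_n[1] \circ \delta_n \circ \gamma_{n-1}[-n+1]$, and $\alpha_n[1] \circ \delta_n$ vanishes in $H^0(\cat D)$ as two consecutive morphisms in the rotated distinguished triangle. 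The cohomological properties of $j_{n,n-1}$ then follow immediately from the LES of the cone pretriangle $Q_{n-1}[-n] \to T_{\cat Q}(X_n) \to T_{\cat Q}(X_{n-1}) \to Q_{n-1}[-n+1]$ and $Q_{n-1} \in \cat D_{\leq 0}$; for $\alpha_{n-1}$, the iso/epi statements in degrees $>n$ and $n$ follow from $\alpha_{n-1} \circ j_{n,n-1} = \alpha_n$ together with the identity $\ker H^n(j_{n,n-1}) = \Img H^n(\beta_{n-1}) = \ker H^n(\alpha_n)$, obtained via the factorisation $H^n(\beta_{n-1}) = H^{n-1}(\delta_n) \circ H^0(\gamma_{n-1})$. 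The epimorphism at $i = n-1$ will come from the octahedral axiom applied to $\alpha_n = \alpha_{n-1} \circ j_{n,n-1}$, which yields a triangle $Q_{n-1}[-n+1] \to C'_n \to C'_{n-1} \to Q_{n-1}[-n+2]$ whose LES, combined with $H^0(\gamma_{n-1})$ being surjective, forces $H^{n-1}(C'_{n-1}) = 0$. I expect the main obstacle to be engineering the strict commutativity $\alpha_{n-1} \circ j_{n,n-1} = \alpha_n$ at the dg-level: one must carefully track a specific null-homotopy of $\alpha_n \circ T_{\cat Q}(\beta_{n-1})$ and feed it through the explicit cone formula; once that is done, the remaining cohomological verifications reduce to routine long-exact-sequence chases.
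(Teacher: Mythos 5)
Your proposal is correct and follows essentially the same strategy as the paper's proof: downward induction, attaching a cell from $\cat Q$ at each stage to kill the cohomology of the cone $C'_n$, lifting the attaching map to $\Tw^-_\bd(\cat Q)$, and arranging strict commutativity via a null-homotopy fed into the explicit cone formula. Two small points where the paper is slightly tighter: since $Q_{n-1}[-n]$ sits in a single weight degree while $X_n$ sits in degrees $[n,M]$, every closed degree-$0$ morphism between them in $\MC(\cat Q^\leftarrow)$ is automatically one-sided, so the lift of $\beta'_{n-1}$ can be taken \emph{strictly} rather than merely up to homotopy; and for the epimorphism at $i = n-1$ the paper avoids the octahedral axiom by writing down the explicit morphism of pretriangles \eqref{eq:alphan_def} (the role of your $\gamma_{n-1}$ is played by its $p_n$) and applying the diagram chase of Lemma \ref{lemma:diagramchase}, which sidesteps having to identify the octahedral connecting map $Q_{n-1}[-n+1] \to C'_n$ with $\gamma_{n-1}[-n+1]$.
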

\begin{proof}
Upon replacing $A$ with a suitable shift, we assume that $M=0$, so that $A \in \cat D_{\leq 0}$ and in particular $H^i(A)=0$ for all $i<0$. We construct the sequences $(Q_n)_{n \geq 0}$ and $(X_n)_{n \geq 0}$ inductively, together with the maps $\alpha_n \colon T_{\cat Q}(X_n) \to A$. For notational ease, we shall drop $T_{\cat D}$ when taking cohomology, writing for instance $H^i(\alpha_n) \colon H^i(X_n) \to H^i(A)$. 

\emph{Base step.} By hypothesis, we can find a closed degree $0$ map in $\cat D$
\begin{equation}
\alpha_0 \colon Q_0 \to A,
\end{equation}
where $Q_0 \in \cat Q \subset \cat D_{\leq 0}$. Clearly, we have $Q_0 = T_{\cat Q}(X_0)$, where $X_0 =(Q_0,0)=Q_0 \in \Tw^-_\bd (\cat Q)$. The map $H^0(\alpha_0)$ is an epimorphism by hypothesis, and $H^i(\alpha_0)=0$ is an isomorphism for all $i < 0$, since (again by hypothesis) both $A$ and $X_0$ have cohomology concentrated in nonpositive degrees.

\emph{Inductive step.} Assume we have the objects $Q_k$, the twisted complexes $X_k$ and the maps $\alpha_k \colon A \to X_k$ with the required properties for $k \geq n$ ($n \leq 0$). Now, set
\begin{equation*}
C_n = \cone(\alpha_n \colon T_{\cat Q}(X_n) \to A)[-1] \in \cat D^-.
\end{equation*}
By hypothesis, we find a closed degree $0$ map
\begin{equation*}
Q_{n-1} \to C_n[n]
\end{equation*}
which is an epimorphism in $H^0(\cat D)^\heartsuit$ upon taking $H^0$. Shifting, we find a closed degree $0$ map
\begin{equation*}
p_n \colon Q_{n-1}[-n] \to C_n
\end{equation*}
such that $H^n(p_n) \colon H^n(Q_{n-1}[-n]) \to H^n(C_n)$ is an epimorphism. Consider the following diagram in $\cat D$:
\begin{equation} \label{eq:alphan_def}
\begin{gathered}
\xymatrix{
Q_{n-1}[-n] \ar[r]^-{\beta_n} \ar[d]_{p_n} & T_{\cat Q}(X_n) \ar[r]^{j_{n,n-1}} \ar@{=}[d] & T_{\cat Q}(X_{n-1}) \ar@{.>}[d]^{\alpha_{n-1}} \\
C_n \ar[r] & T_{\cat Q}(X_n)  \ar[r]^-{\alpha_n} & A.
}
\end{gathered}
\end{equation}
The morphism $\beta_n$ is defined as the composition making the left square (strictly) commute. Now, by hypothesis $X_n = (\bigoplus_{k=n}^0 Q_k[-k], q_{X_n})$ is concentrated in degrees between $n$ and $0$, and $Q_{n-1}[-n]$ is concentrated in degree $n$ as a twisted complex. Hence, the closed degree $0$ map $\beta_n$ necessarily comes from a unique one-sided morphism $b_n \colon Q_{n-1}[n] \to X_n$ in $\Tw^-_\bd (\cat Q)$. By definition, $X_{n-1} = \cone(Q_{n-1}[-n] \xrightarrow{b_n} X_n)$. It is the twisted complex defined by
\begin{equation}
X_{n-1} = (Q_{n-1}[-n+1] \oplus X_n, \begin{psmallmatrix} 0 & 0 \\ b_n \shiftid{Q_{n-1}}{-n+1}{-n} & q_{X_n} \end{psmallmatrix}).
\end{equation}
We notice here that $T_{\cat Q}(X_{n-1}) \in \cat D_{\leq 0}$, since it is the cone of a map between objects in $\cat D_{\leq 0}$. The morphism $j_{n,n-1}$ is induced by the natural inclusion $X_n \to X_{n-1}$. Next, notice that $\alpha_n \circ \beta_n$ is $0$ in the homotopy category (the rows of the above diagram induce distinguished triangles in $H^0(\cat D)$), hence we can find a degree $0$ morphism $c_n \colon Q_{n-1}[-n+1] \to A$ such that
\begin{equation*}
dc_n - \alpha_n \beta_n \shiftid{Q_{n-1}}{-n+1}{-n} = 0.
\end{equation*}
So, we may define the closed degree $0$ morphism $\alpha_{n-1} \colon T_{\cat Q}(X_{n-1}) \to A$ as
\begin{equation}
\alpha_{n-1} = (c_n, \alpha_n),
\end{equation}
and by construction this makes the right square of the above diagram (strictly) commute.

Next, consider the following diagram induced by \eqref{eq:alphan_def} in cohomology (the rows are exact):
\begin{equation*}
\begin{gathered}
\xymatrix{
 H^i(Q_{n-1}[-n]) \ar[r] \ar[d]^{H^i(p_n)} &  H^i(X_n) \ar[r]^-{H^i(j_{n,n-1})} \ar@{=}[d] & H^i(X_{n-1}) \ar[r] \ar[d]^{{H^i(\alpha_{n-1})}} & H^{i+1}(Q_{n-1}[-n]) \ar[r] \ar@{=}[d] & H^{i+1}(X_n) \ar@{=}[d] \\
H^i(C_n) \ar[r] & H^i(X_n) \ar[r]^{H^i(\alpha_n)} & H^i(A) \ar[r] & H^{i+1}(C_n) \ar[r] & H^{i+1}(X_n).
}
\end{gathered}
\end{equation*}
Since $H^i(\alpha_n)$ is an isomorphism for $i > n$ and $H^n(\alpha_n)$ is an epimorphism, we deduce by exactness of the lower  row that $H^i(C_n) \cong 0$ for all $i > n$. Next, observe that $H^i(Q_{n-1}[-n]) \cong 0$ for all $i > n$, since the objects in $\cat Q$ have cohomology concentrated in nonpositive degrees; notice that this implies that $H^i(j_{n,n-1})$ is an isomorphism for $i>n$ and a monomorphism for $i=n$, as required. Moreover, $H^i(j_n) = 0$ is trivially an isomorphism for $i > n$, and recall that $H^n(j_n)$ is an epimorphism by construction. So, we easily deduce (for example, using the Five Lemma) that $H^i(\alpha_{n-1})$ is an isomorphism for all $i \geq n$.

In order to show that $H^{n-1}(\alpha_{n-1})$ is an epimorphism, consider the following diagram:
\begin{equation*}
\begin{gathered}
\xymatrix{
H^{n-1}(X_n) \ar[r] \ar@{=}[d] & H^{n-1}(X_{n-1}) \ar[r] \ar[d]^{H^{n-1}(\alpha_{n-1})} & H^n(Q_{n-1}[-n]) \ar[r] \ar@{->>}[d]^{H^n(p_n)} & H^n(X_n) \ar@{=}[d] \\
H^{n-1}(X_n) \ar[r] & H^{n-1}(A) \ar[r] & H^n(C_n) \ar[r] & H^n(X_n). 
}
\end{gathered}
\end{equation*}
We can now conclude by the version of the 5-lemma recalled in Lemma \ref{lemma:diagramchase}.
\end{proof}
\begin{lemma} \label{lemma:diagramchase}
Consider the following commutative diagram in any abelian category:
\begin{equation*}
\begin{gathered}
\xymatrix{
A \ar[r]^\alpha \ar@{=}[d] & B \ar[r]^\beta \ar[d]^f & C \ar[r]^\gamma \ar@{->>}[d]^g & D \ar@{=}[d] \\
A \ar[r]^{\alpha'} & B' \ar[r]^{\beta'} & C' \ar[r]^{\gamma'} & D. 
}
\end{gathered}
\end{equation*}
Assume that the rows are exact and that $g$ is a epimorphims. Then, $f$ is an epimorphism.
\end{lemma}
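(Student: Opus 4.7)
The plan is a direct diagram chase, essentially a weak version of the five lemma where only surjectivity of $f$ is required. I will work with elements, which is legitimate in any abelian category by the Freyd--Mitchell embedding theorem (or equivalently by using generalized elements in the sense of Mac Lane). The key observation is that both outer vertical maps are identities, so the chase closes up without requiring any further hypotheses.

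First I fix an arbitrary element $b' \in B'$ and aim to produce a preimage under $f$. Pushing $b'$ forward via $\beta'$ gives an element of $C'$, which lifts to some $c \in C$ via the epimorphism $g$. Then I check that $\gamma(c) = 0$: indeed, commutativity of the rightmost square (with identity on $D$) and exactness of the bottom row at $C'$ give $\gamma(c) = \gamma'(g(c)) = \gamma'(\beta'(b')) = 0$. By exactness of the top row at $C$, there exists $b \in B$ with $\beta(b) = c$.

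Next I compare $f(b)$ with $b'$. By commutativity of the middle square, $\beta'(f(b)) = g(\beta(b)) = g(c) = \beta'(b')$, so $f(b) - b' \in \ker(\beta')$. By exactness of the bottom row at $B'$, there is an element $a \in A$ with $\alpha'(a) = f(b) - b'$. Using commutativity of the leftmost square (identity on $A$), we have $\alpha'(a) = f(\alpha(a))$, and hence $b' = f(b) - f(\alpha(a)) = f(b - \alpha(a))$, so $b'$ lies in the image of $f$. Since $b'$ was arbitrary, $f$ is an epimorphism.

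I do not anticipate any real obstacle: the argument is a completely routine diagram chase, and the only part where one might hesitate is the justification for element-chasing in an abstract abelian category, which is handled by the embedding theorem. An alternative, entirely element-free approach would be to apply the snake lemma to the two short exact sequences extracted by cutting the rows at $C$ and $C'$ respectively, but the diagram chase is considerably cleaner.
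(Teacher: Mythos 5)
Your diagram chase is correct: it is the standard proof of the surjectivity half of the four lemma, applied to the four maps $\mathrm{id}_A$ (epi), $f$, $g$ (epi), $\mathrm{id}_D$ (mono). The paper itself states Lemma~\ref{lemma:diagramchase} without proof (it is invoked at the end of the proof of Proposition~\ref{prop:dginj_resolution} as a well-known variant of the five lemma), so there is nothing to compare against; your argument fills that gap cleanly. One small remark on presentation: the step ``$\gamma(c) = \gamma'(g(c))$'' silently uses that the rightmost vertical map is the identity (monomorphism) --- the commutative square only gives $\mathrm{id}_D(\gamma(c)) = \gamma'(g(c))$, and one needs injectivity of $\mathrm{id}_D$ to deduce $\gamma(c) = 0$. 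It would be worth flagging this, since it is precisely the point at which the hypothesis on the rightmost column enters; without it, the conclusion fails. Your appeal to Freyd--Mitchell (or generalized elements) to justify element-chasing is the standard way to make this rigorous in an arbitrary abelian category and is perfectly acceptable.
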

\begin{remark}
The above proof, with the suitable changes, applies to a more general setting where we are given a cohomological functor $H^0 \colon H^0(\cat D) \to \mathfrak A$ with values in any abelian category (not necessarily coming from a t-structure on $\cat D$) and a full dg-subcategory $\cat Q \subset \cat D$ concentrated in nonpositive degrees such that $H^i(Q) = 0$ for any $i>0$ and $Q \in \cat Q$. In that case, we may resolve any $A \in \cat D$ such that $H^i(A)=0$ for $i>M$ with a sequence of twisted complexes $(X_n)$ of objects of $\cat Q$ such that $H^i(X_n)=0$ for all $i >M$. We won't need the result in such a generality, and we leave this to the interested reader.
\end{remark}
In the case where $\cat D$ has a non-degenerate right bounded t-structure which is closed under countable coproducts (Definition \ref{def:tstruct_closed_directsums}), we may actually reconstruct any object $A \in \cat D^- = \cat D$ as the homotopy colimit of its resolution.
\begin{coroll} \label{coroll:res_tstruct}
In the setting of the above Proposition \ref{prop:dginj_resolution}, assume that $\cat D$ has a non-degenerate right bounded t-structure which is closed under countable coproducts (Definition \ref{def:tstruct_closed_directsums}). The object $A \in \cat D_{\leq M}$, with the maps $\alpha_{M-p} \colon T_{\cat Q}(X_{M-p}) \to A$ and the ``trivial homotopies'' $h_{M-p} = 0 \colon T_{\cat Q}(X_{M-p}) \to A$, is the homotopy colimit of the sequence $(T_{\cat Q}(X_{M-p}) \xrightarrow{j_{M-p,M-p-1}} T_{\cat Q}(X_{M-p-1}))_p$. Namely, the induced morphism (recall Remark \ref{remark:hocolim_univprop})
\begin{equation}
\holim_p (\alpha^*_{M-p},0) \colon \cat D(A,-) \to \holim_p \cat D(T_{\cat Q}(X_{M-p}),-)
\end{equation}
is an isomorphism in $\dercomp{\opp{\cat D}}$. In particular, any $A \in \cat D$ can be reconstructed as a homotopy colimit as above.
\end{coroll}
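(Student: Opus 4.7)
The plan is to apply the triangulated Corollary \ref{coroll:tcohom_holim} to the sequence $(T_{\cat Q}(X_{M-p}))_{p \geq 0}$ in $H^0(\cat D)$ and then transfer the result back to the dg-level via Lemma \ref{lemma:dgcat_hocolim} and the weak universal property recorded in Remark \ref{remark:hocolim_univprop}.

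First I would reduce to the case $M = 0$ by shifting: everything (the object $A$, the twisted complexes $X_n$, the generators $Q_n$, the cohomological estimates) translates accordingly, and it suffices to prove the claim for $A \in \cat D_{\leq 0}$ and the sequence $(T_{\cat Q}(X_{-p}))_{p \geq 0}$. Since each $T_{\cat Q}(X_{-p}) \in \cat D_{\leq 0}$ and the t-structure is closed under countable coproducts, Remark \ref{remark:tstruct_closed_directsums} gives the coproduct $\bigoplus_p T_{\cat Q}(X_{-p})$ in $\cat D$; Lemma \ref{lemma:dgcat_hocolim} then produces a dg-functorial homotopy colimit $H := \hocolim_p T_{\cat Q}(X_{-p})$ whose underlying object agrees with the triangulated homotopy colimit in $H^0(\cat D)$. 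The strict commutativity of \eqref{eq:res_Xn_commutative} tells us that the family $(\alpha_{-p}, 0)_p$ (with trivial homotopies $h_{-p} = 0$) is a closed degree $0$ element of $\holim_p \cat D(T_{\cat Q}(X_{-p}), A)$, so Remark \ref{remark:hocolim_univprop} yields a morphism $[\alpha] \colon H \to A$ in $H^0(\cat D)$ satisfying $[\alpha] \circ [j_{-p}] = [\alpha_{-p}]$.

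Next I would verify the hypotheses of Corollary \ref{coroll:tcohom_holim}. Proposition \ref{prop:dginj_resolution} supplies precisely what is needed: $H^i(j_{-p, -p-1})$ is an isomorphism for $i > -p$ and an epimorphism for $i = -p$, while $H^i(\alpha_{-p})$ is an isomorphism for $i > -p$ and hence, for fixed $i$, for all $p > -i$ (so one may take $M(i) = -i$). The corollary therefore applies and gives that $H^i([\alpha]) \colon H^i(H) \to H^i(A)$ is an isomorphism for every $i \in \mathbb Z$. By non-degeneracy of the t-structure on $\cat D$ this upgrades to $[\alpha]$ being an isomorphism in $H^0(\cat D)$.

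To conclude, the morphism $\holim_p(\alpha_{-p}^*, 0) \colon \cat D(A,-) \to \holim_p \cat D(T_{\cat Q}(X_{-p}), -)$ factors as
\begin{equation*}
\cat D(A,-) \xrightarrow{\alpha^*} \cat D(H,-) \xrightarrow{\sim} \holim_p \cat D(T_{\cat Q}(X_{-p}),-),
\end{equation*}
where the second arrow is the defining quasi-isomorphism of the dg-functorial homotopy colimit $H$; since $[\alpha]$ is an isomorphism in $H^0(\cat D)$, the first map is a homotopy equivalence of dg-modules, and the composite is an isomorphism in $\dercomp{\opp{\cat D}}$ as required. The final sentence of the corollary is immediate: by right-boundedness $\cat D = \cat D^-$, so every $A \in \cat D$ lies in some $\cat D_{\leq M}$ and Proposition \ref{prop:dginj_resolution} furnishes the resolution to which the previous argument applies. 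I expect the only delicate point to be the index bookkeeping in the reduction to $M = 0$ and the careful identification of the induced morphism on representables with the one coming out of Remark \ref{remark:hocolim_univprop}; the mathematical content is entirely carried by Corollary \ref{coroll:tcohom_holim}.
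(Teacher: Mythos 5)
Your proof is correct and follows the same route as the paper: reduce to $M=0$, invoke Lemma \ref{lemma:dgcat_hocolim} (after observing that the $T_{\cat Q}(X_{-p})$ lie in $\cat D_{\leq 0}$ and that the aisle is closed under countable coproducts) to get the dg-functorial homotopy colimit, use the weak universal property of Remark \ref{remark:hocolim_univprop} to produce $\alpha$, then apply Corollary \ref{coroll:tcohom_holim} via the estimates from Proposition \ref{prop:dginj_resolution} to conclude $[\alpha]$ is an isomorphism, and finally transfer this to the claimed quasi-isomorphism of representables. This matches the paper's proof in structure and content.
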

\begin{proof}
Upon shifting, assume $M=0$. First, we notice that $\holim_p (\alpha^*_{-p},0)$ is well defined, since \eqref{eq:res_Xn_commutative} is strictly commutative. By hypothesis, the aisle $H^0(\cat D)_{\leq 0}$ is closed under countable coproducts and $T_{\cat Q}(X_{-p}) \in \cat D_{\leq 0}$ for all $p$; hence, by Lemma \ref{lemma:dgcat_hocolim}, we know that $(T_{\cat Q}(X_{-p}) \xrightarrow{j_{-p,-p-1}} T_{\cat Q}(X_{-p-1}))_p$ admits a homotopy colimit $\hocolim_p T_{\cat Q}(X_{-p})$ in $\cat D$. Namely, we have closed degree $0$ maps $j_{-p} \colon T_{\cat Q}(X_{-p}) \to \hocolim_p T_{\cat Q}(X_{-p})$ and homotopies $h_{-p}$ which induce an isomorphism in $\dercomp{\opp{\cat D}}$:
\begin{equation*}
\holim_p (j^*_{-p}, h^*_{-p}) \colon \cat D(\hocolim_p T_{\cat Q}(X_{-p}), -) \to \holim_p \cat D(T_{\cat Q}(X_{-p}),-).
\end{equation*}
By the universal property \eqref{eq:hocolim_univprop_diagram}, we find a closed degree $0$ map $\alpha \colon \hocolim_p T_{\cat Q}(X_{-p}) \to A$ such that the following diagram is commutative in $\dercomp{\opp{\cat D}}$:
\begin{equation*}
\begin{gathered}
\xymatrix{
\cat D(A,-) \ar[r]^-{\holim_n (\alpha^*_{-p},0)} \ar@{.>}[d]^-{\alpha^*} & \holim_n \cat D(T_{\cat Q}(X_{-p}),-) \\
\cat D(\hocolim_p T_{\cat Q}(X_{-p}),-), \ar[ur]_{\holim_n (j^*_n,h^*_n)}
}
\end{gathered}
\end{equation*}
and moreover $[\alpha] [j_{-p}] = [\alpha_{-p}]$ in $H^0(\cat D)$. Now, we know from the above Proposition \ref{prop:dginj_resolution} that both $H^i(j_{-p,-p-1})$ and $H^i(\alpha_{-p})$ are isomorphisms for $i>-p$ and epimorphisms for $i=p$. So, we may apply Corollary \ref{coroll:tcohom_holim} and find that $[\alpha] \colon \hocolim_p T_{\cat Q}(X_{-p}) \to A$ is an isomorphism in $H^0(\cat D)$. Hence, $\alpha^* \colon \cat D(A,-) \to \cat D(\hocolim_p T_{\cat Q}(X_{-p}),-)$ is an isomorphism in $\dercomp{\opp{\cat D}}$, and we conclude that $\holim_p (\alpha^*_{-p},0)$ is also an isomorphism in $\dercomp{\opp{\cat D}}$, as claimed.
\end{proof}
Finally, we prove the essential surjectivity of $H^0(\Tot) \colon H^0(\Tw^-(\cat Q)) \to H^0(\hprojmin{\cat Q}^{\mathrm{hfp}})$, which completes the proof of Theorem \ref{thm:hlc_tw_equiv}:
\begin{prop} \label{prop:Twmin_hprojhfp_essimg}
Let $\cat Q$ be a hlc dg-category, and let $M \in \hprojmin{\cat Q}^{\mathrm{hfp}}$. Then, there exists $X \in \Tw^-(\cat Q)$ such that $\Tot(X) \cong M$ in $H^0(\hproj{\cat Q})$. 
\end{prop}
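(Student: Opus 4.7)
The strategy is to apply the resolution machinery of Proposition \ref{prop:dginj_resolution} inside the dg-category $\cat D = \hprojmin{\cat Q}^{\mathrm{hfp}}$, glue the resulting bounded twisted complexes into an object of $\Tw^-(\cat Q)$ via Proposition \ref{prop:resolution_twminus_colimit}, and identify its totalisation with $M$ using Proposition \ref{prop:tot_hocolim_iso_lim} together with the t-cohomological comparison result Corollary \ref{coroll:tcohom_holim}.

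First, one verifies the hypotheses of Proposition \ref{prop:dginj_resolution} for $\cat D = \hprojmin{\cat Q}^{\mathrm{hfp}}$: by Lemma \ref{lemma:hproj_tsubcat_fp_pretr} it is strongly pretriangulated, by Lemma \ref{lemma:hfp_tstruct} it carries a non-degenerate t-structure, and via the Yoneda embedding one has $\cat Q^\oplus \subseteq \cat D_{\leq 0}$, with cohomology of $\cat Q^\oplus$ still concentrated in nonpositive degrees. The crucial covering hypothesis is the one to check: given $A \in \cat D^- = \cat D$, the module $H^0(A) \in \Modfp{H^0(\cat Q)}$ admits a presentation $\bigoplus_{i=1}^n H^0(\cat Q)(-,C_i) \twoheadrightarrow H^0(A)$; by the dg-Yoneda lemma (and since representables are h-projective) the $n$ generating classes in $H^0(A)(C_i)$ lift to closed degree $0$ morphisms $\cat Q(-,C_i) \to A$ in $\compdg{\cat Q}$, which assemble into a closed degree $0$ morphism $\alpha \colon Q := \bigoplus_{i=1}^n \cat Q(-,C_i) \to A$ in $\cat D$ with $Q \in \cat Q^\oplus$ and $H^0(\alpha)$ epic, as required.

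Pick $N$ with $H^i(M)=0$ for $i>N$, so $M \in \cat D_{\leq N}$. Proposition \ref{prop:dginj_resolution} then produces sequences $(Q_n)_{n \leq N}$ in $\cat Q^\oplus$, $(X_n)_{n \leq N}$ in $\Tw^-_\bd(\cat Q)$ (using $\Tw^-_\bd(\cat Q^\oplus) = \Tw^-_\bd(\cat Q)$) and closed degree $0$ morphisms $\alpha_n \colon T_{\cat Q^\oplus}(X_n) \to M$ making \eqref{eq:res_Xn_commutative} strictly commutative, with the further properties that $X_n = \cone(Q_{n-1}[-n] \to X_{n+1})$ (reindexing), that $H^i(j_{n,n-1})$ is iso for $i>n$ and epi for $i=n$, and that $H^i(\alpha_n)$ is iso for $i>n$ and epi for $i=n$. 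Applying Proposition \ref{prop:resolution_twminus_colimit} to the system $(X_n)_{n \leq N}$ yields an object $X \in \Tw^-(\cat Q)$ with $\sigma_{\geq n} X = X_n$, and Proposition \ref{prop:tot_hocolim_iso_lim} provides a homotopy equivalence $\varphi \colon \hocolim_p \Tot(\sigma_{\geq N-p} X) \isorightarrow \Tot(X)$ in $\hocomp{\cat Q}$, compatible with the canonical inclusions.

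Finally, the strict commutativity of \eqref{eq:res_Xn_commutative} together with the universal property of the (triangulated) homotopy colimit in $\dercomp{\cat Q}$ (applied in the ambient non-degenerate t-structure on $\dercomp{\cat Q}$, which is closed under coproducts so that $\bigoplus_p \Tot(\sigma_{\geq N-p} X)$ exists) yields a morphism $f \colon \hocolim_p \Tot(\sigma_{\geq N-p} X) \to M$ with $f \circ j_{N-p} = \alpha_{N-p}$. The cohomological estimate $H^i(\alpha_{N-p})$ iso for $i > N-p$ means that for every fixed $i \in \mathbb Z$ the map $H^i(\alpha_{N-p})$ is an isomorphism for $p$ sufficiently large, so Corollary \ref{coroll:tcohom_holim} (applied in $\dercomp{\cat Q}$, where the t-structure is non-degenerate) shows $f$ is an isomorphism. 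Composing with $\varphi^{-1}$ yields $\Tot(X) \cong M$ in $H^0(\hproj{\cat Q})$. The main obstacle I anticipate is the bookkeeping in the inductive step of Proposition \ref{prop:dginj_resolution}: one has to ensure that the intermediate cones $C_n = \cone(\alpha_n)[-1]$ stay inside $\cat D = \hprojmin{\cat Q}^{\mathrm{hfp}}$ so that the covering hypothesis remains available, which relies on $\hprojmin{\cat Q}^{\mathrm{hfp}}$ being closed under cones — exactly the content of Lemma \ref{lemma:hproj_tsubcat_fp_pretr} made possible by the coherence of $H^0(\cat Q)$.
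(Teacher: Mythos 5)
Your proposal is correct and follows essentially the same route as the paper's proof: resolve $M$ inside $\cat D = \hprojmin{\cat Q}^{\mathrm{hfp}}$ via Proposition \ref{prop:dginj_resolution} (whose hypotheses you verify the same way, modulo working with $\cat Q^\oplus$ rather than invoking additivity of $H^0(\cat Q)$ to stay inside $\cat Q$), assemble a one-sided twisted complex $X$ by Proposition \ref{prop:resolution_twminus_colimit}, and compare $\Tot(X)$ with $M$ through the cohomology estimates. The only cosmetic deviation is at the end, where you pass through the triangulated homotopy colimit and Corollary \ref{coroll:tcohom_holim}, while the paper exploits the strict colimit description of $\Tot(X)$ from Proposition \ref{prop:tot_twmin_colim} to produce the comparison map directly and then applies Lemma \ref{lemma:truncation_cohomology_constant}; since that lemma is itself proved via Lemma \ref{lemma:tcohom_holim} and Proposition \ref{prop:tot_hocolim_iso_lim}, the underlying machinery is the same.
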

\begin{proof}
It is sufficient to show that there exist $X \in \Tw^-(\cat Q)$ and a quasi-isomorphism $\Tot(X) \to M$, for both $\Tot(Y)$ and $M$ are h-projective. We recall Lemma \ref{lemma:hproj_tsubcat_fp_pretr} and we apply Proposition \ref{prop:dginj_resolution} with:
\begin{itemize}
\item $\cat D = \hprojmin{\cat Q}^{\mathrm{hfp}}$, which is a strongly pretriangulated full dg-subcategory of $\hproj{\cat Q}$ and has a t-structure (Lemma \ref{lemma:hfp_tstruct}).
\item $\cat Q = \cat Q$ viewed as a full subcategory of $\hprojmin{\cat Q}^{\mathrm{hfp}}$ via the Yoneda embedding.
\end{itemize}
Notice that $\cat D = \cat D^-$, and the dg-functor $T_{\cat Q} \colon \Tw^-_\bd (\cat Q) \to \cat D$ is precisely the totalisation $\Tot \colon \Tw^-_\bd (\cat Q) \to \hprojmin{\cat Q}^\mathrm{hfp}$. The above data satisfy the hypotheses of Proposition \ref{prop:dginj_resolution}: $\cat Q$ is cohomologically concentrated in negative degrees, it lies in the aisle $\hprojmin{\cat Q}^\mathrm{hfp}_{\leq 0}$, and every $N \in \cat D$ is such that $H^0(N) \in \Modfp{H^0(\cat Q)}$ is finitely generated, so that we have an epimorphism
\begin{equation*}
H^0(\cat Q(-,B)) \to H^0(N),
\end{equation*}
and by the  Yoneda Lemma this is induced by a closed degree $0$ morphism $\cat Q(-,B) \to N$.

Now, for simplicity, assume that $H^i(M) = 0$ for all $i>0$. By Proposition \ref{prop:dginj_resolution}, we find a sequence $(X_{-p})_{p \geq 0}$ of twisted complexes and maps $\alpha_{-p} \colon \Tot(X_{-p}) \to M$ such that the diagram
\begin{equation*}
\begin{gathered}
\xymatrix{
\Tot(X_{-p}) \ar[r]^-{\alpha_{-p}} \ar[d]_{j_{-p,-p-1}} & M \\
\Tot(X_{-p-1}) \ar[ur]_{\alpha{-p-1}}
}
\end{gathered}
\end{equation*}
is (strictly) commutative in $\cat D$. 

We may apply Proposition \ref{prop:resolution_twminus_colimit} and find $X \in \Tw^-(\cat Q)$ and such that $X_{-p} = \sigma_{\geq -p} X$. We know from Proposition \ref{prop:tot_twmin_colim} that $\Tot(X)$ together with the natural inclusions $j_{-p} \colon \Tot(X_{-p}) \to \Tot(X)$ is the colimit of the sequence $(\Tot(X_{-p}) \xrightarrow{j_{-p,-p-1}} \Tot(X_{-p-1}))_p$, so the maps $\alpha_{-p}$ induce a map $\alpha \colon \Tot(X) \to M$ such that $\alpha \circ j_{-p} = \alpha_{-p}$. Given $i \in \mathbb Z$, these relations give in particular commutative diagrams:
\begin{equation*}
\begin{gathered}
\xymatrix{
H^i(\Tot(\sigma_{\geq -p} X)) \ar[r]^-{H^i(\alpha_{-p})} \ar[d]_{H^i(j_{-p})} & H^i(M) \\
H^i(\Tot(X)). \ar[ur]_{H^i(\alpha)}
}
\end{gathered}
\end{equation*}
Now, choose any $p$ such that $i > -p$. we know from Lemma \ref{lemma:truncation_cohomology_constant} that $H^i(j_{-p})$ is an isomorphism. Moreover, we know from Proposition \ref{prop:dginj_resolution} that $H^i(\alpha_{-p})$ is also an isomorphism. Hence, $H^i(\alpha)$ is an isomorphism; since $i \in \mathbb Z$ is arbitrary, we conclude that $\alpha \colon \Tot(X) \to M$ is a quasi-isomorphism, as claimed.
\end{proof}

\section{Derived projectives and injectives} \label{section:setting}
\subsection{Basic definitions and properties}
\begin{defin}[{\cite[\S 5.1]{rizzardo-vdb-nonFM}}] \label{defin_dginj_dgproj}
Let $\cat T$ be a triangulated category with t-structure, and as usual denote
\begin{equation*}
H^0 = \tau_{\leq 0}\tau_{\geq 0} \colon \cat T \to \cat T^\heartsuit.
\end{equation*}
\begin{enumerate}
\item \label{item:dgproj_def} Assume that for all projectives $P \in \Proj(\cat T^\heartsuit)$ in the heart the cohomological functor $\cat T^\heartsuit(P,H^0(-)) \colon \cat T \to \Mod{\basering k}$ is corepresentable:
\begin{equation*}
\cat T^\heartsuit(P,H^0(-)) \cong \cat T(S(P),-).
\end{equation*}
We say that $S(P)$ is the \emph{derived projective} associated to $P$. In this case, we also say that \emph{$\cat T$ has derived projectives}. If $\cat A$ is a pretriangulated dg-category such that $H^0(\cat A)$ has a t-structure, we shall say that \emph{$\cat A$ has derived projectives} if $H^0(\cat A)$ has this property.
\item \label{item:dginj_def} Dually, assume that for all injectives $I \in \Inj(\cat T^\heartsuit)$ in the heart, the cohomological functor $\cat T^\heartsuit(H^0(-),I) \colon \opp{\cat T} \to \Mod{\basering k}$ is representable:
\begin{equation*}
\cat T^\heartsuit(H^0(-),I) \cong \cat T(-,L(I)).
\end{equation*}
We say that $L(I) \in \cat T$ is the \emph{derived injective} associated to $I$. In this case, we also say that \emph{$\cat T$ has derived injectives}. If $\cat A$ is a pretriangulated dg-category such that $H^0(\cat A)$ has a t-structure, we shall say that \emph{$\cat A$ has derived injectives} if $H^0(\cat A)$ has this property.  
\end{enumerate}
\end{defin}

\begin{remark}
Derived injectives over non-positively graded dg-algebras, with respect to the standard t-structure, have been investigated in \cite{shaul}.
\end{remark}

We now sum up from \cite[\S 5.1]{rizzardo-vdb-nonFM} some basic properties of derived injectives and projectives:
\begin{prop}
Let $\cat T$ be a triangulated category with t-structure.
\begin{enumerate}
\item Assume that $\cat T$ has derived projectives as in Definition \ref{defin_dginj_dgproj} part \ref{item:dgproj_def}. Then, for any $P \in \Proj(\cat T^\heartsuit)$, we have:
\begin{align}
S(P) & \in \cat T_{\leq 0}, \\
H^0(S(P)) & \cong P.
\end{align}
If $Q \in \Inj(\cat T^\heartsuit)$ is another projective, we have
\begin{equation}
\cat T(S(P),S(Q)[i]) \cong \begin{cases} \cat T^\heartsuit(P,Q) & \text{if $i=0$}, \\ 0 & \text{if $i > 0$,} \end{cases}
\end{equation}
for $i \in \mathbb Z$. In particular, $S \colon \Proj(\cat T^\heartsuit) \to \cat T, P \mapsto S(P)$ defines a fully faithful functor. Its essential image, which is the full subcategory of derived projectives of $\cat T$, is denoted by $\DGProj(\cat T)$. If $\cat T=H^0(\cat A)$ for some dg-category $\cat A$, we simplify notation and write $\DGProj(H^0(\cat A)) = \DGProj(\cat A)$, viewing it as a full dg-subcategory of $\cat A$.
\item Dually, assume that $\cat T$ has derived injectives as in Definition \ref{defin_dginj_dgproj} part \ref{item:dginj_def}. Then, for any $I \in \Inj(\cat T^\heartsuit)$, we have:
\begin{align}
L(I) & \in \cat T_{\geq 0}, \\
H^0(L(I)) & \cong I.
\end{align}
If $J \in \Inj(\cat T^\heartsuit)$ is another injective, we have
\begin{equation}
\cat T(L(I),L(J)[i]) \cong \begin{cases} \cat T^\heartsuit(I,J) & \text{if $i=0$}, \\ 0 & \text{if $i > 0$,} \end{cases}
\end{equation}
for $i \in \mathbb Z$. In particular, $L \colon \Inj(\cat T^\heartsuit) \to \cat T, I \mapsto L(I)$ defines a fully faithful functor. Its essential image, which is the full subcategory of derived injectives of $\cat T$, is denoted by $\DGInj(\cat T)$. If $\cat T=H^0(\cat A)$ for some dg-category $\cat A$, we simplify notation and write $\DGInj(H^0(\cat A)) = \DGInj(\cat A)$, viewing it as a full dg-subcategory of $\cat A$.
\end{enumerate}
\end{prop}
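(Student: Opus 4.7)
The plan is to prove the derived projective statement; the derived injective statement then follows by applying the same argument to $\opp{\cat T}$ equipped with the opposite t-structure $(\cat T_{\geq 0},\cat T_{\leq 0})$, under which injectives in the heart become projectives and the representation in item (2) becomes the corepresentation in item (1). Every conclusion about $S(P)$ will be extracted from the defining natural isomorphism
\begin{equation*}
\cat T(S(P), -) \cong \cat T^\heartsuit(P, H^0(-))
\end{equation*}
by specialising the target variable and invoking standard facts about t-structures.

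First I would show $S(P) \in \cat T_{\leq 0}$. Recall that $\cat T_{\leq 0}$ is characterised as the left orthogonal to $\cat T_{\geq 1}$. For any $X \in \cat T_{\geq 1}$ one has $H^0(X) = 0$, so $\cat T(S(P),X) \cong \cat T^\heartsuit(P,0) = 0$ and the aisle property is immediate. Next I would identify $H^0(S(P))$: since $S(P) \in \cat T_{\leq 0}$, the standard adjunction between $\cat T_{\leq 0} \hookrightarrow \cat T$ and $\tau_{\geq 0}$ (which on $\cat T_{\leq 0}$ agrees with $H^0$) gives, for every $Y \in \cat T^\heartsuit$,
\begin{equation*}
\cat T^\heartsuit(H^0(S(P)),Y) \cong \cat T(S(P),Y) \cong \cat T^\heartsuit(P, H^0(Y)) = \cat T^\heartsuit(P,Y),
\end{equation*}
naturally in $Y$. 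By the Yoneda lemma in $\cat T^\heartsuit$ we conclude $H^0(S(P)) \cong P$.

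For the Ext computation, take $Q \in \Proj(\cat T^\heartsuit)$ and $i \geq 0$; substituting $X = S(Q)[i]$ in the defining isomorphism yields
\begin{equation*}
\cat T(S(P), S(Q)[i]) \cong \cat T^\heartsuit(P, H^0(S(Q)[i])).
\end{equation*}
For $i=0$ the previous step identifies the right-hand side with $\cat T^\heartsuit(P,Q)$. For $i>0$, since $S(Q) \in \cat T_{\leq 0}$ we have $S(Q)[i] \in \cat T_{\leq -i} \subseteq \cat T_{\leq -1}$, whence $H^0(S(Q)[i]) = 0$ and the hom vanishes. Naturality of these isomorphisms in $P$ and $Q$ shows that the assignment $P \mapsto S(P)$ extends to a fully faithful functor $S \colon \Proj(\cat T^\heartsuit) \to \cat T$ whose essential image is $\DGProj(\cat T)$ by definition.

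None of the individual steps presents a real obstacle: the whole argument is a straightforward exercise in manipulating the defining adjunction-like isomorphism against the truncation functors of the t-structure, and the fact that $\cat T^\heartsuit \hookrightarrow \cat T_{\leq 0}$ is coreflective via $H^0$. The only point that requires a moment of care is the passage to the dual case, where one should verify that ``projective in the opposite heart'' genuinely corresponds to ``injective in the original heart'', which is standard.
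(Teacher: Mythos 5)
Your proof is correct, and it is the standard argument: extract everything from the defining (co)representability isomorphism $\cat T(S(P),-) \cong \cat T^\heartsuit(P, H^0(-))$ by specialising the test object, using that $H^0$ kills $\cat T_{\geq 1}$ and that on $\cat T_{\leq 0}$ one has $H^0 = \tau_{\geq 0}$, together with the adjunction $\tau_{\geq 0} \dashv \iota_{\geq 0}$. The paper itself does not prove this proposition — it is imported from \cite[\S 5.1]{RizzardoVdB2} — so there is nothing intrinsic to compare against, but your argument is exactly the one-liner the citation is pointing to. One small terminological slip: in the final paragraph you call $\cat T^\heartsuit \hookrightarrow \cat T_{\leq 0}$ \emph{coreflective} via $H^0$; since $H^0|_{\cat T_{\leq 0}} = \tau_{\geq 0}$ is a \emph{left} adjoint to the inclusion, this subcategory is reflective, not coreflective. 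This does not affect the correctness of the displayed computation, which uses the adjunction in the right direction.
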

\begin{remark} \label{remark:dgproj_stable_tminus}
If $\cat T$ has derived projectives as in Definition \ref{defin_dginj_dgproj} part \ref{item:dginj_def}, then the same is true for $\cat T^-$ and $\DGProj(\cat T^-) = \DGProj(\cat T)$. This follows immediately from the fact that $(\cat T^-)^\heartsuit = \cat T^\heartsuit$ and the fact that $S(P) \in \cat T^-$ for all injectives $P \in \Proj(\cat T^\heartsuit)$. Dually, if $\cat T$ has derived injectives as in Definition \ref{defin_dginj_dgproj} part \ref{item:dgproj_def}, then the same is true for $\cat T^+$ and $\DGProj(\cat T^+) = \DGProj(\cat T)$.
\end{remark}
\begin{remark} \label{remark:dgproj_coprod}
Assume that $\cat T$ has derived projectives, and let $\{P_i : i \in I \}$ be a family of objects in $\Proj(\cat T^\heartsuit)$ such that $P=\bigoplus_i P_i$ exists in $\cat T$. Then, $H^0(P)$ is a coproduct of the $P_i$ in $\cat T^\heartsuit$ and in particular is in $\Proj(\cat T^\heartsuit)$. To see this, first note that $\bigoplus_i P_i \in \cat T_{\leq 0}$, because
\begin{equation*}
\cat T(\bigoplus_i P_i, Y) \cong \prod_i \cat T(P_i,Y) = 0,
\end{equation*}
for all $Y \in \cat T_{\geq 1}$, since $P_i \in \cat T^\heartsuit \subseteq \cat T_{\leq 0}$. Then, for all $A \in \cat T^\heartsuit$ we have:
\begin{align*}
\prod_i \cat T^\heartsuit(P_i,A) & \cong \cat T(P, A) \\
& \cong \cat T(\tau_{\geq 0}(P), A) \\
& \cong \cat T^\heartsuit(H^0(P), A),
\end{align*} 
naturally in $A$. Being a coproduct of projectives in $\cat T^\heartsuit$, the object $H^0(P)$ is itself projective.

Moreover, we have
\begin{align*}
\cat T(S(H^0(P)),X) & \cong \cat T^\heartsuit(H^0(P),H^0(X)) \\
& \cong \prod_i \cat T^\heartsuit(P_i,H^0(X)) \\
& \cong \prod_i \cat T(S(P_i),X) \\
& \cong \cat T(\bigoplus_i S(P_i),X),
\end{align*}
naturally in $X \in \cat T$. We deduce that $S(H^0(P)) \cong \bigoplus_i S(P_i)$; in other words, $S(-)$ commutes with coproducts. Dually, one can prove that $L(-)$ commutes with direct products if $\cat T$ has derived injectives.
\end{remark}
Derived injectives or projectives can be used to make resolutions of objects of triangulated categories with t-structures, much like projectives are used to resolve objects in abelian categories. The starting point for this is the following definition:
\begin{defin}
Let $\cat T$ be a triangulated category with t-structure. We say that \emph{$\cat T$ has enough derived projectives} if $\cat T$ has derived projectives as in Definition \ref{defin_dginj_dgproj} and moreover the heart $\cat T^\heartsuit$ has enough projectives. Dually, we say that \emph{$\cat T$ has enough derived injectives} if $\cat T$ has derived injectives and $\cat T^\heartsuit$ has enough injectives. 

If $\cat A$ is a pretriangulated dg-category with a t-structure, we say that \emph{$\cat A$ has enough derived projectives (or injectives)} if $H^0(\cat A)$ has this property.
\end{defin}
\begin{remark} \label{remark:derinj_firststep}
If $\cat T$ has enough derived projectives, then for any given $A \in \cat T$ we can find a projective $P \in \Proj(\cat T^\heartsuit)$ and an epimorphism $\overline{\alpha} \colon P \twoheadrightarrow H^0(A)$. From this, directly applying the definition, we find a morphism $\alpha \colon S(P) \to A$ in $\cat T$ such that $H^0(\alpha)$ is identified with $\overline{\alpha}$ via the isomosphism $H^0(S(P)) \cong P$.
\end{remark}
In the following example, we explicitly characterise the derived projectives of $\hproj{\cat Q}^{\mathrm{hfp}}$.
\begin{example} \label{example:derproj_yoneda}
Let $\cat Q$ be a hlc dg-category such that $H^0(\cat Q)$ is Karoubian. Then, we know from Theorem \ref{thm:hlc_tw_equiv} that the triangulated categories $\dercomp{\cat Q}^{\mathrm{hfp}}$
and $\dercompmin{\cat Q}^{\mathrm{hfp}}$ have natural t-structures with the same heart given by $\Modfp{H^0(\cat Q)}$. By \cite[Proposition A.14]{lowen-vandenbergh-deformations-abelian}, we know that the projectives of $\Modfp{H^0(\cat Q)}$ are precisely given by (the essential image of) $H^0(\cat Q)$, namely the modules isomorphic to the representables $H^0(\cat Q)(-,A)$ for some $A \in \cat Q$. By the Yoneda lemma, we have for all $M \in \hproj{\cat Q}^{\mathrm{hfp}}$:
\begin{align*}
\Modfp{H^0(\cat Q)}(H^0(\cat Q)(-,A),H^0(M)) &\cong H^0(M)(A)\\
&\cong \dercomp{\cat Q}(\cat Q(-,A), M)
\end{align*}
We conclude that $\DGProj(\hproj{\cat Q}^\mathrm{hfp})$ is precisely given by the dg-modules isomorphic in $H^0(\hproj{\cat Q})$ to the representable dg-modules. The same is true for $\DGProj(\hprojmin{\cat Q}^\mathrm{hfp})$ by Remark \ref{remark:dgproj_stable_tminus}. A little more precisely, we have that the Yoneda embedding $\cat Q \hookrightarrow \hprojmin{\cat Q}^\mathrm{hfp}$ induces a quasi-equivalence
\begin{equation}
\cat Q \xrightarrow{\approx} \DGProj(\hprojmin{\cat Q}^\mathrm{hfp}) = \DGProj(\hproj{\cat Q}^\mathrm{hfp}).
\end{equation}
Since $\Tw^-(\cat Q)$ is quasi-equivalent to $\hprojmin{\cat Q}^\mathrm{hfp}$ via the totalisation functor, we also deduce that $\DGProj(\Tw^-(\cat Q))$ is the closure in $H^0(\Tw^-(\cat Q))$ of the objects of the form $Q=(Q,0) \in \Tw^-(\cat Q)$.
\end{example}
The following lemma is an improvement of Lemma \ref{lemma:hproj_tsubcat_fp_pretr}.
\begin{lemma} \label{lemma:hprojhfp_enoughderivedproj}
Let $\cat Q$ be a hlc dg-category such that $H^0(\cat Q)$ is Karoubian. Then, the full dg-subcategory $\hproj{\cat Q}^{\mathrm{hfp}}$ of $\hproj{\cat Q}$ defined in \eqref{eq:hproj_tsubcat_fp} has enough derived projectives. Moreover, if $H^0(\cat Q)$ is closed under countable coproducts, then the same is true for $H^0(\hproj{\cat Q}^{\mathrm{hfp}})$
\end{lemma}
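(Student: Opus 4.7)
The plan is to verify the two ingredients of ``enough derived projectives'' separately, and then handle closure under countable coproducts by a presentation argument, leveraging Example \ref{example:derproj_yoneda}.

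For the first claim, Lemma \ref{lemma:hproj_tsubcat_fp_pretr} and Lemma \ref{lemma:hfp_tstruct} give that $\hproj{\cat Q}^{\mathrm{hfp}}$ is strongly pretriangulated with a non-degenerate t-structure whose heart is $\Modfp{H^0(\cat Q)}$. Example \ref{example:derproj_yoneda} (which uses the Karoubian hypothesis) identifies $\DGProj(\hproj{\cat Q}^{\mathrm{hfp}})$ with the closure under $H^0$-isomorphism of the representables $\cat Q(-, A)$, so each projective $H^0(\cat Q)(-, A)$ of the heart comes equipped with a derived projective $\cat Q(-, A)$. For enough projectives in the heart, I would invoke \cite[Proposition A.14]{lowen-vandenbergh-deformations-abelian} (also cited in the same Example) to identify the projectives of $\Modfp{H^0(\cat Q)}$ with the representables, and then use that by definition any $M \in \Modfp{H^0(\cat Q)}$ admits a surjection from a finite direct sum of representables $\bigoplus_{i=1}^n H^0(\cat Q)(-, A_i)$, which by additivity is itself a representable $H^0(\cat Q)(-, A_1 \oplus \cdots \oplus A_n)$.

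For the second claim, assuming $H^0(\cat Q)$ is closed under countable coproducts and given a countable family $\{M_n\}_{n \geq 0}$ in $\hproj{\cat Q}^{\mathrm{hfp}}$, I would form $M := \bigoplus_n M_n$ in $\compdg{\cat Q}$; this is h-projective, and cohomology commutes with direct sums, so the task reduces to showing that $H^i(M) \cong \bigoplus_n H^i(M_n)$ is finitely presented over $H^0(\cat Q)$ for each $i$. Choosing for each $n$ a presentation
\[
H^0(\cat Q)(-, B_n^{(i)}) \to H^0(\cat Q)(-, A_n^{(i)}) \to H^i(M_n) \to 0
\]
(with single objects $A_n^{(i)}, B_n^{(i)} \in H^0(\cat Q)$ by additivity), and using the hypothesis to identify $\bigoplus_n H^0(\cat Q)(-, A_n^{(i)}) \cong H^0(\cat Q)(-, \bigoplus_n A_n^{(i)})$ (and similarly for $B$), one exhibits $\bigoplus_n H^i(M_n)$ as a cokernel between two representables, hence finitely presented.

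The hard part is the last identification: $\bigoplus_n H^0(\cat Q)(-, A_n) \cong H^0(\cat Q)(-, \bigoplus_n A_n)$ as $H^0(\cat Q)$-modules does not follow automatically from the mere existence of the coproduct in $H^0(\cat Q)$, but requires these coproducts to be preserved by the Yoneda embedding into $\Mod{H^0(\cat Q)}$. This is the natural reading of the hypothesis when $H^0(\cat Q)$ is viewed as the category of projectives in an abelian category with countable coproducts (as intended in context), and making this compatibility explicit is the main conceptual point that must be addressed.
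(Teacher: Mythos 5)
Your proof matches the paper's closely: the same reduction to Theorem \ref{thm:hlc_tw_equiv} and Example \ref{example:derproj_yoneda} for the first claim (your remark that a finite direct sum of representables is again representable, giving enough projectives in the heart, is a correct detail the paper leaves implicit), and the same presentation-and-direct-sum argument for the second. The point you flag at the end is indeed the crux of the second claim, and it is exactly where the paper compresses: the paper simply asserts ``by hypothesis'' that $\bigoplus_j H^0(\cat Q)(-,A_j)$ and $\bigoplus_j H^0(\cat Q)(-,B_j)$ (direct sums formed in $\Mod{H^0(\cat Q)}$) are representable. You are right that this is genuinely stronger than the bare existence of countable coproducts in $H^0(\cat Q)$: it is the requirement that the canonical map $\bigoplus_j H^0(\cat Q)(C,A_j)\to H^0(\cat Q)(C,\bigoplus_j A_j)$ be an isomorphism for every $C$, i.e.\ an $\aleph_0$-compactness condition on the objects of $H^0(\cat Q)$, and it is not automatic even when $H^0(\cat Q)$ arises as the projectives of an abelian category admitting countable coproducts (it already fails for free $\mathbb Z$-modules of countable rank, taking $C=\mathbb Z^{(\omega)}$ and $A_n=\mathbb Z$). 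The phrase ``$H^0(\cat Q)$ is closed under countable coproducts'' is therefore best read as carrying this stronger, Yoneda-compatible meaning --- closure under countable coproducts inside $\Mod{H^0(\cat Q)}$ via the Yoneda embedding --- and under that reading your proof and the paper's coincide; making the requirement explicit, as you do, is a genuine improvement in exposition rather than a defect of your argument.
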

\begin{proof}
Clearly the homotopy category of $\hproj{\cat Q}^{\mathrm{hfp}}$ is equivalent to the triangulated category $\dercomp{\cat Q}^{\mathrm{hfp}}$ (defined in \eqref{eq:dercat_tsubcat_fp}), and from Theorem \ref{thm:hlc_tw_equiv} we know that it has a t-structure induced from $\dercomp{\cat Q}$ with heart $\Modfp{H^0(\cat Q)}$. Since $H^0(\cat Q)$ is Karoubian, we know that the representable $H^0(\cat Q)$-modules are precisely the projectives of the heart $\Modfp{H^0(\cat Q)}$, and by the above Example \ref{example:derproj_yoneda} we know that for every projective $H^0(\cat Q)(-,A)$ the associated derived projective is $\cat Q(-,A)$. Moreover, the heart $\Modfp{H^0(\cat Q)}$ has enough projectives, hence by definition $\hproj{\cat Q}^{\mathrm{hfp}}$ has enough derived projectives.

Next, we assume that $H^0(\cat Q)$ has countable coproducts, and we show that the same is true for $\dercomp{\cat Q}^{\mathrm{hfp}}$. Let $(M_j)_{j\in J}$ be a countable family of objects there. Then, for all $i \in \mathbb Z$, we have projective presentations
\begin{equation*}
H^0(\cat Q)(-,A_j) \to H^0(\cat Q)(-,B_j) \to H^i(M_j).
\end{equation*}
Taking direct sums in $\dercomp{\cat Q}$ we find an exact sequence:
\begin{equation*}
\bigoplus_{j \in J} H^0(\cat Q)(-,A_j) \to \bigoplus_{j \in J} H^0(\cat Q)(-,B_j) \to \bigoplus_{j \in J} H^i(M_j) \cong H^i (\bigoplus_{j \in J} M_j ).
\end{equation*}
By hypothesis, $\oplus_j H^0(\cat Q)(-,A_j)$ and $\oplus_j H^0(\cat Q)(-,B_j)$ are representable (say, respectively by objects $A$ and $B$ in $H^0(\cat Q)$) and we get an exact sequence:
\begin{equation*}
H^0(\cat Q)(-,A) \to H^0(\cat Q)(-,B) \to H^i(\bigoplus_{j \in J} M_j),
\end{equation*}
namely $H^i(M) \in \Modfp{H^0(\cat Q)}$, as desired.
\end{proof}

For a given dg-category, the property of being homotopically locally coherent and with Karoubian $H^0$ is precisely what makes it a ``dg-category of derived projectives of a dg-category with enough derived projectives'', in virtue of Example \ref{example:derproj_yoneda}
and the following result:
\begin{lemma} \label{lemma:dgproj_hlc_karoubian}
Let $\cat A$ be a pretriangulated dg-category with a t-structure and enough derived projectives. Then, the dg-category $\DGProj(\cat A)$ is hlc and $H^0(\DGProj(\cat A))$ is Karoubian. 

Moreover, if the t-structure on $\cat A$ is closed under countable direct sums (Definition \ref{def:tstruct_closed_directsums}), then the coproduct $\bigoplus_i S(P_i)$ in $H^0(\cat A)_{\leq 0}$ of any countable collection of objects in $H^0(\DGProj(\cat A))$ lies in $H^0(\DGProj(\cat A))$. In particular, $H^0(\DGProj(\cat A))$ is closed under countable direct sums.
\end{lemma}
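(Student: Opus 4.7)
The plan is to use the fully faithful functor $S \colon \Proj(\cat A^\heartsuit) \to H^0(\cat A)$ to identify $H^0(\DGProj(\cat A))$ with $\Proj(\cat A^\heartsuit)$, and then translate each of the three conditions in Definition \ref{def:hlc} into a statement about projectives of the heart. Under this identification, cohomological concentration in nonpositive degrees is just the vanishing $\cat A(S(P), S(P')[i]) = 0$ for $i > 0$ already recorded in the proposition above; additivity holds because a direct sum of projectives is projective; and the Karoubian property follows because projectives in an abelian category are closed under direct summands.

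For right coherence I would invoke Proposition \ref{prop:cohcat_wker} and exhibit weak kernels in $\Proj(\cat A^\heartsuit)$. Given $f \colon P \to P'$ in $\Proj(\cat A^\heartsuit)$, take the kernel $K$ of $f$ in $\cat A^\heartsuit$ and, using the enough-projectives hypothesis, choose an epimorphism $P'' \twoheadrightarrow K$ with $P'' \in \Proj(\cat A^\heartsuit)$; the composite $P'' \to K \hookrightarrow P$ is a weak kernel, because any $h \colon Q \to P$ in $\Proj(\cat A^\heartsuit)$ with $fh = 0$ factors through $K$ and lifts to $P''$ by projectivity of $Q$. For the hfp condition, applying the defining property of derived projectives to the object $S(P)[i] \in \cat A$ yields a natural identification
\begin{equation*}
\cat A(S(P'), S(P)[i]) \cong \cat A^\heartsuit(P', H^i(S(P))),
\end{equation*}
so the right $H^0(\DGProj(\cat A))$-module $H^i(\DGProj(\cat A)(-, S(P)))$ corresponds to the restriction of the Yoneda module of $X := H^i(S(P)) \in \cat A^\heartsuit$ from $\cat A^\heartsuit$ to $\Proj(\cat A^\heartsuit)$. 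A projective presentation $P_1 \to P_0 \to X \to 0$ in the heart, together with the exactness of $\cat A^\heartsuit(P', -)$ on $P' \in \Proj(\cat A^\heartsuit)$, then exhibits the module as finitely presented.

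For the last claim, suppose the t-structure on $\cat A$ is closed under countable coproducts. Given a countable family $(S(P_i))_i$ in $H^0(\DGProj(\cat A))$, each $S(P_i)$ lies in $\cat A_{\leq 0}$, so by Remark \ref{remark:tstruct_closed_directsums} the coproduct $R = \bigoplus_i S(P_i)$ exists in $H^0(\cat A)$ and lies in $\cat A_{\leq 0}$; the argument of Remark \ref{remark:dgproj_coprod} then shows that $H^0(R)$ is the coproduct of the $P_i$ in $\cat A^\heartsuit$ (hence projective) and that the natural map $S(H^0(R)) \to R$ is an isomorphism, so $R \in H^0(\DGProj(\cat A))$. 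I do not anticipate a serious obstacle: the most delicate verification is the hfp property, but this reduces immediately to the existence of projective presentations in $\cat A^\heartsuit$ once the adjunction defining $S$ has been unpacked on the shifted object $S(P)[i]$.
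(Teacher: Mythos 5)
Your proof is correct and follows essentially the same route as the paper: transport all three conditions of Definition~\ref{def:hlc} through the equivalence $S\colon \Proj(H^0(\cat A)^\heartsuit) \to H^0(\DGProj(\cat A))$, use the defining adjunction $\cat A(S(P'),S(P)[i]) \cong H^0(\cat A)^\heartsuit(P',H^i(S(P)))$ together with a projective presentation of $H^i(S(P))$ to get the hfp condition, and invoke Remark~\ref{remark:dgproj_coprod} for the coproduct claim. The only cosmetic difference is that the paper dispatches additivity, the Karoubian property and coherence by citing the dual of \cite[Remark A.13]{lowen-vandenbergh-deformations-abelian}, whereas you unpack the coherence part into the explicit weak-kernel construction via Proposition~\ref{prop:cohcat_wker}; the underlying argument is the same.
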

\begin{proof}
For simplicity, set $\cat Q = \DGProj(\cat A)$. We know that $S(-) \colon \Proj(H^0(\cat A)^\heartsuit) \to H^0(\cat Q)$ is an equivalence, hence $H^0(\cat Q)$ is additive, coherent and Karoubian since $\Proj(H^0(\cat A)^\heartsuit)$ is such (by the dual of \cite[Remark A.13]{lowen-vandenbergh-deformations-abelian}. To go on to show that $\cat Q$ is hlc, we let $Q \in \cat Q$ and consider $H^i(\cat Q(S(P),Q))$ for any given projective $P$ in the heart. We have:
\begin{align*}
H^i(\cat Q(S(P),Q)) &\cong H^0(\cat A(S(P),Q[i])) \\
&\cong H^0(\cat A)^\heartsuit(P,H^i(Q)).
\end{align*}
Next, consider a projective presentation of $H^i(Q)$, which exists since $H^0(\cat A)^\heartsuit$ has enough projectives:
\begin{equation*}
P_1 \to P_0 \to H^i(Q) \to 0.
\end{equation*}
Since $P$ is projective, we get an exact sequence
\begin{equation*}
H^0(\cat A)^\heartsuit(P,P_1) \to H^0(\cat A)^\heartsuit(P,P_0) \to H^0(\cat A)^\heartsuit(P,H^i(Q)) \to 0.
\end{equation*}
Next, recalling that $S(-) \colon \Proj(H^0(\cat A)^\heartsuit) \to H^0(\cat Q)$ is an equivalence, we get an exact sequence:
\begin{equation*}
H^0(\cat Q)(S(P),S(P_1)) \to H^0(\cat Q)(S(P),S(P_0)) \to H^i(\cat Q(S(P),Q)) \to 0.
\end{equation*}
This sequence is natural in $P$, hence also in $S(P) \in H^0(\cat Q)$, since $S(-)$ is fully faithful. We conclude that $H^i(\cat Q(-,Q))$ is finitely presented, as desired.

Finally, the second part of the claim follows from Remark \ref{remark:dgproj_coprod}. Indeed, the coproduct $\bigoplus_i P_i$ exists in $H^0(\cat A)_{\leq 0}$ since this aisle is closed under direct sums, and then we have that
\begin{equation*}
S(H^0(P)) \cong \bigoplus_i S(P_i) \in H^0(\cat Q). \qedhere
\end{equation*}
\end{proof}
\subsection{Functors preserving derived injectives or projectives}
In this part we give sufficient conditions in order that a given exact functor $F \colon \cat T \to \cat S$ between categories which have derived projectives or injectives actually preserves the subcategories of derived projectives or injectives. To this purpose, we start by investigating the behaviour of adjoints with respect to t-structures. Recall that $F$ is right (left) t-exact if 
$F(\cat S_{\le 0})\subset \cat T_{\le 0} $ 
($F(\cat S_{\ge 0})\subset \cat T_{\ge 0} $) holds. Furthemore we say that $F$ is t-exact if it is both left and right t-exact. We recall the following standard result.
\begin{lemma} \label{lemma:tstruct_adjoint_leftright}
Assume we are given an adjunction
$
F \dashv G \colon \cat T \leftrightarrows \cat S
$
of exact functors between triangulated categories with t-structures. Then $F$ is right t-exact if and only if $G$ left t-exact.
\end{lemma}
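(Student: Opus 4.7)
The plan is to give a short proof by combining the adjunction isomorphism with the standard orthogonality characterization of the aisles of a t-structure. Recall that for a t-structure $(\cat T_{\leq 0}, \cat T_{\geq 0})$, one has
\begin{equation*}
\cat T_{\leq 0} = \{X \in \cat T : \cat T(X,Y)=0 \text{ for all } Y \in \cat T_{\geq 1}\}
\end{equation*}
and dually
\begin{equation*}
\cat T_{\geq 1} = \{Y \in \cat T : \cat T(X,Y)=0 \text{ for all } X \in \cat T_{\leq 0}\},
\end{equation*}
and similarly for $\cat S$. I would first record these two facts (they are standard for any t-structure, using the truncation triangles and the Hom-vanishing $\cat T(\cat T_{\leq 0}, \cat T_{\geq 1})=0$).

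Next, I would translate the t-exactness hypotheses through adjunction. Assume $F$ is right t-exact, i.e.\ $F(\cat T_{\leq 0}) \subseteq \cat S_{\leq 0}$. To show $G$ is left t-exact, I need $G(\cat S_{\geq 1}) \subseteq \cat T_{\geq 1}$ (which is equivalent to $G(\cat S_{\geq 0}) \subseteq \cat T_{\geq 0}$ by shifting). So pick $Y \in \cat S_{\geq 1}$ and $X \in \cat T_{\leq 0}$; by adjunction,
\begin{equation*}
\cat T(X,G(Y)) \cong \cat S(F(X),Y) = 0,
\end{equation*}
since $F(X) \in \cat S_{\leq 0}$ and $Y \in \cat S_{\geq 1}$. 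By the orthogonality characterization, this means $G(Y) \in \cat T_{\geq 1}$, as required.

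The converse implication is formally dual: assume $G$ is left t-exact, pick $X \in \cat T_{\leq 0}$ and $Y \in \cat S_{\geq 1}$, and use
\begin{equation*}
\cat S(F(X),Y) \cong \cat T(X,G(Y)) = 0
\end{equation*}
to conclude $F(X) \in \cat S_{\leq 0}$. There is no real obstacle here; the only slightly subtle point is remembering that the orthogonality characterization is biconditional, so the argument in each direction really is an \emph{if and only if}. In particular, no further assumption on $F$ or $G$ beyond being adjoint exact functors is needed.
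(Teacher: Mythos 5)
Your proof is correct and follows essentially the same route as the paper's: both use the adjunction isomorphism $\cat S(F(X),Y)\cong\cat T(X,G(Y))$ together with the orthogonality characterization of the aisles $\cat T_{\leq 0}={}^\perp\cat T_{\geq 1}$ and $\cat T_{\geq 1}=\cat T_{\leq 0}^\perp$. The only cosmetic difference is which implication you spell out first; the paper proves ``$G$ left t-exact $\Rightarrow F$ right t-exact'' and remarks that the converse is analogous, while you do the converse first and then dualize.
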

\begin{proof}
Assume that $G(\cat S_{\geq 0}) \subseteq \cat T_{\geq 0}$. Playing with shifts, we notice that this implies that $G(\cat S_{\geq n}) \subseteq \cat T_{\geq n}$ for all $n \in \mathbb Z$. Now, let $A \in \cat T_{\leq 0}$. Recall that $F(A) \in \cat S_{\leq 0}$ is equivalent to
\begin{equation*}
\cat S(F(A),B) = 0, \quad \forall\, B \in \cat S_{>0}.
\end{equation*}
On the other hand, for any $B \in \cat S_{>0}$, we have
\begin{equation*}
\cat S(F(A),B) \cong \cat T(A,G(B)) \cong 0,
\end{equation*}
since by hypothesis $G(B) \in \cat T_{>0}$. The other implication is proved in the same fashion.
\end{proof}
Any exact functor $F \colon \cat T \to \cat S$ between triangulated categories with t-structures induces a functor between the hearts:
\begin{equation}
\begin{split}
F^\heartsuit \colon \cat T^\heartsuit & \to \cat S^\heartsuit, \\
A & \mapsto H^0(F(A)) = \tau_{\leq 0} \tau_{\geq 0}F(A).
\end{split}
\end{equation}
This formula simplifies to $\tau_{\geq 0} F(A)$ ($\tau_{\leq 0} F(A)$) is $F$ if right (left) t-exact (in which case $F^{\heartsuit}:\cat T^{\heartsuit} \to \cat S^{\heartsuit}$ is right (left) exact)
and to $A \mapsto F(A)$ if $F$ is t-exact (in which case $F^{\heartsuit}$ is exact).
\begin{lemma} \label{lemma:tstruct_adjoint_hearts}
Assume we are given an adjunction
\begin{equation*}
F \dashv G \colon \cat T \leftrightarrows \cat S
\end{equation*}
of exact functors between triangulated categories with t-structures. Then, if  $F$ is right t-exact (or equivalently
$G$ is left t-exact) then the above adjunction induces an adjunction
\begin{equation*}
F^\heartsuit \dashv G^\heartsuit \colon \cat T^\heartsuit \leftrightarrows \cat S^\heartsuit
\end{equation*}
of the functors induced between the hearts.
\end{lemma}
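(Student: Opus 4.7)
The plan is first to simplify the description of $F^\heartsuit$ and $G^\heartsuit$ under our hypothesis. Since $F$ is right t-exact, for $A\in \cat T^\heartsuit \subseteq \cat T_{\leq 0}$ we have $F(A)\in \cat S_{\leq 0}$. I would then observe that $\tau_{\geq 0}$ preserves the subcategory $\cat S_{\leq 0}$: from the triangle $\tau_{\leq -1}F(A)\to F(A)\to \tau_{\geq 0}F(A)$ with the first two terms in $\cat S_{\leq 0}$, one gets $\tau_{\geq 0}F(A)\in \cat S_{\leq 0}\cap \cat S_{\geq 0} = \cat S^\heartsuit$. Hence $F^\heartsuit(A) = \tau_{\leq 0}\tau_{\geq 0}F(A) = \tau_{\geq 0}F(A)$. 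Dually, by Lemma~\ref{lemma:tstruct_adjoint_leftright} $G$ is left t-exact, so $G(B)\in \cat T_{\geq 0}$ for $B\in \cat S^\heartsuit$, and $G^\heartsuit(B) = \tau_{\leq 0}G(B)$.

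Next, I would construct the adjunction isomorphism by concatenating three natural bijections: the adjointness of the truncation $\tau_{\geq 0}$ with the inclusion $\cat S_{\geq 0}\hookrightarrow \cat S$, the given $F\dashv G$ adjunction, and the adjointness of the inclusion $\cat T_{\leq 0}\hookrightarrow \cat T$ with $\tau_{\leq 0}$. Explicitly, for $A\in \cat T^\heartsuit$ and $B\in \cat S^\heartsuit$,
\begin{align*}
\cat S^\heartsuit(F^\heartsuit(A),B) &= \cat S(\tau_{\geq 0}F(A),B) \\
&\cong \cat S(F(A),B) \\
&\cong \cat T(A,G(B)) \\
&\cong \cat T(A,\tau_{\leq 0}G(B)) = \cat T^\heartsuit(A,G^\heartsuit(B)),
\end{align*}
where the first isomorphism uses $B\in \cat S_{\geq 0}$, the second is the hypothesis $F\dashv G$, and the last uses $A\in \cat T_{\leq 0}$. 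Each step is natural in both $A$ and $B$, so the composite gives the required adjunction $F^\heartsuit \dashv G^\heartsuit$.

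There is no serious obstacle in this argument; the only small point to be careful about is verifying that $\tau_{\geq 0}F(A)$ really lies in the heart (so that $F^\heartsuit(A) = \tau_{\geq 0}F(A)$ without further truncation), which is the preservation of $\cat S_{\leq 0}$ by $\tau_{\geq 0}$ noted above. Everything else is a standard composition of adjunctions.
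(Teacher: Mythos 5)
Your proof is correct and takes essentially the same route as the paper: identifying $F^\heartsuit(A) = \tau_{\geq 0}F(A)$ and $G^\heartsuit(B) = \tau_{\leq 0}G(B)$ via the (left/right) t-exactness of $F$ and $G$, and then chaining the truncation adjunctions with the given adjunction $F\dashv G$. The only difference is that you spell out why $\tau_{\geq 0}F(A)$ already lies in the heart, a point the paper uses implicitly, and this extra check is correct.
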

\begin{proof}
We may compute, for any $A \in \cat T^{\heartsuit}$ and $B \in \cat S^{\heartsuit}$:
\begin{align*}
\cat S^\heartsuit(F^\heartsuit(A),B)
&\cong \cat S^\heartsuit(\tau_{\geq 0}F(A),B) && \text{($F$ is right t-exact)}\\
&\cong \cat S(F(A),B) &&(B \in \cat S^\heartsuit \subseteq \cat S_{\geq 0}) \\
&\cong \cat T(A,G(B)) \\
&\cong \cat T(A,\tau_{\le 0}G(B)) &&(A \in \cat T^\heartsuit \subseteq \cat T_{\leq 0}) \\ 
&\cong \cat T^\heartsuit(A,G^\heartsuit(B)) && \text{($G$ is left t-exact).}
 \qedhere
\end{align*}
\end{proof}
It is well known that a functor between abelian categories preserves projectives whenever it has an exact right adjoint. A similar result is true in the framework of t-structures and derived projectives and injectives:
\begin{prop} \label{prop:restr_dgproj_preserve}
Let $F \colon \cat T \to \cat S$ be an exact functor between triangulated categories with t-structures. If $\cat T$ and $\cat S$ have derived projectives and $F$ has a t-exact right adjoint $G$ then $F$ preserves the derived projectives, namely it restricts to a functor
\begin{equation*}
F|_{\DGProj(\cat T)} \colon \DGProj(\cat T) \to \DGProj(\cat S).
\end{equation*}

Dually, if $\cat T$ and $\cat S$ have derived injectives and $F$ has a t-exact left adjoint $G'$, then $F$ preserves the derived injectives, namely it restricts to a functor
\begin{equation*}
F|_{\DGInj(\cat T)} \colon \DGInj(\cat T) \to \DGInj(\cat S).
\end{equation*}
\end{prop}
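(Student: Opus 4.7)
The plan is to prove the first statement directly; the dual follows by reversing arrows. Let $P \in \Proj(\cat T^\heartsuit)$ with associated derived projective $S(P) \in \DGProj(\cat T)$. I want to exhibit $F(S(P))$ as the derived projective $S'(P')$ of a suitable projective $P' \in \Proj(\cat S^\heartsuit)$, where I write $S'$ for the derived projective functor in $\cat S$.

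First I would set up the relevant induced adjunction on hearts. Since $G$ is t-exact, in particular $G$ is left t-exact, so by Lemma~\ref{lemma:tstruct_adjoint_leftright} $F$ is right t-exact, and by Lemma~\ref{lemma:tstruct_adjoint_hearts} there is an adjunction $F^\heartsuit \dashv G^\heartsuit$ between the hearts. Moreover, because $G$ is both right and left t-exact, the induced functor $G^\heartsuit$ is both right and left exact, hence exact. Standard abelian nonsense then gives that $F^\heartsuit$ preserves projectives, so $P' := F^\heartsuit(P) \in \Proj(\cat S^\heartsuit)$, which is the natural candidate.

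Next I would compute, for arbitrary $X \in \cat S$, the representable functor associated to $F(S(P))$. Using adjunction and the defining property of $S(P)$:
\begin{equation*}
\cat S(F(S(P)), X) \;\cong\; \cat T(S(P), G(X)) \;\cong\; \cat T^\heartsuit(P, H^0(G(X))).
\end{equation*}
The key observation is that t-exactness of $G$ implies $G$ commutes with the truncation functors $\tau_{\leq 0}$ and $\tau_{\geq 0}$ (by uniqueness of truncation triangles), hence $H^0(G(X)) \cong G^\heartsuit(H^0(X))$ naturally in $X$. Combining this with the adjunction $F^\heartsuit \dashv G^\heartsuit$ and the defining property of $S'(P')$ yields
\begin{equation*}
\cat T^\heartsuit(P, G^\heartsuit(H^0(X))) \;\cong\; \cat S^\heartsuit(F^\heartsuit(P), H^0(X)) \;\cong\; \cat S(S'(P'), X),
\end{equation*}
all natural in $X$. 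By the Yoneda lemma, $F(S(P)) \cong S'(P')$ in $\cat S$, so $F(S(P)) \in \DGProj(\cat S)$ as required.

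The main technical point—really the only non-formal step—is the commutation $H^0 \circ G \cong G^\heartsuit \circ H^0$, which rests squarely on the two-sided t-exactness hypothesis on $G$; without it one only recovers a natural transformation in one direction and the computation breaks down. The dual statement is obtained by the same argument applied to $\opp{F} \colon \opp{\cat T} \to \opp{\cat S}$ with its t-exact right adjoint $\opp{G'}$, which preserves derived projectives in the opposite categories and therefore translates to $F$ preserving derived injectives.
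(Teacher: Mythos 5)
Your proposal is correct and follows essentially the same route as the paper's proof: pass to the heart-level adjunction $F^\heartsuit \dashv G^\heartsuit$ via Lemma~\ref{lemma:tstruct_adjoint_hearts}, use exactness of $G^\heartsuit$ to see $F^\heartsuit$ preserves projectives and to get $H^0 \circ G \cong G^\heartsuit \circ H^0$, then string together the adjunctions and the defining property of derived projectives to identify $F(S(P)) \cong S(F^\heartsuit(P))$ by Yoneda. The only difference is cosmetic: the paper writes the same chain of isomorphisms starting from $\cat S(S(F^\heartsuit(P)),B)$ rather than from $\cat S(F(S(P)),X)$.
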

\begin{proof}
We prove the statement about derived projectives, the other one being dual. Let $S(P) \in \DGProj(\cat T)$ be a fixed derived projective, associated to some $P \in \Proj(\cat T^\heartsuit)$. We are going to prove that $F(S(P)) \cong S(F^\heartsuit(P))$. For any $B \in \cat S$, we have:
\begin{align*}
\cat S(S(F^\heartsuit(P)),B) &= \cat S^\heartsuit(F^\heartsuit(P),H^0(B)) \\
&\cong \cat T^\heartsuit(P,G^\heartsuit(H^0(B))) \qquad (F^\heartsuit \dashv G^\heartsuit) \\
&\cong \cat T^\heartsuit(P,H^0(G(B))) \qquad \text{($G$ is t-exact, hence $G^\heartsuit \circ H^0 \cong H^0 \circ G$)} \\
&\cong \cat T(S(P),G(B)) \\
&\cong \cat T (F(S(P)),B).
\end{align*}
We applied the above Lemma \ref{lemma:tstruct_adjoint_hearts} and the fact that $F^\heartsuit(P)$ is projective since $P \in \Proj(\cat T^\heartsuit)$ and $F^\heartsuit$ has an exact right adjoint $G^\heartsuit$.
\end{proof}

\section{The (re)construction} \label{section:reconstruction}
The results of the previous sections (see  Theorem \ref{thm:hlc_tw_equiv}, Lemma
  \ref{lemma:hprojhfp_enoughderivedproj}, Example
  \ref{example:derproj_yoneda}, see also Remark \ref{remark:tcohom_holim_leftsep_applicable})
 are summarized in the following theorem which provides a method for constructing triangulated categories with a t-structure and with enough derived projectives.
\begin{thm}[Construction] \label{thm:construction} Let $\cat Q$ be a hlc dg-category such that $H^0(\cat Q)$ is Karoubian. Then, the dg-category $\hproj{\cat Q}^\mathrm{hfp}$ defined by
\begin{equation*}
\hproj{\cat Q}^{\mathrm{hfp}} = \{M \in \hproj{\cat Q} : H^i(M) \in \Modfp{H^0(\cat Q)} \ \ \forall\, i\}
\end{equation*}
has a non-degenerate t-structure whose heart is the category $\Modfp{H^0(\cat Q)}$, has enough derived
projectives, and $\DGProj(\hproj{\cat Q}^\mathrm{hfp})$ is the closure of $\cat Q \hookrightarrow \hproj{\cat Q}^{\mathrm{hfp}}$ under isomorphisms in $H^0(\hproj{\cat Q}^\mathrm{hfp})$; also, $H^0(\hproj{\cat Q}^\mathrm{hfp})$ is closed under countable copruducts if $H^0(\cat Q)$ is. Moreover, the totalisation dg-functor induces a quasi-equivalence:
\begin{equation*}
\Tot \colon \Tw^-(\cat Q) \to \hprojmin{\cat Q}^{\mathrm{hfp}}.
\end{equation*}
In particular, $\Tw^-(\cat Q)$ has a non-degenerate right bounded t-structure. This t-structure is closed under countable coproducts (Definition \ref{def:tstruct_closed_directsums}) if $H^0(\cat Q)$ is closed under countable coproducts.
\end{thm}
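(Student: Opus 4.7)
The plan is to assemble the theorem entirely from results already proved in the excerpt, so there is essentially no new content to produce; the work lies in identifying which previously established piece supplies each clause of the statement and in explaining how the quasi-equivalence transports structure from $\hprojmin{\cat Q}^{\mathrm{hfp}}$ to $\Tw^-(\cat Q)$.

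First I would invoke Theorem \ref{thm:hlc_tw_equiv}\ref{item:hlc_tw_equiv_1}, which says that for a hlc dg-category $\cat Q$ the category $\dercomp{\cat Q}^{\mathrm{hfp}}$ is a triangulated subcategory of $\dercomp{\cat Q}$ carrying a non-degenerate t-structure induced from $\dercomp{\cat Q}$ whose heart is $\Modfp{H^0(\cat Q)}$. Transported to $\hproj{\cat Q}^{\mathrm{hfp}}$ (which dg-enhances this triangulated category via the localization equivalence), this yields the first clause of the theorem. Then Lemma \ref{lemma:hprojhfp_enoughderivedproj} provides that $\hproj{\cat Q}^{\mathrm{hfp}}$ has enough derived projectives and is closed under countable coproducts when $H^0(\cat Q)$ is; and Example \ref{example:derproj_yoneda} identifies $\DGProj(\hproj{\cat Q}^{\mathrm{hfp}})$ with the closure of the Yoneda image of $\cat Q$ under isomorphisms in $H^0(\hproj{\cat Q}^{\mathrm{hfp}})$, taking care to use that $H^0(\cat Q)$ is Karoubian (so that the representable $H^0(\cat Q)$-modules coincide with the projectives of the heart, by \cite[Proposition A.14]{lowen-vandenbergh-deformations-abelian}).

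Next I would apply Theorem \ref{thm:hlc_tw_equiv}\ref{item:hlc_tw_equiv_2} to obtain the quasi-equivalence
\begin{equation*}
\Tot \colon \Tw^-(\cat Q) \xrightarrow{\approx} \hprojmin{\cat Q}^{\mathrm{hfp}},
\end{equation*}
and transport the t-structure on $\hprojmin{\cat Q}^{\mathrm{hfp}}$ across this equivalence to endow $\Tw^-(\cat Q)$ with a unique non-degenerate right bounded t-structure for which $\Tot$ is t-exact. For the final clause, closure under countable coproducts on $\Tw^-(\cat Q)$, I would invoke the discussion of Remark \ref{remark:tcohom_holim_leftsep_applicable} together with Lemma \ref{lemma:hprojhfp_enoughderivedproj}: when $H^0(\cat Q)$ is closed under countable coproducts, the aisle $\hprojmin{\cat Q}^{\mathrm{hfp}}_{\leq 0}$ is stable under countable coproducts (these are computed as in $\hproj{\cat Q}$ and the finite-presentation condition is preserved by the argument in Lemma \ref{lemma:hprojhfp_enoughderivedproj}), and this property is transferred along the quasi-equivalence $\Tot$.

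There is no genuine obstacle, since every ingredient has been established already; the only thing to keep straight is that the ``closed under countable coproducts'' assertion for the $\Tw^-(\cat Q)$ t-structure requires passing through the quasi-equivalence rather than being computed directly in $\Tw^-(\cat Q)$, because $\Tw^-(\cat Q)$ itself is not manifestly closed under arbitrary countable direct sums of twisted complexes at the level of the underlying dg-category.
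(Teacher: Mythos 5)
Your proposal is correct and follows the paper's own approach exactly: the paper presents Theorem~\ref{thm:construction} explicitly as a summary of Theorem~\ref{thm:hlc_tw_equiv}, Lemma~\ref{lemma:hprojhfp_enoughderivedproj}, Example~\ref{example:derproj_yoneda}, and Remark~\ref{remark:tcohom_holim_leftsep_applicable}, which is precisely the list of ingredients you assemble and in the same roles. Your closing observation, that the countable-coproducts clause for the t-structure on $\Tw^-(\cat Q)$ is most naturally obtained by transporting it along the quasi-equivalence $\Tot$ rather than checking it directly on twisted complexes, is also how the statement is meant to be read.
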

A natural question is whether the above Theorem \ref{thm:construction}
can be ``inverted''. Namely, given a pretriangulated category $\cat A$ with a non-degenerate right bounded t-structure with enough derived projectives, can we reconstruct $\cat A^-$ as $\Tw^-(\DGProj(\cat A))$?  Theorem \ref{thm:comparison} below provides positive answers to these question, provided that we also assume closure under countable coproducts.
\subsection{Reconstruction} \label{subsection:reconstructing} We fix a  pretriangulated dg-category $\cat A$ with a non-degenerate right bounded t-structure, with enough derived projectives, and which is closed under countable coproducts. Let $\cat Q = \DGProj(\cat A)$, and let $j \colon \cat Q \hookrightarrow \cat A$ be the inclusion. We can compose the Yoneda embedding $\cat A \hookrightarrow \hproj{\cat A}$ with the restriction quasi-functor $\res_j \colon \hproj{\cat A} \to \hproj{\cat Q}$ (recall \eqref{eq:res_qfun}), hence obtaining a quasi-functor
\begin{equation} \label{eq:qfun_comparison}
\cat A \to \hproj{\cat Q},
\end{equation}
which in $H^0$ gives the following exact functor between triangulated categories:
\begin{equation} \label{eq:exfun_comparison}
\begin{split}
H^0(\cat A) &\to \dercomp{\cat Q}, \\
A & \mapsto \cat A(j(-),A).
\end{split}
\end{equation}
Notice that $\cat Q$ is hlc and its $H^0$ is Karoubian by Lemma \ref{lemma:dgproj_hlc_karoubian}, so $\Tw^-(\cat Q)$ is quasi-equivalent to $\hprojmin{\cat Q}^{\mathrm{hfp}}$ by the above Theorem \ref{thm:construction}. The key result we are going to show is the following:
\begin{thm}[Reconstruction] \label{thm:comparison} Let $\cat A$ be a pretriangulated dg-category with a non-degenerate, right bounded t-structure with enough derived projectives, and which is closed under countable coproducts. Let $\cat Q = \DGProj(\cat A)$. The quasi-functor \eqref{eq:qfun_comparison}
\begin{equation*}
\cat A^- \to \hproj{\cat Q}
\end{equation*}
is t-exact and induces an isomorphism in the homotopy category $\Hqe$ between the dg-categories $\cat A^-$ and $\hprojmin{\cat Q}^{\mathrm{hfp}} \approx \Tw^-(\cat Q)$.
\end{thm}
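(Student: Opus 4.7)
The plan is to verify three properties of the quasi-functor $\Phi \colon \cat A \to \hproj{\cat Q}$ defined by $\Phi(A) = \cat A(j(-), A)$: that it is t-exact, that it lands in $\hprojmin{\cat Q}^{\mathrm{hfp}}$ upon restriction to $\cat A^-$, and that this restriction is a quasi-equivalence. The starting observation is that for any $S(P) \in \cat Q$ (with $P \in \Proj(\cat A^\heartsuit)$) and $A \in \cat A$,
\begin{equation*}
H^i(\Phi(A)(S(P))) = H^0(\cat A)(S(P), A[i]) = \cat A^\heartsuit(P, H^i(A)),
\end{equation*}
by the defining property of derived projectives.

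From this formula, t-exactness of $\Phi$ and finite presentation of $H^i(\Phi A)$ as an $H^0(\cat Q)$-module follow readily. The existence of enough projectives in $\cat A^\heartsuit$ guarantees that the family $\{\cat A^\heartsuit(P, -)\}_P$ jointly detects vanishing (yielding t-exactness via \eqref{eq:tstruct_nondeg_aisles}), and lifting a projective presentation of $H^i(A)$ through the equivalence $\Proj(\cat A^\heartsuit) \simeq H^0(\cat Q)$ shows $H^i(\Phi A) \in \Modfp{H^0(\cat Q)}$. Combined with $H^i(\Phi A) = 0$ for $i \gg 0$ when $A \in \cat A^-$, this produces a restricted quasi-functor $\Phi \colon \cat A^- \to \hprojmin{\cat Q}^{\mathrm{hfp}}$.

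For the quasi-equivalence claim, the key tool is Corollary \ref{coroll:res_tstruct}, which applies both in $\cat A^-$ and in $\hprojmin{\cat Q}^{\mathrm{hfp}}$ with $\cat Q$ as the distinguished full subcategory (the hypotheses hold by Lemma \ref{lemma:dgproj_hlc_karoubian}, by the presence of enough derived projectives, and by closure under countable coproducts). Since $\Phi$ restricts to the Yoneda embedding on $\cat Q$, a cone induction shows it is quasi-fully-faithful on $\Tw^-_\bd(\cat Q) \hookrightarrow \cat A$. For general $A, B \in \cat A^-$, I would write $A \approx \hocolim A_{-p}$ with $A_{-p} \in \Tw^-_\bd(\cat Q)$; by Lemma \ref{lemma:dgcat_hocolim}, $\cat A(A, B) \approx \holim \cat A(A_{-p}, B)$. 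Establishing the parallel resolution $\Phi(A) \approx \hocolim \Phi(A_{-p})$ in $\hprojmin{\cat Q}^{\mathrm{hfp}}$ then lets one match the two presentations via $\holim$ and conclude quasi-fully-faithfulness. For essential surjectivity, given $M \in \hprojmin{\cat Q}^{\mathrm{hfp}}$ with $M \approx \Tot(X)$ via Theorem \ref{thm:hlc_tw_equiv}, I lift the bounded truncations $\sigma_{\geq -p} X$ to objects $A_{-p} \in \cat A$ through the pretriangulated closure $\pretr{\cat Q} \hookrightarrow \cat A$ and set $A = \hocolim A_{-p} \in \cat A^-$; then $\Phi(A) \approx M$ by cohomological comparison.

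The main obstacle will be the comparison $\Phi(\hocolim A_{-p}) \approx \hocolim \Phi(A_{-p})$, that is, showing that $\Phi$ transports the specific homotopy colimit resolutions of Corollary \ref{coroll:res_tstruct} from $\cat A^-$ into $\hprojmin{\cat Q}^{\mathrm{hfp}}$. This is precisely where the hypothesis of closure under countable coproducts becomes essential: it guarantees that $\bigoplus A_{-p}$ exists in $\cat A_{\leq 0}$, and pointwise at $S(P) \in \cat Q$ one checks that the induced map $\hocolim \cat A(S(P), A_{-p}) \to \cat A(S(P), \hocolim A_{-p})$ is a quasi-isomorphism by combining the triangle defining $\hocolim$ with the cohomological stabilization of Lemma \ref{lemma:tcohom_holim}, applied to the sequence $(A_{-p})$ whose cohomology stabilizes by construction.
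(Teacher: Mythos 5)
Your proposal follows the same route as the paper: express the same starting isomorphism $H^i(\cat A(S(P),A)) \cong H^0(\cat A)^\heartsuit(P, H^i(A))$ (the paper's Lemma~\ref{lemma:functor_preserves_tstruct}), resolve $A \in \cat A^-$ by iterated cones of derived projectives via Proposition~\ref{prop:dginj_resolution}/Corollary~\ref{coroll:res_tstruct}, and compare the two sides via a $\holim$/$\hocolim$ argument, with essential surjectivity handled by the dual construction $A = \hocolim_p T_{\cat Q}(X_{-p})$. The one place where the paper is more explicit than your sketch is the step where $\holim_p \cat A(A_{-p},B) \cong \compdg{\cat Q}(\hocolim_p \Phi(A_{-p}), \Phi(B))$ is upgraded from a statement in $\hocomp{\cat Q}$ to one in $\dercomp{\cat Q}$: this requires observing (the paper's step~$(\ast)$) that $\hocolim_p \Phi(A_{-p})$ is h-projective, being a homotopy colimit of representable modules, so that \eqref{eq:hproj_loc_iso} applies; your phrase ``match the two presentations via $\holim$'' and the framing in $\hprojmin{\cat Q}^{\mathrm{hfp}}$ leave this implicit, and it is the crux of why the comparison lands in the derived category rather than merely the homotopy category of modules. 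Also note that the paper avoids a separate cone induction on $\Tw^-_\bd(\cat Q)$ by applying the Yoneda lemma over $\pretr{\cat Q}$, which handles an arbitrary second variable $B$ in one stroke; your cone-induction alternative works but you should make sure it is formulated with $B$ arbitrary.
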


\subsection{Theorem \ref{thm:comparison}: preparations}
The proof of Theorem \ref{thm:comparison} is achieved by applying Proposition \ref{prop:dginj_resolution} in order to resolve $A \in \cat A$ with a sequence of twisted complexes of derived projectives. Before going on with the actual proof, we take care of the setting and the preparatory results.

We fix a pretriangulated dg-category $\cat A$ with a non-degenerate right bounded t-structure with enough derived projectives, which is closed under countable coproducts (so that the aisles $H^0(\cat A)_{\leq M}$ are closed under countable coproducts). We set $\cat Q=\DGProj(\cat A)$, which is a hlc dg-category such that $H^0(\cat Q)$ is Karoubian. Moreover, $\cat Q \subseteq \cat A_{\leq 0}$.
\begin{itemize}
\item We can assume that $\cat A$ is strongly pretriangulated, by replacing and identifying it with its pretriangulated hull: $\pretr{\cat A} = \cat A$.
\item Consider the inclusion dg-functor $j \colon \cat Q \to \cat A$. Since $\cat A$ is strongly pretriangulated, we have an induced fully faithful dg-functor $j' \colon \pretr{\cat Q} \hookrightarrow \cat A$. Recalling the properties of $\Ind$ in \S \ref{subsection:twcomp_qeq}, we know that the restriction along the natural embedding $\cat Q \hookrightarrow \pretr{\cat Q}$ induces a dg-equivalence
\begin{equation} \label{eq:pretr_dgeq}
\compdg{\pretr{\cat Q}} \xrightarrow{\sim} \compdg{\cat Q},
\end{equation}
which gives also an equivalence between the derived categories:
\begin{equation}
\dercomp{\pretr{\cat Q}} \xrightarrow{\sim}  \dercomp{\cat Q}.
\end{equation}
Clearly, this equivalence maps
\begin{equation}
\cat A(j'(-),A) \mapsto \cat A(j(-),A),
\end{equation}
for all $A \in \cat A$.
\item Recalling \eqref{eq:twbd_pretr}, the totalisation dg-functor $\Tot_{\cat Q}$ restricts to a dg-functor
\begin{equation*}
\Tot_{\cat Q} \colon \Tw^-_\bd(\cat Q) \to \pretr{\cat Q},
\end{equation*}
and composing with $j' \colon \pretr{\cat Q} \hookrightarrow \cat A$, we get a dg-functor (recall also \eqref{eq:TQ_tot})
\begin{equation*}
T_{\cat Q} \colon \Tw^-_\bd (\cat Q) \to \cat A
\end{equation*}
with the property that for all $Y \in \Tw^-_\bd(\cat Q)$ we have an isomorphism 
\begin{equation} \label{eq:tot_notational_iso}
\Tot_{\cat Q}(Y) \cong \cat A(j(-),T_{\cat Q}(Y)),
\end{equation}
in $\compdg{\cat Q}$, natural in $Y$.
\end{itemize}
\begin{lemma} \label{lemma:functor_preserves_tstruct}
Choose an inverse $S^{-1}$ of the equivalence $S(-) \colon \Proj(H^0(\cat A)^\heartsuit) \to \DGProj(\cat A)$, and let $A \in \cat A$. For all $i \in \mathbb Z$, we have an isomorphism
\begin{equation} \label{eq:dgproj_tstruct_compare}
H^i(\cat A(j(-), A)) \cong H^0(\cat A)^\heartsuit(S^{-1}(-),H^i(A)).
\end{equation}
In particular, the functor
\begin{equation} \label{eq:nerve_dgproj}
\begin{split}
H^0(\cat A) & \to \dercomp{\cat Q}, \\
A & \mapsto \cat A(j(-),A)
\end{split}
\end{equation}
is t-exact.
\end{lemma}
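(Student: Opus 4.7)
The proof will be a direct computation using the defining universal property of derived projectives, followed by an easy deduction of t-exactness from the characterization \eqref{eq:tstruct_nondeg_aisles} of aisles in non-degenerate t-structures.

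For the isomorphism \eqref{eq:dgproj_tstruct_compare}, fix $Q \in \cat Q$ and let $P = S^{-1}(Q) \in \Proj(H^0(\cat A)^{\heartsuit})$, so $Q \cong S(P)$ in $H^0(\cat Q)$. I will compute
\begin{equation*}
H^i(\cat A(j(Q), A)) \;=\; H^0(\cat A)(S(P), A[i])
\end{equation*}
and then apply the defining adjunction of derived projectives from Definition \ref{defin_dginj_dgproj}, which gives
\begin{equation*}
H^0(\cat A)(S(P), A[i]) \;\cong\; H^0(\cat A)^{\heartsuit}(P, H^0(A[i])) \;=\; H^0(\cat A)^{\heartsuit}(S^{-1}(Q), H^i(A)).
\end{equation*}
I would then check that this chain of isomorphisms is natural in $Q \in \cat Q$, hence that we obtain an isomorphism of right $H^0(\cat Q)$-modules as asserted. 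The only thing to verify carefully is this naturality, but since every step is induced by functorial universal properties (the derived projective adjunction and the functor $S^{-1}$), it is automatic.

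For t-exactness of the functor \eqref{eq:nerve_dgproj}, I recall that the t-structure on $\dercomp{\cat Q}$ has aisles characterized by the vanishing of the cohomology modules $H^i$. Since the t-structure on $\cat A$ is non-degenerate, by \eqref{eq:tstruct_nondeg_aisles} we have $A \in \cat A_{\leq n}$ (respectively $\cat A_{\geq n}$) if and only if $H^i(A) = 0$ for all $i > n$ (respectively $i < n$). Using \eqref{eq:dgproj_tstruct_compare}, this is equivalent to $H^i(\cat A(j(-), A)) = 0$ for all $i > n$ (respectively $i < n$), which in turn means $\cat A(j(-), A) \in \dercomp{\cat Q}_{\leq n}$ (respectively $\dercomp{\cat Q}_{\geq n}$). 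Hence the functor preserves both aisles and is t-exact.

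There is no real obstacle here; the content of the lemma is essentially a direct translation of the defining property of derived projectives combined with the characterization of aisles in a non-degenerate t-structure. The only subtlety worth spelling out is the compatibility of the identifications with the module structures over $H^0(\cat Q)$, but this is forced by functoriality of $S$ and $S^{-1}$.
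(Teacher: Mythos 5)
Your proof is correct and takes essentially the same route as the paper's. The one place where you are slightly imprecise is the last step of the t-exactness argument: the implication ``$H^i(\cat A(j(-),A)) = 0$ for all $i > n$ implies $\cat A(j(-),A) \in \dercomp{\cat Q}_{\leq n}$'' relies on the non-degeneracy (and the resulting cohomological characterization \eqref{eq:tstruct_nondeg_aisles} of aisles) of the t-structure on $\dercomp{\cat Q}$, not of the t-structure on $\cat A$, which is the fact the paper explicitly invokes; you cite non-degeneracy of $\cat A$ but leave the $\dercomp{\cat Q}$ side implicit, and moreover only the forward direction of your ``iff'' chain is needed (the implication $A \in \cat A_{\leq n} \Rightarrow H^i(A)=0$ for $i>n$ already holds for any t-structure via \eqref{eq:aisles_incl_cohom}).
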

\begin{proof}
The isomorphism \eqref{eq:dgproj_tstruct_compare} follows from the very definition of derived projectives. t-exactness now follows from the fact that the t-structure on $\dercomp{\cat Q}$ is non-degenerate. Indeed, let $A \in H^0(\cat A)_{\leq n}$. By \eqref{eq:aisles_incl_cohom}, we know that $H^i(A)=0$ for all $i> n$. Thanks to \eqref{eq:dgproj_tstruct_compare}, we find out that $H^i(\cat A(j(-), A))=0$ for $i>n$, and this means that $\cat A(j(-), A) \in \dercomp{\cat Q}_{\leq n}$. A similar argument shows that $H^0(\cat A)_{\geq n}$ is mapped to $\dercomp{\cat Q}_{\geq n}$.
\end{proof}
\begin{remark} \label{remark:induced_hearts}
The functor induced by the above \eqref{eq:nerve_dgproj} between the hearts is precisely
\begin{equation}
\begin{split}
H^0(\cat A)^\heartsuit & \to \Mod{H^0(\cat Q)}, \\
A & \mapsto H^0(\cat A)^\heartsuit(S^{-1}(-),A).
\end{split}
\end{equation}
This is proven in \cite[Proposition 6.25]{lowen-vandenbergh-deformations-abelian} to induce an equivalence between $H^0(\cat A)^\heartsuit$ and $\Modfp{H^0(\cat Q)}$. That result will follow from the proof of Theorem \ref{thm:comparison}.
\end{remark}
\begin{lemma} \label{lemma:prep1}
Let $A \in \cat A_{\leq M}$. Then, there is a sequence $(X_{M-p} \to X_{M-p-1})_{p \geq 0}$ in $\Tw^-_\bd (\cat Q)$, with $T_{\cat Q}(X_{M-p}) \in \cat A_{\leq M}$, and closed degree $0$ maps $\alpha_{M-p} \colon T_{\cat Q}(X_{M-p}) \to A$ such that the diagram
\begin{equation*}
\begin{gathered}
\xymatrix{
T_{\cat Q}(X_{M-p}) \ar[r]^-{\alpha_{M-p}} \ar[d]_{j_{M-p,M-p-1}} & A \\
T_{\cat Q}(X_{M-p-1}) \ar[ur]_{\alpha_{M-p-1}}
}
\end{gathered}
\end{equation*}
is strictly commutative in $\cat A$, and
\begin{equation} \label{eq:resol_dgproj_holim}
\holim_p (\alpha^*_{M-p}, 0) \colon \cat A(A,-) \to \holim_p \cat A(T_{\cat Q}(X_{M-p}), -)
\end{equation}
is an isomorphism in $\dercomp{\opp{\cat A}}$. In other words, $A$ together with the maps $\alpha_{M-p}$ is the homotopy colimit of $(T_{\cat Q}(X_{M-p}) \xrightarrow{j_{M-p,M-p-1}} T_{\cat Q}(X_{M-p-1})) _p$.
\end{lemma}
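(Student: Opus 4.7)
The plan is to apply Proposition \ref{prop:dginj_resolution} with $\cat D = \cat A$ (which is strongly pretriangulated by our standing assumption) and with the full dg-subcategory $\cat Q = \DGProj(\cat A)$. Since $\cat Q$ is hlc by Lemma \ref{lemma:dgproj_hlc_karoubian}, it is concentrated in nonpositive cohomological degrees, and by definition $\cat Q \subseteq \cat A_{\leq 0}$, so the first two hypotheses of that proposition are automatic.

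The only nontrivial hypothesis to verify is the ``enough projectives'' condition: for every $A \in \cat A^- = \cat A$ there must exist $Q \in \cat Q$ and a closed degree $0$ morphism $\alpha \colon Q \to A$ such that $H^0(\alpha)$ is an epimorphism in the heart. This is exactly the content of Remark \ref{remark:derinj_firststep}: since $\cat A$ has enough derived projectives, the heart $H^0(\cat A)^\heartsuit$ has enough projectives, so we pick an epimorphism $\overline{\alpha} \colon P \twoheadrightarrow H^0(A)$ with $P \in \Proj(H^0(\cat A)^\heartsuit)$; by the defining property of the derived projective $S(P)$, there is a lift $\alpha \colon S(P) \to A$ with $H^0(\alpha) = \overline{\alpha}$ under the canonical identification $H^0(S(P)) \cong P$, and $S(P) \in \cat Q$.

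Having checked the hypotheses, Proposition \ref{prop:dginj_resolution} directly produces the desired sequence $(X_{M-p})_{p \geq 0}$ in $\Tw^-_\bd(\cat Q)$, the transition maps $j_{M-p,M-p-1}$, and the closed degree $0$ maps $\alpha_{M-p} \colon T_{\cat Q}(X_{M-p}) \to A$ making the stated triangle strictly commute. The proposition also guarantees $T_{\cat Q}(X_{M-p}) \in \cat A_{\leq M}$ and that the induced cohomology maps $H^i(j_{M-p,M-p-1})$ and $H^i(\alpha_{M-p})$ are isomorphisms for $i > M-p$ and epimorphisms for $i = M-p$.

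Finally, for the homotopy colimit statement \eqref{eq:resol_dgproj_holim}, we invoke Corollary \ref{coroll:res_tstruct}. Its hypotheses require $\cat A$ to carry a non-degenerate right bounded t-structure closed under countable coproducts, which are exactly the standing assumptions on $\cat A$ in this subsection. The conclusion of the corollary is precisely that $A$, equipped with the maps $\alpha_{M-p}$ and trivial homotopies, is the homotopy colimit of the sequence $(T_{\cat Q}(X_{M-p}) \xrightarrow{j_{M-p,M-p-1}} T_{\cat Q}(X_{M-p-1}))_p$, i.e.\ the map $\holim_p(\alpha^*_{M-p},0)$ is an isomorphism in $\dercomp{\opp{\cat A}}$. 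No real obstacle arises; the lemma is a bookkeeping consequence of Proposition \ref{prop:dginj_resolution} and Corollary \ref{coroll:res_tstruct}, the only substantive point being the verification via Remark \ref{remark:derinj_firststep} that ``enough derived projectives'' supplies the epimorphism hypothesis needed to feed the general resolution machinery.
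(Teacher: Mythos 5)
Your proposal is correct and follows exactly the same route as the paper, whose proof of this lemma is simply the one-line citation ``Proposition \ref{prop:dginj_resolution}, Corollary \ref{coroll:res_tstruct}''; you have merely spelled out the verification of the hypotheses (via Lemma \ref{lemma:dgproj_hlc_karoubian} and Remark \ref{remark:derinj_firststep}), which is the correct bookkeeping the paper leaves implicit.
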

\begin{proof}
Proposition \ref{prop:dginj_resolution}, Corollary \ref{coroll:res_tstruct}.
\end{proof}
\begin{lemma} \label{lemma:prep2}
Let $A \in \cat A_{\leq M}$, and consider the sequence $(X_{M-p} \to X_{M-p-1})_p$ and the maps $\alpha_{M-p} \colon T_{\cat Q}(X_{M-p}) \to A$ given by Lemma \ref{lemma:prep1}. There exists $X \in \Tw^-(\cat Q)$ such that $\sigma_{\geq M-p} X = X_{M-p}$, and the morphisms 
\begin{equation*}
(\alpha_{M-p})_* \colon \Tot_{\cat Q}(X_{M-p}) \cong \cat A(j(-),T_{\cat Q}(X_{M-p})) \to \cat A(j(-),A)
\end{equation*}
induce a closed degree $0$ morphism in $\compdg{\cat Q}$
\begin{equation} \label{eq:TotX_dgproj_qis}
\Tot_{\cat Q}(X) \to \cat A(j(-),A)
\end{equation}
which is an isomorphism in $\dercomp{\cat Q}$. Moreover, the induced morphism (recall \eqref{eq:hocolim_inducedmap})
\begin{equation} \label{eq:hocolim_pretr_resol}
\hocolim_p ((\alpha_{M-p})_*,0) \colon \hocolim_p \cat A(j'(-),T_{\cat Q}(X_{M-p})) \to \cat A(j'(-),A)
\end{equation}
is an isomorphism in $\dercomp{\pretr{\cat Q}}$.
\end{lemma}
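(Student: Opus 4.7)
The plan is to assemble $X$ out of the sequence $(X_{M-p})$ using Proposition \ref{prop:resolution_twminus_colimit}, to identify $\Tot_{\cat Q}(X)$ with the homotopy colimit of the $\Tot_{\cat Q}(X_{M-p})$, and then to reduce the required quasi-isomorphism to the cohomological bounds already produced by Proposition \ref{prop:dginj_resolution}.

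First I would apply Proposition \ref{prop:resolution_twminus_colimit} to $(X_{M-p})_{p\geq 0}$ to obtain $X \in \Tw^-(\cat Q)$ with $\sigma_{\geq M-p} X = X_{M-p}$. By Proposition \ref{prop:tot_twmin_colim}, $\Tot_{\cat Q}(X)$ is the strict colimit $\varinjlim_p \Tot_{\cat Q}(X_{M-p})$ in $\comp{\cat Q}$, and Proposition \ref{prop:tot_hocolim_iso_lim} furthermore identifies it with $\hocolim_p \Tot_{\cat Q}(X_{M-p})$ in $\hocomp{\cat Q}$. The strict commutativity provided by Lemma \ref{lemma:prep1}, together with the natural isomorphism \eqref{eq:tot_notational_iso}, furnishes a compatible family of closed degree $0$ morphisms $(\alpha_{M-p})_* \colon \Tot_{\cat Q}(X_{M-p}) \to \cat A(j(-),A)$ in $\compdg{\cat Q}$, and the universal property of the strict colimit then produces the required morphism \eqref{eq:TotX_dgproj_qis}.

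To verify that this morphism is a quasi-isomorphism I would compute cohomology. By Lemma \ref{lemma:functor_preserves_tstruct}, the induced functor $\cat A(j(-),-) \colon H^0(\cat A) \to \dercomp{\cat Q}$ is t-exact, and via \eqref{eq:dgproj_tstruct_compare} the effect of taking $H^i$ is naturally identified with applying the $\Hom$-from-projectives functor $H^0(\cat A)^\heartsuit(S^{-1}(-),-)$ to $H^i$ of the argument. Since $S^{-1}$ takes values in projectives of the heart, this functor is exact and in particular preserves both isomorphisms and epimorphisms. Combining this with the cohomological bounds from Proposition \ref{prop:dginj_resolution}, the induced transition maps
\[
H^i\bigl(\cat A(j(-),T_{\cat Q}(X_{M-p}))\bigr) \to H^i\bigl(\cat A(j(-),T_{\cat Q}(X_{M-p-1}))\bigr)
\]
and the comparison maps
\[
H^i\bigl(\cat A(j(-),T_{\cat Q}(X_{M-p}))\bigr) \to H^i\bigl(\cat A(j(-),A)\bigr)
\]
are isomorphisms for $i>M-p$ and epimorphisms for $i=M-p$.

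The triangulated category $\dercomp{\cat Q}$ carries the standard non-degenerate t-structure with heart $\Mod{H^0(\cat Q)}$ and is closed under countable coproducts, so after the harmless reindexing $n := p - M$ the pattern above matches the hypotheses of Lemma \ref{lemma:tcohom_holim} and Corollary \ref{coroll:tcohom_holim}. Applying the latter yields that
\[
\hocolim_p \cat A(j(-),T_{\cat Q}(X_{M-p})) \to \cat A(j(-),A)
\]
is an isomorphism in $\dercomp{\cat Q}$. Combining with the identification $\Tot_{\cat Q}(X) \approx \hocolim_p \Tot_{\cat Q}(X_{M-p})$ via \eqref{eq:tot_notational_iso} yields that \eqref{eq:TotX_dgproj_qis} is a quasi-isomorphism. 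The second assertion \eqref{eq:hocolim_pretr_resol} follows by transporting along the restriction equivalence \eqref{eq:pretr_dgeq}, under which $\cat A(j'(-),-)$ corresponds to $\cat A(j(-),-)$. The main obstacle is verifying that the hypotheses of Lemma \ref{lemma:tcohom_holim} / Corollary \ref{coroll:tcohom_holim} genuinely apply after the reindexing (in particular that the ambient t-structure on $\dercomp{\cat Q}$ is non-degenerate and closed under the relevant coproducts); once that is checked, the remaining cohomology bookkeeping is straightforward.
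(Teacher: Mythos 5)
Your proposal is correct and follows essentially the same overall strategy as the paper: build $X$ from the sequence $(X_{M-p})$ via Proposition \ref{prop:resolution_twminus_colimit}, identify $\Tot_{\cat Q}(X)$ with $\hocolim_p \Tot_{\cat Q}(X_{M-p})$, and reduce the quasi-isomorphism claim to the cohomological bounds from Proposition \ref{prop:dginj_resolution}, with the second assertion obtained by transport along $\dercomp{\pretr{\cat Q}} \simeq \dercomp{\cat Q}$.

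The one place where you diverge is the cohomology step. The paper invokes the already-established Lemma \ref{lemma:truncation_cohomology_constant} to get $H^i(\Tot_{\cat Q}(X_{-p})) \xrightarrow{\sim} H^i(\Tot_{\cat Q}(X))$ for $i>-p$, then uses \eqref{eq:dgproj_tstruct_compare} only to transport the isomorphisms $H^i(\alpha_{-p})$ for $i>-p$ across $\cat A(j(-),-)$; this avoids re-running a homotopy-colimit argument and never needs to transport epimorphisms. You instead transport the full iso/epi pattern of $H^i(j_{M-p,M-p-1})$ and $H^i(\alpha_{M-p})$ through \eqref{eq:dgproj_tstruct_compare} and apply Corollary \ref{coroll:tcohom_holim} directly to the sequence $\cat A(j(-),T_{\cat Q}(X_{M-p}))$ in $\dercomp{\cat Q}$. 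This is valid — and the point you flag as a potential obstacle is harmless, since $\dercomp{\cat Q}$ is cocomplete and its standard t-structure is non-degenerate — but it does genuinely require the extra observation you make, namely that $N\mapsto H^0(\cat A)^\heartsuit(S^{-1}(-),N)$ is exact because $S^{-1}$ lands in projectives, so that the epimorphism condition of Lemma \ref{lemma:tcohom_holim} transfers. The paper's route is a bit leaner since it only needs iso-preservation; your route re-derives Lemma \ref{lemma:truncation_cohomology_constant}'s content on the fly via Corollary \ref{coroll:tcohom_holim}. Either way, the argument is sound and bottoms out in the same place.
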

\begin{proof}
Upon shifting, assume $M=0$. The sequence $(X_{-p} \to X_{-p-1})_p$ is constructed using Proposition \ref{prop:dginj_resolution}, so Proposition \ref{prop:resolution_twminus_colimit} is applicable and  gives $X \in \Tw^-(\cat Q)$ such that $\sigma_{-p} X = X_{-p}$, and moreover 
\begin{equation*}
\Tot_{\cat Q}(X) \cong \varinjlim_p \Tot_{\cat Q}(X_{-p}) \approx \hocolim_p \Tot_{\cat Q}(X_{-p}) .
\end{equation*}
Thanks to the commutative diagram \eqref{eq:tot_hocolim_commutative}, the equivalence \eqref{eq:pretr_dgeq} and the isomorphism \eqref{eq:tot_notational_iso}, we only need to check that \eqref{eq:TotX_dgproj_qis} is a quasi-isomorphism. Let $i \in \mathbb Z$ and recall Lemma \ref{lemma:truncation_cohomology_constant}: $H^i(\Tot_{\cat Q}(X_{-p})) \to H^i(\Tot_{\cat Q}(X))$ is an isomorphism for $-p < i$, so it is enough to prove that
\begin{equation*}
H^i(\Tot_{\cat Q}(X_{-p})) \cong H^i(\cat A(j(-),T_{\cat Q}(X_{-p}))) \xrightarrow{H^i((\alpha_{-p})_*)} H^i(\cat A(j(-),A))
\end{equation*}
is an isomorphism for $-p < i$. Thanks to \eqref{eq:dgproj_tstruct_compare}, this is equivalent to proving that
\begin{equation*}
H^0(\cat A)^\heartsuit(S^{-1}(-),H^i(T_{\cat Q}(X_{-p}))) \xrightarrow{H^i(\alpha_{-p})_*} H^0(\cat A)^\heartsuit (S^{-1}(-),H^i(A))
\end{equation*}
is an isomorphism for $-p < i$. This follows from Proposition \ref{prop:dginj_resolution}, where we prove that
\begin{equation*}
H^i(\alpha_{-p}) \colon H^i(T_{\cat Q}(X_{-p})) \to H^i(A)
\end{equation*}
is an isomorphism for $-p < i$.
\end{proof}

\subsection{Theorem \ref{thm:comparison}: proof}
In order to prove Theorem \ref{thm:comparison}, it is enough to show that the functor \eqref{eq:exfun_comparison}
\begin{align*}
H^0(\cat A) & \to \dercomp{\cat Q}, \\
A & \mapsto \cat A(j(-),A)
\end{align*}
is fully faithful and its essential image is $\dercompmin{\cat Q}^\mathrm{hfp} \cong H^0(\hprojmin{\cat Q}^\mathrm{hfp})$, since t-exactness follows from Lemma \ref{lemma:functor_preserves_tstruct}.

\subsubsection*{Fully faithfulness} Since $\pretr{\cat Q} \hookrightarrow \cat A$ and $\dercomp{\pretr{\cat Q}} \cong \dercomp{\cat Q}$ via the restriction dg-functor, fully faithfulness of \eqref{eq:exfun_comparison} is equivalent to fully faithfulness of 
\begin{equation}
\begin{split}
H^0(\cat A) & \to \dercomp{\pretr{\cat Q}}, \\
A & \mapsto \cat A(j'(-),A).
\end{split}
\end{equation}
We compute, given $A, B \in \cat A$ (assuming for simplicity that $H^i(A)=0$ for all $i>0$):
\begin{align*}
\cat A(A,B) & \qis \holim_p \cat A(T_{\cat Q}(X_{-p}),B) \quad \text{(from \eqref{eq:resol_dgproj_holim})} \\
& \cong \holim_p \compdg{\pretr{\cat Q}}(\cat A(j'(-),T_{\cat Q}(X_{-p})), \cat A(j'(-),B)) \quad \text{(Yoneda)} \\
& \cong \compdg{\pretr{\cat Q}}(\hocolim_p \cat A(j'(-),T_{\cat Q}(X_{-p})), \cat A(j'(-),B)) \quad \text{(from \eqref{eq:strictholim_dgmod})}.
\end{align*}
Hence:
\begin{align}
H^0(\cat A(A,B)) & \cong \hocomp{\pretr{\cat Q}}(\hocolim_p \cat A(j'(-),T_{\cat Q}(X_{-p})), \cat A(j'(-),B)) \nonumber \\
& \cong \dercomp{\pretr{\cat Q}}(\hocolim_p \cat A(j'(-),T_{\cat Q}(X_{-p})), \cat A(j'(-),B)) \quad \tag{$\ast$} \label{tag:hocolim_hproj}  \\
& \cong \dercomp{\pretr{\cat Q}}(\cat A(j'(-),A), \cat A(j'(-),B)). \quad \text{(from \eqref{eq:hocolim_pretr_resol})} \nonumber
\end{align}
The isomorphism \eqref{tag:hocolim_hproj} follows from \eqref{eq:hproj_loc_iso} because
\begin{align*}
\hocolim_p \cat A(j'(-),T_{\cat Q}(X_{-p})) &= \hocolim_p \cat A(j'(-),j'(\Tot_{\cat Q}(X_{-p})) \\
&\cong \hocolim_p \pretr{\cat Q}(-,\Tot_{\cat Q}(X_{-p}))
\end{align*}
is h-projective, being a homotopy colimit of representables (hence h-projectives). \qed
\subsubsection*{Essential image} Recall from Theorem \ref{thm:construction} that the totalisation functor $\Tot_{\cat Q}$ induces an equivalence $H^0(\Tw^-(\cat Q)) \cong \dercompmin{\cat Q}^\mathrm{hfp}$, so it is enough to prove that the essential image of \eqref{eq:exfun_comparison} coincides with the objects of the form $\Tot_{\cat Q}(X) \in \dercompmin{\cat Q}^\mathrm{hfp}$, for some $X \in \Tw^-(\cat Q)$. 

Given $A \in \cat A$, we know from Lemma \ref{lemma:prep2} that $\cat A(j(-),A)$ is isomorphic to $\Tot_{\cat Q}(X)$ in $\dercomp{\cat Q}$, for some $X \in \Tw^-(\cat Q)$, hence the essential image of \eqref{eq:exfun_comparison} is contained in $\dercompmin{\cat Q}^\mathrm{hfp}$. 

On the other hand, take an object $\Tot_{\cat Q}(X) \in \dercompmin{\cat Q}^\mathrm{hfp}$, for some $X \in \Tw^-(\cat Q)$. Upon shifting, we may assume that $X$ is of the form
\begin{equation*}
X= (\bigoplus_{h \leq 0} Q_h[-h],q).
\end{equation*}
By Lemma \ref{lemma:hlc_twminus_essimg}, we know that $\Tot_{\cat Q}(X) \in \dercomp{\cat Q}_{\leq 0}$. Let $X_{-p} = \sigma_{\geq -p} X$ and consider the sequence $(\Tot_{\cat Q}(X_{-p}) \to \Tot_{\cat Q}(X_{-p-1}))_p$ (recall \S \ref{subsec:twcompl_colim}). This clearly gives a sequence in $\cat A$:
\begin{equation*}
(T_{\cat Q}(X_{-p}) \xrightarrow{j_{-p,-p-1}} T_{\cat Q}(X_{-p-1}))_p
\end{equation*}
We claim that $T_{\cat Q}(X_{-p}) \in \cat A_{\leq 0}$ for all $p$. Indeed, we have that $T_{\cat Q}(X_0) = X_0 \in \cat Q \subseteq \cat A_{\leq 0}$, and then there is a pretriangle
\begin{equation*}
Q_{-p-1}[p] \to T_{\cat Q}(X_{-p}) \xrightarrow{j_{-p,-p-1}} T_{\cat Q}(X_{-p-1}) \to Q_{-p-1}[p+1],
\end{equation*}
from where we see that, assuming inductively that $T_{\cat Q}(X_{-p}) \in \cat A_{\leq 0}$, we get $T_{\cat Q}(X_{-p-1}) \in \cat A_{\leq 0}$. Now, the sequence $(\Tot_{\cat Q}(X_{-p}) \to \Tot_{\cat Q}(X_{-p-1}))_p$ is isomorphic (recall \eqref{eq:tot_notational_iso}) to
\begin{equation*}
(\cat A(j(-),T_{\cat Q}(X_{-p})) \xrightarrow{(j_{-p,-p-1})_*} \cat A(j(-), T_{\cat Q}(X_{-p}))_p.
\end{equation*}
By Proposition \ref{prop:tot_hocolim_iso_lim}, we know that
\begin{equation*}
\Tot_{\cat Q}(X) \approx \hocolim_p \Tot_{\cat Q}(X_{-p}) \cong \hocolim_p \cat A(j(-),T_{\cat Q}(X_{-p})).
\end{equation*}

Now, set
\begin{equation*}
A = \hocolim_p T_{\cat Q}(X_{-p}),
\end{equation*}
which lies in $\cat A_{\leq 0}$ by Remark \ref{remark:hocolim_aisle}, and comes with closed degree $0$ maps
\begin{equation*}
j_{-p} \colon T_{\cat Q}(X_{-p}) \to A
\end{equation*}
such that $[j_{-p-1}] = [j_{-p,-p-1} \circ j_{-p}]$ in $H^0(\cat A)$. In particular, the diagram
\begin{equation*}
\begin{gathered}
\xymatrix{
\cat A(j(-),T_{\cat Q}(X_{-p})) \ar[r]^-{(j_{-p})_*} \ar[d]_{(j_{-p,-p-1})_*} & \cat A(j(-),A) \\
\cat A(j(-),T_{\cat Q}(X_{-p-1})) \ar[ur]_{(j_{-p-1})_*}
}
\end{gathered}
\end{equation*}
is commutative in $\dercomp{\cat Q}$, and recalling the ``weak universal property'' of the homotopy colimit (see \S \ref{subsec:hocolim_trcat}) we get a morphism in $\dercomp{\cat Q}$:
\begin{equation*}
\Tot_{\cat Q}(X) \to \cat A(j(-),A)
\end{equation*} 
and for all $i$ a commutative diagram
\begin{equation*}
\begin{gathered}
\xymatrix{
H^i(\Tot_{\cat Q}(X)) \ar[r]  & H^i(\cat A(j(-),A)) \\
H^i(j(-), T_{\cat Q}(X_{-p})). \ar[u]  \ar[ur]_{H^i((j_{-p})_*)}
}
\end{gathered}
\end{equation*}

Now, fix $i \in \mathbb Z$ and recall from Lemma \ref{lemma:truncation_cohomology_constant} that
\begin{equation*}
H^i(\Tot_{\cat Q}(X_{-p})) \cong H^i(\cat A(j(-), T_{\cat Q}(X_{-p})) \to H^i(\Tot_{\cat Q}(X))
\end{equation*}
is an isomorphism if $i > -p$. Moreover, by \eqref{eq:dgproj_tstruct_compare} the map $H^i((j_{-p})_*)$ can be identified with
\begin{equation*}
H^0(\cat A)^\heartsuit(S^{-1}(-),H^i(T_{\cat Q}(X_{-p}))) \xrightarrow{H^i(j_{-p})_*} H^0(\cat A)^\heartsuit(S^{-1}(-),H^i(A))
\end{equation*}

So, in order to show that $H^i(\Tot_{\cat Q}(X)) \to H^i(\cat A(j(-),A))$ is an isomorphism, it is now sufficient to show that $H^i(T_{\cat Q}(X_{-p})) \to H^i(A)$ is an isomorphism if $i > -p$. The idea is to show that
\begin{equation*}
H^i(T_{\cat Q}(X_{-p})) \to H^i(T_{\cat Q}(X_{-p-1}))
\end{equation*}
is an isomorphism for $i > -p$ and an epimorphism for $i=p$, and then apply Lemma \ref{lemma:tcohom_holim} (see also Remark \ref{remark:tcohom_holim_leftsep_applicable}). In fact, we may apply $T_{\cat Q} \colon \Tw^-_\bd (\cat Q) \to \cat A$ to the pretriangle \eqref{eq:twtruncation_triangle}, obtaining a pretriangle in $\cat A$:
\begin{equation*}
T_{\cat Q}(Q_{-p-1})[p] \to T_{\cat Q}(X_{-p}) \to T_{\cat Q}(X_{-p-1}) \to T_{\cat Q}(Q_{-p-1})[p+1].
\end{equation*}
Notice that $T_{\cat Q}(Q_{-p-1}) = Q_{-p-1} \in \cat Q \subset \cat A$, and it has cohomology concentrated in nonpositive degrees. Hence, applying the t-structure cohomology of $\cat A$ and arguing as in the proof of Lemma \ref{lemma:truncation_cohomology_constant}, we see that $H^i(T_{\cat Q}(X_{-p})) \to H^i(T_{\cat Q}(X_{-p-1}))$ is indeed an isomorphism for $i > -p$ and an epimorphism for $i=p$. Putting everything together, we conclude that
\begin{equation*}
\Tot_{\cat Q}(X) \to \cat A(j(-),A)
\end{equation*}
is an isomorphism in $\dercomp{\cat Q}$.

\subsection{The correspondence}
Theorems \ref{thm:construction} and \ref{thm:comparison} tell us that the dg-categories of the form $\Tw^-(\cat Q)$ when $\cat Q$ is a hlc dg-category such that $H^0(\cat Q)$ is Karoubian and closed under countable coproducts are precisely the dg-categories $\cat A$ is any dg-category with a non-degenerate right bounded t-structure with enough derived projectives and which is closed under countable coproducts. This correspondence can be made into an equivalence of categories, as we are going to show.
\begin{defin} \label{defin:Hqe_DGProj}
The category $\Hqe^{\mathrm{DGProj}}$ is defined as follows: 
\begin{itemize}
\item The objects are the homotopically locally coherent dg-categories $\cat Q$ such that $H^0(\cat Q)$ is Karoubian.
\item The morphisms are the morphisms $F: \cat Q \to \cat Q'$ in $\Hqe$ (isomorphism classes of quasi-functors) with the property that for any $Q' \in \cat Q'$, the restricted module along the functor $H^0(F)$:
\begin{equation*}
H^0(\cat Q')(F(-),Q') = H^0(\cat Q')(H^0(F)(-),Q')
\end{equation*}
lies in $\Modfp{H^0(\cat Q)}$.
\end{itemize}

We also denote by $\Hqe^{\mathrm{DGProj}}_{\oplus}$ the full subcategory of $\Hqe^{\mathrm{DGProj}}$ of dg-categories $\cat Q$ such that $H^0(\cat Q)$ is closed under countable coproducts.
\end{defin} 
\begin{lemma}
$\Hqe^{\mathrm{DGProj}}$ is a subcategory of $\Hqe$.
\end{lemma}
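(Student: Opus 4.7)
The plan is to verify the two conditions required for $\Hqe^{\mathrm{DGProj}}$ to be a subcategory of $\Hqe$: that identities lie in $\Hqe^{\mathrm{DGProj}}$, and that the composition of two morphisms in $\Hqe^{\mathrm{DGProj}}$ is again in $\Hqe^{\mathrm{DGProj}}$. Neither step involves a serious obstacle; the real content is just the stability of finitely presented modules under cokernels and finite direct sums (Proposition \ref{prop:fp_closed_coker_ext}).

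For the identity $\mathrm{id} \colon \cat Q \to \cat Q$ in $\Hqe$, the module in question is simply the representable $H^0(\cat Q)(-, Q)$ for each $Q \in \cat Q$. Any representable module over any additive category is trivially finitely presented (use the identity as its own finite presentation), so $\mathrm{id}$ lies in $\Hqe^{\mathrm{DGProj}}$.

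For composition, let $F \colon \cat Q \to \cat Q'$ and $G \colon \cat Q' \to \cat Q''$ be morphisms in $\Hqe^{\mathrm{DGProj}}$, and fix $Q'' \in \cat Q''$. Since $G$ lies in $\Hqe^{\mathrm{DGProj}}$, the $H^0(\cat Q')$-module $H^0(\cat Q'')(G(-),Q'')$ is finitely presented, so there is an exact sequence in $\Mod{H^0(\cat Q')}$
\begin{equation*}
\bigoplus_{j=1}^m H^0(\cat Q')(-, Q'_j) \to \bigoplus_{i=1}^n H^0(\cat Q')(-, Q'_i) \to H^0(\cat Q'')(G(-), Q'') \to 0.
\end{equation*}
Restriction along the functor $H^0(F) \colon H^0(\cat Q) \to H^0(\cat Q')$ is exact, so pulling back we obtain an exact sequence of $H^0(\cat Q)$-modules
\begin{equation*}
\bigoplus_{j=1}^m H^0(\cat Q')(F(-), Q'_j) \to \bigoplus_{i=1}^n H^0(\cat Q')(F(-), Q'_i) \to H^0(\cat Q'')(G(F(-)), Q'') \to 0.
\end{equation*}

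By the hypothesis on $F$, each term $H^0(\cat Q')(F(-), Q'_i)$ and $H^0(\cat Q')(F(-), Q'_j)$ lies in $\Modfp{H^0(\cat Q)}$, and since finite direct sums of finitely presented modules are finitely presented, the first two terms of the exact sequence above are in $\Modfp{H^0(\cat Q)}$. By Proposition \ref{prop:fp_closed_coker_ext}, $\Modfp{H^0(\cat Q)}$ is closed under cokernels, so $H^0(\cat Q'')(G(F(-)), Q'')$ is finitely presented. Since $H^0(G \circ F) = H^0(G) \circ H^0(F)$, this proves that $G \circ F$ lies in $\Hqe^{\mathrm{DGProj}}$, completing the proof.
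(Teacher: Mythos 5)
Your proof is correct and follows essentially the same route as the paper: for identities, representables are finitely presented; for composition, take a finite presentation of $H^0(\cat Q'')(G(-),Q'')$ by representable $H^0(\cat Q')$-modules, restrict it along $H^0(F)$ (restriction being exact since exactness of modules is pointwise), and use closure of $\Modfp{H^0(\cat Q)}$ under cokernels. The only cosmetic difference is that you spell out the finite direct sums in the presentation and explicitly invoke Proposition \ref{prop:fp_closed_coker_ext}, whereas the paper compresses these steps.
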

\begin{proof}
Clearly, for any dg-category $\cat Q \in \Hqe^{\mathrm{DGProj}}$ and for any $Q \in \cat Q$, we have that $H^0(\cat Q)(-,Q)$ lies in $\Modfp{H^0(\cat Q)}$, hence $\Hqe^{\mathrm{DGProj}}$ is closed under identities.

To show closure under compositions, let
\begin{equation*}
\cat Q \xrightarrow{F} \cat Q' \xrightarrow{G} \cat Q''
\end{equation*}
be morphisms in $\Hqe^{\mathrm{DGProj}}$. Let $Q'' \in \cat Q''$; by hypothesis, we have an exact sequence:
\begin{equation*}
H^0(\cat Q'')(-,Q_1) \to H^0(\cat Q'')(-,Q_0) \to H^0(\cat Q'')(G(-),Q'') \to 0,
\end{equation*}
from which we restrict along $H^0(F)$ and get an exact sequence:
\begin{equation*}
H^0(\cat Q'')(F(-),Q_1) \to H^0(\cat Q'')(F(-),Q_0) \to H^0(\cat Q'')(GF(-),Q'') \to 0.
\end{equation*}
Now, since both $H^0(\cat Q'')(F(-),Q_1)$ and $H^0(\cat Q'')(F(-),Q_0)$ are finitely presented by hypothesis, the same is true for the cokernel $H^0(\cat Q'')(GF(-),Q'')$, as desired.
\end{proof}
\begin{defin} \label{defin:Hqe_tmin}
The category $\Hqe^{\mathrm{t-}}$ is defined as follows:
\begin{itemize}
\item The objects are dg-categories $\cat A$ endowed with non-degenerate right bounded t-structure with enough derived projectives.
\item The morphisms are the morphisms $F \colon \cat A \to \cat B$ in $\Hqe$ (isomorphism classes of quasi-functors) such that they admit a t-exact right adjoint $G \colon \cat B \to \cat A$ .
\end{itemize}

We also denote by $\Hqe^{\mathrm{t-}}_{\oplus}$ the full subcategory of $\Hqe^{\mathrm{t-}}$ of dg-categories $\cat A$ such that the t-structure is closed under countable coproducts.
\end{defin}
Notice that two dg-categories $\cat A, \cat B \in \Hqe^{\mathrm{t-}}$ are isomorphic in $\Hqe^{\mathrm{t-}}$ if and only if there is an isomorphism $\cat A \cong \cat B$ in $\Hqe$ which preserves the t-structures.
\begin{remark} \label{remark:hprojminhfp_Twmin_ind}
If $\cat Q, \cat Q'$ are hlc dg-categories with Karoubian $H^0$ and $F \colon \cat Q \to \cat Q'$ is any dg-functor, then the dg-functor $\Ind_F \colon \hproj{\cat Q} \to \hproj{\cat Q'}$ actually restricts to a dg-functor
\begin{equation}
\Ind_F \colon \hprojmin{\cat Q}^\mathrm{hfp} \to \hprojmin{\cat Q'}^\mathrm{hfp},
\end{equation}
and the following diagram is commutative, with vertical arrows being quasi-equivalences:
\begin{equation} \label{eq:Tw-_hproj-_commute}
\begin{gathered}
\xymatrix{
\Tw^-(\cat Q) \ar[r]^{\Tw^-(F)} \ar[d]^{\approx}_{\Tot_{\cat Q}} & \Tw^-(\cat Q') 
\ar[d]_{\approx}^{\Tot_{\cat Q'}} \\
\hprojmin{\cat Q}^\mathrm{hfp} \ar[r]^-{\Ind_F} & \hprojmin{\cat Q'}^\mathrm{hfp}.
}
\end{gathered}
\end{equation}
Indeed, if $M \in \hprojmin{\cat Q}^\mathrm{hfp}$, we know from Proposition \ref{prop:Twmin_hprojhfp_essimg} that $M \cong \Tot(Y)$ in $H^0(\hproj{\cat Q})$. By the commutative diagram \eqref{eq:Tw-_hproj_commute}, we have
\begin{equation*}
\Ind_F(M) \cong \Ind_F(\Tot(Y)) \cong \Tot(\Tw^-(F)(Y))
\end{equation*}
in $H^0(\hproj{\cat Q'})$, and we know that $\Tot(\Tw^-(F)(Y)) \in \hprojmin{\cat Q'}^\mathrm{hfp}$ by Lemma \ref{lemma:hlc_twminus_essimg}.
\end{remark}
Now, recall from Proposition \ref{prop:Twmin_qeq} that the functor $\cat B \mapsto \Tw^-(\cat B)$ preserves quasi-equivalences, hence it induces a functor
\begin{align*}
\Tw^-  \colon \Hqe & \to \Hqe, \\
\cat B &\mapsto \Tw^-(\cat B).
\end{align*}
If $\cat Q$ is an hlc dg-category with Karoubian $H^0$, we shall identify $\Tw^-(\cat Q)$ with $\hprojmin{\cat Q}^\mathrm{hfp}$ via the totalisation $\Tot_{\cat Q}$. If $F \colon \cat Q \to \cat Q'$ is a dg-functor between such dg-categories, then $\Tw^-(F)$ is identified with $\Ind_F$ thanks to by \eqref{eq:Tw-_hproj-_commute}.
\begin{lemma}
The functor $\Tw^- \colon \Hqe \to \Hqe$ induces a functor
\begin{equation} \label{eq:correspondence}
\begin{split}
 \Tw^- \colon \Hqe^{\mathrm{DGProj}} & \to \Hqe^{\mathrm{t-}}, \\
\cat Q &\mapsto \Tw^-(\cat Q) \equiv \hprojmin{\cat Q}^\mathrm{hfp},
\end{split}
\end{equation}
where we endow $\Tw^-(\cat Q)$ with the natural t-structure of Theorem \ref{thm:construction}.
\end{lemma}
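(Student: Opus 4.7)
The plan is to verify the two claims separately: first, that the construction $\cat Q \mapsto \Tw^-(\cat Q)$ sends objects of $\Hqe^{\mathrm{DGProj}}$ to objects of $\Hqe^{\mathrm{t-}}$; second, that a morphism $F \colon \cat Q \to \cat Q'$ in $\Hqe^{\mathrm{DGProj}}$ induces a morphism $\Tw^-(F)$ of $\Hqe^{\mathrm{t-}}$, i.e.\ a quasi-functor admitting a t-exact right adjoint. Functoriality and preservation of quasi-equivalences come for free from Proposition \ref{prop:Twmin_qeq} together with the functoriality of $\Tw^-$ on $\Hqe$. The object-level assertion is a direct invocation of Theorem \ref{thm:construction}: for $\cat Q$ hlc with $H^0(\cat Q)$ Karoubian, $\Tw^-(\cat Q) \approx \hprojmin{\cat Q}^\mathrm{hfp}$ carries a non-degenerate right bounded t-structure with heart $\Modfp{H^0(\cat Q)}$ and enough derived projectives.

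For the morphism-level assertion, first identify $\Tw^-(F)$ with $\Ind_F \colon \hprojmin{\cat Q}^\mathrm{hfp} \to \hprojmin{\cat Q'}^\mathrm{hfp}$ via the commutative square \eqref{eq:Tw-_hproj-_commute}. The candidate right adjoint is the restriction quasi-functor $\res_F$ (cf.\ Remark \ref{remark:ind_qfun_adj}), which produces an adjunction $\Ind_F \dashv \res_F$ at the level of $\hproj$'s. I first check that $\res_F$ restricts to a quasi-functor $\hprojmin{\cat Q'}^\mathrm{hfp} \to \hprojmin{\cat Q}^\mathrm{hfp}$. Given $M \in \hprojmin{\cat Q'}^\mathrm{hfp}$, restriction commutes with taking cohomology in the obvious way: $H^i(\res_F(M))(Q) \cong H^i(M)(F(Q))$. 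The bounded-above condition is preserved trivially. For the finite presentation condition, take a presentation
\begin{equation*}
\bigoplus_j H^0(\cat Q')(-,Q'_j) \to \bigoplus_k H^0(\cat Q')(-,Q'_k) \to H^i(M) \to 0
\end{equation*}
with finitely many summands, restrict along $H^0(F)$, and use that by the defining condition of morphisms in $\Hqe^{\mathrm{DGProj}}$ each $H^0(\cat Q')(F(-),Q'_l)$ lies in $\Modfp{H^0(\cat Q)}$; together with Proposition \ref{prop:fp_closed_coker_ext} (closure under finite direct sums and cokernels) this places $H^i(\res_F(M))$ in $\Modfp{H^0(\cat Q)}$.

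Next, t-exactness of $\res_F$ follows immediately from $H^i \circ \res_F = \res_{H^0(F)} \circ H^i$, since the t-structures on both $\hprojmin{\cat Q}^\mathrm{hfp}$ and $\hprojmin{\cat Q'}^\mathrm{hfp}$ are defined by vanishing of these $H^i$. Finally, one must verify that the adjunction $\Ind_F \dashv \res_F$ restricts appropriately to give an adjunction between the hfp subcategories; this reduces to checking that the unit and counit produced by the ambient adjunction at $\hproj$ level take values inside the hfp subcategories, which follows from the two points just established. Composition of morphisms and identity morphisms in $\Hqe^{\mathrm{t-}}$ are then compatible with the underlying $\Hqe$ composition, so the assignment $F \mapsto \Tw^-(F)$ is functorial.

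The main obstacle is the verification that $\res_F$ preserves the finite presentation condition; everything else is essentially formal once the right adjoint is identified. The crucial input is the defining property of morphisms in $\Hqe^{\mathrm{DGProj}}$, which was precisely engineered so that this step goes through via Proposition \ref{prop:fp_closed_coker_ext}.
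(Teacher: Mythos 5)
Your proof is correct and follows the same route as the paper: after reducing to the cofibrant/dg-functor case (implicit in your treatment of $F$), you identify $\Tw^-(F)$ with $\Ind_F$ via \eqref{eq:Tw-_hproj-_commute}, take $\res_F$ as the candidate right adjoint, and verify it preserves both the hfp condition (via restriction of a finite presentation and Proposition \ref{prop:fp_closed_coker_ext}) and the t-structures. The extra remark on restricting the adjunction to the full hfp subcategories is harmless but automatic once both functors are shown to preserve those subcategories.
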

\begin{proof}
We already know from Theorem \ref{thm:construction} that if $\cat Q \in \Hqe^{\mathrm{DGProj}}$ then $\hproj{\cat Q}^\mathrm{hfp} \in \Hqe^{\mathrm{t-}}$. It only remains to show that for any morphism $F \colon \cat Q \to \cat Q'$ in $\Hqe^{\mathrm{DGProj}}$, the morphism $\Tw^-(F) \colon \Tw^-(\cat Q) \to \Tw^-(\cat Q')$ has a right adjoint which preserves the t-structures. Since $\Tw^-$ preserves quasi-equivalences, without loss of generality we can assume that $\cat Q$ is cofibrant, so that $F$ can be represented by a dg-functor -- which, abusing notation, we also denote by $F$. Since we are identifying $\Tw^-(-)$ with $\hprojmin{-}^\mathrm{hfp}$ via the totalisation dg-functor, by \eqref{eq:Tw-_hproj-_commute} we see that $\Tw^-(F)$ is identified with $\Ind_F \colon \hprojmin{\cat Q}^\mathrm{hfp} \to \hprojmin{\cat Q'}^\mathrm{hfp}$. From Remark \ref{remark:ind_qfun_adj}, we know that $\res_F \colon \hproj{\cat Q'} \to \hproj{\cat Q}$ is the right adjoint quasi-functor of $\Ind_F \colon \hproj{\cat Q} \to \hproj{\cat Q'}$; being a lift of the restriction functor $\dercomp{\cat Q'} \to \dercomp{\cat Q}$, it is readily seen that $\res_F$ preserves the t-structures. To conclude, it is sufficient to show that $\res_F$ restricts to a quasi-functor
\begin{equation*}
\res_F \colon \hprojmin{\cat Q'}^\mathrm{hfp} \to \hprojmin{\cat Q}^\mathrm{hfp},
\end{equation*}
which will be the right adjoint of $\Ind_F \colon \hprojmin{\cat Q}^\mathrm{hfp} \to \hprojmin{\cat Q'}^\mathrm{hfp}$. Hence, we need to show that for any $M \in \dercompmin{\cat Q'}^\mathrm{hfp} \cong H^0(\hprojmin{\cat Q'}^\mathrm{hfp})$, the restriction $M \circ F$ lies in $\dercompmin{\cat Q}^\mathrm{hfp} \cong H^0(\hprojmin{\cat Q}^\mathrm{hfp})$. First, since $H^i(M) = 0$ for $i \gg 0$, we immediately see that $H^i(M\circ F) =0$ for $i \gg 0$. Moreover, by hypothesis we have an exact sequence
\begin{equation*}
H^0(\cat Q')(-,Q_1) \to H^0(\cat Q')(-,Q_0) \to H^i(M) \to 0,
\end{equation*}
which by restriction induces an exact sequence
\begin{equation*}
H^0(\cat Q')(F(-),Q_1) \to H^0(\cat Q')(F(-),Q_0) \to H^i(M \circ F) \to 0.
\end{equation*}
By assumption, $H^0(\cat Q')(F(-),Q_1)$ and $H^0(\cat Q')(F(-),Q_0)$ are finitely presented, so the same is true for the cokernel $H^i(M \circ F)$, as desired.
\end{proof}
Finally, we prove:
\begin{thm} \label{thm:correspondence}
The functor \eqref{eq:correspondence}
\begin{equation*}
\Tw^- \colon \Hqe^{\mathrm{DGProj}} \to \Hqe^{\mathrm{t-}}.
\end{equation*}
is fully faithful, and induces an equivalence of categories:
\begin{equation*}
\Tw^- \colon \Hqe^{\mathrm{DGProj}}_{\oplus} \to \Hqe^{\mathrm{t-}}_{\oplus}.
\end{equation*}
The inverse is given by
\begin{equation}
\begin{split}
\mathrm{DGProj} \colon \Hqe^{\mathrm{t-}}_{\oplus} & \to \Hqe^{\mathrm{DGProj}}_{\oplus}, \\
\cat A & \to \DGProj(\cat A).
\end{split}
\end{equation}
\end{thm}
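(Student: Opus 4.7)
The plan is to exhibit $\DGProj$ as an honest functor with target $\Hqe^{\mathrm{DGProj}}$, then produce a unit $\eta$ and counit $\epsilon$ witnessing the equivalence on the $\oplus$ subcategories, and finally bootstrap this to fully faithfulness on the larger categories via a reconstruction argument.

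I would first verify that $\DGProj \colon \Hqe^{\mathrm{t-}} \to \Hqe^{\mathrm{DGProj}}$ is well-defined. On objects this is exactly Lemma \ref{lemma:dgproj_hlc_karoubian}, whose second half also shows that $\DGProj$ restricts to a functor $\Hqe^{\mathrm{t-}}_{\oplus} \to \Hqe^{\mathrm{DGProj}}_{\oplus}$. On morphisms, any $F \colon \cat A \to \cat B$ in $\Hqe^{\mathrm{t-}}$ has a t-exact right adjoint $G$, so by Proposition \ref{prop:restr_dgproj_preserve}, $F$ carries derived projectives to derived projectives and thus restricts to a morphism $\DGProj(F) \colon \DGProj(\cat A) \to \DGProj(\cat B)$ in $\Hqe$; the finite presentation condition required for being in $\Hqe^{\mathrm{DGProj}}$ follows from the adjunction isomorphism $H^0(\cat B)(F(-), P) \cong H^0(\cat A)(-, G(P))$ together with the argument in the proof of Lemma \ref{lemma:dgproj_hlc_karoubian} showing that such restrictions of representables are finitely presented.

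Next, the unit $\eta_{\cat Q} \colon \cat Q \to \DGProj(\Tw^-(\cat Q))$ is given by the Yoneda inclusion, which is a quasi-equivalence onto the derived projectives by Example \ref{example:derproj_yoneda}. Naturality with respect to $f \colon \cat Q \to \cat Q'$ in $\Hqe^{\mathrm{DGProj}}$ is a consequence of the commutative square \eqref{eq:Tw-_hproj-_commute} together with the identity $\Ind_f(\cat Q(-,Q)) \cong \cat Q'(-, f(Q))$. The counit $\epsilon_{\cat A} \colon \Tw^-(\DGProj(\cat A)) \to \cat A$ on the $\oplus$-subcategory is the quasi-inverse of the t-exact quasi-equivalence of Theorem \ref{thm:comparison} (right boundedness gives $\cat A = \cat A^-$ and $\oplus$-closure is needed for the reconstruction). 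These data give the equivalence $\Tw^- \colon \Hqe^{\mathrm{DGProj}}_{\oplus} \simeq \Hqe^{\mathrm{t-}}_{\oplus}$ with inverse $\DGProj$. For fully faithfulness on the larger $\Hqe^{\mathrm{DGProj}}$, faithfulness is immediate: $\DGProj \circ \Tw^- \cong \mathrm{id}$ via $\eta$ holds on all of $\Hqe^{\mathrm{DGProj}}$, and a left inverse forces faithfulness. For fullness, given $\Phi \colon \Tw^-(\cat Q) \to \Tw^-(\cat Q')$ in $\Hqe^{\mathrm{t-}}$, set $f := \eta_{\cat Q'}^{-1} \circ \DGProj(\Phi) \circ \eta_{\cat Q}$ and show $\Phi \cong \Tw^-(f)$. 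Both morphisms are t-exact, agree on $\cat Q$ by construction, preserve shifts and cones (being exact), and preserve the homotopy colimits $\hocolim_p \Tot(\sigma_{\geq n-p} X)$ reconstructing any $X \in \Tw^-(\cat Q)$ (Proposition \ref{prop:tot_hocolim_iso_lim})---the morphism $\Phi$ because it admits a right adjoint, and $\Tw^-(f) = \Ind_f$ by its construction, cf.\ the argument in the proof of Proposition \ref{prop:Twmin_qeq}.

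The main obstacle is precisely the last step: upgrading the pointwise identifications $\Phi(X) \cong \Tw^-(f)(X)$ to a genuine isomorphism of quasi-functors, i.e.~to an iso in $\dercomp{\Tw^-(\cat Q') \otimes \opp{\Tw^-(\cat Q)}}$. The cleanest route is to work at the bimodule level: starting from the agreement on the full subcategory $\cat Q \hookrightarrow \Tw^-(\cat Q)$, one extends by dg-functoriality of cones to the pretriangulated hull $\pretr{\cat Q} \approx \Tw^-_\bd(\cat Q)$, and then by strict dg-functoriality of the relevant homotopy colimits (as exploited in Definition \ref{def:holim_dgmod} and in the proof of Proposition \ref{prop:tot_hocolim_iso_lim}) to the whole $\Tw^-(\cat Q)$. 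This coherence bookkeeping---essentially a ``reverse'' of the argument used in the essential surjectivity part of Proposition \ref{prop:Twmin_qeq}---carries the bulk of the technical work.
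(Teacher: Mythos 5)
Your high-level structure is sound and much of it is genuinely the same as the paper: the essential surjectivity on the $\oplus$-subcategories is Theorem \ref{thm:comparison}; faithfulness on the larger categories does follow from the easy identity $\Ind_F|_{\cat Q} \cong F$ (equivalently, the unit is an isomorphism); and the correct candidate for the inverse of $\Phi \mapsto \Tw^-(\Phi)$ is restriction to derived projectives (which lands in $\Hqe^{\mathrm{DGProj}}$ by Proposition \ref{prop:restr_dgproj_preserve} plus the finite-presentation check you sketch). But you have correctly located the hard step and not actually resolved it, and your proposed route through it would not go smoothly.

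The gap is in the final paragraph. You want to promote the pointwise agreement $\Phi(X) \cong \Tw^-(f)(X)$ to an isomorphism in $\dercomp{\Tw^-(\cat Q') \otimes \opp{\Tw^-(\cat Q)}}$ by ``extending by dg-functoriality of cones'' and then ``by strict dg-functoriality of homotopy colimits.'' The trouble is that $\Phi$ is a quasi-functor, not a dg-functor: it commutes with cones and with the homotopy colimits of Proposition \ref{prop:tot_hocolim_iso_lim} only up to higher coherence, and the agreement on $\cat Q$ is itself only an isomorphism in a derived category. Propagating such a derived isomorphism up through cones and then through a sequential homotopy colimit requires choosing compatible closed lifts at every stage and showing all the resulting diagrams commute up to specified homotopies; this is not a formal ``reverse'' of the essential-surjectivity argument in Proposition \ref{prop:Twmin_qeq} (which constructs an object, not an isomorphism of bimodules), and you have not indicated how to organize it. As written, this is where the proof is missing.

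The paper avoids this altogether by passing to right adjoints. Rather than comparing $\Phi$ and $\Tw^-(f) = \Ind_f$ directly, one compares their right adjoints $S$ and $\res_f$, which are restriction-type quasi-functors. Two ingredients make this tractable. Lemma \ref{lemma:qfun_hproj_qis} shows that a quasi-functor $T$ with target $\hprojmin{\cat Q}^{\mathrm{hfp}}$ is determined, as a bimodule up to isomorphism, by its restriction $T^{h_{\cat Q}(-)}$ along the Yoneda embedding $\cat Q \hookrightarrow \hprojmin{\cat Q}^{\mathrm{hfp}}$ in the contravariant variable (essentially because quasi-representable $h$-projective modules over $\hprojmin{\cat Q}^{\mathrm{hfp}}$ are determined by their restriction to $\cat Q$). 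Lemma \ref{lemma:qfun_hproj_adj} then expresses the adjunction $\Ind \dashv \res$ and $\Phi \dashv S$ at the level of these restricted bimodules. With these two lemmas, the comparison $\res_f^{\,h_{\cat Q}(-)} \cong S^{h_{\cat Q}(-)}$ reduces to a short Yoneda computation in $\compdg{\cat Q'}$ using only that $h_{f(Q)} \approx T^{h_{\cat Q'}(-)}_{h_{\cat Q}(Q)}$ in $\hproj{\cat Q'}$ --- precisely the agreement you already have on $\cat Q$. No coherent extension through cones or colimits is needed. In short: your plan is correct up to the point you flag, and the paper's Lemmas \ref{lemma:qfun_hproj_qis} and \ref{lemma:qfun_hproj_adj} are exactly the device that replaces the coherence bookkeeping you were worried about.
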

The proof of the above theorem requires some care with the technical details, using the language of quasi-functors.
\begin{lemma} \label{lemma:qfun_hproj_qis}
Let 
\begin{equation*}
T \colon \hprojmin{\cat Q}^\mathrm{hfp} \to \hprojmin{\cat Q'}^\mathrm{hfp}
\end{equation*}
be a quasi-functor. Then, for all $Y \in \hprojmin{\cat Q}^\mathrm{hfp}$, the restriction $T^{h_{\cat Q'}(-)}_Y$ lies in $\dercompmin{\cat Q'}^\mathrm{hfp}$, and  there is an isomorphism in $\dercomp{\basering k}$
\begin{equation}
T^X_Y \qis \compdg{\cat Q'}(X,T^{h_{\cat Q'}(-)}_Y),
\end{equation}
``natural'' in $X$ and $Y$ in the sense that it lifts to an isomorphism in the derived category $\dercomp{\opp{\hprojmin{\cat Q}^\mathrm{hfp}} \otimes \hprojmin{\cat Q'}^\mathrm{hfp}}$. In particular, given another quasi-functor
\begin{equation*}
T' \colon \hprojmin{\cat Q}^\mathrm{hfp} \to \hprojmin{\cat Q'}^\mathrm{hfp},
\end{equation*}
we have that $T \cong T'$ (in the derived category) if and only if $T^{h_{\cat Q'}(-)} \cong T^{h_{\cat Q'}(-)}$ in the derived category $\dercomp{\opp{\hprojmin{\cat Q}^\mathrm{hfp}} \otimes \cat Q'}$.

Moreover, if $T_Y$ is an h-projective dg-module for all $Y$, the same is true for $T^{h_{\cat Q'}(-)}_Y$.
\end{lemma}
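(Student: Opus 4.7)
The plan is to exploit the right quasi-representability of $T$ and the dg-Yoneda lemma to identify $T^{h_{\cat Q'}(-)}_Y$ with the object representing $T_Y$, and then reconstruct $T$ from this restriction.

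Since $T$ is a quasi-functor, for each $Y \in \hprojmin{\cat Q}^\mathrm{hfp}$ we may choose $F(Y) \in \hprojmin{\cat Q'}^\mathrm{hfp}$ together with a quasi-isomorphism of right $\hprojmin{\cat Q'}^\mathrm{hfp}$-modules $T_Y \qis h_{F(Y)}$. Evaluating at a representable $h_{\cat Q'}(Q')$ and applying the dg-Yoneda lemma gives
\[
T^{h_{\cat Q'}(Q')}_Y \qis \hprojmin{\cat Q'}^\mathrm{hfp}(h_{\cat Q'}(Q'), F(Y)) = \compdg{\cat Q'}(h_{\cat Q'}(Q'), F(Y)) \cong F(Y)(Q'),
\]
naturally in $Q' \in \cat Q'$. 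Hence $T^{h_{\cat Q'}(-)}_Y \qis F(Y)$ as right $\cat Q'$-modules; since $F(Y) \in \hprojmin{\cat Q'}^\mathrm{hfp}$, the first claim $T^{h_{\cat Q'}(-)}_Y \in \dercompmin{\cat Q'}^\mathrm{hfp}$ follows at once.

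For the natural isomorphism, evaluating $T_Y \qis h_{F(Y)}$ at an arbitrary $X \in \hprojmin{\cat Q'}^\mathrm{hfp}$ gives $T^X_Y \qis \compdg{\cat Q'}(X, F(Y))$. Since $X$ is h-projective over $\cat Q'$, the functor $\compdg{\cat Q'}(X, -)$ preserves quasi-isomorphisms, so the quasi-iso $F(Y) \qis T^{h_{\cat Q'}(-)}_Y$ yields $\compdg{\cat Q'}(X, F(Y)) \qis \compdg{\cat Q'}(X, T^{h_{\cat Q'}(-)}_Y)$, and concatenating produces the desired quasi-iso $T^X_Y \qis \compdg{\cat Q'}(X, T^{h_{\cat Q'}(-)}_Y)$. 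To promote this pointwise identification to an isomorphism in the derived category $\dercomp{\opp{\hprojmin{\cat Q}^\mathrm{hfp}} \otimes \hprojmin{\cat Q'}^\mathrm{hfp}}$ of bimodules, we replace $T$ by a cofibrant bimodule model, equivalently represent the underlying morphism in $\Hqe$ by an honest dg-functor $G$ after cofibrant replacement of the domain; then $F(Y) = G(Y)$ becomes strictly functorial in $Y$, and the zigzag above assembles into a morphism of bimodules which is a quasi-isomorphism termwise, hence a quasi-isomorphism in the derived category of bimodules. The consequence that $T \cong T'$ if and only if $T^{h_{\cat Q'}(-)} \cong T'^{h_{\cat Q'}(-)}$ is then immediate: restriction gives one direction, and the natural iso recovers both bimodules from their restrictions.

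For the last claim, if $T_Y$ is h-projective then the quasi-iso $T_Y \qis h_{F(Y)}$ is between h-projective modules over $\hprojmin{\cat Q'}^\mathrm{hfp}$, hence is a homotopy equivalence in $\compdg{\hprojmin{\cat Q'}^\mathrm{hfp}}$; restriction along the Yoneda dg-embedding $\cat Q' \hookrightarrow \hprojmin{\cat Q'}^\mathrm{hfp}$ preserves homotopy equivalences, so $T^{h_{\cat Q'}(-)}_Y$ is homotopy equivalent in $\compdg{\cat Q'}$ to $F(Y)$, which is h-projective. Since h-projectivity is invariant under homotopy equivalence, $T^{h_{\cat Q'}(-)}_Y$ is h-projective as well. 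The main obstacle is the bimodule-level naturality in paragraph three: everything is straightforwardly pointwise, but upgrading the zigzag to a genuine morphism of bimodules in the derived category requires the cofibrant-replacement trick to make $Y \mapsto F(Y)$ strictly functorial, thereby bypassing the coherence issues intrinsic to working with quasi-functors rather than strict dg-functors.
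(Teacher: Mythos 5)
Your argument for the first assertion (that $T^{h_{\cat Q'}(-)}_Y$ lands in $\dercompmin{\cat Q'}^{\mathrm{hfp}}$) and your closing argument for the h\mbox{-}projectivity claim are both correct; the latter is in fact slightly more elementary than the paper's, which cites a general lemma of Canonaco--Stellari that restriction along a dg-functor taking representables to h-projectives preserves h-projectives, whereas you observe directly that a quasi-isomorphism between h-projectives is a homotopy equivalence and restriction along a Yoneda embedding preserves homotopy equivalences.

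However, there is a genuine gap in the naturality argument. You first produce a pointwise zigzag $T^X_Y \qis \compdg{\cat Q'}(X,F(Y)) \qis \compdg{\cat Q'}(X,T^{h_{\cat Q'}(-)}_Y)$ and then try to rigidify it by ``cofibrant bimodule replacement, equivalently representing $T$ by a dg-functor $G$ after cofibrant replacement of the domain.'' These two operations are not equivalent: a cofibrant (h-projective) bimodule model of $T$ does not by itself give a dg-functor, and cofibrantly replacing the domain replaces $\hprojmin{\cat Q}^{\mathrm{hfp}}$ by a different dg-category $\widetilde{\cat D}$, so the isomorphism of bimodules you obtain lives in $\dercomp{\opp{\widetilde{\cat D}} \otimes \hprojmin{\cat Q'}^{\mathrm{hfp}}}$ rather than in $\dercomp{\opp{\hprojmin{\cat Q}^{\mathrm{hfp}}} \otimes \hprojmin{\cat Q'}^{\mathrm{hfp}}}$ as required. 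Transporting it back along the derived equivalence induced by the quasi-equivalence $\widetilde{\cat D} \to \hprojmin{\cat Q}^{\mathrm{hfp}}$ would close the gap, but you do not carry out this step. The paper avoids the issue entirely by writing down a single strict morphism of bimodules $T^X_Y \to \compdg{\cat Q'}(X,T^{h_{\cat Q'}(-)}_Y)$ induced by the action $T^X_Y \otimes \hprojmin{\cat Q'}^{\mathrm{hfp}}(h_{\cat Q'}(-),X) \to T^{h_{\cat Q'}(-)}_Y$, and then verifying termwise (using h-projectivity of $X$ and the Yoneda lemma in a commutative square) that it is a quasi-isomorphism. I'd suggest adopting that construction: it is purely pointwise to check and gives naturality for free, with no replacements needed.
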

\begin{proof}
Since $T$ is a quasi-functor, for all $Y \in \hprojmin{\cat Q}^\mathrm{hfp}$ we have
\begin{equation*}
T_Y \qis \hprojmin{\cat Q'}^\mathrm{hfp}(-,F(Y)),
\end{equation*}
for some $F(Y) \in \hprojmin{\cat Q'}^\mathrm{hfp}$. In particular, by the Yoneda lemma:
\begin{equation*}
T^{h_{Q'}(-)}_Y \qis \compdg{\cat Q'}(h_{\cat Q'}(-),F(Y)) \cong F(Y) \in \hprojmin{\cat Q'}^\mathrm{hfp},
\end{equation*}
hence $T^{h_{Q'}(-)}_Y \in \dercompmin{\cat Q'}^\mathrm{hfp}$.

There is a natural morphism
\begin{equation*}
T^X_Y \to \compdg{\cat Q'}(X, T^{h_{\cat Q'}(-)}_Y)
\end{equation*}
which is induced by the action
\begin{equation*}
T^X_Y \otimes \hprojmin{\cat Q'}^{\mathrm{hfp}}(h_{\cat Q'}(-),X) \to T_Y^{h_{\cat Q'}(-)},
\end{equation*}
and we have a commutative diagram:
\begin{equation*}
\begin{gathered}
\xymatrix{
T^X_Y \ar[r] \ar[d]^\approx & \compdg{\cat Q'}(X,T^{h_{\cat Q'}(-)}_Y) \ar[d]^\approx \\
\hprojmin{\cat Q'}^\mathrm{hfp}(X,F(Y)) \ar[r]^-\sim & \hprojmin{\cat Q'}^\mathrm{hfp}(X,\compdg{\cat Q'}(h_{\cat Q'}(-),F(Y))).
}
\end{gathered}
\end{equation*}
The rightmost vertical arrow is an isomorphism in $\dercomp{\basering k}$ because $X$ is h-projective; the lower horizontal arrow is a (strict) isomorphism by the Yoneda lemma. This implies that the upper horizontal arrow is an isomorphism in $\dercomp{\basering k}$, as we wanted. Next, let $T' \colon \hprojmin{\cat Q}^\mathrm{hfp} \to \hprojmin{\cat Q'}^\mathrm{hfp}$ be another quasi-functor. If $T \cong T'$ then $T^{h_{\cat Q'}(-)} \cong (T')^{h_{\cat Q'}(-)}$ by restriction. On the other hand, assume there is an isomorphism $T^{h_{\cat Q'}(-)} \xrightarrow{\approx} (T')^{h_{\cat Q'}(-)}$. For $X \in \hprojmin{\cat Q'}^\mathrm{hfp}$ and $Y \in \hprojmin{\cat Q}^\mathrm{hfp}$, this induces
\begin{equation*}
T^X_Y \approx \compdg{\cat Q'}(X, T^{h_{\cat Q'}(-)}_Y) \xrightarrow{\approx} \compdg{\cat Q'}(X,T'^{h_{\cat Q'}(-)}_Y) \approx (T')^X_Y,
\end{equation*}
which gives an isomorphism of quasi-functors.

Now, assume that $T_Y$ is h-projective for all $Y$. Notice that $T^{h_{\cat Q'}}_Y$ is the image of $T_Y$ with respect to the restriction functor
\begin{equation*}
\compdg{\hprojmin{\cat Q'}^\mathrm{hfp}} \to \compdg{\cat Q'}
\end{equation*}
along the Yoneda embedding $\cat Q' \hookrightarrow \hprojmin{\cat Q'}^\mathrm{hfp}$. This restriction functor maps any representable $\hprojmin{\cat Q'}^\mathrm{hfp}(-,M)$ to
\begin{equation*}
\hprojmin{\cat Q'}^\mathrm{hfp}(h_{\cat Q'}(-),M) = \compdg{\cat Q'}(h_{\cat Q'}(-),M) \cong M
\end{equation*}
by the Yoneda lemma. By \cite[Proposition 3.2, (4)]{canonaco-stellari-internalhoms} we deduce that this restriction functor preserves h-projective dg-modules, hence we have that $T^{h_{\cat Q'}(-)}_Y$ is actually h-projective, as claimed.
\end{proof}
\begin{lemma} \label{lemma:qfun_hproj_adj}
Giving an adjunction of quasi-functors
\begin{equation*}
T \dashv S \colon \hprojmin{\cat Q}^\mathrm{hfp} \to \hprojmin{\cat Q'}^\mathrm{hfp}
\end{equation*}
is the same as giving an isomorphism in $\dercomp{\basering k}$
\begin{equation}
\compdg{\cat Q'}(Q(T)^{h_{\cat Q'}(-)}_Y, X) \qis \compdg{\cat Q}(Y,S^{h_{\cat Q}(-)}_X) \approx S^Y_X,
\end{equation}
``natural'' in $X$ and $Y$ in the sense that it lifts to an isomorphism in the derived category $\dercomp{\opp{\hprojmin{\cat Q}^\mathrm{hfp}} \otimes \hprojmin{\cat Q'}^\mathrm{hfp}}$. Here $Q(T)$ is, as usual, an h-projective resolution of $T$ as a bimodule.
\end{lemma}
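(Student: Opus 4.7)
The plan is to reduce the lemma to the general definition of adjunction of quasi-functors given by \eqref{eq:qfunct_adjoints}, rewriting both sides of the adjunction isomorphism into the required form by means of the Yoneda lemma, the restriction functors along the Yoneda embeddings $\cat Q \hookrightarrow \hprojmin{\cat Q}^\mathrm{hfp}$ and $\cat Q' \hookrightarrow \hprojmin{\cat Q'}^\mathrm{hfp}$, and Lemma \ref{lemma:qfun_hproj_qis}.

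Specializing \eqref{eq:qfunct_adjoints} to $\cat A=\hprojmin{\cat Q}^\mathrm{hfp}$ and $\cat B=\hprojmin{\cat Q'}^\mathrm{hfp}$, giving $T\dashv S$ is the same as giving a natural isomorphism in $\dercomp{\basering k}$
\[
\compdg{\hprojmin{\cat Q'}^\mathrm{hfp}}(Q(T)_Y, h_X) \qis \compdg{\hprojmin{\cat Q}^\mathrm{hfp}}(h_Y, S_X) \cong S^Y_X,
\]
natural in $X$ and $Y$ in the precise bimodule sense. The right-hand side I would rewrite using Lemma \ref{lemma:qfun_hproj_qis}, which gives a natural quasi-isomorphism $S^Y_X \qis \compdg{\cat Q}(Y, S^{h_{\cat Q}(-)}_X)$: this is exactly the right-hand side of the formula in the statement.

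The more substantial step is producing a natural quasi-isomorphism
\[
\compdg{\hprojmin{\cat Q'}^\mathrm{hfp}}(Q(T)_Y, h_X) \qis \compdg{\cat Q'}(Q(T)^{h_{\cat Q'}(-)}_Y, X).
\]
Since $T$ is a quasi-functor, for each $Y$ there is $F(Y) \in \hprojmin{\cat Q'}^\mathrm{hfp}$ with $Q(T)_Y \qis h_{F(Y)}$ in $\compdg{\hprojmin{\cat Q'}^\mathrm{hfp}}$; restricting along $\cat Q' \hookrightarrow \hprojmin{\cat Q'}^\mathrm{hfp}$ this yields a quasi-isomorphism $Q(T)^{h_{\cat Q'}(-)}_Y \qis F(Y)$ in $\compdg{\cat Q'}$. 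Both $Q(T)_Y$ and $h_{F(Y)}$ are h-projective in $\compdg{\hprojmin{\cat Q'}^\mathrm{hfp}}$, and the last statement of Lemma \ref{lemma:qfun_hproj_qis} says $Q(T)^{h_{\cat Q'}(-)}_Y$ is h-projective in $\compdg{\cat Q'}$; also $F(Y)$ is h-projective there, since $F(Y) \in \hprojmin{\cat Q'}^\mathrm{hfp}$. Applying the Yoneda lemma, both hom complexes are thus naturally quasi-isomorphic to $\compdg{\cat Q'}(F(Y),X) = \hprojmin{\cat Q'}^\mathrm{hfp}(F(Y),X)$, hence to each other.

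Stringing together these natural quasi-isomorphisms produces the equivalence of data claimed in the lemma; the converse implication is obtained by reading the same chain in reverse. The main difficulty I expect is not the construction of the individual quasi-isomorphisms but rather verifying that every step is natural at the level of bimodules, so that the composite genuinely lifts to an isomorphism in $\dercomp{\opp{\hprojmin{\cat Q}^\mathrm{hfp}} \otimes \hprojmin{\cat Q'}^\mathrm{hfp}}$. This is largely bookkeeping, but it requires fixing once and for all the h-projective resolution $Q(T) \to T$ and the comparison maps induced by dg-functoriality of restriction, and tracking the naturality of the Yoneda identifications at each step.
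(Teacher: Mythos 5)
Your proposal is correct and takes essentially the same route as the paper's proof: reduce to the definition of quasi-functor adjunction \eqref{eq:qfunct_adjoints}, rewrite the right-hand side via Lemma \ref{lemma:qfun_hproj_qis}, and rewrite the left-hand side by combining the dg-Yoneda lemma with the fact that $Q(T)_Y$ (resp.\ $Q(T)^{h_{\cat Q'}(-)}_Y$) is h-projective and quasi-isomorphic to a representable (resp.\ its restriction); your use of $F(Y)$ as the pivot rather than the bimodule-functorial $Q(T)^{h_{\cat Q'}(-)}_Y$ is a cosmetic difference that the final naturality bookkeeping smooths over, exactly as you anticipate.
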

\begin{proof}
For simplicity, assume that $T$ is h-projective, and identify $Q(T) = T$. We have:
\begin{align*}
\compdg{\cat Q'}(T^{h_{\cat Q'}(-)}_Y,X) & \cong  \compdg{\hprojmin{\cat Q'}^\mathrm{hfp}}(h_{T^{h_{\cat Q'}(-)}_Y}, h_X) \qquad \text{(Yoneda)} \\
&\qis \compdg{\hprojmin{\cat Q'}^\mathrm{hfp}}(T_Y,h_X),
\end{align*}
where the second isomorphism follows from the above Lemma \ref{lemma:qfun_hproj_qis} and the fact that both $T_Y$ and the representable module $h_{T^{h_{\cat Q'}(-)}_Y}$ are h-projective (recall \cite[Lemma 3.4]{canonaco-stellari-internalhoms}). Now, since
\begin{equation*}
\compdg{\cat Q}(Y,S^{h_{\cat Q}(-)}_X) \qis  S^Y_X \approx \compdg{\cat Q}(h_Y, S_X)
\end{equation*}
again by Lemma \ref{lemma:qfun_hproj_qis}, we conclude recalling the definition of adjoint quasi-functors \eqref{eq:qfunct_adjoints}. Every isomorphism above is ``natural'' in $X$ and $Y$ in the sense that it lifts to an isomorphism in the convenient derived category of bimodules.
\end{proof}

Finally, we can prove Theorem \ref{thm:correspondence}:
\begin{proof}[Proof of Theorem \ref{thm:correspondence}]
Essential surjectivity of $\Tw^- \colon \Hqe^\mathrm{DGProj}_{\oplus} \to \Hqe^{t-}_{\oplus}$ follows directly from Theorem \ref{thm:comparison} and the fact that if $\cat A \in \Hqe^\mathrm{DGProj}_{\oplus}$ then $\DGProj(\cat A)$ is closed under countable coproducts (Lemma \ref{lemma:dgproj_hlc_karoubian}); we need to show fully faithfulness. Assume without loss of generality that $\cat Q$ is cofibrant, so that any quasi-functor defined on $\cat Q$ is actually isomorphic to a (strict) dg-functor. We are going to prove that the inverse of
\begin{align*}
\Hqe^\mathrm{DGProj}(\cat Q,\cat Q') & \to \Hqe^{t-}(\hprojmin{\cat Q}^\mathrm{hfp}, \hprojmin{\cat Q'}^\mathrm{hfp}), \\
F &\mapsto \Ind_F
\end{align*}
is given by the restriction map
\begin{align*}
\Hqe^{t-}(\hprojmin{\cat Q}^\mathrm{hfp}, \hprojmin{\cat Q'}^\mathrm{hfp}) & \to \Hqe^\mathrm{DGProj}(\cat Q,\cat Q'), \\
T &\mapsto T|_{\cat Q}.
\end{align*}
from which we also see that $\mathrm{DGProj}$ gives actually the inverse functor of $\Tw^- \colon \Hqe^\mathrm{DGProj}_{\oplus} \to \Hqe^{t-}_{\oplus}$. First, since any $T \in \Hqe^{t-}(\hprojmin{\cat Q}^\mathrm{hfp}, \hprojmin{\cat Q'}^\mathrm{hfp})$ has a t-exact right adjoint, by Proposition \ref{prop:restr_dgproj_preserve} we know that $H^0(T)$ preserves the derived projectives, which means that the essential image of $H^0(T|_{\cat Q})$ is $H^0(\cat Q')$; hence $T|_{\cat Q}$ is actually a quasi-functor $\cat Q \to \cat Q'$, and the above restriction map is well defined.

Now, start with a dg-functor $F \colon \cat Q \to \cat Q'$. Then, it is well-known that ${\Ind_F}|_{\cat Q} \cong F$. On the other hand, start with a quasi-functor 
\begin{equation*}
T \colon \hprojmin{\cat Q}^\mathrm{hfp} \to \hprojmin{\cat Q'}^\mathrm{hfp}
\end{equation*}
admitting a right adjoint $S$ which preserves the t-structures. Upon replacing $T$ with an h-projective resolution $Q(T)$, we can assume that $T$ is h-projective as a bimodule. The restriction $T|_{\cat Q} \colon \cat Q \to \cat Q'$ is given by the bimodule $T_{h_{\cat Q}(-)}^{h_{\cat Q'}(-)}$. Since $\cat Q$ is cofibrant, we can assume that there is a dg-functor $F \colon \cat Q \to \cat Q'$ and an isomorphism in the suitable derived category
\begin{equation*}
T_{h_{\cat Q}(-)}^{h_{\cat Q'}(-)} \approx \cat Q'(-,F(-)).
\end{equation*}
Now, we would like to prove that $\Ind_F \cong T$ as quasi-functors. Clearly, this is equivalent to proving that the right adjoints are isomorphic: $\res_F \cong S$. By Lemma \ref{lemma:qfun_hproj_qis}, it is enough to prove that $\res_F^{h_{\cat Q}(-)} \cong S^{h_{\cat Q}(-)}$. We compute:
\begin{align*}
(\res_F)_Y^{h_{\cat Q}(Q)} &= \compdg{\cat Q}(h_Q, Y \circ F) \\
&\cong (Y \circ F)(Q) \qquad \text{(Yoneda)} \\
& \cong \compdg{\cat Q'}(h_{F(Q)}, Y) \\
&\approx \compdg{\cat Q'} (T^{h_{\cat Q'}(-)}_{h_{\cat Q}(Q)}, Y) \\
& \approx S^{h_{\cat Q}(Q)}_Y. \qquad \text{(Lemma \ref{lemma:qfun_hproj_adj})}
\end{align*}
Every isomorphism above is ``natural'' in the sense that it lifts to an isomorphism in the convenient derived category. The isomorphism in $\dercomp{\basering k}$
\begin{equation*}
\compdg{\cat Q'}(h_{F(Q)}, Y) \approx \compdg{\cat Q'} (T^{h_{\cat Q'}(-)}_{h_{\cat Q}(Q)}, Y)
\end{equation*}
holds because $h_{F(Q)}$ and $T^{h_{\cat Q'}(-)}_{h_{\cat Q}(Q)}$ are both h-projective (recall Lemma \ref{lemma:qfun_hproj_qis}). Our proof is complete.
\end{proof}

\bibliographystyle{amsplain}
\bibliography{biblio_publication}

\providecommand{\bysame}{\leavevmode\hbox to3em{\hrulefill}\thinspace}
\providecommand{\MR}{\relax\ifhmode\unskip\space\fi MR }
\providecommand{\MRhref}[2]{%
  \href{http://www.ams.org/mathscinet-getitem?mr=#1}{#2}
}
\providecommand{\href}[2]{#2}
\begin{thebibliography}{10}

\bibitem{amiot-cluster}
Claire Amiot, \emph{Cluster categories for algebras of global dimension 2 and
  quivers with potential}, Ann. Inst. Fourier (Grenoble) \textbf{59} (2009),
  no.~6, 2525--2590.

\bibitem{beilinson-pervers}
Alexander~A. Be\u{\i}linson, Joseph Bernstein, and Pierre Deligne,
  \emph{Faisceaux pervers}, Analysis and topology on singular spaces, {I}
  ({L}uminy, 1981), Ast\'erisque, vol. 100, Soc. Math. France, Paris, 1982,
  pp.~5--171.

\bibitem{bondal-kapranov}
Alexey~I. Bondal and Mikhail~M. Kapranov, \emph{Enhanced triangulated
  categories}, Math. USSR Sbornik \textbf{70} (1991), no.~1, 93--107.

\bibitem{bondal-larsen-lunts}
Alexey~I. Bondal, Michael Larsen, and Valery~A. Lunts, \emph{Grothendieck ring
  of pretriangulated categories}, International Mathematics Research Notices
  \textbf{2004} (2004), no.~29, 1461--1495.

\bibitem{canonaco-stellari-internalhoms}
Alberto Canonaco and Paolo Stellari, \emph{Internal {H}oms via extensions of dg
  functors}, Adv. Math. \textbf{277} (2015), 100--123.

\bibitem{dedeken-lowen-deformations}
Olivier De~Deken and Wendy Lowen, \emph{On deformations of triangulated
  models}, Adv. Math. \textbf{243} (2013), 330--374.

\bibitem{drinfeld-dgcat}
Vladimir Drinfeld, \emph{D{G} quotients of {DG} categories}, J. Algebra
  \textbf{272} (2004), no.~2, 643--691.

\bibitem{genovese-adjunctions}
Francesco Genovese, \emph{Adjunctions of quasi-functors between
  {DG}-categories}, Appl. Categ. Structures \textbf{25} (2017), no.~4,
  625--657.

\bibitem{deftria1}
Francesco Genovese, Wendy Lowen, and Michel Van~den Bergh, \emph{{Deformation
  theory of triangulated categories with t-structure I}}, in preparation.

\bibitem{deftria2}
\bysame, \emph{{Deformation theory of triangulated categories with t-structure
  II}}, in preparation.

\bibitem{keller-dgcat}
Bernhard Keller, \emph{On differential graded categories}, International
  {C}ongress of {M}athematicians. {V}ol. {II}, Eur. Math. Soc., Z\"urich, 2006,
  pp.~151--190.

\bibitem{keller-lowen-nicolas-curved}
Bernhard Keller, Wendy Lowen, and Pedro Nicol\'{a}s, \emph{On the
  (non)vanishing of some ``derived'' categories of curved dg algebras}, J. Pure
  Appl. Algebra \textbf{214} (2010), no.~7, 1271--1284.

\bibitem{krause-brown-coherent}
Henning Krause, \emph{A {B}rown representability theorem via coherent
  functors}, Topology \textbf{41} (2002), no.~4, 853--861.

\bibitem{kuznetsov-height}
Alexander Kuznetsov, \emph{Height of exceptional collections and {H}ochschild
  cohomology of quasiphantom categories}, J. Reine Angew. Math. \textbf{708}
  (2015), 213--243.

\bibitem{LowenVdBFormal}
Wendy Lowen and Michel Van~den Bergh, \emph{The curvature problem for formal
  and infinitesimal deformations}, arXiv:1505.03698 [math.KT].

\bibitem{lowen-vdb-hochschild}
\bysame, \emph{Hochschild cohomology of abelian categories and ringed spaces},
  Adv. Math. \textbf{198} (2005), no.~1, 172--221.

\bibitem{lowen-vandenbergh-deformations-abelian}
\bysame, \emph{Deformation theory of abelian categories}, Trans. Amer. Math.
  Soc. \textbf{358} (2006), no.~12, 5441--5483.

\bibitem{Positselski1}
Leonid Positselski, \emph{Weakly curved {${A}_{\infty}$}-algebras over a
  topological local ring}, M\'{e}m. Soc. Math. Fr. (N.S.) (2018), no.~159,
  vi+206.

\bibitem{rizzardo-vdb-nonFM}
Alice Rizzardo, Michel Van~den Bergh, and Amnon Neeman, \emph{An example of a
  non-{F}ourier-{M}ukai functor between derived categories of coherent
  sheaves}, Invent. Math. \textbf{216} (2019), no.~3, 927--1004.

\bibitem{shaul}
Liran Shaul, \emph{Injective {DG}-modules over non-positive {DG}-rings}, J.
  Algebra \textbf{515} (2018), 102--156.

\bibitem{stacks-project}
The {Stacks project authors}, \emph{The {S}tacks project},
  http://stacks.math.columbia.edu.

\bibitem{tabuada-dgcat}
Gon{\c{c}}alo Tabuada, \emph{Une structure de cat\'egorie de mod\`eles de
  {Q}uillen sur la cat\'egorie des dg-cat\'egories}, C. R. Math. Acad. Sci.
  Paris \textbf{340} (2005), no.~1, 15--19.

\bibitem{toen-morita}
Bertrand To{\"e}n, \emph{The homotopy theory of {$dg$}-categories and derived
  {M}orita theory}, Invent. Math. \textbf{167} (2007), no.~3, 615--667.

\bibitem{toen-dgcat}
\bysame, \emph{Lectures on dg-categories}, Topics in Algebraic and Topological
  K-Theory, Lecture Notes in Mathematics, vol. 2008, Springer Berlin
  Heidelberg, 2011, pp.~243--302 (English).

\end{thebibliography}
\end{document}